\newtheorem*{thm}{Theorem}
\newtheorem{theorem}{Theorem}[section]
\newtheorem{lemma}[theorem]{Lemma}
\newtheorem{proposition}[theorem]{Proposition}
\newtheorem{corollary}[theorem]{Corollary}
\theoremstyle{definition}
\newtheorem*{defn}{Definition}
\newtheorem{definition}[theorem]{Definition}
\newtheorem{construction}[theorem]{Construction}
\newtheorem{convention}[theorem]{Convention}
\newtheorem{example}[theorem]{Example}
\newtheorem*{ex}{Example}
\newtheorem{counterexample}[theorem]{Counterexample}
\newtheorem{notation}[theorem]{Notation}
\newtheorem{warning}[theorem]{Warning}
\newtheorem{remark}[theorem]{Remark}
\newtheorem*{rmk}{Remark}
\DeclareMathOperator*{\hocolim}{hocolim}
\DeclareMathOperator*{\colim}{colim}
\newcommand{\fin}{\mathsf{Fin}}
\newcommand{\aeff}{\mathsf{A}^{\textit{eff}}}
\begin{document}

\title{On categories of slices}
\author{Dylan Wilson}
\maketitle


\begin{abstract}
In this paper we give an algebraic description
of the category of $n$-slices for an arbitrary
group $G$, in the sense of Hill-Hopkins-Ravenel.
Specifically, given a finite group $G$ and an integer $n$, we construct
an explicit $G$-spectrum 
$W$ (called an \emph{isotropic slice $n$-sphere})
with the following properties: (i) the $n$-slice of a $G$-spectrum $X$
is equivalent to the data of a certain quotient of the Mackey functor
$\underline{[W, X]}$ as a module over the endomorphism Green functor
$\underline{[W,W]}$; (ii) the category of $n$-slices is equivalent to the full
subcategory of right modules over $\underline{[W,W]}$ for which
a certain restriction map is injective. We use this theorem to recover
the known results on categories of slices to date, and exhibit
the utility of our description in several new examples. We go
further and show that the Green
functors $\underline{[W,W]}$ for
certain slice $n$-spheres have a special property
(they are \emph{geometrically split})
which reduces the amount of data necessary
to specify a $\underline{[W,W]}$-module. This step
is purely algebraic and may be of independent interest. 
\end{abstract}

\tableofcontents

\newpage

\section*{Introduction}
\addcontentsline{toc}{section}{Introduction} \markboth{INTRODUCTION}{}

The stable homotopy category has a standard filtration 
$\{\tau_{\ge n}\mathsf{Sp}\}$ by
its subcategories of $n$-connective spectra for 
$n \in \mathbb{Z}$. We have a good
computational handle on this filtration for the following reasons:

	\begin{enumerate}[(i)]
	\item We can easily \emph{build} objects in
	$\tau_{\ge n}\mathsf{Sp}$: all $n$-connective objects
	are obtained from a wedge of copies of $S^n$
	by iteratively attaching cells of dimension $k \ge n$. 
	\item We can \emph{compute} when an object
	$Y$ is $n$-truncated: we need to check that the
	homotopy groups of $Y$ vanish above dimension $n$.
	\item We can \emph{compute} when an object $X$ is in
	$\tau_{\ge n}\mathsf{Sp}$: we need to check that
	the homotopy groups of $X$ vanish below dimension $n$.
	\item The category of $n$-connective and $n$-truncated
	objects has a completely \emph{algebraic} description:
	it is equivalent to the category of abelian groups via the
	Eilenberg-MacLane functor. Moreover, knowledge of
	the $n$th Postnikov
	layer of a spectrum $X$ is equivalent to knowledge of 
	$\pi_nX$.
	\end{enumerate}

Motivated by the Schubert cell structures on complex
Grassmanians, Dan Dugger \cite{dug} defined the slice filtration
in $C_2$-equivariant homotopy theory
to study Atiyah's $\mathbf{R}$eal $K$-theory \cite{atiyah}. This filtration 
was later generalized to a filtration of $G$-equivariant
stable homotopy theory for any finite group $G$
and used to great effect by Hill-Hopkins-Ravenel \cite{HHR}
in their solution to the Kervaire invariant one problem.

Associated to the slice filtration on the
category $\mathsf{Sp}^G$ of $G$-spectra
are the notions of \emph{slice $n$-connective}
and
\emph{slice $n$-truncated} spectra. (The reader
may review the relevant definitions below in
Definition \ref{defn:slice-filtn}.) A $G$-spectrum which is
both slice $n$-connective and slice $n$-truncated
is called an
\emph{$n$-slice}. The relevant features
of the
slice filtration are as follows:
	\begin{enumerate}[(i)]
	\item By design, we can easily \emph{build}
	slice $n$-connective $G$-spectra.
	\item By definition, we can \emph{compute}
	when a $G$-spectrum is slice $n$-truncated
	by computing certain homotopy classes of
	maps in from representation spheres.
	\item Thanks to a recent result of Hill-Yarnall \cite{HY},
	it is possible to \emph{compute} when a $G$-spectrum
	is slice $n$-connective: we need to check certain connectivity
	conditions on each of its geometric fixed point spectra.
	\end{enumerate}
The purpose of
this paper is to give an analogue of the property (iv) of
the Postnikov filtration, in complete generality. That is, we
provide an algebraic description of the layers of the slice filtration
together with a replacement for the functor $\pi_n$ in this context.

As a way to establish notation and provide motivation for our approach,
we begin by reviewing the completely understood case of $G = C_2$.
Recall that, given $G$-spectra $X$ and $Y$ we may form a Mackey
functor $\underline{[X, Y]}$ whose value on finite $G$-sets is given by:
	\[
	\mathsf{Fin}_G \ni T \mapsto [T_+ \wedge X, Y]^G.
	\]
When $X = S^V$ is a representation sphere associated to
a virtual representation, $V$, then we define the 
\emph{$V$th homotopy Mackey functor} by
	\[
	\underline{\pi}_VX := \underline{[S^V, X]}.
	\]
Finally, we denote by $\mathsf{Slice}_n$ the category
of $n$-slices, by $P^n_nX$ the $n$-slice of a $G$-spectrum $X$,
and by $\mathsf{Mack}(G, \mathsf{Ab})$ the category of Mackey functors
valued in abelian groups.

\begin{thm}[\cite{HHR}, \cite{primer}]
Let $G = C_2$, and let $\rho$ denote the regular representation.
	\begin{enumerate}[(a)]
	\item The functor
		\[
		\underline{\pi}_{n\rho-1}:
		\mathsf{Slice}_{2n-1} \longrightarrow 
		\mathsf{Mack}(C_2, \mathsf{Ab})
		\]
	is an equivalence of categories.
	\item The functor
		\[
		\underline{\pi}_{n\rho}:
		\mathsf{Slice}_{2n} \longrightarrow
		\mathsf{Mack}(C_2, \mathsf{Ab})
		\]
	is fully faithful. The essential image consists of those
	Mackey functors $\underline{M}$ such that the
	restriction map
		\[
		\mathrm{res}: \underline{M}(*) 
		\longrightarrow \underline{M}(C_2)
		\]
	is injective.
	\item The slices of a $G$-spectrum $X$ are determined by
	the formulae:
		\[
		\underline{\pi}_{n\rho -1}P^{2n-1}_{2n-1}X
		=
		\underline{\pi}_{n\rho - 1}X.
		\]
		\[
		\underline{\pi}_{n\rho}P^{2n}_{2n}X
		=
		\frac{\underline{\pi}_{n\rho}X}{\mathrm{ker}(\mathrm{res})}.
		\]
	\end{enumerate}
\end{thm}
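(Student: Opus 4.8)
The plan is to reduce everything to the structural fact that, for $G=C_2$, every slice is a representation-sphere suspension of an Eilenberg--MacLane spectrum: a $(2n-1)$-slice has the form $\Sigma^{n\rho-1}H\underline{M}$ and a $2n$-slice has the form $\Sigma^{n\rho}H\underline{M}$. I take this from \cite{HHR}; it is the one genuinely nontrivial ingredient. Granting it, parts (a) and (b) follow from the elementary behaviour of the functor $H(-)$ together with a single $RO(C_2)$-graded computation, and (c) from a comparison along the slice tower. Write $\sigma$ for the sign representation, so $\rho=1+\sigma$. Since $H(-)\colon\mathsf{Mack}(C_2,\mathsf{Ab})\to\mathsf{Sp}^{C_2}$ is fully faithful onto the heart of the homotopy $t$-structure and $\Sigma^V$ is invertible, the functors $\Sigma^{n\rho-1}H(-),\Sigma^{n\rho}H(-)\colon\mathsf{Mack}(C_2,\mathsf{Ab})\to\mathsf{Sp}^{C_2}$ are fully faithful, and $\underline{\pi}_{n\rho-1}\Sigma^{n\rho-1}H\underline{M}\cong\underline{M}$ and $\underline{\pi}_{n\rho}\Sigma^{n\rho}H\underline{M}\cong\underline{M}$ naturally in $\underline{M}$.

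The computation needed identifies $[S^{\sigma},H\underline{M}]^{C_2}$ with $\ker\bigl(\mathrm{res}\colon\underline{M}(C_2/C_2)\to\underline{M}(C_2/e)\bigr)$. Applying $[-,H\underline{M}]^{C_2}$ to the Euler-class cofiber sequence $C_{2+}\to S^0\to S^\sigma$, the term $[\Sigma C_{2+},H\underline{M}]^{C_2}$ vanishes --- it computes $\pi_1$ of the Eilenberg--MacLane spectrum $\mathrm{res}_eH\underline{M}$ --- so the sequence collapses to $0\to[S^\sigma,H\underline{M}]^{C_2}\to\underline{M}(C_2/C_2)\to\underline{M}(C_2/e)$, where the last map is induced by the fold $C_{2+}\to S^0$ and is therefore the restriction. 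With this, one determines which shifted Eilenberg--MacLane spectra are slices. Slice-connectivity is automatic: the underlying and geometric fixed-point spectra of $\Sigma^{n\rho-1}H\underline{M}$ and of $\Sigma^{n\rho}H\underline{M}$ are connective, suspended precisely as the geometric fixed points of the slice cells $S^{n\rho-1}$, $S^{n\rho}$ are, so by the Hill--Yarnall criterion these spectra are always slice-$(\ge 2n-1)$, resp.\ slice-$(\ge 2n)$, for every $\underline{M}$. For slice-truncation one checks that $[\widehat{S}_d,-]^{C_2}$ vanishes on every slice cell $\widehat{S}_d$ of dimension $\ge 2n$, resp.\ $\ge 2n+1$: free cells see only the integer-graded homotopy of $\mathrm{res}_eH\underline{M}$ away from degree $0$, and a cell $S^{k\rho}$ or $S^{k\rho-1}$ contributes $\underline{\pi}_W H\underline{M}$ for a representation $W$ with strictly positive fixed-point dimension, which vanishes because $S^{-W}$ then has cells only in negative degrees --- with the single exception, in the even case only, of the cell $S^{(n+1)\rho-1}$ of dimension $2n+1$, which contributes $[S^\sigma,H\underline{M}]^{C_2}=\ker(\mathrm{res})$ (in the odd case the analogous borderline cell contributes $\underline{\pi}_\rho H\underline{M}$, which already vanishes). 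Hence $\Sigma^{n\rho-1}H\underline{M}$ is a $(2n-1)$-slice for every $\underline{M}$, while $\Sigma^{n\rho}H\underline{M}$ is a $2n$-slice exactly when $\mathrm{res}$ is injective. Combined with the structural fact: $\Sigma^{n\rho-1}H(-)$ is a fully faithful, essentially surjective functor onto $\mathsf{Slice}_{2n-1}$ with one-sided inverse $\underline{\pi}_{n\rho-1}$, hence an equivalence --- this is (a); and $\Sigma^{n\rho}H(-)$ identifies the full subcategory of Mackey functors with injective restriction with $\mathsf{Slice}_{2n}$, so $\underline{\pi}_{n\rho}$ is fully faithful with exactly that essential image --- this is (b).

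For (c), write $P_m$ for the slice-$(\ge m)$ cover and $P^m$ for the slice-$(\le m)$ truncation. First, $\mathrm{res}_e$ carries the slice filtration to the Postnikov filtration (slice-$(\ge m)$ spectra restrict to $m$-connective spectra, slice-$(\le m)$ to $m$-truncated ones, and the truncation cofiber sequences correspond), so $\mathrm{res}_e P^{2n-1}_{2n-1}X\simeq\Sigma^{2n-1}H\pi_{2n-1}\mathrm{res}_eX$ and similarly in the even case --- this settles both formulas at the level of $C_2/e$. At $C_2/C_2$ one chases the long exact sequences of $P_{2n}X\to X\to P^{2n-1}X$ and $P_{2n}X\to P_{2n-1}X\to P^{2n-1}_{2n-1}X$ (and their even analogues), using that $\underline{\pi}_{n\rho-1}$ and $\underline{\pi}_{n\rho}$ vanish on slice-$(\le 2n-2)$ spectra (the relevant spheres and their free companions are slice cells of dimension $\ge 2n-1$) and that $\underline{\pi}_{n\rho-1}$ and $\underline{\pi}_{n\rho-2}$ vanish on slice-$(\ge 2n)$ spectra (a direct inspection of the cell structure of the representation spheres involved). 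This yields $\underline{\pi}_{n\rho-1}P^{2n-1}_{2n-1}X\cong\underline{\pi}_{n\rho-1}X$ directly. In the even case the same chase gives a surjection $\underline{\pi}_{n\rho}X\twoheadrightarrow\underline{\pi}_{n\rho}P^{2n}_{2n}X$ whose kernel is supported over $C_2/C_2$, since $\mathrm{res}_e$ of the slice-$(\le 2n-1)$ part of $X$ is $(2n-1)$-truncated; as $P^{2n}_{2n}X$ is a $2n$-slice its restriction is injective, and matching the two descriptions identifies this kernel with $\ker(\mathrm{res})\subseteq\underline{\pi}_{n\rho}X$, giving $\underline{\pi}_{n\rho}P^{2n}_{2n}X\cong\underline{\pi}_{n\rho}X/\ker(\mathrm{res})$.

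The main obstacle is the structural fact itself. It is not formal: the geometric fixed points of a $C_2$-slice need not be coconnective --- already $\Phi^{C_2}H\underline{\mathbb{Z}}$ has homotopy in every non-negative even degree --- so one cannot merely identify $\mathrm{res}_e$ and $\Phi^{C_2}$ and reassemble via the isotropy-separation square. The proof in \cite{HHR} reduces, using that $\Sigma^{\rho}$ shifts the slice filtration by $2$, to the cases of $0$-slices and $(-1)$-slices, and then applies the slice theorem --- which controls the slice-associated graded of $X$ as a module over the (already computed) slices of the sphere --- to force the integer-graded homotopy Mackey functors of a slice into a single degree, carrying the correct representation-sphere twist. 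Everything above this is bookkeeping.
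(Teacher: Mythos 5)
Your proposal is correct, but it takes a genuinely different route from the paper's. You treat the HHR classification of $C_2$-slices as shifted Eilenberg--MacLane spectra ($\Sigma^{n\rho-1}H\underline{M}$ and $\Sigma^{n\rho}H\underline{M}$) as a black box, and then everything reduces to the Euler-class computation $[S^\sigma, H\underline{M}]^{C_2}\cong\ker(\mathrm{res})$ together with dimension-counting on slice cells. The paper deliberately avoids that structural input: its proof (Theorem~\ref{thm:lit-review} and the subsequent corollary in \S\ref{ssec:G}) runs through the general Freyd--Gabriel machinery of \S\ref{sec:slice-cats}. Concretely, $S^0$ is an isotropic slice $0$-sphere with $\underline{\mathrm{End}}(S^0)\cong\underline{A}$, so Theorem~\ref{thm:slices-as-modules} gives $\heartsuit_0\cong\mathsf{Mack}(C_2;\mathsf{Ab})$ directly; the only $0$-jump is the trivial subgroup, so the localization $L^{inj}$ enforces injectivity of $\mathrm{res}\colon\underline{M}(*)\to\underline{M}(C_2)$, giving parts (b) and (c) at once, and $S^{-1}$ plays the analogous role for part (a) because $-1$ is a jump for every subgroup; slice periodicity by $\Sigma^{k\rho}$ then handles general $n$. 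The trade-off: your route is shorter \emph{given} the HHR structure theorem, but that theorem is a substantial input (it rests on the slice theorem / explicit knowledge of the slices of the sphere). The paper's route is self-contained, works uniformly for any finite $G$, and---crucially for the rest of the paper---does not break down when slices fail to be $RO(G)$-graded suspensions of Eilenberg--MacLane spectra, as they already do for $C_4$ (Counterexample~\ref{counter:non-em-slice}). Your part (c) argument via the long exact sequences of the slice tower is essentially what the paper packages as Proposition~\ref{prop-slice-htpy-loc}; the vanishing you invoke is Proposition~\ref{prop-slice-htpy-vanish}.

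One small caution in your slice-truncation check: when you say ``a cell $S^{k\rho}$ or $S^{k\rho-1}$ contributes $\underline{\pi}_W H\underline{M}$ for a representation $W$ with strictly positive fixed-point dimension, which vanishes,'' this is only immediate after reducing by $\Sigma^{k\rho}$-periodicity so that the relevant $k-n$ is small and you can inspect the cell structure directly; for the borderline cells you do carry this out, but the blanket claim should be phrased as ``after shifting by $n\rho$, the test cells $S^{(k-n)\rho}$ and $S^{(k-n)\rho-1}$ with $k-n\ge 1$ (resp.\ $\ge 2$) have all cells in strictly positive dimension,'' which is what actually makes the groups vanish against a $0$-truncated connective target.
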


One might hope that, in general, one may compute $n$-slices
directly in terms of a single $RO(G)$-graded homotopy Mackey functor.
Unfortunately, $n$-slices need not be $RO(G)$-graded suspensions
of Eilenberg-MacLane spectra in 
general (Counterexample \ref{counter:non-em-slice}).
Instead, we will need to probe $G$-spectra
by objects more general than representation spheres. This brings us
to the key definition of the paper.

\begin{defn} An \textbf{isotropic slice $n$-sphere} is a compact
$G$-spectrum $W$ with the property that, 
for every subgroup $H \subseteq G$,
the geometric fixed point spectrum $W^{\Phi H}$ is equivalent to
a nonzero, finite wedge of spheres of dimension $\lfloor n/|H|\rfloor$. 
\end{defn}

\begin{rmk} The appearance of the
floor function here is inspired by the theorem
of \cite{HY} characterizing the slice filtration
in terms of connectivity conditions on geometric
fixed points. We will review that theorem below
in \S0 and generalize it in \S1.3.
\end{rmk}

\begin{ex} For any group $G$, $S^{n\rho}$ is
an isotropic slice $n|G|$-sphere and $S^{n\rho -1}$
is an isotropic slice $(n|G|-1)$-sphere.
\end{ex}

\begin{ex} For any group $G$, the cofiber
of the collapse map $G_+ \to S^0$ is a slice $1$-sphere.
It is not equivalent to a representation sphere or an
induced representation sphere unless $|G| = 2$.
\end{ex}

Next, we will need a way
to state a generalization of the injectivity condition
in part (b) above.

\begin{defn} A subgroup $H \subseteq G$
is called an
\textbf{$n$-jump} if the inequality
	\[
	\left\lfloor \frac{n+1}{|H|} \right\rfloor > \left\lfloor \frac{n}{|H|}\right\rfloor
	\]
holds. We denote by $\mathrm{Jump}_n$ the set of
conjugacy classes of $n$-jumps.
\end{defn}

It is not so obvious that isotropic slice $n$-spheres exist
for arbitrary $G$ and $n$, but indeed they do
(Proposition \ref{prop-sph-is-test}).

Finally, we remark that, for any $G$-spectrum $X$, the Mackey
functor 
	\[
	\underline{\mathrm{End}}(X):= \underline{[X,X]}
	\]
admits the canonical structure of
a Green functor (Definition \ref{defn:green}) under composition.
Moreover, given another $G$-spectrum $Y$, the Mackey
functor $\underline{[X, Y]}$ is naturally a right module
over $\underline{\mathrm{End}}(X)$ via precomposition.

Now we can state a version of our first main result
(Theorem \ref{thm:slices-as-modules}).

\begin{thm} Let $W$ be an isotropic slice $n$-sphere.
Define a $G$-set
	\[
	T^{jump}:= \coprod_{[H] \in \mathrm{Jump}_n} G/H.
	\]
	\begin{enumerate}[(a)]
	\item The functor
		\[
		\underline{[W, -]}:
		\mathsf{Slice}_n
		\longrightarrow
		\mathsf{RMod}_{\underline{\mathrm{End}}(W)}
		\]
	is fully faithful. The essential image consists of those
	$\underline{\mathrm{End}}(W)$-modules $\underline{M}$
	with the property that, for every $G$-set $T$, the restriction map
	associated to the projection $T^{jump} \times T \to T$,
		\[
		\underline{M}(T)
		\longrightarrow \underline{M}(T^{jump} \times T)
		\]
	is injective.
	\item Let 
		\[
		L^{inj}: \mathsf{RMod}_{\underline{\mathrm{End}}(W)}
		\longrightarrow
		\mathsf{RMod}_{\underline{\mathrm{End}}(W)}
		\]
	denote the localization functor which enforces the injectivity
	constraint in (a). Then the $n$-slice of a $G$-spectrum
	$X$ is determined by the formula
		\[
		\underline{[W, P^n_nX]}
		= 
		L^{inj}\underline{[W, X]}. 
		\]
	\end{enumerate}

\end{thm}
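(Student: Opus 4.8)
The plan is to reduce the theorem to the following chain of equivalences, each handled by an already-available tool: first identify slice $n$-truncated and slice $n$-connective objects via geometric fixed points, then use the compactness of $W$ to turn the functor $\underline{[W,-]}$ into an equivalence onto an explicit reflective subcategory of modules. To begin, I would use the Hill--Yarnall characterization (and its promised generalization in \S1.3) to reformulate membership in $\mathsf{Slice}_n$: a $G$-spectrum $Y$ is an $n$-slice precisely when, for each subgroup $H$, the geometric fixed point spectrum $Y^{\Phi H}$ is concentrated in a single dimension $\lfloor n/|H|\rfloor$ and the relevant connectivity both above and below is sharp. Since $W^{\Phi H}$ is by hypothesis a nonzero finite wedge of spheres in exactly that dimension, the functor $\Sigma^\infty_+\!-\mapsto \underline{[W,-]}$ sees exactly the ``top'' homotopy of each geometric fixed point. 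The key input here is that $W$ is compact, so $\underline{[W,-]}$ commutes with filtered colimits and with the geometric fixed point functors in the appropriate sense, and so detects equivalences between $n$-slices --- this gives faithfulness.

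Next I would establish fullness and compute the essential image. The point is that an $n$-slice $Y$ is built, via the cellular filtration coming from the slice tower applied to the $W^{\Phi H}$'s, out of shifted Eilenberg--MacLane pieces indexed by the conjugacy classes of subgroups; the ``jump'' subgroups are exactly those $H$ for which the dimension $\lfloor n/|H|\rfloor$ of $W^{\Phi H}$ is strictly larger than $\lfloor (n{-}1)/|H|\rfloor$ would force, i.e. where the slice cell genuinely contributes a new generator at level $n$. Translating the gluing data of the slice tower into module language, one finds that $\underline{[W,Y]}$ is an $\underline{\mathrm{End}}(W)$-module for which the restriction along each projection $G/H \times T \to T$ with $[H]\in\mathrm{Jump}_n$ is injective --- this is the module-theoretic shadow of the statement that the geometric fixed points below the jump subgroups inject into those at the jump subgroups. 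Conversely, given such a module $\underline{M}$, I would build an $n$-slice realizing it by attaching slice cells modeled on $W$, obstruction theory being governed by exactly the same injectivity condition; this is where part (a) of the cited $C_2$ theorem, and the Hill--Yarnall machinery, are used as the base case and inductive engine respectively.

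Finally, part (b) follows formally once (a) is in hand. The localization $L^{inj}$ is the Bousfield localization of $\mathsf{RMod}_{\underline{\mathrm{End}}(W)}$ onto the subcategory cut out by the injectivity constraints; since $\underline{[W,-]}$ carries $\mathsf{Sp}^G$ to modules and carries the slice-truncation functor $P^n_n$ to $L^{inj}$ (both being localizations characterized by the same geometric-fixed-point/injectivity conditions), naturality of the slice tower gives $\underline{[W,P^n_nX]} = L^{inj}\underline{[W,X]}$ directly. Concretely: $P^n_n X$ is the $n$-slice nearest to $X$, so $\underline{[W,P^n_nX]}$ lies in the essential image of (a), hence is $L^{inj}$-local; and the map $X\to P^n_nX$ induces an $L^{inj}$-equivalence $\underline{[W,X]} \to \underline{[W,P^n_nX]}$ because $P^n_n$ kills exactly the slice-connective-but-not-truncated and truncated-but-not-connective parts, which are precisely the parts of $\underline{[W,X]}$ annihilated by $L^{inj}$.

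The main obstacle I anticipate is the essential-image computation in part (a): proving that the single injectivity condition along the $\mathrm{Jump}_n$ projections is both necessary and \emph{sufficient} for a module to be realized by an $n$-slice. Necessity is a direct computation with the slice tower, but sufficiency requires running an obstruction-theoretic construction --- building a $G$-spectrum cell-by-cell out of copies of $W$ so that its slice tower has exactly the prescribed associated graded --- and showing all obstructions vanish under the injectivity hypothesis. This is where the \S1.3 generalization of Hill--Yarnall, together with the geometrically-split structure on $\underline{\mathrm{End}}(W)$ promised later in the paper, will do the real work of reducing the obstruction groups to something visibly controlled by the restriction maps.
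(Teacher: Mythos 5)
Your proposal diverges substantially from the paper's argument and, as written, has a real gap at the essential-image step. The paper does \emph{not} prove Theorem~\ref{thm:slices-as-modules} by obstruction theory or by building $n$-slices cell-by-cell. Instead, it first establishes Theorem~\ref{thm:slices-as-models}, which says that $\hat{\pi}_n$ identifies the ambient abelian category $\heartsuit_n$ (not just $\mathsf{Slice}_n$) with a category of product-preserving presheaves on a testing subcategory, via a Freyd--Gabriel recognition argument (Proposition~\ref{prop:freyd-gabriel}). The two hard inputs there are (i) slice $n$-spheres are compact and \emph{projective} in $\heartsuit_n$, which follows from the vanishing Proposition~\ref{prop-slice-htpy-vanish}, and (ii) $\hat{\pi}_n$ is conservative on $\heartsuit_n$, which is an induction on $\mathsf{P}_{\mathcal{O}}$ using the recollement fiber sequence $j_!j^*A \to A \to i_*i^*A$ and careful bookkeeping of what $[W,-]$ detects at each stage. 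Once $\heartsuit_n\cong\mathsf{Model}_n\cong\mathsf{RMod}_{\underline{\mathrm{End}}(W)}$ is in hand, $\mathsf{Slice}_n$ is identified as the full subcategory cut out by the injectivity constraint (Proposition~\ref{prop-inj-res}), and part~(b) is Proposition~\ref{prop-slice-htpy-loc}, which uses the vanishing lemma again to show the map $\hat{\pi}_nC\to\hat{\pi}_nP^nC$ is precisely the quotient by $\ker(\mathrm{res})$.

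The gap in your argument: you propose to prove fullness and compute the essential image by ``a cellular filtration coming from the slice tower'' that exhibits an $n$-slice as ``built out of shifted Eilenberg--MacLane pieces indexed by conjugacy classes,'' and then to run an obstruction-theoretic realization argument for the converse. But $n$-slices are not in general built from $RO(G)$-graded suspensions of Eilenberg--MacLane spectra --- this is exactly Counterexample~\ref{counter:non-em-slice} in the paper --- so the cellular decomposition you invoke does not exist in the naive form you need, and you are left with no way to get the obstruction theory off the ground. The paper's route through $\heartsuit_n$ circumvents all of this: because $\heartsuit_n$ is Grothendieck abelian and the test objects are compact projective generators, there is nothing to ``build'' --- the functor is an equivalence on the abelian category by Freyd--Gabriel, and the subcategory of $n$-slices is simply described by a linear-algebraic condition on modules. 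A second, smaller issue: your claim that $\underline{[W,-]}$ ``commutes with geometric fixed point functors in the appropriate sense'' is not literally true and is not how the conservativity is established; the relationship between $[W,X]^H$ and maps on geometric fixed points is mediated by the recollement structure, and the induction in the paper is needed precisely because this relation is not direct.

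Your part~(b) sketch is closest to the paper's argument and would be fine once part~(a) is actually established --- the core mechanism (the cofiber of $P_{n+1}X\to X$ contributes nothing to $\hat{\pi}_n$ that survives $L^{inj}$) is Proposition~\ref{prop-slice-htpy-loc}, which ultimately rests on the vanishing of $[W,\Sigma Y]$ for $Y\geq n$.
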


This result is enough for many applications. For example,
it is straightforward to deduce from it the previous known
results on categories of slices (see \S\ref{sec:examples}).
Nevertheless, the results proved in the body of the paper are
stronger in several respects:
	\begin{itemize}
	\item Using a construction of MacPherson-Vilonen
	and some special features of
	the Green functor $\underline{\mathrm{End}}(W)$,
	we provide a simpler description (Theorem \ref{thm:slices-as-tw-mack})
	of the category
	$\mathsf{RMod}_{\underline{\mathrm{End}}(W)}$
	which cuts down on the computation necessary
	to determine an $n$-slice.
	\item We do not restrict ourselves to the standard slice filtration,
	but allow more general filtrations. Thus, the user can adapt
	to more situations of interest. 
	\item All of the results are proven in the setting of parameterized
	stable homotopy theory. One reason is to allow for more flexible inductive
	techniques. It also means the results apply to other settings,
	such as Goodwillie calculus \cite{glas-calc}. 
	\end{itemize}

We now give a summary of each of the sections.\newline\newline
\noindent\textbf{Section 0}. We include this section for the
equivariant homotopy theorist eager to find ready-to-use
definitions and statements without the need to unravel
too much notation or contend with the level
of generality used in the body of the paper. In this section,
we survey all of the results of the paper in a form
specialized to the case of the standard slice filtration on $G$-spectra.
We also provide enough of a sketch of the proofs that an expert may reconstruct
the details. After reading this section, a reader versed in equivariant
homotopy theory should be able to fruitfully skip to \S\ref{sec:examples}
and understand
the examples presented therein without contemplating the words
`parameterized $\infty$-category' or `inductive orbital category'. 
\newline
\noindent\textbf{Section 1}. In this section we set up the formal
backdrop for our work. We review and develop gluing techniques,
inductive techniques, and a sort of `six-functor' formalism for
manipulating homotopy theories parameterized over certain
bases which we call \emph{inductive orbital categories}
(Definition \ref{defn:ioc}). We define
a notion of slice filtration in this context and prove
some elementary results about these.
As a quick application of this formalism,
we produce a characterization of slice filtrations which, as a corollary,
provides a streamlined proof of the main result in \cite{HY}. 
The framework of parameterized
homotopy theory we use is due to Barwick-Dotto-Glasman-Nardin-Shah.
They have obtained most of the results in this section independently,
using slightly different terminology. To the best of the author's knowledge,
however, the remaining sections, including the main theorem and
description of slices, is new.\newline
\noindent\textbf{Section 2}. This section contains the bulk of the work.
We begin by introducing slice spheres
(Definition \ref{def-slice-sphere})
which generalize (induced) representation spheres. The transition from
the category of $n$-slices, $\mathsf{Slice}_n$, to our final
algebraic description takes several steps. First, using slice spheres,
or more generally \emph{testing subcategories} of slice spheres,
as the collection of free algebras for a Lawvere theory, we prove
that $\mathsf{Slice}_n$ is a localization of the category of models
for this Lawvere theory (Theorem \ref{thm:slices-as-models}). After showing
that slice spheres exist in sufficient supply,
we prove this using a `many-object'
variant of a classical theorem dating back to Freyd
and Gabriel. The key step is to establish that equivalences are detected
by the testing subcategory. 

The next
step moves from the Lawvere theory to a category
of modules over a Green functor. This is entirely
algebraic, and the argument is essentielly
a parameterized version of the
aforementioned result of Freyd-Gabriel.
From here,
we prove that $\mathsf{Slice}_n$ is a localization of
the category $\mathsf{RMod}_{\underline{\mathrm{End}}(W)}$
where $W$ is an isotropic slice $n$-sphere (Definition \ref{def-slice-sphere},
Theorem \ref{thm:slices-as-modules}). 

Next, we distill a special feature of
the Green functor $\underline{\mathrm{End}}(W)$
which shows that its structure is strongly controlled
by the endomorphisms of
its geometric fixed points $\mathrm{End}(W^{\Phi T})$
for each orbit $T$. We
call Green functors that share this property \emph{geometrically
split} (Definition \ref{defn:geom-split}) and digress
to prove a purely algebraic result about the structure
of modules over geometrically split Green functors.
While this work is purely algebraic,
it is not trivial. For example, one must contend with the
combinatorics of the Burnside category
(Proposition \ref{prop:pre-hereditary}).

By definition, a slice $n$-sphere $W$ has the property
that $W^{\Phi T}$ is a finite wedge of spheres,
so the work ultimately reduces to understanding
modules over the matrix rings $M_n(\mathbb{Z})$.
Of course, these rings are Morita equivalent to $\mathbb{Z}$.
This observation leads us to formulate a description
of $\mathsf{Slice}_n$ that does not require
computing $\underline{\mathrm{End}}(W)$ or
understanding its action $\underline{[W, X]}$. 
Instead, one need only understand the homology
of the wedge of spheres $W^{\Phi T}$
as a module over the group $\mathrm{Aut}(T)$
of automorphisms of the orbit $T$. In practice,
this is much easier.
Our main general result identifies
$\mathsf{Slice}_n$ with a localization of the
category of \emph{twisted Mackey functors}
(Definition \ref{defn:tw-mack}, Theorem \ref{thm:slices-as-tw-mack}).
\newline
\noindent\textbf{Section 3}. A general theory is no good without
examples. We show that the machinery in the first two sections
can be made put to use in cases of interest. First, we show in
\S\ref{ssec:G}
how to recover the known results on slices of $G$-spectra
to date, i.e. the cases of $(n|G|-\epsilon)$-slices where
$\epsilon= 0,1,2$ (\cite{HHR, Ullman}). In \S\ref{ssec:Cp}
we carry out our theory in the case $G = C_p$. The categories
of slices for $C_p$ were previously determined by \cite{HY}
in a slightly different form, and we show how to recover their
description from ours. Finally, in \S\ref{ssec:C4} we move on to
a new example. This is the first case where we see
slices that are \emph{not} $RO(G)$-graded suspensions
of Eilenberg-MacLane spectra, and so are not amenable to
previous methods of attack.
\newline
\newline
\noindent\textbf{Acknowledgements}. Part
of this work was supported
by NSF grant DGE-1324585.

I would
like to extend a special thanks to Saul Glasman and 
Mike Hill. Saul Glasman's work is responsible for my current
understanding of what equivariant homotopy theory
looks like, and the relationship between equivariant
spectra and constructible sheaves was one
of the motivations for the results of this paper.
Mike Hill has patiently fielded my questions
about equivariant homotopy theory
and the slice filtration for many years now,
and I learned most of the tricks of the trade from him.

I am also
grateful to Clark Barwick and Denis Nardin
for conversations related to this work, and
to Peter May for his comments on an earlier
draft. Finally, as these results
have their precursors in my thesis,
I would like to thank my advisor Paul Goerss
for his support and wisdom. 
\newline\newline
\textbf{Notations and conventions}.
	\begin{itemize}
	\item If $G$ is a group we denote by $\rho_G$
	(or just $\rho$, if $G$ is understood) the
	real, regular representation of $G$.
	\item If $X$ and $Y$ are objects in a model category
	(or, more generally, a relative category)
	we will use $\mathrm{map}(X,Y)$ to denote the \emph{derived}
	mapping space between $X$ and $Y$.
	\item We will use $[X,Y]$ to denote the set
	of maps in a homotopy category, with no
	further decoration if it is clear where these objects
	live. So, for example, if $X$ and $Y$ are $G$-spectra,
	then $[X,Y]$ is the set of maps between
	$X$ and $Y$ in the homotopy category of
	$G$-spectra, not the set of maps between
	the underlying spectra in the
	homotopy category of spectra. 
	\item We say that a spectrum $X$ is \textbf{$n$-connective}
	if $\pi_k(X) = 0$ for all $k < n$. 
	\item We say a spectrum $X$ is \textbf{$n$-truncated}
	if $\pi_k(X) = 0$ for all $k> n$. 
	\item Since $W$ is being used a lot, we will write $\mathrm{Aut}(G/H)$
	for the automorphisms of the $G$-set $G/H$ (i.e. the Weyl group)
	instead of
	$WH$ or $W_GH$.
	\item If $\mathcal{C}$ and $K$ are $\infty$-categories,
	we denote by $\mathsf{Psh}_{\mathcal{C}}(K)$
	the $\infty$-category of $\mathcal{C}$-valued presheaves.
	If $\mathcal{C}$ happens to equivalent to a 1-category, for example
	if $\mathcal{C} = \mathsf{Set}$ is the category of sets,
	then there is a natural equivalence
	$\mathsf{Psh}_{\mathcal{C}}(K) \cong \mathsf{Psh}_{\mathcal{C}}(hK)$
	where $hK$ denotes the homotopy category of $K$.
	\end{itemize}

\setcounter{section}{-1}

\section{An overview in the equivariant case}\label{sec:eqvt-overview}

This section contains a survey of our main results and definitions
in the case of $G$-spectra. We assume the reader is familiar with
the basic definitions of equivariant stable homotopy theory.
A nice overview can be found in \S1-\S3 of \cite{HHR}.
We begin by reviewing the definition of the slice filtration.

\begin{definition}
	\begin{itemize}
	\item A $G$-spectrum $X$ is \textbf{slice $n$-connective} if
	it belongs to the full subcategory of $\mathsf{Sp}^G$ generated
	under extensions and homotopy colimits by the
	objects 
		\[
		G/H_+ \wedge S^{k\rho_H - \epsilon}, \quad \epsilon = 0,1
		\text{ and } \,\, k|H|-\epsilon \ge n.
		\] 
	We denote
	the homotopy theory of slice $n$-connective $G$-spectra
	by $\mathsf{Sp}^G_{\ge n}$. We will often write $X \ge n$
	to indicate that $X$ is slice $n$-connective.
	\item A $G$-spectrum $Y$ is \textbf{slice $n$-truncated}
	if, for every $X \ge n+1$, the mapping space
	$\mathrm{map}(X, Y)$ is weakly contractible. We denote
	the homotopy theory of slice $n$-truncated $G$-spectra by
	$\mathsf{Sp}^G_{\le n}$ and indicate that $Y$ is slice $n$-truncated
	by writing $Y \le n$.
	\item We denote by $P_n: \mathsf{Sp}^G \to \mathsf{Sp}^G_{\ge n}$
	a right adjoint to the inclusion, and by $P^n: \mathsf{Sp}^G
	\to \mathsf{Sp}^G_{\le n}$ a left adjoint to the inclusion.
	\item We say that $A \in \mathsf{Sp}^G$ is an \textbf{$n$-slice}
	if $A \le n$ and $A \ge n$. We denote the category of $n$-slices
	by $\mathsf{Slice}_n$. 
	\end{itemize}
\end{definition}

\begin{remark} If $A$ and $B$ are $n$-slices, then the mapping space
$\mathrm{map}(A, B)$ is discrete, which justifies the use of the word 
\emph{category} of $n$-slices. 
\end{remark}

\begin{remark} If $G$ is trivial, then $X \ge n$ (resp. $Y \le n$) if
and only if $X$ is $n$-connective (resp. $n$-truncated)
in the classical sense. Hence the slice filtration is the usual Postnikov
filtration.
\end{remark}

The slice filtration for non-trivial groups is \emph{not} the
filtration associated to a $t$-structure. Specifically, we have
an inclusion
	\[
	\Sigma \left(\mathsf{Sp}^G_{\ge n}\right) \subseteq \mathsf{Sp}^G_{\ge n+1}
	\]
which is usually strict. Nevertheless, each subcategory $\mathsf{Sp}^G_{\ge n}$
is the collection of connective objects for a $t$-structure on
$\mathsf{Sp}^G$. We denote the heart of this $t$-structure by $\heartsuit_n$. 
The following is elementary from the inclusion above:

\begin{lemma} There is an inclusion
$\mathsf{Slice}_n \longrightarrow \heartsuit_n$ which exhibits
the source as an accessible localization of the target, i.e.
it admits an accessible left adjoint.
\end{lemma}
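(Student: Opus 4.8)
The plan is to verify the hypotheses of a standard recognition criterion for accessible localizations (e.g. \cite[Prop.~5.5.4.15]{HTT}): we need to show that $\mathsf{Slice}_n$ is a full subcategory of the presentable $\infty$-category $\heartsuit_n$, that it is closed under limits, and that it is closed under sufficiently filtered colimits (or, equivalently, that the inclusion preserves $\kappa$-filtered colimits for some regular cardinal $\kappa$). Fullness is immediate: an $n$-slice is by definition an object of $\heartsuit_n$, and mapping spaces in $\mathsf{Slice}_n$ are computed in $\mathsf{Sp}^G$, hence agree with those in $\heartsuit_n$. Presentability of $\heartsuit_n$ is standard: it is the heart of an accessible $t$-structure on the presentable stable $\infty$-category $\mathsf{Sp}^G$, so it is a Grothendieck abelian category and in particular presentable.

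Next I would identify $\mathsf{Slice}_n$ inside $\heartsuit_n$ as the objects killed by a single truncation functor. An object $A \in \heartsuit_n$ is slice $n$-connective by construction (the connective objects for the $t$-structure are exactly $\mathsf{Sp}^G_{\ge n}$), so being an $n$-slice is equivalent to the extra condition $A \le n$, i.e.\ $P^n A \simeq A$, i.e.\ the natural map $A \to P^n A$ is an equivalence. Equivalently, writing $F := P^n|_{\heartsuit_n}$ composed with the (exact, hence heart-preserving on $n$-connective truncated objects) relevant truncation, $\mathsf{Slice}_n$ is the full subcategory of $\heartsuit_n$ on objects $A$ with $A \xrightarrow{\sim} FA$; since $P^n$ is a left adjoint and the slice truncation of a slice-$n$-connective object is again slice-$n$-connective and is an $n$-slice, $F$ is idempotent and lands in $\mathsf{Slice}_n$. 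Thus $\mathsf{Slice}_n = \mathrm{im}(F)$ is a reflective subcategory of $\heartsuit_n$ with reflector $F$.

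For accessibility, I would argue that $F$ (equivalently, the inclusion's right-orthogonal complement) is accessible. The cleanest route: $\mathsf{Sp}^G_{\le n}$ is the full subcategory of objects $Y$ with $\mathrm{map}(G/H_+ \wedge S^{k\rho_H - \epsilon}, Y)$ contractible for the (small) set of slice cells with $k|H| - \epsilon \ge n+1$; this is a small-generated localizing condition, so $P^n$ is an accessible localization of $\mathsf{Sp}^G$, and $\mathsf{Sp}^G_{\le n}$ is presentable. The inclusion $\mathsf{Slice}_n \hookrightarrow \heartsuit_n$ then factors through the presentable subcategories involved and is seen to preserve $\kappa$-filtered colimits for $\kappa$ large enough that both $P_n$ (connective cover, which preserves filtered colimits as a left adjoint once we use the $t$-structure description — note $\tau_{\ge n}$ for a $t$-structure need not, so care is required here) and $P^n$ commute with $\kappa$-filtered colimits. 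Applying the adjoint functor theorem / the recognition theorem for accessible localizations then produces the accessible left adjoint $L^{\heartsuit}: \heartsuit_n \to \mathsf{Slice}_n$, namely $L^{\heartsuit} = F$.

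The main obstacle is the accessibility bookkeeping in the last step, specifically checking that the relevant truncation functors commute with sufficiently filtered colimits: the slice connective cover $P_n$ is a \emph{right} adjoint and $P^n$ is a \emph{left} adjoint, so neither preserves all colimits on the nose, and one must pin down a regular cardinal $\kappa$ for which the composite $F$ is $\kappa$-accessible. I expect this to follow formally from the fact that both $\mathsf{Sp}^G_{\ge n}$ and $\mathsf{Sp}^G_{\le n}$ are accessible localizations/colocalizations generated by small data (a finite set of slice cells, up to shifts), so that all functors in sight are accessible; but it is the one point where one cannot simply wave at ``the $t$-structure'' and must instead use the explicit generators of the slice filtration recalled in Definition~\ref{defn:slice-filtn}.
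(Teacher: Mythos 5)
Your approach is the paper's: the reflector is $P^n$ restricted to $\heartsuit_n$, and the substance is the observation that $\mathcal{C}_{\le n}$ is a limit-closed, accessible (small-generated) subcategory, so the inclusion $\mathcal{C}_{\le n}\subseteq\mathcal{C}$ admits an accessible left adjoint which one then identifies with $P^n = \mathrm{cof}(P_{n+1}\to\mathrm{id})$; the paper isolates exactly this as a separate lemma in \S1.3 and deduces the stated result by restriction to the heart. Two points will streamline your write-up. First, $P^n|_{\heartsuit_n}$ already lands in $\mathsf{Slice}_n$ with no post-composed truncation: if $A\ge n$, then $P^nA$ is the cofiber of $P_{n+1}A\to A$, a map of slice $n$-connective objects, so $P^nA\ge n$; together with $P^nA\le n$ (which implies $\tau^{(n)}_{\le 0}$ since $\Sigma\mathcal{C}_{\ge n}\subseteq\mathcal{C}_{\ge n+1}$), this already places $P^nA$ in $\mathsf{Slice}_n\subseteq\heartsuit_n$. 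Second, the slice connective cover $P_n$ does not appear in the reflector at all --- it enters only in naming $P^n$ as a cofiber --- so your worry about whether $P_n$ or $\tau_{\ge n}$ commutes with filtered colimits is a red herring; the accessibility bookkeeping you flag as the remaining obstacle reduces precisely to the small-generation of $\mathcal{C}_{\le n}$ by slice cells, which you have already stated.
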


While the heart of a $t$-structure is always an abelian category,
the same is not true in general for $\mathsf{Slice}_n$. Our results
on $\mathsf{Slice}_n$ are obtained by first identifying $\heartsuit_n$
with a more algebraic category, and then identifying the localization
that yields $\mathsf{Slice}_n$. 
\newline

When the slice filtration was first introduced, one had to exhibit slice
$n$-connectivity by an explicit construction. Recently, a much
simpler characterization of slice connectivity was given
by Hill-Yarnall \cite{HY}.

\begin{theorem}[Hill-Yarnall] Let $X$ be a $G$-spectrum.
Then $X \ge n$ if and only if, for all $H \subseteq G$,
the spectrum $X^{\Phi H}$ is $(\lfloor n/|H|\rfloor)$-connective.
\end{theorem}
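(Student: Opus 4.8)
The plan is to prove both implications. The forward direction (``$X\ge n$ implies the connectivity conditions'') will be a direct check on generators, and the converse will go by induction on $|G|$ using isotropy separation.

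For the forward direction, note first that the class of $Z\in\mathsf{Sp}^G$ with $Z^{\Phi H}$ being $\lfloor n/|H|\rfloor$-connective for every $H\subseteq G$ is closed under homotopy colimits and extensions: each $\Phi^H$ preserves colimits and cofiber sequences, and each $\tau_{\ge m}\mathsf{Sp}$ is closed under colimits and extensions. So it suffices to verify the generators $G/H_+\wedge S^{k\rho_H-\epsilon}$ with $\epsilon\in\{0,1\}$ and $k|H|-\epsilon\ge n$. Applying the standard formulas $\Phi^K(G/H_+\wedge Y)\simeq\bigvee_{[g]:\,g^{-1}Kg\subseteq H}\Phi^{g^{-1}Kg}(Y)$ and $\Phi^J(S^{k\rho_H-\epsilon})\simeq S^{k[H:J]-\epsilon}$ (and noting $\Phi^K$ of the generator vanishes, hence is trivially connective, when no conjugate of $K$ lies in $H$), the claim reduces to the elementary inequality: if $|K|$ divides $|H|$ and $k|H|-\epsilon\ge n$, then $k|H|/|K|-\epsilon\ge\lfloor n/|K|\rfloor$. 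For $\epsilon=0$ the left side is an integer that is at least $n/|K|$; for $\epsilon=1$ it unwinds to the identity $\lceil(n+1)/|K|\rceil=\lfloor n/|K|\rfloor+1$.

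For the converse I would induct on $|G|$, the case $G=e$ being the classical fact that $\tau_{\ge n}\mathsf{Sp}$ is generated under colimits and extensions by $S^n$. For the inductive step, fix $X$ with $X^{\Phi H}$ being $\lfloor n/|H|\rfloor$-connective for all $H$, let $\mathcal{P}$ be the family of proper subgroups, and use the cofiber sequence $E\mathcal{P}_+\wedge X\to X\to\widetilde{E\mathcal{P}}\wedge X$. The left term is built under colimits and extensions, via the $G$-CW structure of $E\mathcal{P}_+$, from spectra $\Sigma^k(G/H_+\wedge X)\simeq\Sigma^k\mathrm{Ind}_H^G\mathrm{Res}_H X$ with $H$ proper; since $(\mathrm{Res}_H X)^{\Phi K}=X^{\Phi K}$ for $K\subseteq H$, the inductive hypothesis gives $\mathrm{Res}_H X\ge n$ in $\mathsf{Sp}^H$, and $\mathrm{Ind}_H^G$ carries slice cells to slice cells, so $E\mathcal{P}_+\wedge X\ge n$. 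The right term $\widetilde{E\mathcal{P}}\wedge X$ lies in the essential image of the fully faithful colimit-preserving functor $i_*=\widetilde{E\mathcal{P}}\wedge\mathrm{infl}(-)\colon\mathsf{Sp}\to\mathsf{Sp}^G$, where it is identified with $i_*(X^{\Phi G})$; since $X^{\Phi G}\in\tau_{\ge q}\mathsf{Sp}$ for $q=\lfloor n/|G|\rfloor$, and $\tau_{\ge q}\mathsf{Sp}$ is generated under colimits and extensions by $S^q$, the whole problem collapses to the single assertion that $\Sigma^q\widetilde{E\mathcal{P}}=i_*S^q$ is slice $n$-connective. Granting that, the cofiber sequence forces $X\ge n$.

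I expect the assertion $\Sigma^{\lfloor n/|G|\rfloor}\widetilde{E\mathcal{P}}\ge n$ to be the crux, carrying essentially all of the content; everything else is bookkeeping with the recollement. Writing $\widetilde{E\mathcal{P}}=\mathrm{colim}_k S^{k\bar\rho_G}$ with $\bar\rho_G=\rho_G-1$, and using that $\mathsf{Sp}^G_{\ge n}$ is closed under colimits, this reduces further to showing that the representation spheres $S^{k\bar\rho_G+q}=S^{k\rho_G-(k-q)}$ are slice $n$-connective for a cofinal set of $k$. The $k=q+1$ term is exactly the slice cell $S^{(q+1)\rho_G-1}$, which is slice $((q+1)|G|-1)$-connective, hence slice $n$-connective since $r\le|G|-1$; but the terms with $k\ge q+2$ are genuine desuspensions of slice cells, and the slice filtration does not commute with desuspension, so these require a real input. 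This is precisely where I would invoke a foundational computation of the slice connectivity of representation spheres (as in \cite{HHR}) — or, within the present framework, the ``gluing detects connectivity'' characterization of slice filtrations from \S1.3 applied to the standard slice cells as the generating set — and I would expect that to be the one substantive lemma behind the theorem.
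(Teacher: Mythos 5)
Your forward direction is correct and matches the paper's (the routine inequality for the generators via the double-coset formula), and your converse has exactly the right skeleton: the paper (Theorem \ref{thm:slice-recognition} applied in Corollary \ref{cor:HY-char}) also inducts on the poset of subgroups using the isotropy separation recollement, reducing to a claim about the $\widetilde{E\mathcal{P}}$-piece. You also correctly identify the crux: $\widetilde{E\mathcal{P}}\wedge S^q\ge n$ with $q=\lfloor n/|G|\rfloor$.

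The gap is in how you propose to establish that crux. Expanding $\widetilde{E\mathcal{P}}=\colim_k S^{k\bar\rho_G}$ and trying to show each term $S^{k\rho_G-(k-q)}$ is slice $n$-connective is a dead end: for $k\ge q+2$ these are genuine desuspensions of slice cells, and establishing their slice connectivity from the HHR definition is essentially equivalent to the theorem you are proving (there is no off-the-shelf ``foundational computation'' in \cite{HHR} sharp enough to do it, and invoking the \S1.3 recognition theorem is just deferring to the paper's own proof). The fix is to abandon the colimit decomposition entirely. Observe instead that $\widetilde{E\mathcal{P}}\wedge S^q$ depends only on $\Phi^G$ of what you smash against, so $\widetilde{E\mathcal{P}}\wedge S^q\simeq\widetilde{E\mathcal{P}}\wedge S^{(q+1)\rho_G-1}$, and apply the isotropy separation cofiber sequence to the slice cell $W=S^{(q+1)\rho_G-1}$ rather than to $X$. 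One has $W\ge(q+1)|G|-1\ge n$ by definition of the slice filtration, and $\mathrm{Res}_HW$ is again a slice cell with $\mathrm{Res}_HW\ge n$ for every proper $H$, so $E\mathcal{P}_+\wedge W\ge n$; since $\mathsf{Sp}^G_{\ge n}$ is closed under cofibers of maps between its objects (the cofiber is an extension of $\Sigma(E\mathcal{P}_+\wedge W)$ by $W$, both slice $n$-connective), $\widetilde{E\mathcal{P}}\wedge W=i_*S^q\ge n$. This is precisely what condition (ii) of the paper's recognition theorem is packaging: the slice cell $G_+\wedge_HS^{(m+1)\rho_H-1}$ is singled out because its $H$-geometric fixed points are a nonzero wedge of copies of $S^m$, generating $\tau_{\ge m}\mathsf{Sp}$.
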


This motivates the following definition.

\begin{definition} Let $W$ be a compact $G$-spectrum. 
	\begin{itemize}
	\item $W$ is a \textbf{slice sphere} if,
	for every $H \subseteq G$, the spectrum $W^{\Phi H}$
	is a finite (possibly trivial) wedge of spheres.
	\item $W$ is a \textbf{slice $n$-sphere} if,
	for every $H \subseteq G$, the spectrum $W^{\Phi H}$
	is a finite (possibly trivial) wedge of
	spheres of dimension $\lfloor n/|H|\rfloor$.
	\item $W$ is an \textbf{isotropic slice $n$-sphere}
	if it is a slice $n$-sphere and $W^{\Phi H}$
	is nonzero for each $H \subseteq G$.
	\end{itemize}
\end{definition}

To motivate the next steps, let us imagine that we are
trying to identify the heart of $\mathsf{Sp}^G$ with its
standard $t$-structure, i.e. the $t$-structure for
which $X$ is $0$-connective if the (genuine)
fixed points $X^H$ are $0$-connective for all $H \subseteq G$. 
So we would like to understand $G$-spectra $A$
such that $A$ is $0$-connective and $0$-truncated.

Now, by definition, a map $X \to Y$ is an equivalence
if and only if $X^H \to Y^H$ is an equivalence for
all $H \subseteq G$. We know that $S^0$ detects
equivalences between $0$-connective and $0$-truncated
spectra. The restriction-induction adjunction implies that
equivalences between $0$-connective, $0$-truncated
$G$-spectra are detected by the subcategory
$\{G/H_+ \wedge S^0\}_{H \subseteq G}$, and
hence also by the subcategory $\{T_+ \wedge S^0\}$
where $T$ ranges over all finite $G$-sets.

Thus, the restricted Yoneda embedding:
	\[
	\underline{\pi}_0: \left(\mathsf{Sp}^G\right)^{\heartsuit}
	\longrightarrow
	\mathsf{Psh}^{\times}_{\mathsf{Set}}(\{T_+ \wedge S^0\}) 
	\]
is conservative, where $\mathsf{Psh}_{\mathsf{Set}}^{\times}$ denotes the category
of product-preserving presheaves of sets. Using the fact that $[T_+ \wedge S^0, Y] = 0$
when $Y$ is 1-connective, and in particular that 
	\[
	[T_+ \wedge S^0, Y] = [T_+ \wedge S^0, \tau_{\le 0}Y],
	\]
one shows that $\underline{\pi}_0$ preserves cokernels. The Barr-Beck theorem implies
$\underline{\pi}_0$
is monadic, and the same equality above applied to $Y = G/H_+ \wedge S^0$,
shows that the monad in question is the identity functor. We conclude that
the assignment $A \mapsto \{[T_+ \wedge S^0, A]\}$ yields an equivalence of categories:
	\[
	\left(\mathsf{Sp}^G\right)^{\heartsuit} \cong 
	\mathsf{Psh}^{\times}_{\mathsf{Set}}(\{T_+ \wedge S^0\}).
	\]
This is already a sort of algebraic description: categories of product-preserving
presheaves are known as \emph{models
for Lawvere theories} and behave like
categories of algebraic objects. 
Our understanding of a Lawvere theory
is only as good as our understanding of 
the (maps between) \emph{free} objects,
which in this case are the objects $\{T_+ \wedge S^0\}$.

\begin{theorem}[Segal, tom Dieck, Lewis-May-Steinberger]
The category $\{T_+ \wedge S^0\}$ is equivalent to the Burnside category
of finite $G$-sets. Hence, from the definition of Mackey functors,
	\[
	\mathsf{Psh}^{\times}_{\mathsf{Set}}(\{T_+ \wedge S^0\})
	\cong \mathsf{Mack}(G; \mathsf{Ab}).
	\]
\end{theorem}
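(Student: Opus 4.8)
The plan is to exhibit an explicit functor from the Burnside category $\mathsf{A}(G)$ of finite $G$-sets to the full subcategory $\{T_+\wedge S^0\}$ of the homotopy category of $G$-spectra, and then to verify it is an equivalence by a $\mathrm{Hom}$-group computation together with a trivial essential-surjectivity check. Recall that $\mathsf{A}(G)$ has as objects the finite $G$-sets, as morphisms $\mathsf{A}(G)(S,T)$ the group completion of the commutative monoid of isomorphism classes of spans $S \xleftarrow{f} U \xrightarrow{g} T$ under disjoint union of the middle term, with composition given by pullback of spans. I would define the functor on objects by $T \mapsto T_+\wedge S^0$, and send a span $S \xleftarrow{f} U \xrightarrow{g} T$ to the composite $S_+\wedge S^0 \xrightarrow{\mathrm{tr}_f} U_+\wedge S^0 \xrightarrow{g_+} T_+\wedge S^0$, where $\mathrm{tr}_f$ is the Wirthm\"uller transfer along the finite covering $f$. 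One extends additively over disjoint unions of middle terms, using $(\coprod_i U_i)_+\wedge S^0 \simeq \bigvee_i (U_i)_+\wedge S^0$ and the additivity of transfers and pushforwards. Functoriality amounts to the assertion that the composite of two such maps is the one attached to the pullback span; via bi-additivity this reduces to the case of orbits, where it is the classical double-coset (Mackey) formula together with the geometric identification of $G/H\times_{G/K}G/L$ with the evident disjoint union of double-coset orbits.

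For full faithfulness I would first reduce to orbits: every finite $G$-set is a finite coproduct of orbits, and both $\mathsf{A}(G)(-,-)$ and $[{-}_+\wedge S^0, {-}_+\wedge S^0]^G$ carry coproducts to direct sums, so it suffices to compare $\mathsf{A}(G)(G/H, G/K)$ with $[G/H_+\wedge S^0, G/K_+\wedge S^0]^G$. Using self-duality of orbit suspension spectra --- $\mathbb{D}(G/H_+\wedge S^0)\simeq G/H_+\wedge S^0$, since $G/H$ is a $0$-manifold with trivial tangent bundle (Wirthm\"uller) --- the target becomes $\pi_0^G\big((G/H\times G/K)_+\wedge S^0\big)$. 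Segal's theorem, equivalently the tom Dieck splitting, identifies $\pi_0^G(X_+\wedge S^0)$ with the Burnside group $\mathsf{A}(X)$, the Grothendieck group of $\mathsf{Fin}_G/X$; for $X = G/H\times G/K$ this is free on the double cosets, matching $\mathsf{A}(G)(G/H,G/K)$, and one checks the bijection is induced by the functor above (it sends an orbit $G/J\hookrightarrow G/H\times G/K$ to the corresponding span). Essential surjectivity is immediate from the definition of $\{T_+\wedge S^0\}$.

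For the second assertion, note that $\mathsf{A}(G)$ is semiadditive (finite coproducts of $G$-sets are simultaneously products in $\mathsf{A}(G)$) and its $\mathrm{Hom}$-monoids are already abelian groups. A standard argument then shows any product-preserving presheaf of sets $F$ on a semiadditive category takes values canonically in commutative monoids, via the fold map $S\sqcup S\to S$ and the isomorphism $F(S\sqcup S)\cong F(S)\times F(S)$; since the $\mathrm{Hom}$'s are groups these monoids are groups and $F$ is automatically additive, while conversely every additive functor preserves products. Hence $\mathsf{Psh}^{\times}_{\mathsf{Set}}(\mathsf{A}(G))\simeq \mathsf{Fun}^{\mathrm{add}}(\mathsf{A}(G)^{op},\mathsf{Ab})$, and since $\mathsf{A}(G)^{op}\cong \mathsf{A}(G)$ by the duality reversing spans, the right-hand side is by definition $\mathsf{Mack}(G;\mathsf{Ab})$. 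The main obstacle is purely bookkeeping, concentrated in two classical inputs: that composition of stable maps between suspension spectra of $G$-sets is computed by pullback of spans (the double-coset formula, e.g. \cite{LMS}, \cite{td}), and Segal's computation of $\pi_0^G$ of a suspension spectrum; in the writeup I would cite these and spend the effort only on assembling them into the stated equivalence and the passage to presheaves.
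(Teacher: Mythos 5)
The paper does not actually prove this statement; it is placed in the expository Section~0 with an attribution to Segal, tom Dieck, and Lewis--May--Steinberger, and the only pointer given later is the citation to \cite[V.9.6]{LMS} in \S3.1. So there is no ``paper's proof'' to compare against, and your task was essentially to reconstruct the classical argument, which you have done correctly. Your route --- define the span-to-map functor via pushforward and transfer, check functoriality against the double-coset formula, use Wirthm\"uller self-duality of $G/H_+ \wedge S^0$ to convert the hom computation into $\pi_0^G\bigl((G/H\times G/K)_+\wedge S^0\bigr)$, and then invoke the Segal/tom Dieck identification of this with the Grothendieck group of $(\mathsf{Fin}_G)_{/G/H\times G/K}$ --- is the standard proof, and the passage from product-preserving $\mathsf{Set}$-valued presheaves on a semiadditive category with group-valued homs to additive $\mathsf{Ab}$-valued functors is correct and complete.

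One small imprecision worth fixing in a writeup: you say that for $X = G/H\times G/K$ the Burnside group $\mathsf{A}(X)$ is ``free on the double cosets.'' It is in fact free on isomorphism classes of $G$-maps $G/L \to G/H\times G/K$ (equivalently, on objects of $(\mathsf{Fin}_G)_{/X}$ that are orbits), which is a strictly larger indexing set than the set of $G$-orbits \emph{inside} $G/H\times G/K$ (the double cosets proper). Your preceding sentence, identifying $\mathsf{A}(X)$ with the Grothendieck group of $\mathsf{Fin}_G/X$, is the correct statement and is exactly what matches $\mathsf{A}(G)(G/H, G/K)$; the parenthetical gloss ``free on the double cosets'' should simply be dropped or corrected to ``free on isomorphism classes of spans,'' since as written it misdescribes the basis even though it does not affect the validity of the argument.
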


Our first algebraic description of the category of $n$-slices
follows this outline closely. First, we require a replacement
for the category $\{T_+ \wedge S^0\}$ of test objects.

\begin{definition} We say that a subcategory
$\mathsf{Test}_n$ of the category of slice $n$-spheres
is a \textbf{testing subcategory} if
	\begin{enumerate}[(i)]
	\item For all finite $G$-sets $T$ and $W \in \mathsf{Test}_n$,
	the $G$-spectrum $T_+ \wedge W$ is also in $\mathsf{Test}_n$.;
	\item For every $H \subseteq G$, there is some
	$W \in \mathsf{Test}_n$ such that $W^{\Phi H}$
	is nonzero.
	\end{enumerate}
\end{definition}

It is not obvious from the definition that testing subcategories exist.
To show that they do, it suffices to produce a single isotropic slice $n$-sphere.

\begin{proposition}(Proposition \ref{prop-sph-is-test}) For any $G$ and
$n$, there exists an isotropic slice $n$-sphere.
\end{proposition}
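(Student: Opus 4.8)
The plan is to induct on $|G|$. The base case $G=\{e\}$ is immediate: $S^n$ is a compact spectrum with $(S^n)^{\Phi e}=S^n$, a single sphere of dimension $\lfloor n/1\rfloor=n$. So fix $|G|>1$, write $\mathcal{P}$ for the family of proper subgroups, and abbreviate $m_H:=\lfloor n/|H|\rfloor$; note that $m_H\ge m_G$ whenever $H\subseteq G$. By the inductive hypothesis, for each maximal proper subgroup $K\subsetneq G$ there is an isotropic slice $n$-sphere $Z_K$ for $K$ (same $n$!). I would first assemble these into
\[
W_1:=\bigvee_{[K]\text{ maximal proper}}\mathrm{Ind}_K^G Z_K .
\]
This is compact, being a finite wedge of inductions of compact spectra. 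Using the standard double-coset formula for the geometric fixed points of an induced spectrum, $W_1^{\Phi L}$ is a finite wedge of copies of spheres $Z_K^{\Phi (g^{-1}Lg)}$ over double cosets with $g^{-1}Lg\subseteq K$; since each such $Z_K^{\Phi(g^{-1}Lg)}$ is a nonzero finite wedge of spheres of dimension $\lfloor n/|g^{-1}Lg|\rfloor=m_L$, we get that $W_1^{\Phi L}$ is a finite wedge of $m_L$-spheres for every $L$, nonzero exactly when $L$ is subconjugate to some maximal proper $K$ — that is, for every proper $L$ (take $K\supseteq L$) — and zero for $L=G$ (which is subconjugate to no proper subgroup). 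So $W_1$ is an "isotropic slice $n$-sphere away from $G$", and it only remains to repair the top geometric fixed points.

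For this I would invoke the gluing (``six functor'') formalism of \S1: the family $\mathcal{P}$ presents $\mathsf{Sp}^G$ as a stable recollement whose open part is $\mathsf{Sp}^G[\mathcal{P}^{-1}]\simeq\mathsf{Sp}$ via $\Phi^G$ and whose closed part is the $\mathcal{P}$-torsion category. Since $W_1^{\Phi G}=0$, the spectrum $W_1$ is $\mathcal{P}$-torsion (indeed $\widetilde{E\mathcal{P}}\wedge W_1\simeq 0$), so one can glue it to any chosen object of the open part along any gluing datum; taking the open object to be a single sphere $S^{m_G}$ and the gluing datum to be zero yields a $G$-spectrum $W$ with $W^{\Phi G}\simeq S^{m_G}$ (nonzero) and $W^{\Phi L}\simeq W_1^{\Phi L}$ for all proper $L$. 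By construction $W$ is then a slice $n$-sphere, and isotropic since all of its geometric fixed points are nonzero.

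The two points requiring genuine care are: (i) verifying that the geometric fixed points of $W_1$ and of the glued object $W$ are exactly as claimed, which is a bookkeeping exercise with the induced-spectrum formula and with the behaviour of $\Phi^H$ under the recollement; and (ii) \emph{compactness of $W$} — the functor ``extension by zero from the open part'' need not visibly preserve compact objects (for instance $\widetilde{E\mathcal{P}}$, the image of the sphere under one of the open-part inclusions, is not compact), so one must argue compactness of the glued object directly from the (finitely generated) gluing data, using that the relevant recollement functors restrict suitably to compact objects. I expect (ii) to be the main obstacle. Should it prove awkward to handle through the recollement, the alternative is to avoid the recollement and repair $W_1^{\Phi G}$ by an explicit finite cell attachment: first wedge on an inflated sphere $S^{m_G}$ — this sets $W^{\Phi G}=S^{m_G}$ but introduces low-dimensional ``junk'' $m_G$-spheres into $W^{\Phi L}$ for those proper $L$ with $m_L>m_G$ — and then process conjugacy classes of proper subgroups $L$ in order of decreasing $|L|$, at each stage attaching finitely many induced cells $G/L_+\wedge S^j$ with $j<m_L$ along carefully chosen maps, using cofiber sequences to trade each junk $S^{j}$ in $W^{\Phi L}$ for spheres of dimension $j+1$, iterating until $W^{\Phi L}$ is concentrated in degree $m_L$. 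Such cells leave $W^{\Phi M}$ untouched for $|M|\ge|L|$ and create only controlled lower-dimensional junk for $M\subsetneq L$, so the process terminates at a compact slice $n$-sphere. The delicate step in this variant is to choose the attaching maps so that their geometric fixed points \emph{below} $L$ vanish (keeping each $W^{\Phi M}$ a wedge of spheres rather than a more complicated finite complex) while their $L$-geometric fixed points split off the junk summands; this is exactly where one exploits that induced cells have explicitly computable geometric fixed points, and where the $\mathrm{Aut}(G/L)$-actions must be tracked — though for mere existence one can be generous and use extra free cells.
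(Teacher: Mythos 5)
Your first route (glue $S^{m_G}$ onto $W_1$ across the $\mathcal{P}$-recollement) does fail for exactly the reason you suspect: the pushforward $i_*$ from the ``closed'' stratum does not preserve compact objects, so $i_*S^{m_G}=\widetilde{E\mathcal{P}}\wedge S^{m_G}$ is not compact, and the gluing approach cannot be salvaged without essentially abandoning it. The paper flags this very point in a Remark in \S\ref{ssec:slice-sphere} and proposes the fix you then reach for: use inflation $\left(\psi_{\{G/G\}}\right)_!$, which does preserve compacts, and do the repair by finite cell attachment.

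Your second route is the right idea, but as written it has a genuine gap at what you call the ``delicate step,'' and it is worth being precise about what the paper does differently. You start from $W_1\vee\widetilde{S}^{m_G}$, which is \emph{not} homogeneous: for a proper $L$ with $m_L>m_G$, $W^{\Phi L}$ is a wedge of both $S^{m_L}$'s (from $W_1$) and an $S^{m_G}$ (from the inflation). That non-homogeneity is what forces you to seek attaching maps $G/L_+\wedge S^j\to W$ that surgically split off only the junk summand at $\Phi L$ while having vanishing geometric fixed points strictly below $L$. Such a map amounts to lifting the inclusion $S^j\hookrightarrow W^{\Phi L}$ along the localization $\mathrm{res}_L W\to \widetilde{E\mathcal{P}_L}\wedge \mathrm{res}_L W$; there is no reason for that lift to exist (the isotropy separation sequence is not split — for $\widetilde{S}^{m_G}$ itself this is the non-splitting of the Tate square / Segal conjecture), and ``being generous with extra free cells'' does not produce it either, since cells induced from strictly smaller subgroups are invisible at $\Phi L$. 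So as stated there is a construction missing.

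The paper sidesteps this entirely by (a) dropping $W_1$ and starting from just the inflated sphere $X_0=\left(\psi_{\{G/G\}}\right)_!S^{m_G}$, which has $X_0^{\Phi L}\simeq S^{m_G}$ for \emph{every} $L$ and is therefore homogeneous (a single sphere dimension at each stratum), and (b) using the \emph{canonical} counit $\mathrm{ind}_L\mathrm{res}_L X\to X$ rather than a hand-chosen attaching map. On $\Phi M$ for $M$ subconjugate to $L$ this counit admits a section by the Wirthm\"uller/ambidexterity isomorphism, so its cofiber at each such $M$ is again a wedge of spheres, now shifted up one dimension. Iterating this $\nu(n,L)-(\text{current dimension at }L)$ times and processing conjugacy classes in decreasing order of $|L|$ does the job, with two additional cases you did not discuss but which do occur: (i) the cumulative bumps from several incomparable larger subgroups can \emph{overshoot} $m_L$ at a smaller $L$ (already for $D_8$ and small $n$), and the paper corrects this with the fiber of the unit $X\to\mathrm{coind}_L\mathrm{res}_L X$, which splits on geometric fixed points for the dual reason; and (ii) an intermediate $W^{\Phi L}$ can become zero, in which case the paper wedges on $\mathrm{Ind}_L^G A$ for $A$ an isotropic slice $n$-sphere for $L$ supplied by the inductive hypothesis — this is the only place the induction on the size of the group (or, in the paper, on $|\mathsf{P}_{\mathcal O}|$) is actually invoked, so your preliminary wedge $W_1$ over all maximal proper subgroups is unnecessary. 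In short: right shape of argument (inflate, then repair by finite induced cell attachments in decreasing order of subgroup), but the missing ingredient is to keep each stratum homogeneous so that the canonical counit/unit can be used, for which their splitting on geometric fixed points is the one nontrivial fact.
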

\begin{proof}[Proof sketch] Begin with the sphere
$S^k$ where $k = \lfloor n/|G|\rfloor$ and then inductively attach
induced spheres to bump up the dimension of each geometric
fixed point spectrum without messing up the work you've already done.
In order to ensure that the geometric fixed points
remain wedges of spheres, one must choose attaching maps
which split upon taking geometric fixed points. The simplest way
to achieve this in general is to use the counit
$G/H_+ \wedge X \to X$. 
\end{proof}

Now we may define an analog of the functor $\underline{\pi}_0$:

\begin{definition} If $\mathsf{Test}_n$ is a testing subcategory,
define the category of \textbf{model $n$-slices} by
	\[
	\mathsf{Model}_n:=
	\mathsf{Psh}_{\mathsf{Set}}^{\times}(\mathsf{Test}_n).
	\]
Let $\hat{\pi}_n$ denote the restricted Yoneda embedding:
	\[
	\hat{\pi}_n: 
	\mathrm{h}\mathsf{Sp}^G \longrightarrow
	\mathsf{Model}_n.
	\]
\end{definition}

\begin{theorem}[Theorem \ref{thm:slices-as-models}]
	\begin{enumerate}[(i)]
	\item The restriction of $\hat{\pi}_n$ to $\heartsuit_n$ yields
	an equivalence of categories
		\[
		\hat{\pi}_n:
		\heartsuit_n \stackrel{\cong}{\longrightarrow} \mathsf{Model}_n.
		\]
	\item Under this equivalence, the category
	$\mathsf{Slice}_n$ corresponds to the full subcategory
	of $\mathsf{Model}_n$ spanned by presheaves
	$F$ with the property that, for all $W \in \mathsf{Test}_n$,
	the projection $T^{jump}_+ \wedge W \to W$ produces
	an \emph{injective} map
		\[
		F(W) \longrightarrow F(T^{jump}_+ \wedge W).
		\]
	\end{enumerate}
\end{theorem}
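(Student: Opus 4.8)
The plan is to run the argument sketched above for the standard heart, with $\mathsf{Test}_n$ in place of $\{T_+\wedge S^0\}$: manufacture a set of compact projective generators of $\heartsuit_n$ out of $\mathsf{Test}_n$ and invoke a many-object form of the Freyd--Gabriel reconstruction theorem, then cut out $\mathsf{Slice}_n$ by hand. Since $\heartsuit_n$ is the heart of an accessible $t$-structure on a presentable stable $\infty$-category it is a Grothendieck abelian category, hence cocomplete, so that theorem applies once we have such generators. The computational input is the vanishing statement: \emph{if $W$ is a slice $n$-sphere and $Z\in\Sigma\mathsf{Sp}^G_{\ge n}$, then $[W,Z]=0$.} I would prove this by dualizing: $W$ is compact, hence dualizable, so $[W,Z]=[S^0,DW\wedge Z]^G$; writing $Z=\Sigma Z_0$ with $Z_0\ge n$ and using that geometric fixed points commute with duality, $(DW\wedge Z_0)^{\Phi H}\simeq D(W^{\Phi H})\wedge Z_0^{\Phi H}$ is a finite wedge of copies of $\Sigma^{-\lfloor n/|H|\rfloor}Z_0^{\Phi H}$, which is $0$-connective because $Z_0^{\Phi H}$ is $\lfloor n/|H|\rfloor$-connective by Hill--Yarnall; thus $DW\wedge Z_0\ge 0$, again by Hill--Yarnall, so $DW\wedge Z=\Sigma(DW\wedge Z_0)$ has vanishing $[S^0,-]^G$. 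Writing $h\colon\mathsf{Sp}^G_{\ge n}\to\heartsuit_n$ for the reflection into the heart, this has two formal consequences. First, for $A\in\heartsuit_n$ the unit $W\to hW$ induces an isomorphism $[hW,A]\xrightarrow{\sim}[W,A]$, so $\hat\pi_n|_{\heartsuit_n}$ is computed by the restricted Yoneda embedding along $h(\mathsf{Test}_n)\subseteq\heartsuit_n$ followed by the equivalence $\mathsf{Psh}^\times_{\mathsf{Set}}(h(\mathsf{Test}_n))\simeq\mathsf{Model}_n$ ($h$ restricts to an equivalence $\mathsf{Test}_n\xrightarrow{\sim}h(\mathsf{Test}_n)$, again by the vanishing). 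Second, each $hW$ is compact in $\heartsuit_n$ (filtered colimits in $\heartsuit_n$ differ from those in $\mathsf{Sp}^G$ only by applying $h$, which $[W,-]$ cannot detect) and projective in $\heartsuit_n$ (for an epimorphism $A\twoheadrightarrow A''$ in $\heartsuit_n$ with kernel $A'$, the obstruction to lifting a map out of $hW$ lies in $[W,\Sigma A']$, and $\Sigma A'\in\Sigma\mathsf{Sp}^G_{\ge n}$).

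The remaining point of part (i), and the step I expect to be the main obstacle, is that $h(\mathsf{Test}_n)$ \emph{generates} $\heartsuit_n$; equivalently (given compactness and projectivity) that $\hat\pi_n$ is conservative on $\heartsuit_n$, equivalently that every nonzero $A\in\heartsuit_n$ receives a nonzero map from some $W\in\mathsf{Test}_n$. I would prove this by induction on $|G|$ using the isotropy-separation recollement of \S1. If the underlying spectrum $A^{\Phi e}\ne0$, then it is an Eilenberg--MacLane spectrum $\Sigma^n HM$ with $M\ne0$: indeed $A\ge n$ forces $A^{\Phi e}$ to be $n$-connective by Hill--Yarnall, while $A\in\heartsuit_n$ forces it to be $n$-truncated, since $G/e_+\wedge S^k$ lies in $\Sigma\mathsf{Sp}^G_{\ge n}$ for $k\ge n+1$ and hence $\pi_k(A^{\Phi e})=[G/e_+\wedge S^k,A]=0$; choosing $W_0\in\mathsf{Test}_n$ with $W_0^{\Phi e}\ne0$ (property (ii) of a testing subcategory) and passing to $G_+\wedge W_0\in\mathsf{Test}_n$ (property (i)), the restriction--induction adjunction gives $[G_+\wedge W_0,A]^G=[W_0^{\Phi e},A^{\Phi e}]\ne0$. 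If $A^{\Phi e}=0$ then $A$ is supported at nontrivial isotropy and $A^{\Phi H}\ne0$ for some $e\ne H\le G$; the compatibility of the slice filtration with geometric fixed points (Hill--Yarnall, generalized in \S1.3) lets one reduce to detecting the nonzero $\mathrm{Aut}(G/H)$-spectrum $A^{\Phi H}$ at level $\lfloor n/|H|\rfloor$, where $\mathrm{Aut}(G/H)$ is strictly smaller, so the inductive hypothesis produces a slice $\lfloor n/|H|\rfloor$-sphere detecting it, and the recollement of \S1 together with Proposition~\ref{prop-sph-is-test} (which supplies test spheres realizing the requisite isotropy) transports this to a test sphere for $G$ detecting $A$. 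Granting this, $h(\mathsf{Test}_n)$ is a set of compact projective generators and Freyd--Gabriel yields the equivalence $\hat\pi_n\colon\heartsuit_n\xrightarrow{\sim}\mathsf{Model}_n$.

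For part (ii), note that $\Sigma\mathsf{Sp}^G_{\ge n}\subseteq\mathsf{Sp}^G_{\ge n+1}$ forces $\mathsf{Slice}_n=\heartsuit_n\cap\mathsf{Sp}^G_{\le n}$, so for $A\in\heartsuit_n$, membership in $\mathsf{Slice}_n$ means $[X,A]=0$ for all $X\ge n+1$. Given $W\in\mathsf{Test}_n$, put $C_W:=\mathrm{cofib}(T^{jump}_+\wedge W\to W)$; the key claim is $C_W\ge n+1$, which by Hill--Yarnall is a computation of $C_W^{\Phi H}=\mathrm{cofib}\bigl((T^{jump})^H_+\wedge W^{\Phi H}\to W^{\Phi H}\bigr)$. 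When $H$ is an $n$-jump, $(T^{jump})^H$ is nonempty (it contains $(G/H)^H$), so the collapse is a split surjection of a finite wedge of $\lfloor n/|H|\rfloor$-spheres onto $W^{\Phi H}$ and its cofiber is $(\lfloor n/|H|\rfloor+1)$-connective, which is exactly $\lfloor(n+1)/|H|\rfloor$-connective; when $H$ is not an $n$-jump, $\lfloor(n+1)/|H|\rfloor=\lfloor n/|H|\rfloor$ and $C_W^{\Phi H}$ is already that connective. Hence $[C_W,A]=0$ for all $W\in\mathsf{Test}_n$ whenever $A\le n$. Conversely, if $A\in\heartsuit_n$ kills every $C_W$ then, since $A$ already kills $\Sigma\mathsf{Sp}^G_{\ge n}$ and since the $C_W$ (available in abundance by Proposition~\ref{prop-sph-is-test}) together with $\Sigma\mathsf{Sp}^G_{\ge n}$ generate $\mathsf{Sp}^G_{\ge n+1}$ under extensions and colimits, $A$ kills all of $\mathsf{Sp}^G_{\ge n+1}$, i.e.\ $A\le n$; verifying this generation statement, in which the combinatorics of $\mathrm{Jump}_n$ and the floor function are used, is the technical heart of (ii) and is where I expect to spend the most effort after the key step of (i).

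It remains to convert the condition ``$[C_W,A]=0$ for all $W$'' into the injectivity in the statement. Applying $[-,A]$ to the cofiber sequence $T^{jump}_+\wedge W\to W\to C_W$ and using that $T^{jump}_+\wedge W$ is again a slice $n$-sphere (property (i) of a testing subcategory) together with the vanishing $[T^{jump}_+\wedge W,\Sigma A]=0$ (since $\Sigma A\in\Sigma\mathsf{Sp}^G_{\ge n}$), one obtains an exact sequence $0\to[C_W,A]\to[W,A]\to[T^{jump}_+\wedge W,A]$, so that $[C_W,A]=0$ if and only if the map $\hat\pi_n(A)(W)\to\hat\pi_n(A)(T^{jump}_+\wedge W)$ induced by the projection is injective. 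Combining this with the two preceding implications and the equivalence of part (i) identifies $\mathsf{Slice}_n$ with the asserted full subcategory of $\mathsf{Model}_n$.
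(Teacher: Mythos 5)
Your approach to part (i) — manufacture compact projective generators from $\mathsf{Test}_n$ and invoke Freyd--Gabriel — is exactly what the paper does, and your duality proof of the vanishing $[W,\Sigma Z_0]=0$ for $Z_0\ge n$ (dualize $W$, push the computation to geometric fixed points, apply Hill--Yarnall in both directions) is a clean alternative to the paper's inductive proof of Proposition~\ref{prop-slice-htpy-vanish}(a). Note, though, that duality only exists because $\mathsf{Sp}^G$ has a symmetric monoidal structure; the paper's induction on the poset of orbits is what makes the statement work for general inductive orbital categories (e.g.\ families of subgroups), where there is no smash product available.

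That same generality issue makes your conservativity sketch technically unsound as stated. You propose induction on $|G|$, but when $A^{\Phi e}=0$ the spectrum $A$ lives in the stratum corresponding to the family of \emph{proper} subgroups, which is a Mackey-functor category over an inductive orbital category that is \emph{not} $\mathcal{O}_H$ for any smaller group $H$. Induction on $|G|$ therefore cannot touch this case. The paper inducts on $|\mathsf{P}_{\mathcal{O}}|$, using the recollement attached to a \emph{minimal} orbit $T$ (for $\mathcal{O}_G$, this is $G/G$, not $G/e$): the open part $j_!j^*A$ is controlled by Proposition~\ref{prop-nil-induced} and the inductive hypothesis over $\mathcal{O}_{/S}$ for $S$ with strictly smaller poset, and the closed stratum $\mathsf{Fun}(\mathrm{Aut}(T),\mathsf{Sp})$ is handled by splitting $e_!S^{\nu(n,T)}$ off a test sphere. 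This is why the paper bothers with the notion of an inductive orbital category at all.

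For part (ii), you take a genuinely different and noticeably harder route, and the place you flag as ``the technical heart'' — showing the cofibers $C_W$ together with $\Sigma\mathsf{Sp}^G_{\ge n}$ generate $\mathsf{Sp}^G_{\ge n+1}$ — is a real gap that you leave unproved and that the paper never needs. The paper instead proves Proposition~\ref{prop-slice-htpy-loc} directly: the localization map $\hat\pi_nC \to \hat\pi_nP^nC$ is surjective (obstruction in $\hat\pi_n\Sigma P_{n+1}C=0$ by the vanishing lemma), and its kernel equals $\ker\bigl(\hat\pi_nC\to \hat\pi_nC(\mathrm{ind}_{T^{jump}}\mathrm{res}_{T^{jump}}(-))\bigr)$ because $\mathrm{res}_{T^{jump}}P_{n+1}C$ lies in $\Sigma\mathsf{Sp}^{\mathcal{O}_{/T^{jump}}}_{\ge n}$. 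The latter hinges on the observation you already used for the cofiber computation but did not exploit further: subgroups of an $n$-jump are $n$-jumps, so $\nu$ jumps at $n$ on all of $\mathsf{P}_{\mathcal{O}_{/T^{jump}}}$, forcing $\Sigma\mathsf{Sp}^{\mathcal{O}_{/T^{jump}}}_{\ge n}=\mathsf{Sp}^{\mathcal{O}_{/T^{jump}}}_{\ge n+1}$. Given Proposition~\ref{prop-slice-htpy-loc} plus conservativity from part (i), the identification of $\mathsf{Slice}_n$ with $\mathsf{Slice}_n^{alg}$ is formal: for $A\in\heartsuit_n$ with $\hat\pi_nA$ slice local, the unit $A\to P^nA$ is a map in $\heartsuit_n$ inducing an isomorphism on $\hat\pi_n$, hence an equivalence. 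You could salvage your write-up by replacing the generation claim with this conservativity argument, which you have already established.
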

\begin{proof}[Proof sketch] Part (i) is established following the
same outline as in the computation of the heart of the standard
$t$-structure above. The main step is to prove that the functor
is conservative, and this is done by standard induction arguments
using isotropy separation.

For part (ii), let $L^{inj}$ denote the monad on $\mathsf{Model}_n$
which enforces the injectivity constraint in the statement. Then
the main computation is that, for any $G$-spectrum $X$,
$L^{inj}\hat{\pi}_nX \cong \hat{\pi}_nP^nX$. The key yet elementary
fact used here is the following: while it is \emph{not} true that $\hat{\pi}_n$ vanishes
on all slice $(n+1)$-connective spectra, it is nevertheless the case that
$\hat{\pi}_n\Sigma X = 0$ when $X \ge n$. (Proposition \label{prop:slice-htpy-vanish}).
This allows us to control the fact that $\hat{\pi}_n$ is not quite exact
on $\mathsf{Slice}_n$. 
\end{proof}

This brings the category of $n$-slices into the realm of algebra. However,
as before, our understanding of $\mathsf{Model}_n$
is only as good as our understanding of the testing subcategory.
For certain examples, it is possible to analyze this category directly.

\begin{example} If $n = k|G|$, then $S^{k\rho}$ generates a testing subcategory
equivalent to the category $\{T_+ \wedge S^0\}$, since smashing with $S^{k\rho}$
is invertible. Thus,
	\[
	\heartsuit_{k|G|} \cong \mathsf{Mack}(G; \mathsf{Ab})
	\]
and, unwinding the definitions, we recover the equivalence \cite{HHR, primer}:
	\[
	\mathsf{Slice}_{k|G|} \cong \{\text{Mackey functors with injective restriction maps}\}.
	\]
\end{example}

In general, however, it is convenient to have an alternative description.
We observe that we already understand $\{T_+ \wedge S^0\}$ and attempt
to understand $\mathsf{Test}_n$ in terms of what we know, so as not
to reinvent the wheel. We restrict to the case of testing subcategories
generated by a single isotropic slice $n$-sphere, $W$, for ease.

In this case, $\hat{\pi}_nX$ is essentially the data of:
	\begin{itemize}
	\item the Mackey functor $\underline{[W, X]}$, and
	\item its interaction with maps $T_+ \wedge W \to U_+ \wedge W$
	for $G$-sets $T, U$.
	\end{itemize}

The name for such a thing is a module over a Green functor.
So we show, very formally, that

\begin{theorem}[Theorem \ref{thm:slices-as-modules}]
	\begin{enumerate}[(i)]
	\item The functor $\underline{[W, -]}$ induces an equivalence
	of categories
		\[
		\underline{[W, -]}: \heartsuit_n \stackrel{\cong}{\longrightarrow}
		\mathsf{RMod}_{\underline{\mathrm{End}}(W)}.
		\]
	\item Under this equivalence, the category $\mathsf{Slice}_n$
	corresponds to the full subcategory of
	$\mathsf{RMod}_{\underline{\mathrm{End}}(W)}$
	spanned by those modules $\underline{M}$ such that the restriction map
		\[
		\underline{M}(T) \longrightarrow \underline{M}(T^{jump} \times T)
		\]
	is injective for all finite $G$-sets $T$.
	\end{enumerate}
\end{theorem}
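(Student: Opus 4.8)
The plan is to derive both parts from Theorem~\ref{thm:slices-as-models}. Take $\mathsf{Test}_n$ to be the testing subcategory generated by the single isotropic slice $n$-sphere $W$, so that its objects are the $G$-spectra $T_+\wedge W$ for $T$ a finite $G$-set; since geometric fixed points are symmetric monoidal, $(T_+\wedge W)^{\Phi H}\simeq (T^H)_+\wedge W^{\Phi H}$ is a finite wedge of $\lfloor n/|H|\rfloor$-spheres, so these are genuinely slice $n$-spheres, and as a full subcategory of $\mathrm{h}\mathsf{Sp}^G$ closed under finite wedges, $\mathsf{Test}_n$ is an additive category. The goal is then an algebraic identification $\mathsf{Model}_n = \mathsf{Psh}^\times_{\mathsf{Set}}(\mathsf{Test}_n)\simeq \mathsf{RMod}_{\underline{\mathrm{End}}(W)}$ under which the restricted Yoneda functor $\hat\pi_n$ becomes $\underline{[W,-]}$.

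The technical core is the claim that $\underline{[W,-]}$ carries $T_+\wedge W$ to the \emph{free} right $\underline{\mathrm{End}}(W)$-module on $T$. For this I would establish a projection formula
\[
\underline{[W,\, T_+\wedge X]}\ \cong\ \underline{A}_T\boxtimes\underline{[W,X]},
\]
natural in the $G$-spectrum $X$ and the finite $G$-set $T$, where $\underline{A}_T$ represents the functor $\underline{N}\mapsto\underline{N}(T)$ and $\boxtimes$ is the box product; by additivity in $T$ this reduces to the case of an orbit $T=G/H$, where it follows from Frobenius reciprocity ($G/H_+\wedge X\simeq\mathrm{Ind}_H^G\mathrm{Res}_H^G X$), the Wirthm\"uller isomorphism ($\mathrm{Ind}_H^G\simeq\mathrm{Coind}_H^G$), and the restriction--coinduction adjunction, by a direct computation of both sides; compactness of $W$ is used only to commute $\underline{[W,-]}$ past the wedges that appear. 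Specializing to $X=W$ gives $\underline{[W,\,T_+\wedge W]}\cong\underline{A}_T\boxtimes\underline{\mathrm{End}}(W)$, the free right $\underline{\mathrm{End}}(W)$-module on $T$, with $\underline{\mathrm{End}}(W)$ itself free on a point. Full faithfulness of $\underline{[W,-]}\colon\mathsf{Test}_n\to\mathsf{RMod}_{\underline{\mathrm{End}}(W)}$ is then automatic, since the Yoneda lemma for modules identifies maps out of the free module on $T$ with evaluation at $T$, matching $[T_+\wedge W,\,U_+\wedge W]=\underline{[W,U_+\wedge W]}(T)$. Hence $\underline{[W,-]}$ identifies $\mathsf{Test}_n$ with the full additive subcategory of finitely generated free $\underline{\mathrm{End}}(W)$-modules.

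Granting this, part~(i) follows from two formal moves. First, a many-object form of the Freyd--Gabriel theorem (\cite{Freyd,gabriel}): restriction of presheaves along the inclusion of finitely generated free modules into all modules is an equivalence $\mathsf{RMod}_{\underline{\mathrm{End}}(W)}\xrightarrow{\ \simeq\ }\mathsf{Psh}^\times_{\mathsf{Set}}(\text{f.g.\ free modules})$, where one also uses that a product-preserving presheaf of \emph{sets} on an additive category automatically lands in abelian groups and is additive. Composing with the equivalence $\mathsf{Test}_n\simeq(\text{f.g.\ free modules})$ above and with $\hat\pi_n\colon\heartsuit_n\xrightarrow{\ \simeq\ }\mathsf{Model}_n$ of Theorem~\ref{thm:slices-as-models}(i), and unwinding $\hat\pi_nX=\{W'\mapsto[W',X]\}$, the composite carries $X$ to the Mackey functor $U\mapsto[U_+\wedge W,X]=\underline{[W,X]}(U)$ equipped with its precomposition action, i.e.\ to $\underline{[W,X]}$; this gives~(i).

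For part~(ii) I would transport the description of $\mathsf{Slice}_n$ inside $\mathsf{Model}_n$ supplied by Theorem~\ref{thm:slices-as-models}(ii). Under the dictionary $F(U_+\wedge W)=\underline{M}(U)$, the injectivity constraint ``$F(W')\to F(T^{jump}_+\wedge W')$ is injective for all $W'\in\mathsf{Test}_n$'', applied to $W'=T_+\wedge W$ and the projection $T^{jump}_+\wedge T_+\wedge W\to T_+\wedge W$, becomes exactly ``$\underline{M}(T)\to\underline{M}(T^{jump}\times T)$ is injective for all finite $G$-sets $T$''; and $L^{inj}$ together with the formula $\underline{[W,P^n_nX]}=L^{inj}\underline{[W,X]}$ transports the corresponding assertion about model $n$-slices. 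The only genuinely technical ingredient is the projection formula of the second paragraph --- its key input being the self-duality of orbit suspension spectra; everything else is Yoneda bookkeeping and transport of structure along Theorem~\ref{thm:slices-as-models}. (In the body this identification is carried out over a general inductive orbital category and phrased as a parameterized Freyd--Gabriel theorem; the $\mathsf{Sp}^G$ case is as above.)
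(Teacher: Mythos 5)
Your proposal is correct and follows essentially the same strategy as the paper: identify the testing subcategory generated by $W$ with the finitely generated free right $\underline{\mathrm{End}}(W)$-modules, apply the Freyd--Gabriel recognition principle (Proposition \ref{prop:freyd-gabriel}) on both sides, and transport the conclusions of Theorem \ref{thm:slices-as-models} across the resulting equivalence. The only (cosmetic) difference is that the paper identifies the free module on $T$ directly as the Mackey functor $\underline{\mathsf{End}}(W)_T\colon U\mapsto[\uparrow_U\downarrow_U W,\uparrow_T\downarrow_T W]$ and checks the Yoneda formula $(*)$, whereas you reach the same object via the projection formula $\underline{[W,T_+\wedge X]}\cong\underline{A}_T\boxtimes\underline{[W,X]}$ --- both reduce to the self-duality of $T_+$ and the induction--restriction adjunction, so the two routes coincide.
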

\begin{proof}[Proof sketch] In the theory
of ordinary abelian categories, one can recognize categories
of modules as those abelian categories that admit a compact,
projective generator. A similar result is true
in the context of $G$-categories to recognize
$G$-categories of modules over a Green functor,
and the claim follows.
\end{proof}

Again, for some applications, this result suffices. 

\begin{example} If $n= k|G|$, we get yet another proof that
$\mathsf{Slice}_{k|G|}$ is the category of Mackey functors
with injective restriction maps. Indeed, since $S^{k\rho}$ is
invertible,
	\[
	\underline{\mathrm{End}}(S^{k\rho}) \cong 
	\underline{\mathrm{End}}(S^0)
	\cong \underline{A}.
	\]
But the Burnside Mackey functor $\underline{A}$ is the unit in the category
of Mackey functors, so a right $\underline{A}$-module is just a Mackey functor.
\end{example}

\begin{remark} This argument applies more generally whenever one can find an
\emph{invertible} isotropic slice $n$-sphere. In this case, the category of
$n$-slices will be equivalent, via  
the corresponding ($\mathrm{Pic}$-graded) homotopy
Mackey functor,
to the category of Mackey functors
satisfying certain injectivity conditions on their restriction maps.
\end{remark}

Now we come to the final simplification
which will ultimately remove the need to
compute the action of $\underline{\mathrm{End}}(W)$
entirely. A (right) module $\underline{M}$ over 
a Green functor $\underline{R}$ consists of, in particular,
a collection of $\underline{R}(G/H)$-modules, $\underline{M}(G/H)$,
for each $H \subseteq G$. However, much of the description of this action
is redundant due to the Frobenius relation dictating the action of a transfer:
	\[
	m \cdot \mathrm{tr}(r) = \mathrm{tr}(\mathrm{res}(m) \cdot r).
	\]
In general, we may not be able to untangle the transferred ring elements from
those not in the image of the transfer, but occasionally we are lucky. We
give a name to this situation.

If $\underline{R}$ is a Green
functor then, for each $H \subseteq G$ we can form the quotient:
	\[
	\underline{R}(G/H) \longrightarrow \frac{\underline{R}(G/H)}
	{\langle\mathrm{tr}_K^H\underline{R}(G/K) | K\text{ subconjugate to }H\rangle}
	=: \underline{R}^{\Phi H}. 
	\]

\begin{definition} We say that $\underline{R}$ is \textbf{geometrically split}
if the map $\underline{R}(G/H) \to \underline{R}^{\Phi H}$ admits
an $\mathrm{Aut}(G/H)$-equivariant ring section.
\end{definition}

If we choose splittings for a geometrically split Mackey functor $\underline{R}$,
then a right $\underline{R}$-module gives rise to a sequence of
$\underline{R}^{\Phi H}$-modules $\underline{M}(G/H)$ with a
compatible action of $\mathrm{Aut}(G/H)$. The precise structure
remaining is explained in Definition \ref{defn:r-phi-modules}
below. We denote the resulting category by 
$\mathsf{RMod}_{\underline{R}^{\Phi}}$. 

We then have the following piece of algebra: 

\begin{theorem}[Theorem \ref{thm:split-structure}]
With notation as above, the forgetful functor
	\[
	\mathsf{RMod}_{\underline{R}}
	\longrightarrow
	\mathsf{RMod}_{\underline{R}^{\Phi}}
	\]
is an equivalence of categories.
\end{theorem}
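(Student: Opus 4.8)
The plan is to induct over the inductive orbital category, peeling off one maximal orbit at a time with the isotropy-separation and gluing formalism of \S1. Fix a maximal orbit $U$ in the orbital category, let $\mathcal{O}'$ denote the full suborbital category on the remaining orbits, and write $\underline{R}'$ for the restriction of $\underline{R}$ to $\mathcal{O}'$; it is again geometrically split, so the inductive hypothesis gives $\mathsf{RMod}_{\underline{R}'}\simeq \mathsf{RMod}_{(\underline{R}')^{\Phi}}$. Both $\mathsf{RMod}_{\underline{R}}$ and $\mathsf{RMod}_{\underline{R}^{\Phi}}$ are then exhibited as recollements: the "open" part is $\mathsf{RMod}_{\underline{R}'}$, resp. $\mathsf{RMod}_{(\underline{R}')^{\Phi}}$ — which agree by induction — the "closed" part is, in both cases, the category of $\mathrm{Aut}(U)$-equivariant right modules over the ring $\underline{R}^{\Phi U}$, and the whole category is assembled from these two via a gluing functor $\Theta$. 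The theorem therefore reduces to producing a natural equivalence between the gluing functor $\Theta_{\underline{R}}$ read off from the Green functor $\underline{R}$ and the formal gluing functor $\Theta_{\underline{R}^{\Phi}}$ used in Definition \ref{defn:r-phi-modules}.

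To compare the gluing functors I would unwind $\Theta_{\underline{R}}$ explicitly. On a module $\underline{M}$ over $\underline{R}$, the recollement reconstructs the top value $\underline{M}(U)$ from the lower data together with a gluing datum: $\underline{M}(U)$ sits in a functorial short exact sequence whose sub is built from transfers out of proper orbits — and is determined by $\underline{M}|_{\mathcal{O}'}$ via the Frobenius relation $m\cdot\mathrm{tr}(r)=\mathrm{tr}(\mathrm{res}(m)\cdot r)$ — and whose quotient is the geometric piece $\underline{M}^{\Phi U}=\underline{M}(U)/\langle\text{transfers}\rangle$, an $\underline{R}^{\Phi U}$-module. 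The hypothesis that $\underline{R}$ is geometrically split provides an $\mathrm{Aut}(U)$-equivariant ring section $s\colon \underline{R}^{\Phi U}\to \underline{R}(U)$; consequently $\underline{R}(U)\cong s(\underline{R}^{\Phi U})\oplus I$ as $s(\underline{R}^{\Phi U})$-bimodules, where $I$ is the transfer ideal, and restriction of scalars along $s$ is fully faithful and exact. Using $s$, the short exact sequence defining $\underline{M}(U)$ acquires a natural (in $\underline{M}$, and $\mathrm{Aut}(U)$-equivariant) splitting of $\underline{R}^{\Phi U}$-modules, so that the gluing datum for $\underline{R}$-modules becomes equivalent to a bare $\underline{R}^{\Phi U}$-module together with restriction and transfer maps to and from the boundary satisfying the Mackey/Frobenius relations — which is exactly the definition of the gluing for $\mathsf{RMod}_{\underline{R}^{\Phi}}$. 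Assembling these identifications gives the equivalence of recollements, hence of categories, and the induction closes.

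The step I expect to be the real obstacle is the well-definedness and functoriality of the \emph{inverse} construction, i.e. producing an honest $\underline{R}$-module from a family of $\underline{R}^{\Phi H}$-modules with only restriction/transfer data. Every element of $\underline{R}(G/H)$ can be written (non-uniquely) as $s_H(\bar r)+\sum_K \mathrm{tr}_K^H(r_K)$, and one is forced to set its action on $m\in\underline{M}(G/H)$ to be $m\cdot_{s_H}\bar r+\sum_K \mathrm{tr}_K^H(\mathrm{res}_K^H(m)\cdot r_K)$; checking that this does not depend on the chosen expression amounts to showing that every $\mathbb{Z}$-linear relation among iterated transfers and restrictions in $\underline{R}$ persists after inserting the module element $m$ — that is, that the combinatorics of the Burnside category governing the double coset formula is "flat enough" to be tested one level at a time. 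This is precisely the phenomenon isolated in Proposition \ref{prop:pre-hereditary}, and I would deduce well-definedness, associativity, and the module Mackey relations from it by a long but mechanical check. The equivariance of the sections $s_H$ is what ensures that the $s_H(\bar r)$-summand is canonical and that the $\mathrm{Aut}(G/H)$-actions remain semilinear over the reconstructed ring action throughout the induction.
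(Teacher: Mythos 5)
Your plan has the right overall shape — peel off one stratum, use the recollement formalism, and recognize that Proposition~\ref{prop:pre-hereditary} carries the combinatorial burden — but the induction is set up in the wrong direction, and this is not merely a convention; it breaks the argument. You peel off a \emph{maximal} orbit $U$ and restrict $\underline{R}$ to the downward-closed complement $\mathcal{O}'$. But restricting a Green functor to a downward-closed subcategory does \emph{not} preserve geometric fixed points: for $T\in\mathcal{O}'$, the quotient $(\underline{R}')^{\Phi T}$ divides $\underline{R}(T)$ only by transfers from orbits in $\mathcal{O}'$, so it is strictly larger than $\underline{R}^{\Phi T}$ whenever $U$ admits a map to $T$. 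Two things then fail. First, the given splittings $s_T\colon\underline{R}^{\Phi T}\to\underline{R}(T)$ do not give splittings of $\underline{R}(T)\to(\underline{R}')^{\Phi T}$, so the claim that $\underline{R}'$ is ``again geometrically split'' is unjustified and the inductive hypothesis does not apply. Second, even granting a splitting, $\mathsf{RMod}_{(\underline{R}')^{\Phi}}$ would be built out of the wrong rings $(\underline{R}')^{\Phi T}$ rather than $\underline{R}^{\Phi T}$, so it is not the stratum of $\mathsf{RMod}_{\underline{R}^{\Phi}}$ you need to compare against. The fix, if you insist on peeling from the top, is to replace $\underline{R}'$ by the geometric localization $\Phi^{\{U\}}\underline{R}=i^*\underline{R}$ (for which one can check that $(\Phi^{\{U\}}\underline{R})^{\Phi T}=\underline{R}^{\Phi T}$ and the splittings descend); alternatively, do what the paper does and peel off the \emph{minimal} elements, since restriction to an upward-closed subcategory tautologically preserves geometric fixed points and splittings. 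Relatedly, your open/closed labels are swapped: with $\{[U]\}$ upward closed, the single-orbit piece is the \emph{open} stratum of the recollement and $\mathcal{O}'$ carries the \emph{closed} one.

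The second point is about how the argument is closed once the recollement comparison is set up. You correctly isolate the real difficulty — well-definedness of the inverse reconstruction of the $\underline{R}$-action from $\underline{R}^{\Phi}$-data, governed by the Burnside-category combinatorics of Proposition~\ref{prop:pre-hereditary} — and you propose to discharge it by ``a long but mechanical check.'' That check is precisely what the paper avoids: having identified $\mathsf{RMod}_{\underline{R}^{\Phi}}$ with a MacPherson--Vilonen recollement (Proposition~\ref{prop:iterated-mv}) and shown both sides are pre-hereditary, the paper invokes the Franjou--Pirashvili comparison criterion for pre-hereditary abelian recollements (\cite[Thms.~8.4, 8.6]{FP}), which says that a recollement-respecting functor that is an equivalence on both strata is an equivalence. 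That abstract theorem replaces the element-level verification of associativity and the Mackey relations entirely; without it, your ``mechanical check'' is where the proof would actually have to live, and it is not carried out.
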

\begin{proof}[Proof sketch] We prove this
using the main theorem of
\cite{FP} on comparisons of abelian category recollements.
The key step is showing that $\mathsf{Rmod}_{\underline{R}}$
admits a pre-hereditary recollement, in the language of
\cite{FP}, and proving this fact requires actually
digging into the structure of the Burnside category.
(Proposition \ref{prop:pre-hereditary}).
\end{proof}

To apply this to our situation we need
to identify $\underline{\mathrm{End}}(W)^{\Phi H}$.

\begin{proposition} There is an isotropic
slice $n$-sphere $W$ with the property
that, for every $H \subseteq G$, the following
conditions are satisfied:
	\begin{enumerate}[(a)]
	\item The natural map
		\[
		[W, W]^H \longrightarrow
		\mathrm{End}(W^{\Phi H})
		\]
	admits an $\mathrm{Aut}(G/H)$-equivariant
	ring section.
	\item The natural map
		\[
		\underline{\mathrm{End}}(W)^{\Phi H}
		\longrightarrow 
		\mathrm{End}(W^{\Phi H})
		\]
	is an isomorphism.
	\item Let $J_H = \pi_{\lfloor n/|H|\rfloor}W^{\Phi H}$
	as an $\mathrm{Aut}(G/H)$-module and left
	$\mathrm{End}(W^{\Phi H})$-module. Then the
	map
		\[
		\mathrm{End}(W^{\Phi H})
		\longrightarrow
		\mathrm{End}_{\mathbb{Z}}(J_H)
		\]
	is an isomorphism of $\mathrm{Aut}(G/H)$-modules.
	\end{enumerate}
\end{proposition}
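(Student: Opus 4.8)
The plan is to construct $W$ very explicitly, following the inductive recipe in the proof sketch of Proposition \ref{prop-sph-is-test}, but being more careful about the attaching maps so that all three properties hold simultaneously. We build $W$ by induction up the poset of conjugacy classes of subgroups, ordered by size. We begin with $W_0 = S^k$ with $k = \lfloor n/|G|\rfloor$ (trivial $G$-action), and at each stage, to correct the dimension of the geometric fixed points at a subgroup $H$, we attach cells of the form $G/H_+ \wedge S^m$ using attaching maps that factor through the counit $G/H_+ \wedge (-) \to (-)$. The key point is that such attaching maps become split surjections after applying $(-)^{\Phi H}$ — indeed $(G/H_+ \wedge X)^{\Phi H}$ splits off a copy of the relevant Weyl-fixed summand — so the geometric fixed points of the cofiber remain finite wedges of spheres, and in fact one can arrange that $W^{\Phi H} \simeq \bigvee S^{\lfloor n/|H|\rfloor}$ with the wedge summands \emph{permuted} (up to homotopy) by $\mathrm{Aut}(G/H)$ according to a permutation representation. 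This is the $W$ we will use.

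Next I would establish (c), which I expect to be the easiest: by construction $W^{\Phi H}$ is a wedge of spheres all in the single dimension $d_H := \lfloor n/|H|\rfloor$, so $W^{\Phi H}$ is equivalent as a spectrum to $\bigvee_{i} S^{d_H}$, and $\pi_{d_H} W^{\Phi H} = J_H$ is a free abelian group of rank equal to the number of wedge summands. For a wedge of spheres in a single dimension, the natural map $\mathrm{End}(\bigvee_i S^{d_H}) \to \mathrm{End}_{\mathbb{Z}}(\bigoplus_i \mathbb{Z})$ sending a self-map to its effect on $\pi_{d_H}$ is an isomorphism of rings (this is classical — maps between wedges of spheres in a fixed dimension are detected and realized by integer matrices), and the construction makes this $\mathrm{Aut}(G/H)$-equivariant because the $\mathrm{Aut}(G/H)$-action on $W^{\Phi H}$ is by permutation of the wedge summands, hence compatible with the corresponding permutation action on $J_H$. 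So (c) holds.

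For (a), the natural map $[W,W]^H \to \mathrm{End}(W^{\Phi H})$ is the composite of restriction to $H$-spectra followed by geometric fixed points; I need an $\mathrm{Aut}(G/H)$-equivariant \emph{ring} section. The idea is that, because $W$ was assembled from the bottom cell $S^k$ and induced cells $G/K_+ \wedge S^m$, a self-map of the wedge of spheres $W^{\Phi H}$ can be lifted cell-by-cell to a genuine self-$G$-map: the obstruction groups to lifting a self-map through the cellular filtration involve $[G/K_+ \wedge S^m, W]^G$ for $K$ ranging over the attached cells, and the Hill–Yarnall connectivity estimate together with the fact that $W$ is an $n$-slice forces these to vanish in the relevant degrees, so the lift exists and is unique up to contractible choice; equivariance under $\mathrm{Aut}(G/H)$ comes from making these lifts naturally in the permutation action, and multiplicativity of the section is arranged by choosing the lift of the identity to be the identity and checking that the lifts compose correctly, again using the obstruction-theoretic uniqueness. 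Finally, (b) is essentially bookkeeping once (a) is in hand: $\underline{\mathrm{End}}(W)^{\Phi H}$ is by definition the quotient of $[W,W]^H$ by the ideal generated by transfers $\mathrm{tr}_K^H[W,W]^K$ for $K$ subconjugate to $H$, and the transfer on $[W,W]$ becomes zero after applying $(-)^{\Phi H}$ (geometric fixed points kill transfers from proper subgroups), so the map $[W,W]^H \to \mathrm{End}(W^{\Phi H})$ factors through $\underline{\mathrm{End}}(W)^{\Phi H}$; surjectivity is (a), and injectivity follows because the section from (a) splits off exactly the non-transfer part — a diagram chase using the definition of the $\Phi$-quotient shows the kernel of $[W,W]^H \to \mathrm{End}(W^{\Phi H})$ is precisely the transfer ideal.

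The main obstacle is the obstruction-theoretic lifting argument in (a): I need to know not just that individual self-maps of $W^{\Phi H}$ lift, but that they lift \emph{compatibly with composition and with the $\mathrm{Aut}(G/H)$-action}, i.e. that the fiber of $[W,W]^H \to \mathrm{End}(W^{\Phi H})$ is not merely surjective but admits a genuine ring-and-$\mathrm{Aut}$ section. This requires either (i) organizing the obstruction theory functorially — showing the relevant mapping spaces $\mathrm{map}_G(W^{\Phi H}\text{-pieces}, W)$ are discrete and concentrated so that the space of sections is itself contractible — or (ii) building the section by hand from the explicit cell structure of $W$, exploiting that the induced cells $G/K_+ \wedge S^m$ contribute nothing to $(-)^{\Phi H}$ unless $K$ is conjugate to a subgroup containing a conjugate of $H$, which sharply limits what a self-$\Phi H$-map has to interact with. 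I expect approach (ii), carried out carefully with the Hill–Yarnall characterization controlling all the relevant $\pi_*$, to succeed.
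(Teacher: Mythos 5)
Your treatment of part (c) is correct and matches the paper. For (a) and (b), however, there are genuine gaps.

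For part (a), your obstruction-theoretic lifting plan is a different route from the paper's, and its central difficulty is one you correctly identify but do not resolve: lifting each self-map of $W^{\Phi H}$ individually (via vanishing of $[G/K_+\wedge S^m, W]^G$ in the relevant range) does not by itself produce a section that is a \emph{ring} map and $\mathrm{Aut}(G/H)$-\emph{equivariant}. Uniqueness up to contractible choice of the lift is not automatic from the connectivity estimates you invoke, and without it the ``arrange multiplicativity by choosing lifts compatibly'' step has no content. The paper sidesteps this entirely: rather than lifting self-maps of $W^{\Phi H}$ after the fact, it builds the section once and for all alongside the inductive construction of $W$ from \ref{prop-sph-is-test}. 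The key device is that taking the cofiber (or fiber) of the counit $\uparrow_U\downarrow_U X \to X$ is \emph{homotopically functorial}, so one gets a ring map $[\downarrow_T X,\downarrow_T X]\to[\downarrow_T Y,\downarrow_T Y]$ directly, which is $\mathrm{Aut}(T)$-equivariant by naturality; precomposing with the inductively given splitting for $X$ yields the splitting for $Y$. No obstruction groups need to vanish and no compatibility is left to check.

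For part (b), the proposal's derivation from (a) does not go through. You correctly observe that the transfer ideal lies in $\ker([W,W]^H\to\mathrm{End}(W^{\Phi H}))$, so the map factors through $\underline{\mathrm{End}}(W)^{\Phi H}$; but the claim that ``the section from (a) splits off exactly the non-transfer part'' and hence gives injectivity is unfounded. A ring section of $\Phi^H\colon [W,W]^H\twoheadrightarrow\mathrm{End}(W^{\Phi H})$ splits $[W,W]^H$ as an abelian group into $\mathrm{im}(s)\oplus\ker(\Phi^H)$, but it says nothing about whether $\ker(\Phi^H)$ is the transfer ideal rather than something larger; a diagram chase cannot produce that containment. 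The paper (Lemma \ref{lem:endo-geom-fix}) proves (b) by a separate, section-independent argument: if $f^{\Phi T}=0$, then $f$ factors through $j_!j^*W$, and using the bar resolution of Proposition \ref{prop-nil-induced} together with the slice-connectivity vanishing of $[W,\Sigma^k W]$ one shows $f$ factors through $\mathrm{ind}_S\mathrm{res}_S W\to W$, i.e.\ is a sum of transfers. Note also that the paper's argument establishes (b) for \emph{every} isotropic slice $n$-sphere, whereas deriving it from (a) would only give it for the specific $W$ constructed there.
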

\begin{proof}[Proof sketch] For part (ii), we use
isotropy separation and some connectivity
arguments. Part (iii) is elementary because
$W^{\Phi H}$ is a finite wedge of spheres of dimension
$\lfloor n/|H|\rfloor$. Both of these statements
are true for every isotropic slice $n$-sphere.
Part (i) relies on an inductive argument 
applied to a specific construction of an isotropic
slice $n$-sphere. 
\end{proof}

\begin{remark} We believe part (i) of this proposition holds
for \emph{every} isotropic slice $n$-sphere,
but we have not tried to prove this.
\end{remark}

By definition, we know that $W^{\Phi H}$ is 
a finite, nonzero wedge of spheres. In particular,
$J_H$ is a free abelian group. Morita theory,
slightly generalized to account for the action
of $\mathrm{Aut}(G/H)$, then yields an
equivalence
	\[ 
	\mathsf{RMod}_{\mathrm{End}(W^{\Phi H})
	\text{-}\mathrm{Aut}(G/H)}
	\cong
	\mathsf{Mod}_{\mathrm{Aut}(G/H)}
	\]
Explicitly the equivalence is given by the functors:
	\[
	\mathsf{RMod}_{\mathrm{End}(W^{\Phi H})
	\text{-}\mathrm{Aut}(G/H)}\ni
	N \mapsto N \otimes_{\mathrm{End}(W^{\Phi H})} J_H
	\]
	\[
	\mathsf{Mod}_{\mathrm{Aut}(G/H)}
	\ni
	M \mapsto M \otimes J_H^{*}
	\]
where tensor products are given the diagonal action and
$J_H^{*}:= \mathrm{Hom}_{\mathbb{Z}}(J_H, \mathbb{Z})$. 

Combining this equivalence with the definition
of $\mathsf{RMod}_{\underline{R}^{\Phi}}$ we get
an equivalent category whose objects consist of
the data $\{M_{(G/H)}\}$ of a collection of $\mathrm{Aut}(G/H)$-modules
together with maps between them after tensoring
with iterations of the $J_K$ and their duals, satisfying various
properties. We call this category the category of 
\textbf{twisted Mackey functors}, denoted
$\mathsf{TwMack}_n$, and the precise definition
is contained in Definition \ref{defn:tw-mack} below.

In order to apply this algebra
 to homotopy theory,
it will be helpful to do some unraveling. If we choose a
(non-equivariant) $\mathbb{Z}$-summand of $J_H$,
then this produces an idempotent in $\mathrm{End}(J_H)$.
We can carry this across a chosen splitting to an idempotent
in $[W, W]^H$, and from there split off an $H$-equivariant
summand:
	\[
	W_{(G/H)} \to W \to W_{(G/H)}.
	\]
If $X$ is a $G$-spectrum, then $[W_{(G/H)}, X]^H$ still
has an $\mathrm{Aut}(G/H)$-action coming from
the one on (the restriction of) $X$. These modules
are essentially the objects $M_{(G/H)}$ described above.\footnote{
One may have to alter the action to account
for how the action on $J_H$ intertwines the chosen idempotent.
In our examples, we will usually have $W_{(G/H)} = \downarrow_HW$, in
which case we take the usual action on homotopy groups,
or we'll have $\downarrow_HW = W^{\Phi H}$,
in which case we may indeed take the trivial action on the
summand.}

If $K$ is subconjugate
to $H$, then composing the inclusions
and retractions gives $K$-equivariant maps:
	\[
	W_{(G/K)} \to W \to W_{(G/H)},
	\]
	\[
	W_{(G/H)} \to W \to W_{(G/K)}.
	\]

Using these, we get analogues of restriction and
transfer maps:
	\[
	[W_{(G/H)}, X]^H
	\stackrel{\mathrm{res}}{\longrightarrow} [W_{(G/H)}, X]^K
	\longrightarrow [W_{(G/K)}, X]^K,
	\]
	\[
	[W_{(G/K)}, X]^K \longrightarrow
	[W_{(G/H)}, X]^K
	\stackrel{\mathrm{tr}}{\longrightarrow}
	[W_{(G/H)}, X]^H.
	\]

The relations these maps satisfy satisfy are somewhat more
involved than their Mackey functor cousins, but
they are about as manageable in practice. The main
point of the algebra above is that these
maps are the only data necessary to describe the
associated $\underline{\mathrm{End}}(W)$-module,
and hence determine the $n$-slice of $X$.

We summarize this discussion in the following
paraphrased theorem:

\begin{theorem}[Theorem \ref{thm:slices-as-tw-mack}]
	\begin{enumerate}[(i)]
	\item The procedure above yields an equivalence
	of categories:
		\[
		\heartsuit_n \cong \mathsf{TwMack}_n.
		\]
	\item Under this equivalence, the category
	$\mathsf{Slice}_n$ corresponds to the full
	subcategory of $\mathsf{TwMack}_n$
	spanned by those objects $\{M_{(G/H)}\}$
	satisfying an explicit injectivity constraint
	on their restriction maps.
	\end{enumerate}
\end{theorem}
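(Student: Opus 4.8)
The plan is to assemble the equivalence of part (i) from the pieces already in hand and then transport the injectivity condition of Theorem~\ref{thm:slices-as-modules}(ii) along the chain. Fix an isotropic slice $n$-sphere $W$ as produced by the Proposition preceding this statement, so that $\underline{\mathrm{End}}(W)$ is geometrically split, $\underline{\mathrm{End}}(W)^{\Phi H} \cong \mathrm{End}(W^{\Phi H})$, and $\mathrm{End}(W^{\Phi H}) \cong \mathrm{End}_{\mathbb{Z}}(J_H)$ as rings with $\mathrm{Aut}(G/H)$-action. By Theorem~\ref{thm:slices-as-modules}(i) we have $\heartsuit_n \simeq \mathsf{RMod}_{\underline{\mathrm{End}}(W)}$, and choosing the $\mathrm{Aut}(G/H)$-equivariant ring sections supplied by part (a) of that Proposition and applying Theorem~\ref{thm:split-structure} gives $\mathsf{RMod}_{\underline{\mathrm{End}}(W)} \simeq \mathsf{RMod}_{\underline{\mathrm{End}}(W)^{\Phi}}$. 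Finally, for each $H$ the equivariant Morita equivalence
$$\mathsf{RMod}_{\mathrm{End}(W^{\Phi H})\text{-}\mathrm{Aut}(G/H)} \xrightarrow{\ \simeq\ } \mathsf{Mod}_{\mathrm{Aut}(G/H)}, \qquad N \longmapsto N \otimes_{\mathrm{End}(W^{\Phi H})} J_H$$
(which uses that $J_H$ is a finitely generated free abelian group) can be fed into the definition of $\mathsf{RMod}_{\underline{\mathrm{End}}(W)^{\Phi}}$; unwinding the result — using the summands $W_{(G/H)} \to W \to W_{(G/H)}$ and the induced twisted restriction and transfer maps recalled before the statement — identifies it with $\mathsf{TwMack}_n$. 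This proves (i). The only thing to verify beyond routine unpacking is that the data produced by this unwinding obeys the axioms of Definition~\ref{defn:tw-mack} and that every twisted Mackey functor so arises; this is a direct computation with the explicit Morita functors and the compositions $W_{(G/K)} \to W \to W_{(G/H)}$.

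For part (ii), Theorem~\ref{thm:slices-as-modules}(ii) identifies $\mathsf{Slice}_n \subseteq \mathsf{RMod}_{\underline{\mathrm{End}}(W)}$ with those $\underline{M}$ for which $\underline{M}(T) \to \underline{M}(T^{jump}\times T)$ is injective for all finite $G$-sets $T$. Since a module over a Green functor is determined by its values on orbits and both sides are additive in $T$, it suffices to take $T = G/K$; decomposing $T^{jump}\times G/K = \coprod_{[H] \in \mathrm{Jump}_n}(G/H \times G/K)$ into $G$-orbits via the double coset formula rewrites the condition as injectivity of the product of restriction maps $\underline{M}(G/K) \to \prod_{[L]}\underline{M}(G/L)$, where $L$ ranges over (conjugacy classes of) subgroups of the form $H \cap gKg^{-1}$ with $[H] \in \mathrm{Jump}_n$ and $g \in G$. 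It remains to translate this into the $\mathsf{TwMack}_n$ picture: one re-expresses each value $\underline{M}(G/L)$ and each restriction map in terms of the geometric-fixed-point summands $M_{(G/L')}$, $L' \subseteq L$, and the twisted restriction maps, using the gluing data underlying Theorem~\ref{thm:split-structure}, and reads off the resulting constraint on $\{M_{(G/H)}\}$. Carrying this out produces the explicit injectivity condition on the twisted restriction maps recorded in Definition~\ref{defn:tw-mack} — equivalently, the transported fixed points of the localization $L^{inj}$ — which completes the proof.

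I expect this last translation to be the main obstacle. The condition from Theorem~\ref{thm:slices-as-modules}(ii) is phrased in terms of the full Mackey-functor values $\underline{[W,X]}(T)$, whereas the twisted-Mackey description has already replaced these by the geometric pieces $M_{(G/H)}$ and reconstructs the full values only through the recollement of Theorem~\ref{thm:split-structure}; one must therefore check that injectivity of the ``old'' restriction maps is detected, orbit by orbit, by injectivity of the ``new'' twisted restriction maps, verifying in particular that the transfers appearing in the reconstruction neither create nor absorb kernels. This is the point at which one has to engage with the combinatorics of the Burnside category (as in Proposition~\ref{prop:pre-hereditary}) rather than argue purely formally; everything else is bookkeeping layered on top of results already established.
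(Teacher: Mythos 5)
Your chain of equivalences is exactly the one the paper assembles: $\heartsuit_n \simeq \mathsf{RMod}_{\underline{\mathrm{End}}(W)}$ via Theorem \ref{thm:slices-as-modules}, then $\simeq \mathsf{RMod}_{\underline{\mathrm{End}}(W)^{\Phi}}$ via Theorem \ref{thm:split-structure} (applicable by Lemmas \ref{lem:endo-geom-fix} and \ref{lem:endo-geom-split}), then $\simeq \mathsf{TwMack}_n$ via the equivariant Morita equivalence that Definition \ref{defn:tw-mack} builds in. The paper's proof of the theorem is literally the one-sentence combination of the first two equivalences, with the remaining packaging done in the surrounding lemmas and definitions, so your part (i) matches.

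The obstacle you anticipate for part (ii), however, is not a genuine one, and stems from a small misreading of what the intermediate categories retain. The forgetful functor of Theorem \ref{thm:split-structure} is the identity on underlying Mackey functors: passing to $\mathsf{RMod}_{\underline{\mathrm{End}}(W)^{\Phi}}$ changes the module structure on $\underline{M}(T)$ (restricting the $\underline{\mathrm{End}}(W)(T)$-action along the chosen section) but leaves the abelian group $\underline{M}(T)$ itself untouched. The subsequent Morita step replaces $\underline{M}(T)$ by $M_{(T)}$ via the clean natural isomorphism $\underline{M}(T) \cong M_{(T)} \otimes J_T^*$ of Lemma \ref{lem:idempotent-piece}; this is a change of coordinates, not a recollement-style reconstruction from geometric strata. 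Nothing is rebuilt from transfers, so no kernels can be created or absorbed in translation. Accordingly, the paper defines $\mathsf{TwMack}_n^{\mathrm{loc}}$ (Definition \ref{defn:tw-mack}) simply by transporting the slice-local condition of Definition \ref{defn:slice-local} along the already-established equivalence, and the explicit form of the constraint, recorded in the remark following that definition, is obtained exactly as you describe: take $T$ an orbit, decompose $T \times T^{jump}$ into orbits, and rewrite each $\underline{M}(U)$ as $M_{(U)} \otimes J_U^*$. The Burnside-category combinatorics of Proposition \ref{prop:pre-hereditary} is entirely absorbed in the proof of Theorem \ref{thm:split-structure} and does not need to be revisited at this final stage.
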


The upshot is the following procedure for computing
the $n$-slice of a $G$-spectrum $X$:

	\begin{enumerate}[Step 1.]
	\item Find or construct an isotropic slice $n$-sphere,
	$W$.
	\item Compute the $\mathrm{Aut}(G/H)$
	action on each $W^{\Phi H}$ to determine
	the modules $J_H$.
	\item Choose a summand of $J_H$ and
	determine the corresponding $H$-equivariant
	summand
	$W_{(G/H)}$ of $W$.
	\item Compute the groups $[W_{(G/H)}, X]^H$
	and the `twisted' restrictions and transfers
	between them.
	\item Make the requisite (direct sum of) restriction maps injective.
	\end{enumerate}

At this point the reader should be prepared for the examples
in \S\ref{sec:examples}.

\section{Filtrations on stratified categories}\label{sec:filtns}

A common strategy for proving statements about $\mathsf{Sp}^G$ is
to induct over the poset of subgroups using geometric fixed points,
and eventually reduce to a statement about non-equivariant spectra.
We can axiomatize
the structure necessary to make arguments like this and arrive at the
notion of a stratified homotopy theory, which we review in \S\ref{ssec:recoll}. 

In the situation of $\mathsf{Sp}^G$ we actually have \emph{two} methods for
reducing to non-equivariant considerations: geometric fixed points and genuine fixed points.
Again, we can distill the requisite properties into a definition, that of a homotopy
theory of Mackey functors on an inductive orbital category 
(Definitions \ref{defn:mack} and \ref{defn:ioc}).
We discuss this in \S\ref{ssec:mack}.

In \S\ref{ssec:slice-filtn}, we define slice filtrations
(Definition \ref{defn:slice-filtn}) associated to dimension functions
for homotopy 
theories of Mackey functors
and explore some of their elementary properties. 
We give a recognition theorem (Theorem \ref{thm:slice-recognition})
for comparing a given filtration
to the slice filtration associated to a dimension function.
As a corollary, we obtain
a streamlined proof of the theorem of Hill-Yarnall characterizing the original
slice filtration in terms of geometric fixed points (Corollary \ref{cor:HY-char}).

We pause now to collect some justifications for our chosen level of generality:
	\begin{itemize}
	\item When making inductive arguments about $G$-spectra, one is often
	led to consider homotopy theories associated to families of subgroups of $G$.
	These homotopy theories are usually not equivalent to $\mathsf{Sp}^H$ for
	any group $H$.
	\item There are several homotopy theories of Mackey functors that do not fall
	under the direct purview of equivariant homotopy theory, e.g.
	the homotopy theory of cyclotomic spectra \cite{BG-cycl, BM}
	and of $n$-excisive functors \cite{glas-calc}.
	Since it does not require extra work, it seems prudent to
	develop the theory in a way that applies to these examples.
	\item Even when restricting to $\mathsf{Sp}^G$, it is convenient
	to consider filtrations other than the standard slice filtration.
	For example, the \emph{regular} slice filtration was used to great
	effect by Ullman \cite{Ullman}
	in his thesis. We allow for yet further variants, so that one may choose
	a filtration suited to the application at hand.
	\end{itemize}

Finally, as we mentioned in the introduction, most of the results in \S1.3 were obtained
independently by Barwick-Dotto-Glasman-Nardin-Shah. We will try to indicate the major
overlaps where they occur.

\subsection{Review of recollements and stratifications}\label{ssec:recoll}

There are many situations where we study objects of a homotopy theory $\mathcal{C}$
by breaking it up into two pieces. Here are some examples.

\begin{itemize}
\item Let $\sigma$ denote the
sign representation of $C_2$.
If $X$ is a $C_2$-spectrum, then $X$ sits in a cofiber sequence
	\[
	S(\infty \sigma)_+ \wedge X \longrightarrow X\longrightarrow
	S^{\infty \sigma} \wedge X
	\]
called the \emph{isotropy separation sequence}. The first term has the property
that it is built out free $C_2$-spectra, in the sense that it has a  filteration
with associated graded a wedge of copies of suspensions of $C_{2+} \wedge X$. The
last term has the property that its underlying spectrum vanishes, so all its information
is contained in its fixed points. 

Alternatively, we can recover $X$ from the homotopy Cartesian square:
	\[
	\xymatrix{
	X \ar[r]\ar[d] & S^{\infty\sigma} \wedge X\ar[d]\\
	F(S(\infty \sigma)_+, X) \ar[r] & S^{\infty\sigma} \wedge F(S(\infty \sigma)_+, X)
	}
	\]
which is sometimes called the \emph{Tate fracture square}.
The bottom
piece of this square can be studied entirely in terms of the local system
on $BC_2$ which underlies $X$, and the left vertical map is a sort of
completion while the right horizontal map is a sort of localization.

\item Let $X$ be a space, $j: U \hookrightarrow X$ an open embedding
with closed complement $i: Y \hookrightarrow X$, and denote by
$\mathsf{Shv}(X; \mathsf{Sp})$ the homotopy theory of sheaves of spectra
on $X$. Then the restriction functor $j^*: \mathsf{Shv}(X; \mathsf{Sp}) \to
\mathsf{Shv}(U; \mathsf{Sp})$ admits a left adjoint $j_!$ (extension
by zero) and every sheaf
decomposes into a natural cofiber sequence:
	\[
	j_!j^*\mathcal{F} \longrightarrow \mathcal{F} \longrightarrow i_*i^*\mathcal{F}.
	\]
The first term is supported on $U$, and the last term is 
set-theoretically supported on $Y$
(in the sense that it is annhilated by $j^*$).

Alternatively, we can recover $\mathcal{F}$ from the homotopy Cartesian square:
	\[
	\xymatrix{
	\mathcal{F} \ar[r]\ar[d] & i_*i^*\mathcal{F} \ar[d]\\
	j_*j^*\mathcal{F} \ar[r]& i_*i^*j_*j^*\mathcal{F}
	}
	\]
The right vertical map provides
gluing data and is a generalization of a \emph{clutching function} from the
classical study of vector bundles.

\item If $M$ is a complex of abelian groups, then there is a
cofiber sequence in the derived category $\mathsf{D}(\mathbb{Z})$:
	\[
	\Gamma_pM \longrightarrow M \longrightarrow M\left[\frac{1}{p}\right]
	\]
where $\Gamma_pM$ has the property that every element of $H_k(\Gamma_pM)$
is annihilated by a power of $p$, and $M[1/p]$ has the property that $p$ acts
invertibly.

Alternatively, we can recover $M$ from the arithmetic fracture square:
	\[
	\xymatrix{
	M \ar[r]\ar[d] & M\left[\frac{1}{p}\right]\ar[d]\\
	\hat{L}M \ar[r] & (\hat{L}M)\left[\frac{1}{p}\right]
	}
	\]
Here, $\hat{L}M$ denotes the derived functor of $p$-completion, which
plays a prominent role in $K(1)$-local homotopy theory.
\end{itemize}

We collect some common features of these examples into a definition. It is
the evident adaptation of the Grothendieck school's notion of a recollement to
our setting.

\begin{definition}\label{defn:recoll} (\cite[A.8.1]{HA}) 
Let $\mathcal{C}$ be an $\infty$-category
which admits finite limits, and let $\mathcal{C}_0, \mathcal{C}_1 
\subseteq \mathcal{C}$
denote full subcategories. We say that $\mathcal{C}$ is a \textbf{recollement} of
$\mathcal{C}_0$ and $\mathcal{C}_1$ if the following conditions are satisfied:
	\begin{enumerate}[(a)]
	\item The subcategories $\mathcal{C}_0$ and $\mathcal{C}_1$ are
	closed under equivalence.
	\item The inclusion functors 
	$\mathcal{C}_i \hookrightarrow \mathcal{C}$
	admit left adjoints $L_i$.
	\item The functors $L_0$ and $L_1$ are left exact.
	\item The functor $L_1$ carries every object of $\mathcal{C}_0$
	to a final object of $\mathcal{C}_1$.
	\item The functors $L_0$ and $L_1$ are jointly conservative.
	\end{enumerate}
\end{definition}

We will need a slight generalization of this definition where $\mathcal{C}$ is glued
together from more than just two subcategories. Recall that an
\textbf{interval} in a poset $\mathsf{P}$ is a subset $I \subseteq \mathsf{P}$
such that if $x,y \in I$ and $x<z<y$, then $z \in I$.

\begin{definition}[\cite{Glas}] \label{def-strat-cats}
Let $\mathcal{C}$ be an $\infty$-category which admits
finite limits, and let $\mathsf{P}$ be a poset. Then a \textbf{$\mathsf{P}$-stratification}
of $\mathcal{C}$ is a map of posets
	\[
	\mathfrak{S}: \{\text{intervals in }\mathsf{P}\} \longrightarrow \{\text{reflective subcategories
	of }\mathcal{C}\}
	\]
such that:
	\begin{itemize}
	\item $\mathfrak{S}(\mathsf{P}) = \mathcal{C}$,
	\item $\mathfrak{S}(\varnothing) \subseteq \mathcal{C}$ is
	the full subcategory of final objects,
	\item whenever an interval $I\subseteq \mathsf{P}$
	is decomposed as $I = I_0 \amalg I_1$ in such a way that no element
	of $I_0$ is strictly greater than any element of $I_1$, then
	$\mathfrak{S}(I)$ is a recollement of $\mathfrak{S}(I_0)$ and
	$\mathfrak{S}(I_1)$.
	\end{itemize}
\end{definition}

These definitions are meant to generalize the fracture squares that appeared in
our motivating examples. In order to get the entire package, cofiber sequences
and all, we need to move to a stable setting. In this case, there is yet another
characterization of stratifications.

\begin{theorem} Let $\mathcal{C}$ be a stable $\infty$-category and let
	\[
	\mathfrak{S}_0: \{\text{downward closed intervals in }\mathsf{P}\}
	\longrightarrow \{\text{full subcategories of }\mathcal{C}\text{ closed
	under equivalences}\}
	\]
be a map of posets. The following conditions are equivalent:
\begin{enumerate}[(i)]
\item Each $\mathfrak{S}_0(I)$ is reflective and coreflective; i.e. the inclusion
admits both left and right adjoints, and $\mathfrak{S}_0(\mathsf{P}) = \mathcal{C}$.
\item The function $\mathfrak{S}_0$ extends to a $\mathsf{P}$-stratification
of $\mathcal{C}$.
\end{enumerate}
If either of these conditions are satisfied then, when $I$ is downward closed and
$J$ is its upwardly closed complement, we may identify $\mathfrak{S}(J)$
with the full subcategory of $\mathcal{C}$ spanned by those objects $Y$ such
that the mapping space $\mathrm{map}_{\mathcal{C}}(X, Y)$ is contractible
for each $X \in \mathfrak{S}(I)$.
\end{theorem}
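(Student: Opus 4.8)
The plan is to prove the equivalence of (i) and (ii) by induction on the size of $\mathsf{P}$, using the two-step recollement definition as the engine and exploiting stability at every turn. The crucial input is the standard dictionary, in a stable $\infty$-category, between a reflective-and-coreflective subcategory $\mathcal{D} \subseteq \mathcal{C}$ and the semiorthogonal decomposition it determines: if $L: \mathcal{C} \to \mathcal{D}$ is the left adjoint to the inclusion and $R$ is the right adjoint, then the kernel $\mathcal{D}^{\perp} := \{Y : \mathrm{map}_{\mathcal{C}}(X,Y) \simeq \ast \ \forall X \in \mathcal{D}\}$ is itself reflective and coreflective, the composite $\mathcal{D}^{\perp} \hookrightarrow \mathcal{C} \to \mathcal{C}/\mathcal{D}$ is an equivalence, and every object $Z$ sits in a functorial fiber sequence $i_!i^!Z \to Z \to j_*j^*Z$ with $i_!i^!Z$ the coreflection into $\mathcal{D}$ (when $\mathcal{D}$ is the "closed" part) and $j_*j^*Z \in \mathcal{D}^{\perp}$. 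This is exactly the stable recollement package (cf. \cite{recoll}, \cite[A.8]{HA}), so the last clause of the theorem — identifying $\mathfrak{S}(J)$ with the right-orthogonal complement of $\mathfrak{S}(I)$ — will come out for free once the stratification is built, and conversely it tells us how $\mathfrak{S}$ must be defined on upward-closed intervals if we are handed $\mathfrak{S}_0$.

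For the implication (i) $\Rightarrow$ (ii): given $\mathfrak{S}_0$ defined on downward-closed intervals with each value reflective and coreflective, I would first extend it to all intervals. Any interval $I$ is the "difference" $I = I_{\le} \setminus I_{<}$ of two downward-closed intervals $I_{<} \subseteq I_{\le}$ (take $I_{\le}$ the downward closure of $I$ and $I_{<} = I_{\le}\setminus I$), and I define $\mathfrak{S}(I)$ to be the full subcategory of $\mathfrak{S}_0(I_{\le})$ consisting of objects right-orthogonal to $\mathfrak{S}_0(I_{<})$; by the stable dictionary above this is reflective and coreflective in $\mathfrak{S}_0(I_{\le})$, hence in $\mathcal{C}$. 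Functoriality in the poset of intervals is a bookkeeping check: an inclusion of intervals $I \subseteq I'$ induces compatible inclusions of the associated downward-closed pairs, and one verifies the orthogonality conditions are nested correctly. The conditions $\mathfrak{S}(\mathsf{P}) = \mathcal{C}$ and $\mathfrak{S}(\varnothing) = \{\text{final objects}\}$ are immediate. The real content is the recollement axiom: given $I = I_0 \amalg I_1$ with no element of $I_0$ strictly above any element of $I_1$, I must show $\mathfrak{S}(I)$ is a recollement of $\mathfrak{S}(I_0)$ and $\mathfrak{S}(I_1)$. Here $I_1$ is downward-closed \emph{within} $I$ and $I_0$ is upward-closed within $I$; so inside the stable $\infty$-category $\mathfrak{S}(I)$ the pair $(\mathfrak{S}(I_1), \mathfrak{S}(I_0))$ is precisely the semiorthogonal decomposition attached to the reflective-coreflective subcategory $\mathfrak{S}(I_1) \subseteq \mathfrak{S}(I)$, and Definition \ref{defn:recoll}(a)--(e) are exactly the axioms satisfied by such a decomposition in the stable setting: the left adjoints $L_0, L_1$ exist and are left exact because in a stable category every exact (adjoint) functor is left exact, $L_1$ kills $\mathfrak{S}(I_0)$ by the orthogonality defining $\mathfrak{S}(I_0)$, and joint conservativity of $(L_0, L_1)$ follows from the fiber sequence $i_!i^! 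Z \to Z \to j_*j^* Z$ together with stability (if both $L_0 Z$ and $L_1 Z$ vanish then both outer terms of the triangle vanish, so $Z \simeq 0$).

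For the converse (ii) $\Rightarrow$ (i): restricting a $\mathsf{P}$-stratification to downward-closed intervals, I need each $\mathfrak{S}(I)$ to be not just reflective (which is built in) but also coreflective. This is where I expect the main obstacle to lie, and the argument should be an induction on $|\mathsf{P}|$. Pick a minimal element $p \in \mathsf{P}$ (so $\{p\}$ is downward-closed) and write $I = \{p\} \amalg (I \setminus \{p\})$ for a downward-closed $I$ containing $p$; by the stratification axiom $\mathfrak{S}(I)$ is a recollement of $\mathfrak{S}(\{p\})$ and $\mathfrak{S}(I \setminus \{p\})$, and in a stable recollement the "open" piece $\mathcal{C}_1 = \mathfrak{S}(I\setminus\{p\})$ is automatically coreflective in $\mathcal{C}_0$-recollement — more precisely, in a recollement of stable categories both inclusions acquire the opposite adjoint, because the recollement of $\mathcal{C}$ by $(\mathcal{C}_0,\mathcal{C}_1)$ is equivalent to a recollement of $\mathcal{C}^{op}$ by $(\mathcal{C}_1^{op}, \mathcal{C}_0^{op})$, and applying this to $\mathcal{C}^{op}$ with its opposite stratification produces the missing right adjoints. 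Chasing this through the induction, together with the fact that a composite/gluing of reflective-coreflective subcategories along a recollement is again reflective-coreflective, gives coreflectivity of every $\mathfrak{S}(I)$ for $I$ downward-closed, and $\mathfrak{S}_0(\mathsf{P}) = \mathcal{C}$ is part of the axioms. The delicate point to get right is the bookkeeping of which adjoints are guaranteed at each stage and the verification that passing to opposite categories genuinely converts a $\mathsf{P}$-stratification into a $\mathsf{P}^{op}$-stratification — this is essentially the stable self-duality of recollements in \cite{recoll}, and once it is in hand the rest is formal.
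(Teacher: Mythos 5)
Your (ii)$\Rightarrow$(i) direction is fine, and in fact simpler than the induction you propose: for downward-closed $I$ with upward-closed complement $J$, decompose $\mathsf{P}=I\amalg J$ and the stable recollement package immediately hands you the right adjoint $i^!$ to $i_*\colon\mathfrak{S}(I)\hookrightarrow\mathcal{C}$. The paper itself gives no argument at all here beyond a citation to Barwick--Glasman's note, which treats $\mathsf{P}=\Delta^1$ (a single recollement); your proposal is genuinely different in attempting to carry out the general case. But the (i)$\Rightarrow$(ii) direction has a real gap at the step you yourself flag as "the real content." You assert that, for an interval decomposition $I=I_0\amalg I_1$, the pair $(\mathfrak{S}(I_0),\mathfrak{S}(I_1))$ \emph{is} the semiorthogonal decomposition attached to the reflective-coreflective $\mathfrak{S}(I_0)\subseteq\mathfrak{S}(I)$. (Your names are swapped relative to the paper's convention --- axiom (d) says $L_1$ kills $\mathcal{C}_0$, so it is $I_0$, not $I_1$, that is the downward-closed piece --- but that is cosmetic.) The substantive claim is that $\mathfrak{S}(I_1)$ equals the right orthogonal of $\mathfrak{S}(I_0)$ inside $\mathfrak{S}(I)$, and this does \emph{not} follow from hypothesis (i). In particular, when $I_0$ and $I_1$ are both already downward closed in $\mathsf{P}$ --- which happens as soon as $\mathsf{P}$ has incomparable minimal elements, say $I_0=\{a\}$, $I_1=\{b\}$, $I=\{a,b\}$ --- your construction returns $\mathfrak{S}(I_j)=\mathfrak{S}_0(I_j)$, and monotonicity together with each value being reflective and coreflective places no orthogonality constraint whatsoever on $\mathfrak{S}_0(\{a\})$ and $\mathfrak{S}_0(\{b\})$.

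To see the failure explicitly, let $\mathsf{P}=\{a,b\}$ be the two-element discrete poset, $\mathcal{C}=\mathsf{Sp}\times\mathsf{Sp}$, and set $\mathfrak{S}_0(\varnothing)=0$, $\mathfrak{S}_0(\{a\})=\mathsf{Sp}\times 0$, $\mathfrak{S}_0(\{b\})=\mathsf{Sp}_{\mathbb{Q}}\times 0$, $\mathfrak{S}_0(\{a,b\})=\mathcal{C}$. Each value is reflective and coreflective (the inclusion of rational spectra preserves colimits, so has a right adjoint), the assignment is monotone, and $\mathfrak{S}_0(\mathsf{P})=\mathcal{C}$, so (i) holds. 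But for the decomposition $\{a,b\}=\{a\}\amalg\{b\}$, the reflection $L_1\colon\mathcal{C}\to\mathsf{Sp}_{\mathbb{Q}}\times 0$ is rationalization on the first factor, which does not annihilate $\mathfrak{S}_0(\{a\})=\mathsf{Sp}\times 0$; axiom (d) of Definition \ref{defn:recoll} fails, and since both $\{a\}$ and $\{b\}$ are downward closed, no extension of $\mathfrak{S}_0$ to a $\mathsf{P}$-stratification exists. So the implication (i)$\Rightarrow$(ii) is false as stated; one must additionally require that $\mathfrak{S}_0$ already satisfy the recollement condition whenever a downward-closed interval is decomposed into two downward-closed pieces (equivalently, that $\mathfrak{S}_0(\{a\})$ and $\mathfrak{S}_0(\{b\})$ be mutually orthogonal for incomparable $a,b$). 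Your argument inherits this gap, and so implicitly does the paper's deferral to Barwick--Glasman: in the two-element chain the issue cannot arise, and "works just as well in the case of a poset" deserves a caveat when $\mathsf{P}$ is not totally ordered. For chains (which is the shape of most of the slice-filtration applications) the rest of your outline should go through, though the transitivity of recollements across three or more strata is still a lemma worth spelling out rather than assuming.
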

\begin{proof} The proof in Barwick-Glasman \cite[Lemma 3]{recoll} works just as well in
the case of a poset.
\end{proof}

When $\mathsf{P} = \Delta^1$ it will be useful to set some notation down for
the morass of adjoints in play. The reader is encouraged to keep in mind the
example where $X$ is a space, $j: U\hookrightarrow X$
is an open embedding, and $i: Y=X-U \hookrightarrow X$ is the closed
complement. Then $\mathcal{C} = \mathsf{Shv}(X; \mathsf{Sp})$, $\mathcal{C}_0
=\mathsf{Shv}(Y; \mathsf{Sp})$, and $\mathcal{C}_1 = 
\mathsf{Shv}(U;\mathsf{Sp})$.

\begin{remark} In the case where $\mathcal{C}$ is
stable and a recollement of $\mathcal{C}_0$
and $\mathcal{C}_1$, we have the following diagram where each arrow is left
adjoint to the arrow below it:
	\[
	\xymatrix{
	\mathcal{C}_1 \ar@<3ex>[r]^{j_!}\ar@<-3ex>[r]^{j_*}& \mathcal{C} \ar[l]_{j^*}
	\ar@<3ex>[r]^{i^*}\ar@<-3ex>[r]^{i^!}& \mathcal{C}_0\ar[l]_{i_*}
	}
	\]
With this notation, $\mathcal{C}$ is the oplax limit \cite[2.8]{lax}
of the
exact functor $i^*j_*: \mathcal{C}_1 \to \mathcal{C}_0$. The $\infty$-category
$\mathcal{C}_1$ is now embedded into $\mathcal{C}$ in two different ways
as a full subcategory: first as a reflective
subcategory via $j_*$, as in the definition of a recollement, but also
as a coreflective subcategory via $j_!$. The essential images are characterized
as the two different orthogonal complements of $\mathcal{C}_0$. We will
denote
the image of $j_*$ by $\mathcal{C}_1^{\wedge}$ and the image of $j_!$ by
$\mathcal{C}_1^{\vee}$. The notation is supposed to suggest that the reflective
subcategory contains \emph{complete} objects while the coreflective
subcategory contains \emph{nilpotent} objects. An explicit equivalence between
these categories is obtained by the two inverse functors:
	\[
	j_*j^*: \mathcal{C}_1^{\vee} \longrightarrow \mathcal{C}_1^{\wedge}
	\]
	\[
	j_!j^*: \mathcal{C}_1^{\wedge}\longrightarrow \mathcal{C}_1^{\vee}
	\]
See Barwick-Glasman \cite{recoll} for details.
\end{remark}

\begin{remark} A stratification of $\mathcal{C}$ gives rise to a lax
functor $\mathsf{P}^{op} \to \mathsf{Cat}_{\infty}$
recording the atomic localizations, and $\mathcal{C}$
can be recovered as the \emph{op}lax limit of this lax diagram.
In fact, this process yields an equivalence
between the homotopy theory of stratified $\infty$-categories
and the homotopy theory of lax functors
$\mathsf{P}^{op} \to \mathsf{Cat}_{\infty}$ (i.e.
locally cocartesian fibrations over $\mathsf{P}^{op}$).
Making this precise would take us too far afield, but for a version
of this reconstruction theorem without the language of lax functors
see \cite[3.18]{Glas}.
The idea is that a lax functor
out of $\mathsf{P}^{op}$ is the same data as an ordinary
functor out of the \emph{relaxation} of $\mathsf{P}^{op}$,
which is modeled on the poset of nonempty subsets of $\mathsf{P}^{op}$. 
\end{remark}

\subsection{Mackey functors}\label{ssec:mack}

In the homotopy theory of $G$-spectra there are two a priori unrelated
inductive approaches to understanding a $G$-spectrum $X$. On the one hand,
we can reduce questions about $X$ to questions about its geometric fixed points,
$X^{\Phi H}$. This is the point of view that motivated the previous section.
On the other hand, we can reduce questions about $X$ to questions about
its \emph{genuine} fixed points, $X^H$. At a key point below (namely in
our construction of isotropic slice spheres) we will utilize the interplay
between these two approaches. 

First, however, we develop a general setting where these two sorts of inductive
tools- genuine and geometric fixed points- can both be defined.

\begin{remark}
Many of the definitions and examples in the beginning of this section
are pulled directly from Glasman \cite{Glas}, with the
notable exception of Definition \ref{defn:ioc}.
\end{remark}

\begin{definition} An \textbf{epiorbital category} is
an essentially finite category $\mathcal{O}$ satisfying the following
conditions:
	\begin{itemize}
	\item Every morphism in $\mathcal{O}$ is an epimorphism.
	\item $\mathcal{O}$ admits pushouts and coequalizers.
	\end{itemize}
Define a relation on the set of isomorphism classes of objects in $\mathcal{O}$
by $[X]\ge [Y]$ if $\mathrm{Hom}(X, Y)$ is nonempty. It is easy to check that
this forms a poset, which we denote $\mathsf{P}_{\mathcal{O}}$. 
\end{definition}

Given an essentially small $\infty$-category $\mathcal{C}$, we will denote by
$\fin_{\mathcal{C}}$ the $\infty$-category obtained by freely adjoining finite
coproducts. An explicit model can be obtained as the full subcategory of
$\mathsf{Psh}(\mathcal{C})$ spanned by finite coproducts of representable functors.
We remark that, in the case when $\mathcal{C}$ is an ordinary category, it doesn't
matter if we use the $\infty$-category of presheaves of spaces, or the ordinary
category of presheaves of sets in this construction.

\begin{definition} An \textbf{orbital $\infty$-category} is an essentially small
$\infty$-category $\mathcal{O}$ such that $\fin_{\mathcal{O}}$ admits
pullbacks. We will often call elements of $\mathcal{O}$ \textbf{orbits}.
\end{definition}

\begin{proposition} (\cite[2.14]{Glas}) Every epiorbital category is orbital.
\end{proposition}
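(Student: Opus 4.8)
The plan is to show that for an epiorbital category $\mathcal{O}$, the category $\fin_{\mathcal{O}}$ of finite coproducts of objects of $\mathcal{O}$ admits pullbacks. Since $\fin_{\mathcal{O}}$ is obtained by freely adjoining finite coproducts, a general object is a finite disjoint union $\coprod_{i} X_i$ of orbits, and a map $\coprod_i X_i \to Z$ (for $Z$ an orbit) is the data of a map $X_i \to Z$ for each $i$, because the $X_i$ are "connected" — this connectedness should follow from the fact that every morphism in $\mathcal{O}$ is an epimorphism together with the existence of coequalizers, so that an orbit cannot decompose as a nontrivial coproduct inside $\fin_{\mathcal{O}}$. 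I would first record this decomposition lemma: morphisms out of a coproduct are componentwise, and every map of orbits is an effective epimorphism after passing to $\fin_{\mathcal{O}}$.

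Next I would reduce the existence of an arbitrary pullback to the existence of pullbacks of the form $X \times_Z Y$ where $X, Y, Z$ are all \emph{orbits}. Indeed, pullbacks commute with coproducts in the base and in each factor (in any category with finite coproducts that are "disjoint and universal" — which holds in $\fin_{\mathcal{O}}$ by construction, being a category of presheaves-of-sets up to the free-coproduct-completion), so
\[
\Bigl(\coprod_i X_i\Bigr) \times_{\coprod_k Z_k} \Bigl(\coprod_j Y_j\Bigr)
\simeq \coprod_{i,j} X_i \times_{Z_{k(i)}} Y_j
\]
when the maps are compatible with the indexing, and is empty otherwise; so it suffices to build $X \times_Z Y$ for orbits. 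For that, the plan is: given orbits with maps $f\colon X \to Z$ and $g\colon Y \to Z$, form the pushout $Z \cup_X (\text{something})$ — more precisely, use that $\mathcal{O}$ has pushouts and coequalizers to build a candidate. The cleanest route is: take the coequalizer/pushout constructions available in $\mathcal{O}$ to produce, for the cospan $X \to Z \leftarrow Y$, the relevant "fiber product" as a quotient; concretely one expects $X\times_Z Y$ to be computed by first forming $X \amalg_{?} Y$ appropriately and then coequalizing, dualizing the fact that in $\fin_{\mathcal{O}}$ a pullback of a diagram of orbits is a finite coproduct of orbits, each arising as a pushout-type colimit in $\mathcal{O}$ of the pair $(X,Y)$ over $Z$.

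The main obstacle — and the crux of the argument — is verifying that this candidate really is a pullback in $\fin_{\mathcal{O}}$, i.e. that the colimit constructed using the epi/pushout/coequalizer structure of $\mathcal{O}$ has the correct universal property after freely adjoining coproducts. The key point to check is that maps \emph{into} the pullback from an orbit $T$ are classified correctly: a map $T \to X\times_Z Y$ in $\fin_{\mathcal{O}}$ should be a pair $(T\to X, T\to Y)$ agreeing over $Z$, and one must see that such a pair factors through exactly one component of the finite-coproduct decomposition of the constructed object. This is where the epimorphism hypothesis does real work: because every map in $\mathcal{O}$ is epic, the pushouts in $\mathcal{O}$ are particularly well-behaved (van Kampen-like), and the coequalizers detect exactly the identifications needed. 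I would handle this by an explicit model: realize $\fin_{\mathcal{O}}$ inside $\mathsf{Psh}_{\mathsf{Set}}(\mathcal{O})$ as finite coproducts of representables, compute the pullback of representables in presheaves (which always exists), and then show — using that $\mathcal{O}$ has pushouts and coequalizers and that all maps are epi — that this presheaf pullback is again a finite coproduct of representables. That last step is precisely the content of \cite[2.14]{Glas} cited in the statement, and I would follow that argument, the heart of which is that the "orbit decomposition" of the pullback presheaf is finite and each piece is representable because it is built from a pushout in $\mathcal{O}$.
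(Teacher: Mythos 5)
The paper offers no proof of this proposition; it simply cites \cite[2.14]{Glas}, so there is no ``paper's own proof'' to compare against. Your outline does capture the right overall shape of Glasman's argument: reduce to cospans of orbits using that coproducts in $\fin_{\mathcal{O}}$ are disjoint and universal (being computed in presheaves), form the pullback presheaf $P$ with $P(T) = \mathrm{Hom}(T,X)\times_{\mathrm{Hom}(T,Z)}\mathrm{Hom}(T,Y)$, and show $P$ is a finite coproduct of representables. Identifying that last step as the crux is also correct.

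However, your explanation of \emph{how} pushouts and coequalizers enter is off in a way that would derail the proof if pursued literally. You suggest that each orbit in the decomposition of $X\times_Z Y$ ``aris[es] as a pushout-type colimit in $\mathcal{O}$ of the pair $(X,Y)$ over $Z$,'' and that one should ``form $X\amalg_?Y$ appropriately and then coequalize.'' This is not what happens. In the motivating example $\mathcal{O}=\mathcal{O}_G$, the pushout $G/H\sqcup_{G/L}G/K$ is $G/\langle H,K\rangle$ (a \emph{join} of subgroups), whereas the pullback $G/H\times_{G/G}G/K$ decomposes via double cosets into orbits $G/(H\cap gKg^{-1})$ (built from \emph{intersections}); these are genuinely different operations and the pieces of the pullback are not pushouts of $X$ and $Y$ in any direct sense. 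The actual role of the pushouts and coequalizers in $\mathcal{O}$ is to control the category of elements $\mathcal{O}_{/P}$: since $\mathrm{Hom}(-,X)$ sends pushouts to pullbacks and coequalizers to equalizers, so does $P$, and this lets one glue elements of $P$ along cospans $T_1\leftarrow T_0\rightarrow T_2$ (via $T_1\sqcup_{T_0}T_2$) and collapse parallel pairs (via coequalizers). Combined with essential finiteness of $\mathcal{O}$ and the epi hypothesis, this shows $\mathcal{O}_{/P}$ is an essentially finite category in which every connected component has a terminal object, which is exactly the statement that $P$ is a finite coproduct of representables. If you want to write this up, that is the lemma to prove; the ``$X\amalg_? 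Y$ then coequalize'' picture should be discarded.
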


Unfortunately, neither of these two levels of generality is quite right for what we
need. We will prove
most of our theorems by induction
on the size of the poset of an epiorbital
category, but unfortunately sometimes
the inductive procedure
takes us outside the realm of
epiorbital categories. We will take a middle ground, and propose the following.

\begin{definition}\label{defn:ioc} An \textbf{inductive orbital category} is an essentially finite,
orbital, (discrete)
category $\mathcal{O}$
with the property that every endomorphism is an isomorphism.
Notice that the isomorphism classes of objects again form a poset, 
$\mathsf{P}_{\mathcal{O}}$.
\end{definition}

\begin{warning} In this definition we use `essentially finite' in the
sense of \emph{ordinary} category theory. Our inductive
orbital categories will generally \emph{not} be finite as $\infty$-categories,
i.e. they will not usually have only finitely many non-degenerate simplices.
\end{warning}

\begin{remark} In fact, the condition that the isomorphism classes
of objects form a poset under the relation 
	\[
	[X]\ge [Y]
	\iff
	 \mathrm{Hom}(X, Y)\ne \varnothing
	\]
is equivalent
to the condition that every endomorphism is an isomorphism.
Both of these, in turn, are equivalent to the condition that
the category admits a conservative map to a poset.
Categories in which every endomorphism is an
isomorphism are sometimes called EI-categories in the literature.
\end{remark}

\begin{remark} Barwick-Dotto-Glasman-Nardin-Shah
define and study the more general but related
notion of a \emph{perfect orbital $\infty$-category} which is likely the
correct setting for the sorts of inductive arguments used below. All of our results 
should hold in this generality with minimal change.
\end{remark}

We choose this definition because of the following closure properties.

\begin{lemma} Let $\mathcal{O}$ be an inductive orbital category.
\begin{enumerate}[(i)]
\item If $I \subseteq \mathsf{P}_{\mathcal{O}}$ is an interval, then
the corresponding full subcategory $\mathcal{O}_I \subseteq \mathcal{O}$
is an inductive orbital category.
\item It $T \in \fin_{\mathcal{O}}$, then $\mathcal{O}_{/T}$
is an inductive orbital category.
\end{enumerate}
\end{lemma}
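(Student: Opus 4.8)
The plan is to verify, in each case, the four defining conditions of an inductive orbital category. Three of them---essential finiteness, being a (discrete) $1$-category, and the property that every endomorphism is invertible---pass immediately to the full subcategory $\mathcal{O}_I \subseteq \mathcal{O}$, whose poset of isomorphism classes is visibly the induced subposet $I \subseteq \mathsf{P}_{\mathcal{O}}$. For (ii), recalling that $\mathcal{O}_{/T}$ denotes the full subcategory of $(\fin_{\mathcal{O}})_{/T}$ on the objects $O \to T$ with $O$ an orbit: there are only finitely many such, since $\mathcal{O}$ has finitely many isomorphism classes and each $\mathrm{Hom}(O,T)$ is finite; it is a $1$-category; and an endomorphism of $(O \xrightarrow{f} T)$ is an endomorphism $\varphi$ of $O$ with $f\varphi = f$, which is invertible in $\mathcal{O}$, and then $f\varphi^{-1}=f$ as well, so $\varphi$ is invertible over $T$. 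Thus in both parts the only substantive point is \emph{orbitality}: that $\fin_{\mathcal{O}_I}$, resp.\ $\fin_{\mathcal{O}_{/T}}$, admits pullbacks.

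For (ii), first I would identify $\fin_{\mathcal{O}_{/T}}$ with the slice category $(\fin_{\mathcal{O}})_{/T}$. The latter has finite coproducts (formed on domains); every object $X \to T$ is the coproduct of the orbits-over-$T$ arising from a decomposition $X=\coprod_i O_i$; and the objects of $\mathcal{O}_{/T}$ are exactly the connected objects, so $(\fin_{\mathcal{O}})_{/T}$ satisfies the universal property of the free finite-coproduct completion of $\mathcal{O}_{/T}$. Since $\mathcal{O}$ is orbital, $\fin_{\mathcal{O}}$ has pullbacks, and a slice $\mathcal{C}_{/T}$ always inherits pullbacks from $\mathcal{C}$ (they are computed downstairs); hence $\fin_{\mathcal{O}_{/T}} \simeq (\fin_{\mathcal{O}})_{/T}$ has pullbacks.

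For (i), the subtlety is that pullbacks in $\fin_{\mathcal{O}_I}$ are \emph{not} in general computed as in $\fin_{\mathcal{O}}$: the ambient pullback typically acquires orbit summands whose isomorphism class lies outside $I$. So the plan is to prune those and check that nothing is lost. Given a cospan $A \to C \leftarrow B$ in $\fin_{\mathcal{O}_I}$, I would form its pullback $P$ in $\fin_{\mathcal{O}}$, decompose $P = \coprod_\alpha O_\alpha$ into orbits, and set $P' := \coprod_{[O_\alpha]\in I} O_\alpha \in \fin_{\mathcal{O}_I}$, which inherits (through $P' \hookrightarrow P$) a cone over the cospan. The claim is that $P'$ is a pullback in $\fin_{\mathcal{O}_I}$. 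Since $\mathrm{Hom}(-,P')$ and $\mathrm{Hom}(-,P)$ both carry coproducts to products, it suffices to show that no orbit $Q_\beta$ with $[Q_\beta]\in I$ admits a morphism to a discarded summand $O_\alpha$; granting this, for every $Q \in \fin_{\mathcal{O}_I}$ the inclusion induces $\mathrm{Hom}(Q,P') = \mathrm{Hom}(Q,P)$ (computed in $\fin_{\mathcal{O}}$, equivalently in the full subcategory $\fin_{\mathcal{O}_I}$), and the right-hand side is canonically the set of cones from $Q$ over the cospan since $A,B,C$ lie in $\fin_{\mathcal{O}_I}$. But a morphism $Q_\beta \to O_\alpha$ yields $[Q_\beta] \ge [O_\alpha]$, while $O_\alpha \hookrightarrow P \to C$ factors through a single orbit summand $C_k$ of $C$, so $[O_\alpha] \ge [C_k]$; as $[Q_\beta]$ and $[C_k]$ both lie in $I$ and $[C_k] \le [O_\alpha] \le [Q_\beta]$, the interval hypothesis forces $[O_\alpha] \in I$, contradicting that it was discarded.

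The main obstacle is exactly this pruning step in (i): it is the one place where the full force of ``interval'' is needed. For $I$ upward-closed no pruning is necessary (the ambient pullback already lies in $\fin_{\mathcal{O}_I}$), and for $I$ downward-closed the inclusion $\fin_{\mathcal{O}_I} \hookrightarrow \fin_{\mathcal{O}}$ has a right adjoint---discarding the summands of type outside $I$---so $\fin_{\mathcal{O}_I}$ is coreflective and inherits finite limits formally; the general case is what forces the argument to run through the assertion that the discarded components are invisible to $\fin_{\mathcal{O}_I}$, via the sandwich $[C_k] \le [O_\alpha] \le [Q_\beta]$, rather than through any direct comparison of pullbacks in $\fin_{\mathcal{O}_I}$ and $\fin_{\mathcal{O}}$.
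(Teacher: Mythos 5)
Your proof is correct and takes essentially the same approach as the paper: part (ii) via the identification $\fin_{\mathcal{O}_{/T}} \simeq (\fin_{\mathcal{O}})_{/T}$, and part (i) by forming the pullback in $\fin_{\mathcal{O}}$, pruning to the orbit summands lying in $\mathcal{O}_I$, and showing the discarded orbits receive no maps from $\fin_{\mathcal{O}_I}$ via a sandwich inequality forced by the interval hypothesis. The paper's argument obtains its lower bound from a leg of the cospan rather than the apex, and gives less detail on part (ii), but the substance is the same.
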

\begin{proof} In both cases, it is clear that
every endomorphism is still an isomorphism, and
that the categories are still discrete,
so we just
need to check that these categories are orbital.
For (ii) this is immediate since
$\fin_{\mathcal{O}_{/T}} \cong \left(\fin_{\mathcal{O}}\right)_{/T}$. 
So we are left with (i). To that end, consider a pullback square in
$\mathsf{Fin}_\mathcal{O}$
	\[
	\xymatrix{
	U'\ar[r]\ar[d] & U\ar[d]\\
	V'\ar[r] & V
	}
	\]
where $U, V, V' \in \mathcal{O}_I$. We can write $U'$ as a 
finite coproduct $U' = \coprod_{\alpha \in A} S_\alpha$ where each $S_\alpha \in \mathcal{O}$.
Define $U'' \in \mathcal{O}_I$ to be the coproduct over the subset $B \subseteq A$
of $\beta$ with $S_{\beta} \in \mathcal{O}_I$. We claim that the diagram
	\[
	\xymatrix{
	U''\ar[r]\ar[d] & U\ar[d]\\
	V' \ar[r] & V
	}
	\]
is a pullback square in $\mathcal{O}_I$. Indeed, since each $S_i$
maps to $V' \in \mathcal{O}_I$, we know that $[S_i] \ge p$ for some
$p \in I$. Since $I$ is an interval, the only way that $[S_i]$ could
fail to be in $I$ is if every element of $I$ is not greater than $[S_i]$. 
So, for every $W \in \fin_{\mathcal{O}_{I}}$, $\mathrm{Hom}(W, S_i)$
is empty when $S_i \notin \mathcal{O}_I$. The claim now follows from the
string of isomorphisms, for $W \in \mathcal{O}_I$,
	\[
	\mathrm{Hom}(W, U')
	= \mathrm{Hom}(W, \coprod_{\alpha} S_\alpha)
	= \coprod_{\alpha\in A} \mathrm{Hom}(W, S_\alpha)
	= \coprod_{\beta \in B} \mathrm{Hom}(W, S_{\beta})
	= \mathrm{Hom}(W, U'').
	\]
\end{proof}

We now recall the examples of interest.

\begin{example} If $G$ is a finite group, then the category $\mathcal{O}_G$
of non-empty transitive $G$-sets, i.e. orbits,
is an epiorbital category. The partial order on isomorphism classes is opposite
to the poset of conjugacy classes of subgroups. That is:
	\[
	[G/H] \ge [G/K] \iff H\text{ is subconjugate to }K.
	\]
More generally, any subgroup $H \subseteq
G$ yields a full subcategory $\mathcal{O}_{G/H} \subseteq \mathcal{O}_G$ of those
orbits with stabilizers that contain $H$ up to conjugacy (i.e. the full subcategory
corresponding to the downward closed
interval $\{[T]\le [G/H]\}$). This is also an epiorbital category,
and when $N$ is normal the two possible interpretations of the symbol $\mathcal{O}_{G/N}$
agree.
\end{example}

\begin{example} Crucially, if $\mathcal{F}$ is a family of subgroups of $G$
closed under
conjugation and passage to subgroups, then the full subcategory
$\mathcal{O}_{\mathcal{F}} \subseteq \mathcal{O}_G$ of transitive $G$-sets
with stabilizers in $\mathcal{F}$ is an inductive orbital category. It is \emph{not} an
epiorbital category in general.
\end{example}

\begin{example} The category $\mathsf{Surj}_{\le n}$ of finite sets of cardinality
at most $n$ and surjective maps between them is an epiorbital category.
\end{example}

\begin{example} If $\mathcal{G}$ is an $\infty$-groupoid, then it is an orbital
$\infty$-category. Unfortunately, even when $\mathcal{G}$ is finite and discrete,
it is not epiorbital because coequalizers do not exist except in trivial cases.
In this case, however, it is an inductive orbital category.
\end{example}

Recall that the twisted arrow category
$\mathsf{TwArr}(\mathcal{C})$ of an $\infty$-category
$\mathcal{C}$ is a specific model of a left fibration
	\[
	\mathsf{TwArr}(\mathcal{C}) \longrightarrow \mathcal{C}^{op} \times \mathcal{C}
	\]
classifying the functor 
	\[
	\mathrm{map}_{\mathcal{C}}:
	\mathcal{C}^{op} \times \mathcal{C} \to \mathsf{Spaces}
	\]  

\begin{warning} Different authors have
different conventions for the twisted arrow category. For example, ours agrees
with Mac Lane, and with Barwick et. al., but is \emph{dual} to
the fibration used by Lurie and by Gepner-Haugseng-Nikolaus.
\end{warning}

\begin{example} When $\mathcal{C}= \Delta^n$,
the twisted arrow category is the poset of
intervals in $[n]$, ordered by inclusion. 
\end{example}

\begin{definition}[Barwick \cite{mackI}]
 For any orbital $\infty$-category, $\mathcal{O}$,
define a simplicial set $\aeff(\mathcal{O})$
by declaring the $n$-simplices to be the set
of functors $F: \mathsf{TwArr}(\Delta^n)^{op}
\to \fin_{\mathcal{O}}$ such that, for all $0\le i \le j \le k \le \ell \le n$, the
square
	\[
	\xymatrix{
	F_{i\ell} \ar[r]\ar[d] & F_{ik}\ar[d]\\
	F_{j\ell}\ar[r] & F_{jk} 
	}
	\]
is a pullback. We call $\aeff(\mathcal{O})$ the \textbf{effective Burnside
$\infty$-category of $\mathcal{O}$}.
\end{definition}

\begin{remark} Even though $\mathcal{O}$ is a discrete category, 
$\aeff(\mathcal{O})$ will not be discrete in general.
Instead, in this case $\aeff(\mathcal{O})$
will be a $(2,1)$-category. The homotopy
category $\mathrm{h}\aeff(\mathcal{O})$ has a more familiar
description: the objects are objects of $\fin_{\mathcal{O}}$, the
morphisms are \emph{isomorphism classes} of spans, and composition
is given by pullback.
\end{remark}

We will need a condition on the targets of our Mackey functors.

\begin{definition} Let $\mathcal{C}$ be an
$\infty$-category which admits finite products and finite coproducts.
Suppose moreover that $\mathcal{C}$ is pointed, i.e. it admits
an object $0$ which is both initial and final.
	\begin{itemize}
	\item We say that $\mathcal{C}$ is \textbf{semi-additive}
	if for all $X, Y \in \mathcal{C}$, the canonical map
		\[
		\begin{pmatrix}
		\mathrm{id}_X & 0\\
		0 & \mathrm{id}_Y
		\end{pmatrix}:
		X \amalg Y \longrightarrow 
		X \times Y
		\]
	is an equivalence. In this case we denote finite coproducts
	and products by $X \oplus Y$ and refer to them
	as direct sums.
	\item We say that $\mathcal{C}$ is
	\textbf{additive} if it is semi-additive 
	and each of the resulting commutative monoids
	$[X, Y]$ have inverses.
	\end{itemize}
\end{definition}

\begin{example} Any ordinary abelian category, viewed
as an $\infty$-category, is additive.
\end{example}

\begin{example} Any stable $\infty$-category is additive.
\end{example}

\begin{example} By Barwick \cite[4.3]{mackI}, the $\infty$-category $\aeff(\mathcal{O})$
is semi-additive.
\end{example}

\begin{definition}\cite{mackI}\label{defn:mack}
If $\mathcal{C}$ is a semi-additive $\infty$-category and
$\mathcal{O}$ is an orbital $\infty$-category, we denote by
$\mathsf{Mack}(\mathcal{O}; \mathcal{C})$ the $\infty$-category
of functors $\aeff(\mathcal{O}) \to \mathcal{C}$ which
preserve finite direct sums. 

When $\mathcal{C}=\mathsf{Sp}$
we will denote this $\infty$-category more simply by $\mathsf{Sp}^{\mathcal{O}}$.
\end{definition}

\begin{example} If $\mathcal{G}$ is a connected
groupoid, then $\mathsf{Mack}(\mathcal{G}; \mathcal{C})$
is canonically equivalent to $\mathsf{Fun}(\mathcal{G}, \mathcal{C})$ \cite[2.27]{Glas}.
\end{example}

\begin{example} For a finite group, $G$,
the $\infty$-category $\mathsf{Mack}(\mathcal{O}_G; \mathsf{Sp})$
of spectral Mackey functors
is equivalent to the $\infty$-category underlying the model category
of orthogonal $G$-spectra (cf. Nardin or Guilloiu-May \cite{denis-stable, GuM}). 
This justifies the notation $\mathsf{Sp}^G$.
\end{example}

\begin{example} The $\infty$-category $\mathsf{Mack}(\mathsf{Surj}_{\le n}; \mathcal{C})$
is equivalent to the $\infty$-category of $n$-excisive functors from
$\mathsf{Sp}$ to $\mathcal{C}$. This is proven by Glasman in
\cite{glas-calc}, and in unpublished work of Dwyer-Rezk.
\end{example}

Let $I \subseteq \mathsf{P}_{\mathcal{O}}$ denote a downward closed interval,
and denote by $\mathsf{Mack}(\mathcal{O}; \mathcal{C})_{\Phi I}$ 
the full
subcategory of $\mathsf{Mack}(\mathcal{O}; \mathcal{C})$ spanned by
those functors which take each element
in the complement of $I$ to a zero object in $\mathcal{C}$.

\begin{theorem}[\cite{Glas}] \label{thm-mack-strat}
If $\mathcal{O}$ is an inductive orbital category, 
and $\mathcal{C}$ is stable, then the stable $\infty$-category
$\mathsf{Mack}(\mathcal{O}; \mathcal{C})$ admits a canonical 
$\mathsf{P}_{\mathcal{O}}$-stratification
which, for downward closed intervals, takes the form:
	\[
	\mathfrak{S}_0(I) := \mathsf{Mack}(\mathcal{O}; \mathcal{C})_{\Phi I}.
	\]
\end{theorem}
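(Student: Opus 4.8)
The plan is to build the $\mathsf{P}_{\mathcal{O}}$-stratification directly from the isotropy-separation machinery available for Mackey functors, and then verify the recollement axioms interval by interval using the closure properties already established. First I would recall that for any downward-closed interval $I \subseteq \mathsf{P}_{\mathcal{O}}$, restriction along the inclusion of the full subcategory $\mathcal{O}_I \subseteq \mathcal{O}$ (which is again an inductive orbital category by the Lemma) induces a functor $\mathsf{Mack}(\mathcal{O};\mathcal{C}) \to \mathsf{Mack}(\mathcal{O}_I;\mathcal{C})$; the subcategory $\mathsf{Mack}(\mathcal{O};\mathcal{C})_{\Phi I}$ is exactly the kernel of restriction to the complement $\mathcal{O}_{I^c}$. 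The key structural input is that, because $\mathcal{C}$ is stable and $\aeff(\mathcal{O})$ is semi-additive, each such restriction functor admits both a left adjoint (left Kan extension, or ``extension by zero'' in the direction of smaller orbits) and a right adjoint, so that $\mathsf{Mack}(\mathcal{O};\mathcal{C})_{\Phi I}$ is both reflective and coreflective. This is the stable input needed to invoke the theorem of Barwick--Glasman quoted just above (the poset analogue of \cite[Lemma 3]{recoll}): a map from downward-closed intervals to reflective-and-coreflective full subcategories closed under equivalence, with $\mathfrak{S}_0(\mathsf{P}_{\mathcal{O}}) = \mathcal{C}$, automatically extends to a $\mathsf{P}_{\mathcal{O}}$-stratification.

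Concretely, the steps are: (1) For each downward-closed $I$, exhibit $\mathsf{Mack}(\mathcal{O};\mathcal{C})_{\Phi I}$ as the essential image of a localization. The cleanest route is to identify it with $\mathsf{Mack}(\mathcal{O}_I;\mathcal{C})$ via the adjoint pair $(j_!, j^*)$ where $j^*$ is restriction to $\mathcal{O}_I$: a Mackey functor supported on $I$ is the left Kan extension of its restriction, using that $\aeff(\mathcal{O}_I) \hookrightarrow \aeff(\mathcal{O})$ and that induced orbits $T \in \fin_{\mathcal{O}}$ with all components outside $I$ receive no maps from $\mathcal{O}_I$ — this is precisely the combinatorial point proven in the Lemma above (the string of isomorphisms $\mathrm{Hom}(W, U') = \mathrm{Hom}(W, U'')$). (2) Check that $I \mapsto \mathsf{Mack}(\mathcal{O};\mathcal{C})_{\Phi I}$ is a map of posets: if $I \subseteq I'$ then a functor vanishing outside $I$ vanishes outside $I'$, so the inclusion of subcategories is clear. (3) Confirm the boundary values: $\mathfrak{S}_0(\varnothing)$ is the zero functor (the full subcategory of final objects, since $\mathcal{C}$ is pointed), and $\mathfrak{S}_0(\mathsf{P}_{\mathcal{O}})$ is everything. (4) Apply the Barwick--Glasman recognition theorem to extend $\mathfrak{S}_0$ to all intervals and obtain the recollement decompositions for free.

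The main obstacle is step (1): verifying that restriction to $\mathcal{O}_I$ and its Kan extensions genuinely preserve the direct-sum-preserving property (i.e., land in Mackey functors rather than arbitrary functors on $\aeff$), and that $j_! j^*$ is idempotent with image exactly the $\Phi I$-supported functors. For the left adjoint one must check that left Kan extension along $\aeff(\mathcal{O}_I) \to \aeff(\mathcal{O})$ is computed by a pointwise colimit that, on an orbit $T \notin I$, is indexed by an empty (or weakly contractible but vanishing) diagram — this is where the interval condition on $I$ and the $\mathrm{Hom}$-vanishing from the Lemma do the real work, and where stability of $\mathcal{C}$ is used to ensure the relevant Kan extensions exist and are exact. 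The right adjoint (coreflectivity) is the formally dual statement using that $\aeff(\mathcal{O})$ is self-dual. Once both adjoints are in hand, reflectivity and coreflectivity of $\mathsf{Mack}(\mathcal{O};\mathcal{C})_{\Phi I}$ follow, and the rest is a direct appeal to the quoted theorem. I would remark that this recovers, as the special case $\mathcal{O} = \mathcal{O}_G$, the familiar isotropy-separation stratification of $\mathsf{Sp}^G$ by families of subgroups.
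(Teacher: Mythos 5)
Your high-level strategy --- show that each $\mathfrak{S}_0(I)$ is reflective and coreflective and then invoke the Barwick--Glasman recognition theorem quoted just above --- is exactly the paper's ``or else'' route, which outsources the reflectivity/coreflectivity to Proposition 3.13 of \cite{Glas}. So the skeleton is right. However, the argument you sketch to establish reflectivity and coreflectivity contains a genuine error in the functorial setup, and it would fail if you tried to push it through.

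The problem is in your step (1). You assert that the inclusion $\mathcal{O}_I \hookrightarrow \mathcal{O}$ induces an embedding $\aeff(\mathcal{O}_I) \hookrightarrow \aeff(\mathcal{O})$, and you propose to realize ``extension by zero'' as left Kan extension along it. There are two issues. First, the functor $\fin_{\mathcal{O}_I} \hookrightarrow \fin_{\mathcal{O}}$ does \emph{not} preserve pullbacks for a general inductive orbital category --- this is precisely the point of the paper's Remark after Notation \ref{notation-so-many-adjoints}, with the counterexample $\mathsf{Surj}_{\le n} \subseteq \mathsf{Surj}_{\le n+1}$ for $n>1$. So the span-category functor $\aeff(\mathcal{O}_I) \to \aeff(\mathcal{O})$ you want to Kan-extend along simply does not exist in the generality of the theorem. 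Second, even when it does exist (e.g.\ for $\mathcal{O} = \mathcal{O}_G$), left Kan extension along it is the functor the paper calls $(\psi_{\widetilde{\mathcal{F}}})_!$, and its image is \emph{not} $\mathsf{Mack}(\mathcal{O};\mathcal{C})_{\Phi I}$. Your claim that ``orbits $T \notin I$ receive no maps from $\mathcal{O}_I$'' is true in $\fin_{\mathcal{O}}$ (that is what the cited Lemma proves), but is false in the effective Burnside category: spans of the form $W \leftarrow U \rightarrow T$ with $W \in \fin_{\mathcal{O}_I}$ and $T \notin \fin_{\mathcal{O}_I}$ exist in abundance --- already for $C_2$, the transfer span $* \leftarrow C_2 = C_2$ is a nonzero map from the $I$-object $*$ to the non-$I$-object $C_2$. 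So the colimit computing the Kan extension at $T \notin I$ is not indexed by an empty diagram, and $(\psi_{\widetilde{\mathcal{F}}})_!$ of a Mackey functor is not supported on $I$.

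The correct route, and what the paper's Notation \ref{notation-so-many-adjoints}(b) sets up, runs in the opposite direction: the inclusion $\psi^{\amalg}_I: \fin_{\mathcal{O}_I} \hookrightarrow \fin_{\mathcal{O}}$ always admits a right adjoint $i_I: \fin_{\mathcal{O}} \to \fin_{\mathcal{O}_I}$, and being a right adjoint it preserves pullbacks, so it induces a functor $\aeff(\mathcal{O}) \to \aeff(\mathcal{O}_I)$. \emph{Restriction} along this functor (not Kan extension along any inclusion of Burnside categories) is the extension-by-zero $(i_I)_*: \mathsf{Mack}(\mathcal{O}_I;\mathcal{C}) \to \mathsf{Mack}(\mathcal{O};\mathcal{C})$, fully faithful with image $\mathsf{Mack}(\mathcal{O};\mathcal{C})_{\Phi I}$. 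Its left and right adjoints $(i_I)^*$ and $(i_I)^!$ are then the left and right Kan extensions along $\aeff(\mathcal{O}) \to \aeff(\mathcal{O}_I)$, and they supply reflectivity and coreflectivity. The genuine technical content (which the paper delegates to \cite{Glas}) is checking that these Kan extensions preserve direct-sum-preserving functors; you flag this as an obstacle but the flagged argument (empty-colimit at $T \notin I$) does not address it and is itself incorrect for the reason above.
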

\begin{proof} Either observe that the proof given by Glasman \cite{Glas} for epiorbital
categories works verbatim for inductive orbital categories, or else
apply Proposition 3.13 of \cite{Glas} to each downward closed $I$ separately
to deduce that $\mathfrak{S}_0(I)$ is the reflective and coreflective
piece of a recollement.
\end{proof}

\begin{warning} This theorem is false if $\mathcal{C}$ is only assumed
to be additive. The issue
is that the left adjoint to the inclusion 
$\mathsf{Mack}(\mathcal{O}; \mathcal{C})_{\Phi I} \subseteq \mathsf{Mack}(\mathcal{O};
\mathcal{C})$ is not exact in general. If $\mathcal{C}$ is abelian, then a related fact is true using the theory
of stratifications of abelian categories (which is not the same
as the notion of stratification we are using.) Unfortunately,
recollements in the theory of abelian categories are less well-behaved
than their $\infty$-categorical counterparts: it is not possible, in general,
to recover a stratified abelian category from its atomic localizations,
even in the case of a recollement.
\end{warning}

We will identify the strata of this stratification momentarily. But first we
take some time to introduce a lot of notation.

\begin{notation}\label{notation-so-many-adjoints}
We now collect together our conventions on the various
functors that show up in the theory of Mackey functors. We wouldn't make
such a fuss, but we will use essentially all of these at some point.

Below, $\mathcal{O}$
will denote an inductive orbital category unless otherwise specified. All Mackey
functors will take values in a fixed presentable, semi-additive
$\infty$-category $\mathcal{C}$
which we suppress (temporarily breaking our convention) to avoid yet more clutter.
	\begin{enumerate}[(a)]
	\item If $\mathcal{F} \subseteq 
	\mathsf{P}_\mathcal{O}$ is \emph{upward closed}, then
	let $j_{\mathcal{F}}: \mathcal{O}_{\mathcal{F}} \hookrightarrow \mathcal{O}$
	denote the inclusion of the full subcategory spanned
	by objects whose isomorphism class lies in $\mathcal{F}$. The
	functor $j_{\mathcal{F}}^{\amalg}$ preserves pullbacks, so we get
	adjoint functors:
		\[
		\xymatrix{
		(j_\mathcal{F})_! : \mathsf{Mack}(\mathcal{O}_{\mathcal{F}}) \ar@<.7ex>[r]&
		\mathsf{Mack}(\mathcal{O}) : (j_{\mathcal{F}})^* \ar@<.7ex>[l]
		}
		\]
		\[
		\xymatrix{
		(j_{\mathcal{F}})^*: \mathsf{Mack}(\mathcal{O}) \ar@<.7ex>[r]&
		\mathsf{Mack}(\mathcal{O}_{\mathcal{F}}) : (j_{\mathcal{F}})_*\ar@<.7ex>[l]
		}
		\]
	given by left Kan extension, restriction, and right Kan extension, respectively.
	
	\item If $\widetilde{\mathcal{F}} \subseteq \mathsf{P}_{\mathcal{O}}$ 
	is \emph{downward
	closed}, then let 
	$\psi_{\widetilde{\mathcal{F}}}: \mathcal{O}_{\widetilde{\mathcal{F}}}
	\hookrightarrow \mathcal{O}$ denote the inclusion of the evident full subcategory. 
	The associated embedding
	$\psi_{\widetilde{\mathcal{F}}}^{\amalg}$ admits a right adjoint
	$i_{\widetilde{\mathcal{F}}}$. We then get the following adjoint pairs:
		\[
		\xymatrix{
		(i_{\widetilde{\mathcal{F}}})^*: 
		\mathsf{Mack}(\mathcal{O}) \ar@<.7ex>[r]&
		\mathsf{Mack}(\mathcal{O}_{\widetilde{\mathcal{F}}})
		: (i_{\widetilde{\mathcal{F}}})_* \ar@<.7ex>[l]
		}
		\]
		\[
		\xymatrix{
		(i_{\widetilde{\mathcal{F}}})_*: 
		\mathsf{Mack}(\mathcal{O}_{\widetilde{\mathcal{F}}}) \ar@<.7ex>[r]&
		\mathsf{Mack}(\mathcal{O})
		: (i_{\widetilde{\mathcal{F}}})^! \ar@<.7ex>[l]
		}
		\]
	We note that, perhaps confusingly, $(i_{\widetilde{\mathcal{F}}})^*$ is given by
	left Kan extension.
	
	If $\mathcal{F}$ is the upward closed complement of $\widetilde{\mathcal{F}}$
	we will sometimes abuse notation and denote by $\Phi^{\mathcal{F}}X$
	\emph{either} $(i_{\widetilde{\mathcal{F}}})^*$ or 
	$(i_{\widetilde{\mathcal{F}}})_*i_{\widetilde{\mathcal{F}}}^*$ when
	we believe there is no chance of confusion.
	
	When $\widetilde{\mathcal{F}} = (-\infty, T]$ is the set of all $p \le [T]$ for 
	some $T \in \mathcal{O}$, then 
	we denote $i_{\widetilde{\mathcal{F}}}$ by $i_{T}$. We will
	sometimes denote the value
	$(i_{T})^*X(T)$ by 
	$X^{\Phi T} \in \mathcal{C}$.\footnote{This leads to an unfortunate
	clash with the standard equivariant notation, 
	but we don't know of a way to avoid it.}
	
	\item In the event that $\psi^{\amalg}_{\widetilde{\mathcal{F}}}$ preserves
	pullbacks, we get even more:
		\[
		\xymatrix{
		(\psi_{{\widetilde{\mathcal{F}}}})_!:
		\mathsf{Mack}(\mathcal{O}_{\widetilde{\mathcal{F}}}) \ar@<.7ex>[r]&
		\mathsf{Mack}(\mathcal{O})
		: (\psi_{{\widetilde{\mathcal{F}}}})^* \ar@<.7ex>[l]
		}
		\]
		\[
		\xymatrix{
		(\psi_{{\widetilde{\mathcal{F}}}})^*:
		\mathsf{Mack}(\mathcal{O}) \ar@<.7ex>[r]&
		\mathsf{Mack}(\mathcal{O}_{\widetilde{\mathcal{F}}})
		:(\psi_{{\widetilde{\mathcal{F}}}})_* \ar@<.7ex>[l]
		}
		\]
	\item Given an object $T \in \fin_{\mathcal{O}} \subseteq \mathsf{Psh}(\mathcal{O})$,
	form the category $\mathcal{O}_{/T}$ of pairs $(x, f)$ where $x \in \mathcal{O}$
	and $f \in T(x)$. This is also an inductive orbital category, and the map
	$\mathcal{O}_{/T} \to \mathcal{O}$ 
	induces a restriction map
	$\mathrm{res}_T: \mathsf{Mack}(\mathcal{O}) \to 
	\mathsf{Mack}(\mathcal{O}_{/T})$.
	The restriction admits both a left and 
	right adjoint (given by left and right Kan extension
	respectively).
		\[
		\xymatrix{
		\mathrm{ind}_T: \mathsf{Mack}(\mathcal{O}_{/T}) \ar@<.7ex>[r]&
		\mathsf{Mack}(\mathcal{O})
		 :\mathrm{res}_T \ar@<.7ex>[l]
		 }
		\]
		\[
		\xymatrix{
		\mathrm{res}_T : \mathsf{Mack}(\mathcal{O}) \ar@<.7ex>[r]&
		\mathsf{Mack}(\mathcal{O}_{/T})
		 : \mathrm{coind}_T \ar@<.7ex>[l]
		 }
		\]
	A key feature of Mackey functors with values in an additive $\infty$-category
	(or, more generally, a semiadditive $\infty$-category) is that the canonical map
	$\mathrm{ind}_T \to \mathrm{coind}_T$ is an equivalence.
	We will often abbreviate (co)induction and restriction by $\uparrow_T$ and 
	$\downarrow_T$, possibly decorated further when there is ambiguity.
	\end{enumerate}
\end{notation}

\begin{remark}\label{rem-induct} It will be very useful in inductive arguments to note that
the poset $\mathsf{P}_{\mathcal{O}_{/T}} \subseteq \mathsf{P}_{\mathcal{O}}$
is strictly smaller unless $T$ contains a representative of each minimal object
as a retract.
If $T = \coprod_i T_i$ for $T_i \in \mathcal{O}$, then
$\mathsf{P}_{\mathcal{O}_{/T}} = \cup_i \{p \le T_i\}$. Beware, however,
that $\mathcal{O}_{/T}$ is \emph{not}
the same as $\mathcal{O}_{\cup_i \{p\le T_i\}}$, using the notation in (b) above.
The latter is the essential image of the former under the projection
$\mathcal{O}_{/T} \to \mathcal{O}$, but the projection
is not full in general.
\end{remark}

\begin{remark} The condition that $\psi_{\widetilde{\mathcal{F}}}^{\amalg}$ preserve
pullbacks is
satisfied in the following important cases:
	\begin{enumerate}[(i)]
	\item whenever $\widetilde{\mathcal{F}}$ is a set consisting of minimal elements
	in $\mathsf{P}_{\mathcal{O}}$,
	\item when $\mathcal{O} = \mathcal{O}_G$ and $\widetilde{\mathcal{F}} \subseteq
	\mathsf{P}_{\mathcal{O}_G}$ is an arbitrary downwardly closed subset.
	\end{enumerate}
We will use (i) frequently. 

It is not true in general
that $\psi_{\widetilde{\mathcal{F}}}^{\amalg}$ preserves pullbacks.
For example, this fails in the case $\mathcal{O}_{\widetilde{\mathcal{F}}}
= \mathsf{Surj}_{\le n}
\subseteq \mathsf{Surj}_{\le n+1}$ when $n>1$. 
\end{remark}

\begin{definition} With notation as above, 
we will refer to the essential image of $(j_{\mathcal{F}})_!$ as the \textbf{subcategory
of $\mathcal{F}$-nilpotent} objects, the essential image
of $(j_{\mathcal{F}})_*$ as the \textbf{subcategory of
$\mathcal{F}$-complete} objects, and the essential image of
$(i_{\widetilde{\mathcal{F}}})_*$ as the \textbf{subcategory of
$\widetilde{\mathcal{F}}$-geometric} objects. We denote these in the following way:
	\[
	\mathsf{Mack}(\mathcal{O}; \mathcal{C})^{\mathcal{F}-\mathrm{nil}}
	= \text{subcategory of }\mathcal{F}\text{-nilpotent objects},
	\]
	\[
	\mathsf{Mack}(\mathcal{O}; \mathcal{C})^{\mathcal{F}-\mathrm{cpl}}
	= \text{subcategory of }\mathcal{F}\text{-complete objects},
	\]
	\[
	\mathsf{Mack}(\mathcal{O}; \mathcal{C})_{\Phi\widetilde{\mathcal{F}}}
	= \text{subcategory of }\widetilde{\mathcal{F}}\text{-geometric objects}.
	\]
\end{definition}

The following is straightfoward from the definitions and the non-trivial
\cite[2.27]{Glas}.

\begin{lemma} Let $I \subseteq \mathcal{O}$ be an interval, and write
it as $I = \mathcal{F} \cap \widetilde{\mathcal{F}}$ 
where $\mathcal{F}$ and $\widetilde{\mathcal{F}}$ are
the smallest upward closed and downward closed intervals, respectively,
containing $I$.
In the stratification of $\mathsf{Mack}(\mathcal{O};\mathcal{C})$
determined by Theorem \ref{thm-mack-strat}, 
	\[
	\mathfrak{S}(I) = \mathsf{Mack}(\mathcal{O}; \mathcal{C})^{\mathcal{F}-\mathrm{cpl}}
	\cap \mathsf{Mack}(\mathcal{O}; \mathcal{C})_{\Phi\widetilde{\mathcal{F}}}.
	\]
When $I=\{T\}$ for $T \in \mathcal{O}$ we can identify this intersection
with $\mathrm{Fun}(\mathrm{Aut}(T), \mathcal{C})$.
\end{lemma}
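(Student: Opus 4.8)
The plan is to reduce the general statement to the two extreme cases of an upward-closed and a downward-closed interval, where the identification is essentially built into the construction of the stratification, and then to glue the two together along the evident decomposition $I = \mathcal{F}\cap\widetilde{\mathcal{F}}$.

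First I would record the two extreme cases. For a downward-closed $\widetilde{\mathcal{F}}$ the identity $\mathfrak{S}(\widetilde{\mathcal{F}}) = \mathsf{Mack}(\mathcal{O};\mathcal{C})_{\Phi\widetilde{\mathcal{F}}}$ is literally Theorem~\ref{thm-mack-strat}. For an upward-closed $\mathcal{F}$ with downward-closed complement $\widetilde{\mathcal{G}} := \mathsf{P}_{\mathcal{O}}\setminus\mathcal{F}$, the recollement exhibiting $\mathsf{Mack}(\mathcal{O};\mathcal{C})$ as glued from $\mathfrak{S}(\widetilde{\mathcal{G}}) = \mathsf{Mack}(\mathcal{O};\mathcal{C})_{\Phi\widetilde{\mathcal{G}}}$ and $\mathfrak{S}(\mathcal{F})$ is precisely the one attached to the adjoint triple $(j_{\mathcal{F}})_! \dashv (j_{\mathcal{F}})^* \dashv (j_{\mathcal{F}})_*$, whose open reflective summand is the essential image of $(j_{\mathcal{F}})_*$, i.e.\ $\mathsf{Mack}(\mathcal{O};\mathcal{C})^{\mathcal{F}-\mathrm{cpl}}$ by definition; so $\mathfrak{S}(\mathcal{F}) = \mathsf{Mack}(\mathcal{O};\mathcal{C})^{\mathcal{F}-\mathrm{cpl}}$. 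The characterization of $\mathsf{P}$-stratifications of stable categories (the theorem following Definition~\ref{def-strat-cats}), applied to $\widetilde{\mathcal{G}}$ with complement $\mathcal{F}$, then reformulates this as: $Y$ is $\mathcal{F}$-complete if and only if $\mathrm{map}(X,Y)$ is contractible for every $X \in \mathsf{Mack}(\mathcal{O};\mathcal{C})_{\Phi\widetilde{\mathcal{G}}}$. This right-orthogonality reformulation is what makes the gluing step go through.

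Now for a general interval $I$, with $\mathcal{F}$ its upward closure and $\widetilde{\mathcal{F}}$ its downward closure, so $I = \mathcal{F}\cap\widetilde{\mathcal{F}}$: since $\mathfrak{S}$ is monotone, $\mathfrak{S}(I) \subseteq \mathfrak{S}(\mathcal{F})\cap\mathfrak{S}(\widetilde{\mathcal{F}}) = \mathsf{Mack}(\mathcal{O};\mathcal{C})^{\mathcal{F}-\mathrm{cpl}}\cap\mathsf{Mack}(\mathcal{O};\mathcal{C})_{\Phi\widetilde{\mathcal{F}}}$ is immediate from the first step. For the reverse inclusion, note that $\widetilde{\mathcal{F}}\setminus I = \widetilde{\mathcal{F}}\cap\widetilde{\mathcal{G}}$ is downward closed, being an intersection of two downward-closed intervals, and that in the decomposition $\widetilde{\mathcal{F}} = (\widetilde{\mathcal{F}}\setminus I)\amalg I$ no element of $\widetilde{\mathcal{F}}\setminus I$ is strictly greater than any element of $I$ (a one-line check: such an element would land in $\mathcal{F}$ since $\mathcal{F}$ is upward closed). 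Hence $\mathfrak{S}(\widetilde{\mathcal{F}})$ is a recollement of $\mathfrak{S}(\widetilde{\mathcal{F}}\setminus I)$ and $\mathfrak{S}(I)$, and applying the characterization theorem inside the stratified category $\mathfrak{S}(\widetilde{\mathcal{F}})$ presents $\mathfrak{S}(I)$ as the full subcategory of $Y \in \mathsf{Mack}(\mathcal{O};\mathcal{C})_{\Phi\widetilde{\mathcal{F}}}$ with $\mathrm{map}(X,Y)$ contractible for all $X \in \mathfrak{S}(\widetilde{\mathcal{F}}\setminus I) = \mathsf{Mack}(\mathcal{O};\mathcal{C})_{\Phi(\widetilde{\mathcal{F}}\setminus I)}$. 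Since $\widetilde{\mathcal{F}}\setminus I \subseteq \widetilde{\mathcal{G}}$ forces $\mathsf{Mack}(\mathcal{O};\mathcal{C})_{\Phi(\widetilde{\mathcal{F}}\setminus I)} \subseteq \mathsf{Mack}(\mathcal{O};\mathcal{C})_{\Phi\widetilde{\mathcal{G}}}$, any $\mathcal{F}$-complete $Y$ is automatically orthogonal to $\mathfrak{S}(\widetilde{\mathcal{F}}\setminus I)$, and if it also lies in $\mathsf{Mack}(\mathcal{O};\mathcal{C})_{\Phi\widetilde{\mathcal{F}}}$ then $Y \in \mathfrak{S}(I)$. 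This proves the displayed identification.

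For the case $I = \{T\}$, take $\widetilde{\mathcal{F}} = \{[X] \le [T]\}$. Restricting the canonical stratification along this downward-closed interval gives a $\mathsf{P}_{\widetilde{\mathcal{F}}}$-stratification of $\mathfrak{S}(\widetilde{\mathcal{F}}) \simeq \mathsf{Mack}(\mathcal{O}_{\widetilde{\mathcal{F}}};\mathcal{C})$ (here $\mathcal{O}_{\widetilde{\mathcal{F}}}$ is again an inductive orbital category, by the closure lemma following Definition~\ref{defn:ioc}); by the uniqueness clause of the characterization theorem this agrees with the canonical stratification of $\mathsf{Mack}(\mathcal{O}_{\widetilde{\mathcal{F}}};\mathcal{C})$, so $\mathfrak{S}(\{T\})$ can be computed there. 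In $\mathsf{P}_{\widetilde{\mathcal{F}}}$ the class $[T]$ is the top element, hence $\{[T]\}$ is upward closed, and the first step identifies its stratum with $\mathsf{Mack}(\mathcal{O}_{\widetilde{\mathcal{F}}};\mathcal{C})^{\{T\}-\mathrm{cpl}}$, the essential image of $(j_{\{T\}})_*$, which is $\mathsf{Mack}((\mathcal{O}_{\widetilde{\mathcal{F}}})_{\{T\}};\mathcal{C}) = \mathsf{Mack}(\mathcal{O}_{\{T\}};\mathcal{C})$. Since $\mathcal{O}_{\{T\}}$ is the connected groupoid with automorphism group $\mathrm{Aut}(T)$, Glasman's computation \cite[2.27]{Glas} gives $\mathsf{Mack}(\mathcal{O}_{\{T\}};\mathcal{C}) \simeq \mathrm{Fun}(\mathrm{Aut}(T),\mathcal{C})$. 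I expect the only real friction to be bookkeeping: checking that restricting a stratification to a downward-closed interval recovers the canonical stratification of the corresponding Mackey category, and keeping straight that among the various adjoints it is $(j_{\mathcal{F}})_*$ — not $(j_{\mathcal{F}})_!$ — whose image is the reflective ``complete'' summand singled out by $\mathfrak{S}$; the remaining content (the orthogonality descriptions and the poset combinatorics) is routine.
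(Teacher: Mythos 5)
Your argument is correct, and it fills in what the paper dispatches in a single line (``straightforward from the definitions and the non-trivial \cite[2.27]{Glas}''). Both containments of the main identification are handled exactly as one would expect: $\mathfrak{S}(\widetilde{\mathcal{F}}) = \mathsf{Mack}(\mathcal{O};\mathcal{C})_{\Phi\widetilde{\mathcal{F}}}$ is literally Theorem~\ref{thm-mack-strat}; $\mathfrak{S}(\mathcal{F}) = \mathsf{Mack}(\mathcal{O};\mathcal{C})^{\mathcal{F}\text{-cpl}}$ follows from matching the recollement produced by the stratification against the adjoint triple $(j_{\mathcal{F}})_! \dashv j_{\mathcal{F}}^* \dashv (j_{\mathcal{F}})_*$ (the key identity being $\ker(j_{\mathcal{F}}^*) = \mathsf{Mack}_{\Phi\widetilde{\mathcal{G}}}$, which pins down the recollement and hence forces $\mathfrak{S}(\mathcal{F})$ to be the image of $(j_{\mathcal{F}})_*$); the $\subseteq$ inclusion is monotonicity of $\mathfrak{S}$; and the $\supseteq$ inclusion uses the decomposition $\widetilde{\mathcal{F}} = (\widetilde{\mathcal{F}}\setminus I)\amalg I$ together with the right-orthogonality characterization of the open piece of a stable recollement. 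Your one-line verification that no element of $\widetilde{\mathcal{F}}\setminus I$ exceeds any element of $I$ is exactly the poset fact needed for the decomposition to be admissible, and the observation $\widetilde{\mathcal{F}}\setminus I \subseteq \widetilde{\mathcal{G}}$ is what lets $\mathcal{F}$-completeness do double duty. The $I = \{T\}$ case via restriction to $\widetilde{\mathcal{F}} = (-\infty,T]$ and \cite[2.27]{Glas} is the intended route. Two minor remarks: (1) you do not really need a formal ``uniqueness clause'' for the $I=\{T\}$ case — it suffices that, in a stable recollement, the open piece is \emph{determined} as the right-orthogonal of the closed piece, so identifying the closed piece $\mathsf{Mack}_{\Phi(\widetilde{\mathcal{F}}\setminus\{T\})}$ on both sides of the equivalence $\mathfrak{S}(\widetilde{\mathcal{F}})\simeq\mathsf{Mack}(\mathcal{O}_{\widetilde{\mathcal{F}}};\mathcal{C})$ already forces the open pieces to agree; (2) you are right to flag the $j_*$ versus $j_!$ distinction, since it is precisely this point (the reflective image $\mathcal{C}_1^{\wedge}$, not the coreflective $\mathcal{C}_1^{\vee}$) that makes the $\mathcal{F}$-complete subcategory the one appearing in $\mathfrak{S}$.
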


To make the notation more memorable, we instantiate each symbol in 
the example that the reader likely cares about.

\begin{example}[Equivariant spectra] Let $G$ be a finite group. Note that
$\mathsf{P}_{\mathcal{O}_G} = \mathsf{Sub}^{op}_G$ is the poset of conjugacy
classes of subgroups
of $G$ ordered by reverse inclusion. 
	\begin{itemize}
	\item An upward closed subset of $\mathsf{Sub}^{op}_G$ is just
	a \emph{family of subgroups} in the sense of, e.g., tom Dieck \cite[7.2]{td}.
	So there is a universal $G$-space $\mathrm{E}\mathcal{F}$ for the family,
	characterized by the property that
		\[
		(\mathrm{E}\mathcal{F})^H \cong
		\begin{cases}
		* & H \in \mathcal{F} \\
		\varnothing & H \notin \mathcal{F}
		\end{cases}.
		\]
	We then have identifications:
		\[
		(j_{\mathcal{F}})_! = \mathrm{E}\mathcal{F}_+ \wedge (-),
		\]
		\[
		(j_{\mathcal{F}})_* = F(\mathrm{E}\mathcal{F}_+, -).
		\]
	\item A downward closed subset of $\mathsf{Sub}^{op}_G$ is the complement
	of a family $\mathcal{F}$
	of subgroups. We can then form the $G$-space $\widetilde{\mathrm{E}\mathcal{F}}$
	as the cofiber:
		\[
		\mathrm{E}\mathcal{F}_+ \to S^0 \to \widetilde{\mathrm{E}\mathcal{F}}.
		\]
	The various functors in \ref{notation-so-many-adjoints}(b) are given classically
	by:
		\begin{align*}
		(i_{\widetilde{\mathcal{F}}})_* &= \widetilde{\mathrm{E}\mathcal{F}} \wedge (-) \\
		(i_{\widetilde{\mathcal{F}}})^* 
		&= \left(\widetilde{\mathrm{E}\mathcal{F}} \wedge (-)\right)^{\mathcal{F}}
		= \Phi^{\mathcal{F}}(-)\\
		(i_{\widetilde{\mathcal{F}}})^! &= F(\widetilde{\mathrm{E}\mathcal{F}}, -)^{\mathcal{F}}\\
		(\psi_{\widetilde{\mathcal{F}}})^* &= (-)^{\mathcal{F}}\\
		\end{align*}
	Where $(-)^{\mathcal{F}}$ is the Lewis-May
	categorical fixed point functor \cite[I.3]{LMS}.
	The functors
		\[
		(\psi_{\widetilde{\mathcal{F}}})_*, \,\, (\psi_{\widetilde{\mathcal{F}}})_!
		\]
	give two different ways of taking an object with some amount of symmetries,
	and adding more. The latter is probably more familiar, and corresponds, in the
	case when $\mathcal{F}$ is the set of subgroups subconjugate to $H$, 
	to the process
	of taking a $(N_GH/H)$-spectrum, regarding it as an $N_GH$ spectrum, and then
	inducing up to $G$.
	\item An object $T \in \fin_{\mathcal{O}_G}$ is just a finite $G$-set, so
	we will feel no guilt denoting the category instead by $\fin_G$ from now on.
	If $T = G/H$, then $\mathcal{O}_{/T}$ is equivalent
	to the orbit category $\mathcal{O}_H$.
	(Co)induction and restriction are as you'd expect. The asserted equivalence
	between in induction and coinduction
	is a special case of the Wirthm\"uller isomorphism.
	\end{itemize}
\end{example}

We end this section by recording some properties and relations between these
functors for later use.

\begin{lemma}\label{lem-geo-ind-res} Fix $T \in \fin_{\mathcal{O}}$ and
let $\widetilde{\mathcal{F}} \subseteq \mathsf{P}_{\mathcal{O}}$ be a downward
closed family. Let $\widetilde{\mathcal{F}}_T = \widetilde{\mathcal{F}} \cap
\mathsf{P}_{\mathcal{O}_{/T}}$. Then there are essentially canonical commutative
diagrams:
	\[
	\xymatrix{
	\mathsf{Mack}(\mathcal{O}_{/T}) \ar[r]^{\mathrm{ind}_T}
	\ar[d]_{(i_{\widetilde{\mathcal{F}}_T})^*} &
	\mathsf{Mack}(\mathcal{O})\ar[d]^{(i_{\widetilde{\mathcal{F}}})^*}\\
	\mathsf{Mack}(\mathcal{O}_{\widetilde{\mathcal{F}}_T})
	\ar[r]_{\mathrm{ind}_T}
	& \mathsf{Mack}(\mathcal{O}_{\widetilde{\mathcal{F}}})
	}
	\]
	\[
	\xymatrix{
	\mathsf{Mack}(\mathcal{O}) \ar[r]^{\mathrm{res}_T}
	\ar[d]_{(i_{\widetilde{\mathcal{F}}})^*} &
	\mathsf{Mack}(\mathcal{O}_{/T})\ar[d]^{(i_{\widetilde{\mathcal{F}}_T})^*}\\
	\mathsf{Mack}(\mathcal{O}_{\widetilde{\mathcal{F}}})
	\ar[r]_{\mathrm{res}_T}
	& \mathsf{Mack}(\mathcal{O}_{\widetilde{\mathcal{F}}_T})
	}
	\]
In particular, if $[T] \notin \widetilde{\mathcal{F}}$, then
$(i_{\widetilde{\mathcal{F}}})^*\mathrm{ind}_T = 0$.
\end{lemma}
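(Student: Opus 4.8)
The plan is to reduce both squares to a single commuting square at the level of the finite‑coproduct completions $\fin$, and then transport it through the effective Burnside construction. Write $\psi^\amalg_{\widetilde{\mathcal F}}\colon\fin_{\mathcal O_{\widetilde{\mathcal F}}}\hookrightarrow\fin_{\mathcal O}$ for the fully faithful inclusion, $i_{\widetilde{\mathcal F}}$ for its right adjoint, and similarly $\psi^\amalg_{\widetilde{\mathcal F}_T}$, $i_{\widetilde{\mathcal F}_T}$ inside $\mathcal O_{/T}$, and $\pi_T^\amalg\colon\fin_{\mathcal O_{/T}}\to\fin_{\mathcal O}$ for the slice projection. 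The bookkeeping observation that makes the statement sensible is the natural identification $(\mathcal O_{/T})_{\widetilde{\mathcal F}_T}\cong(\mathcal O_{\widetilde{\mathcal F}})_{/i_{\widetilde{\mathcal F}}(T)}$: an object $(x,f\colon x\to T)$ of $\mathcal O_{/T}$ has $[x]\in\widetilde{\mathcal F}_T$ exactly when $x\in\mathcal O_{\widetilde{\mathcal F}}$, and by the adjunction $\psi^\amalg_{\widetilde{\mathcal F}}\dashv i_{\widetilde{\mathcal F}}$ such an $f$ is the same datum as a map $x\to i_{\widetilde{\mathcal F}}(T)$ in $\fin_{\mathcal O_{\widetilde{\mathcal F}}}$; under this identification $\bar\pi_T$ is the slice projection of $(\mathcal O_{\widetilde{\mathcal F}})_{/i_{\widetilde{\mathcal F}}(T)}$, which is exactly why one is entitled to write $\mathrm{ind}_T$ and $\mathrm{res}_T$ for the bottom rows. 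One then checks directly that the square of pullback‑preserving functors
\[
i_{\widetilde{\mathcal F}}\circ\pi_T^\amalg\;\simeq\;\bar\pi_T^\amalg\circ i_{\widetilde{\mathcal F}_T}\colon\;\fin_{\mathcal O_{/T}}\longrightarrow\fin_{\mathcal O_{\widetilde{\mathcal F}}}
\]
commutes: both functors send $\coprod_\alpha(x_\alpha,f_\alpha)$ to $\coprod_{\alpha:\,[x_\alpha]\in\widetilde{\mathcal F}}x_\alpha$, the index sets agreeing since $\widetilde{\mathcal F}_T=\widetilde{\mathcal F}\cap\mathsf P_{\mathcal O_{/T}}$. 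Since all four functors preserve pullbacks, applying $\aeff(-)$ gives a commuting square of effective Burnside $\infty$-categories.

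For the induction square, both composites are left Kan extensions: $(i_{\widetilde{\mathcal F}})^*$ is by definition the left Kan extension along $\aeff(i_{\widetilde{\mathcal F}})$ (it is left adjoint to the restriction $(i_{\widetilde{\mathcal F}})_*$), and $\mathrm{ind}_T$ is the left Kan extension along $\aeff(\pi_T)$. By functoriality of left Kan extension, $(i_{\widetilde{\mathcal F}})^*\circ\mathrm{ind}_T$ is the left Kan extension along $\aeff(i_{\widetilde{\mathcal F}})\circ\aeff(\pi_T)$ and $\mathrm{ind}_{\bar\pi_T}\circ(i_{\widetilde{\mathcal F}_T})^*$ is the left Kan extension along $\aeff(\bar\pi_T)\circ\aeff(i_{\widetilde{\mathcal F}_T})$; these two composites of $\aeff$-functors coincide by the square above, so the two composite functors are canonically equivalent. (The Kan extensions exist and preserve the direct‑sum condition because $\mathcal C$ is presentable, so these are genuinely functors between the relevant $\mathsf{Mack}$-categories.)

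The restriction square is the only step needing extra input, and I expect it to be the main obstacle, precisely because $\mathrm{res}_T$ is a restriction functor and not a left Kan extension, so the argument above does not apply verbatim. Here I would argue on generators. All four functors $\mathrm{res}_T$, $\mathrm{res}_{\bar\pi_T}$, $(i_{\widetilde{\mathcal F}})^*$, $(i_{\widetilde{\mathcal F}_T})^*$ preserve colimits, and $\mathsf{Mack}(\mathcal O;\mathcal C)$ is generated under colimits — uniformly in $\mathcal C$, so $\mathcal C$ drops out — by the representable Mackey functors $\underline A_S$, $S\in\fin_{\mathcal O}$. So it suffices to compare the two composites on each $\underline A_S$, using the standard base‑change identities for Burnside categories: left Kan extension takes representables to representables, so $(i_{\widetilde{\mathcal F}})^*\underline A_S\simeq\underline A_{i_{\widetilde{\mathcal F}}(S)}$ and $\mathrm{ind}_T\underline A_U\simeq\underline A_{\pi_T^\amalg(U)}$, while restriction of a free Mackey functor is free on the base change, $\mathrm{res}_T\underline A_S\simeq\underline A_{\rho_T(S)}$ with $\rho_T$ the right adjoint of $\pi_T^\amalg$ (see Barwick \cite{mackI}), and likewise over $\mathcal O_{\widetilde{\mathcal F}}$. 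This reduces the restriction square to the $\fin$-level identity $i_{\widetilde{\mathcal F}_T}\circ\rho_T\simeq\rho_{\bar\pi_T}\circ i_{\widetilde{\mathcal F}}$, which one verifies by the same orbit‑counting, using in addition that products in $\fin_{\mathcal O_{\widetilde{\mathcal F}}}$ are computed by applying $i_{\widetilde{\mathcal F}}$ to products in $\fin_{\mathcal O}$ (as $\mathcal O_{\widetilde{\mathcal F}}$ sits coreflectively) and that a $\widetilde{\mathcal F}$-class lying above an orbit $S_i$ of $S$ forces $[S_i]\in\widetilde{\mathcal F}$ by downward closure.

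Finally, the concluding assertion is immediate from the induction square: if $T$ is an orbit with $[T]\notin\widetilde{\mathcal F}$, then every isomorphism class $[x]$ occurring in $\mathcal O_{/T}$ satisfies $[x]\ge[T]$ — witnessed by the structure map $x\to T$ — so downward closure of $\widetilde{\mathcal F}$ forces none of them into $\widetilde{\mathcal F}$. Hence $\widetilde{\mathcal F}_T=\varnothing$, the category $(\mathcal O_{/T})_{\widetilde{\mathcal F}_T}$ is empty, $\mathsf{Mack}((\mathcal O_{/T})_{\widetilde{\mathcal F}_T};\mathcal C)\simeq 0$, and the composite $(i_{\widetilde{\mathcal F}})^*\circ\mathrm{ind}_T\simeq\mathrm{ind}_{\bar\pi_T}\circ(i_{\widetilde{\mathcal F}_T})^*$ vanishes.
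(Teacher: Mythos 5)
The paper states this lemma without proof, so there is no reference argument to compare against; your proposal is the kind of argument one would naturally write. The backbone is right: the identification $(\mathcal{O}_{/T})_{\widetilde{\mathcal{F}}_T} \cong (\mathcal{O}_{\widetilde{\mathcal{F}}})_{/i_{\widetilde{\mathcal F}}(T)}$ is what makes the statement well-posed, the $\fin$-level commuting square $i_{\widetilde{\mathcal F}}\circ\pi_T^\amalg \simeq \bar\pi_T^\amalg\circ i_{\widetilde{\mathcal F}_T}$ passes through $\aeff(-)$ because all four functors preserve pullbacks, and the induction square then drops out of the functoriality of left Kan extension. The final assertion is also correct: a structure map $x\to T$ forces $[x]\ge[T]$, so downward closure of $\widetilde{\mathcal F}$ empties $\widetilde{\mathcal F}_T$ when $[T]\notin\widetilde{\mathcal F}$.

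Two points in the restriction-square argument need shoring up. First, you invoke $\rho_T$, the right adjoint to $\pi_T^\amalg$, which sends $S$ to $S\times T$; but orbitality only guarantees that $\fin_{\mathcal{O}}$ has pullbacks, not products, so this adjoint need not exist (the paper remarks it knows no counterexample but does not assume products). This is easy to route around by working with the value formula $(\mathrm{res}_T\underline A_S)(U) = \aeff(\mathcal{O})\bigl(S, \pi_T^\amalg(U)\bigr)$ directly instead of asserting that $\mathrm{res}_T\underline A_S$ is again corepresentable. Second, verifying that two colimit-preserving functors agree by evaluating on generators requires a natural transformation between them in hand before the objectwise check; otherwise the pointwise isomorphisms need not assemble. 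The cleanest fix is to observe that restricting along the commuting $\aeff$-square immediately yields the companion identity $\mathrm{res}_T\circ(i_{\widetilde{\mathcal F}})_* \simeq (i_{\widetilde{\mathcal F}_T})_*\circ\mathrm{res}_T$, whose associated Beck--Chevalley transformation is exactly the natural map $\mathrm{res}_T\circ(i_{\widetilde{\mathcal F}})^* \to (i_{\widetilde{\mathcal F}_T})^*\circ\mathrm{res}_T$ you want; one then shows it is invertible on the corepresentables as you describe. Neither issue is structural, but both should be addressed in a careful write-up.
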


The next proposition is a generalization of the fact that,
in equivariant homotopy theory,
$\mathrm{E}\mathcal{F}_+ \wedge X$ is 
built out of inductions of restrictions of $X$ to subgroups in the family $\mathcal{F}$.

\begin{proposition}\label{prop-nil-induced}
Let $\mathcal{F} \subseteq \mathsf{P}_\mathcal{O}$ be upward closed.
Let $T = \coprod_{S \in \mathcal{O}_{\mathcal{F}}} S \in \fin_{\mathcal{O}_{\mathcal{F}}}
\subseteq \fin_{\mathcal{O}}$. Then there is a functor
	\[
	\mathbf{L}j_! : \mathsf{Mack}(\mathcal{O}_{\mathcal{F}})
	\to \mathsf{Fun}(\Delta^{op}, \mathsf{Mack}(\mathcal{O}))
	\]
with the following properties:
	\begin{enumerate}[(i)]
	\item for each $n\ge 0$, we have $(\mathbf{L}j_!)_n \cong (\mathrm{ind}_T
	\circ \mathrm{res}_T)^{\circ n+1}$,
	\item there is a natural equivalence of functors
		\[
		\colim_{\Delta^{op}} \mathbf{L}j_! \cong j_!.
		\]
	\end{enumerate}
\end{proposition}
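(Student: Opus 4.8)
The plan is to produce $\mathbf{L}j_!$ as the bar construction associated to the adjunction $(\mathrm{ind}_T \dashv \mathrm{res}_T)$ and then identify its colimit with $j_!$ by a monadicity argument. First I would note that, since $T = \coprod_{S \in \mathcal{O}_{\mathcal{F}}} S$ contains every object of $\mathcal{O}_{\mathcal{F}}$ as a retract, restriction $\mathrm{res}_T \colon \mathsf{Mack}(\mathcal{O}) \to \mathsf{Mack}(\mathcal{O}_{/T})$ becomes, after composing with the projection $\mathcal{O}_{/T} \to \mathcal{O}_{\mathcal{F}}$, a conservative functor on the subcategory of $\mathcal{F}$-nilpotent objects: a map of $\mathcal{F}$-nilpotent Mackey functors is an equivalence iff it is an equivalence on every orbit in $\mathcal{O}_{\mathcal{F}}$, and each such orbit is a retract of $T$. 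Concretely, I would set $\mathbf{L}j_!$ to be the simplicial object $n \mapsto (\mathrm{ind}_T \circ \mathrm{res}_T)^{\circ n+1}$ with the usual face and degeneracy maps coming from the unit $\mathrm{id} \to \mathrm{res}_T\,\mathrm{ind}_T$ (via the monad structure on $\mathrm{ind}_T\mathrm{res}_T$ restricted to $\mathcal{O}_{/T}$, or more simply the comonad $\mathrm{ind}_T\mathrm{res}_T$ on $\mathsf{Mack}(\mathcal{O})$) and the counit $\mathrm{res}_T\,\mathrm{ind}_T \to \mathrm{id}$. This manifestly satisfies (i). Note also that, because $[S] \in \mathcal{F}$ for every summand $S$ of $T$, each term $\mathrm{ind}_T\mathrm{res}_T(\cdots)$ lands in the $\mathcal{F}$-nilpotent subcategory, so the whole simplicial object takes values in $\mathsf{Mack}(\mathcal{O})^{\mathcal{F}\text{-}\mathrm{nil}}$.

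For (ii), I would argue that $j_!$ exhibits $\mathsf{Mack}(\mathcal{O}_{\mathcal{F}})$ as comonadic — equivalently, that the stable, cocomplete subcategory $\mathsf{Mack}(\mathcal{O})^{\mathcal{F}\text{-}\mathrm{nil}}$ is generated under colimits by the image of $\mathrm{ind}_T$. The fastest route is: the composite $\mathcal{O}_{/T} \to \mathcal{O}_{\mathcal{F}} \hookrightarrow \mathcal{O}$ and the retraction data identify $j_!$ (up to the idempotent splitting coming from $T$ containing each orbit with some multiplicity) with the colimit over $\Delta^{\mathrm{op}}$ of the standard bar resolution $\mathrm{Bar}_\bullet(\mathrm{ind}_T, \mathrm{ind}_T\mathrm{res}_T, \mathrm{id})$ evaluated after restriction; since $\mathrm{res}_T$ is conservative on $\mathcal{F}$-nilpotent objects and preserves geometric realizations (it is a left adjoint in the additive setting, by the ind/coind equivalence in Notation \ref{notation-so-many-adjoints}(d)), the canonical augmentation $|\mathbf{L}j_!(M)| \to j_!M$ is an equivalence for all $M$. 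I would phrase this using the $\infty$-categorical Barr–Beck–Lurie theorem \cite[4.7.3.5]{HA}: the adjunction $\mathrm{ind}_T \dashv \mathrm{res}_T$, corestricted to $\mathsf{Mack}(\mathcal{O})^{\mathcal{F}\text{-}\mathrm{nil}}$, satisfies its hypotheses — $\mathrm{res}_T$ is conservative there and preserves all colimits — and the resulting comonadic descent presents every $\mathcal{F}$-nilpotent object as the realization of its bar resolution; applying this to $j_!M$ and using $\mathrm{res}_T j_! \cong \mathrm{res}_T$ (which holds since $j^\amalg_{\mathcal{F}}$ is fully faithful on $\mathcal{O}_{/T}$) yields the formula.

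The naturality and functoriality of $\mathbf{L}j_!$ in $M$ are automatic from the bar construction. The only real subtlety — and the step I expect to be the main obstacle — is bookkeeping the discrepancy between $\mathcal{O}_{/T}$ and $\mathcal{O}_{\mathcal{F}}$: the projection $\mathcal{O}_{/T} \to \mathcal{O}_{\mathcal{F}}$ is essentially surjective and induces $j_!$ on nilpotent objects, but it is not an equivalence (it records multiplicities and non-identity automorphisms differently), so one must check that the idempotent comparison is compatible with the simplicial structure and does not obstruct the identification of colimits. In the equivariant case this is the familiar statement that $\mathrm{E}\mathcal{F}_+ \wedge X \simeq |\,[n] \mapsto (\mathrm{ind}\,\mathrm{res})^{\circ n+1}X\,|$, and the same proof — conservativity of restriction on $\mathcal{F}$-nilpotents plus preservation of geometric realizations — goes through verbatim once the orbital-categorical analogue of "restriction detects equivalences among nilpotents" is in place, which is exactly the content of the retract observation above. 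A clean way to sidestep the multiplicity issue entirely is to observe that both sides are colimit-preserving functors $\mathsf{Mack}(\mathcal{O}_{\mathcal{F}}) \to \mathsf{Mack}(\mathcal{O})$ and to check the equivalence on the (compact, projective) generators $\mathrm{ind}_S \mathbb{S}$ for $S \in \mathcal{O}_{\mathcal{F}}$, where it reduces to the splitting of $\mathrm{ind}_S$ off of $\mathrm{ind}_T$ and the simplicial contractibility of the corresponding bar complex.
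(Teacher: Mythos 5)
Your proposal is correct and follows the paper's argument: form the bar resolution associated to $\mathrm{ind}_T\mathrm{res}_T$ and conclude by conservativity and colimit-preservation of $\mathrm{res}_T$. The ``main obstacle'' you flag, though, is not actually an obstacle: since $\mathcal{F}$ is upward closed and morphisms in an orbital category decrease the partial order, any object admitting a map to a summand of $T$ already lies in $\mathcal{O}_{\mathcal{F}}$, so $(\mathcal{O}_{\mathcal{F}})_{/T}=\mathcal{O}_{/T}$. This lets one set up the adjunction $\mathrm{ind}'_T\dashv\mathrm{res}_T$ entirely on $\mathsf{Mack}(\mathcal{O}_{\mathcal{F}})$ and define $\mathbf{L}j_! := j_!\circ\mathrm{Bar}$, with no detour through the nilpotent subcategory of $\mathsf{Mack}(\mathcal{O})$ and no idempotent comparison. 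Part~(i) then falls out of $j_!\circ\mathrm{ind}'_T\cong\mathrm{ind}_T$ (a composite of left Kan extensions is a left Kan extension), and (ii) follows from monadicity of $\mathrm{res}_T$ on $\mathsf{Mack}(\mathcal{O}_{\mathcal{F}})$, exactly as you argue. One small misattribution to fix: $\mathrm{res}_T$ preserves all colimits because it admits a right adjoint (right Kan extension, $\mathrm{coind}_T$), which requires no additivity; the equivalence $\mathrm{ind}_T\simeq\mathrm{coind}_T$ that you invoke is a different fact and is not what grants colimit-preservation of $\mathrm{res}_T$.
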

\begin{proof} We note that $(\mathcal{O}_{\mathcal{F}})_{/T} = \mathcal{O}_{/T}$
since there are no maps from smaller objects to larger objects. This means
that the target of the restriction functors associated to $T$ agree.

To avoid ambiguity, we will temporarily denote by $\mathrm{ind}'_T$
the induction functor with target $\mathsf{Mack}(\mathcal{O}_{\mathcal{F}})$ and
$\mathrm{ind}_T$ the induction functor with target $\mathsf{Mack}(\mathcal{O})$. 
The endofunctor 
	\[
	\mathrm{ind}'_T \circ \mathrm{res}_T : \mathsf{Mack}(\mathcal{O}_{\mathcal{F}})
	\to \mathsf{Mack}(\mathcal{O}_{\mathcal{F}})
	\]
admits a canonical structure of a monad, and so we can form the bar construction
$\mathrm{Bar}: \mathsf{Mack}(\mathcal{O}_{\mathcal{F}}) \to 
\mathsf{Fun}(\Delta^{op}, \mathsf{Mack}(\mathcal{O}_{\mathcal{F}}))$
\cite[4.4.2.7]{HA}. We define $\mathbf{L}j_!$ as $j_! \circ \mathrm{Bar}$. To verify (i),
note that there is an equivalence $j_! \circ \mathrm{ind}'_T \cong \mathrm{ind}_T$
since a composite of left Kan extensions is a left Kan extension of the composite. To
verify (ii) it suffices, since $j_!$
preserves colimits, to check that $\colim_{\Delta^{op}} \mathrm{Bar} \cong \mathrm{id}$.
This follows from the fact that the adjunction $\mathrm{ind}'_T \dashv \mathrm{res}_T$
is monadic for our choice of $T$. Indeed, $\mathrm{res}_T$
preserves all colimits, so we
need only check that $\mathrm{res}_T:
\mathsf{Mack}(\mathcal{O}_{\mathcal{F}}) \to \mathsf{Mack}(\mathcal{O}_{/T})$ is conservative. 
But equivalences of Mackey
functors are detected objectwise, and every object in $\mathcal{O}_{\mathcal{F}}$ is accounted for in $T$.
\end{proof}

\subsection{Slice filtrations and basic properties}\label{ssec:slice-filtn}

In this section we develop a generalization of the slice
filtration suitable for stratified homotopy theories. 

\begin{definition} A \textbf{filtration} of a stable $\infty$-category $\mathcal{C}$ is
a sequence of full subcategories
	\[
	\cdots \mathcal{C}_{\ge n} \subseteq \mathcal{C}_{\ge n-1} \subseteq \cdots 
	\subseteq \mathcal{C}
	\]
such that each $\mathcal{C}_{\ge n}$ is coreflective in $\mathcal{C}$ and closed
under extensions. We say the filtration
is $\textbf{separated}$ if $\bigcap \mathcal{C}_{\ge n}$ is trivial.
We say that the filtration is \textbf{compatible with suspension} if
$\Sigma \mathcal{C}_{\ge n} \subseteq \mathcal{C}_{\ge n+1}$. Objects $X \in \mathcal{C}_{\ge n}$ will be called \textbf{$n$-connective} and we'll indicate this property by writing
$X \ge n$. We will call a filtration \textbf{presentable} if each
of the $\mathcal{C}_{\ge n}$
and $\mathcal{C}$ are presentable.

If $\mathcal{C}$ and $\mathcal{C}'$ are equipped with filtrations and $F: \mathcal{C}
\to \mathcal{C}'$ is a functor we will say that $F$ is \textbf{filtration preserving} if
$F(\mathcal{C}_{\ge n}) \subseteq \mathcal{C}'_{\ge n}$.
\end{definition}

\begin{example} If $\mathcal{C}$ has a $t$-structure then the
sequence of subcategories $\{\tau_{\ge n}\mathcal{C}\}$
is a filtration on $\mathcal{C}$ compatible with suspensions. 
If $\mathcal{C}$ admits countable products and $\tau_{\ge0}\mathcal{C}$
is stable under these, then separability of the filtration is equivalent
to left completeness of the $t$-structure \cite[1.2.1.19]{HA}.
If $\mathcal{C}$ is presentable, then the filtration is presentable
if and only if the $t$-structure is accessible in the sense of  \cite[1.4.4.12]{HA}.
\end{example}

Not every filtration arises from a $t$-structure, but every presentable
filtration gives rise to a \emph{sequence} of $t$-structures.

\begin{definition} Let $\{\mathcal{C}_{\ge n}\}$ be a presentable
filtration on a presentable, stable $\infty$-category $\mathcal{C}$.
Then each subcategory $\mathcal{C}_{\ge n}$ determines
an accessible $t$-structure with $\mathcal{C}_{\ge n}$
as the subcategory of 0-connective objects for that $t$-structure.
The heart is a Grothendieck abelian category \cite[1.3.5.23]{HA}
which we denote by $\mathcal{C}^{\heartsuit_n}$.
If $\mathcal{C}$ is understood, we will abbreviate this to $\heartsuit_n$.
We denote the truncation functors associated to the $n$th $t$-structure
by $\tau^{(n)}_{\le k}$ and $\tau^{(n)}_{\ge k}$ for $k \in \mathbb{Z}$. 
\end{definition}

We offer the following generalization of a perversity suited to our examples.

\begin{definition}\label{def-dim-func} Let $\mathsf{P}$ be a poset.
A \textbf{dimension function} for $\mathsf{P}$ is a function:
	\[
	\nu: \mathbb{Z} \times \mathsf{P} \longrightarrow \mathbb{Z}
	\]
such that for any $p \in \mathsf{P}$, $\nu(-,p): \mathbb{Z} \to \mathbb{Z}$
	is weakly increasing and surjective. We say that $p \in \mathsf{P}$ is an
	\textbf{$n$-jump} if $\nu(n+1, p)> \nu(n, p)$, otherwise we say
	that $p$ is an \textbf{$n$-rest}. We say that \textbf{$\nu$ jumps at $n$}
	if every $p \in \mathsf{P}$ is an $n$-jump.
\end{definition}

\begin{remark} Barwick-Dotto-Glasman-Nardin-Shah study
the almost identital notion of a \emph{generalized perversity}
in their forthcoming work.
\end{remark}

\begin{definition}\label{defn:slice-filtn}
Suppose given a presentable, stable, $\mathsf{P}$-stratified
$\infty$-category $\mathcal{C}$, a dimension function $\nu$ for $\mathsf{P}$,
and separated, presentable filtrations on the strata
$\mathcal{C}_p$ for each $p \in \mathsf{P}$,
compatible with suspension. Denote by $L_p$ the localization $L_p: \mathcal{C}
\to \mathcal{C}_p$. Then define the \textbf{$\nu$-slice filtration} on $\mathcal{C}$ by declaring
$X \ge n$ if and only if $L_pX \ge \nu(n, p)$ for all $p \in \mathsf{P}$. Attached
to this filtration we will use the following terminology:
	\begin{itemize}
	\item We will say $X$ is \textbf{slice $n$-connective} and
	write $X \ge n$ if $L_pX\ge \nu(n, p)$
	for all $p \in \mathsf{P}$. The full subcategory of slice $n$-connective
	objects is denoted $^{\nu}\mathcal{C}_{\ge n}$ or just $\mathcal{C}_{\ge n}$
	if $\nu$ is understood.
	\item We denote the right adjoint to the inclusion $\mathcal{C}_{\ge n} \subseteq
	\mathcal{C}$ by $P_{n}$ and call $P_{n}X$ the \textbf{slice $n$-connective
	cover of $X$}.
	\item We will say $Y$ is \textbf{slice $n$-truncated}
	and write $Y \le n$ if, for every $X \ge n+1$,
	the mapping space $\mathrm{map}_{\mathcal{C}}(X, Y)$ is contractible.
	We denote the full subcategory of slice $n$-truncated objects by
	$^{\nu}\mathcal{C}_{\le n}$ or $\mathcal{C}_{\le n}$ if $\nu$ is understood.
	\item We denote the cofiber of $P_{n+1} \to \mathrm{id}$ by $P^n$ and call
	$P^nX$ the \textbf{$n$th slice section of $X$} or the \textbf{slice $n$-stage of $X$}.
	\item We will say $A$ is an \textbf{$n$-slice} if $A\le n$ and $A\ge n$.
	We denote the full subcategory of $n$-slices by $\mathsf{Slice}_n$, and
	will further decorate this symbol if either $\nu$ or $\mathcal{C}$ is unclear
	from the context.
	\item There is a canonical equivalence $P^nP_n \cong P_nP^n$ and
	we denote either of these functors by $P^n_n$. We call $P^n_nX$ the
	\textbf{$n$-slice of $X$}.
	\end{itemize}
\end{definition}

We record a few basic consequences of the definition before turning to
examples.

\begin{lemma} The $\nu$-slice filtration is indeed
a filtration. As such, it is separated and compatible
with suspension.
\end{lemma}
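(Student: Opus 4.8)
The plan is to verify, directly from the defining condition ``$X\ge n \iff L_pX\ge\nu(n,p)$ for all $p\in\mathsf{P}$'', each clause in the definition of a filtration (nesting, coreflectivity, closure under extensions), and then separatedness and compatibility with suspension. Two structural inputs do most of the work. First: in the stable stratified setting every strata-localization $L_p\colon\mathcal{C}\to\mathcal{C}_p$ is a left adjoint, hence colimit-preserving, and is left exact by Definition~\ref{defn:recoll}(c), hence exact; moreover the family $\{L_p\}_{p\in\mathsf{P}}$ is jointly conservative, which one gets by iterating axiom~(e) of Definition~\ref{defn:recoll} along the stratification (inducting on $|\mathsf{P}|$, splitting $\mathsf{P}=I_0\amalg I_1$ as in Definition~\ref{def-strat-cats} and factoring each $L_p$ through $L_{I_0}$ or $L_{I_1}$). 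Second, a purely combinatorial fact about dimension functions: a weakly increasing surjection $\nu(-,p)\colon\mathbb{Z}\to\mathbb{Z}$ can never increase by more than one, so $\nu(n,p)\le\nu(n+1,p)\le\nu(n,p)+1$ for all $n$ and $p$ --- otherwise the value $\nu(n,p)+1$ would be omitted, contradicting surjectivity --- and, being surjective and weakly increasing, $\nu(-,p)$ is unbounded above.

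Granting these, nesting and closure under extensions are formal. If $L_pX\ge\nu(n,p)$ then, since $\nu(n-1,p)\le\nu(n,p)$ and each stratum filtration is itself nested, $L_pX\ge\nu(n-1,p)$; hence $\mathcal{C}_{\ge n}\subseteq\mathcal{C}_{\ge n-1}$. Given a cofiber sequence $X'\to X\to X''$ with $X',X''\ge n$, applying the exact functor $L_p$ yields a cofiber sequence in $\mathcal{C}_p$ whose outer terms lie in $\mathcal{C}_{p,\ge\nu(n,p)}$, which is closed under extensions, so $L_pX\ge\nu(n,p)$ and thus $X\ge n$. Compatibility with suspension uses the bound $\nu(n+1,p)\le\nu(n,p)+1$: if $X\ge n$ then $L_p\Sigma X\simeq\Sigma L_pX$ lies in $\Sigma\mathcal{C}_{p,\ge\nu(n,p)}\subseteq\mathcal{C}_{p,\ge\nu(n,p)+1}\subseteq\mathcal{C}_{p,\ge\nu(n+1,p)}$, so $\Sigma X\ge n+1$. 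Separatedness follows the same pattern: if $X\in\bigcap_n\mathcal{C}_{\ge n}$ then for each $p$ the object $L_pX$ lies in $\bigcap_m\mathcal{C}_{p,\ge m}$, which is trivial since the stratum filtrations are separated and $\nu(-,p)$ is unbounded above; so $L_pX\simeq 0$ for every $p$, and joint conservativity of $\{L_p\}$ forces $X\simeq 0$.

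The one step that is more than bookkeeping is coreflectivity of $\mathcal{C}_{\ge n}$, i.e.\ the existence of the right adjoint $P_n$ to the inclusion. Here the plan is to present $\mathcal{C}_{\ge n}$ as a limit of presentable $\infty$-categories: each $\mathcal{C}_{p,\ge\nu(n,p)}$ is presentable (it carries a presentable filtration by hypothesis) and coreflective in $\mathcal{C}_p$, hence closed under colimits there, and each $\mathcal{C}_p$ is presentable; thus $\mathcal{C}_{\ge n}$ is the pullback, in $\mathrm{Pr}^{\mathrm{L}}$, of the colimit-preserving accessible functors $\mathcal{C}\xrightarrow{(L_p)_p}\prod_p\mathcal{C}_p$ and $\prod_p\mathcal{C}_{p,\ge\nu(n,p)}\hookrightarrow\prod_p\mathcal{C}_p$. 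Such a pullback is presentable and the projection $\mathcal{C}_{\ge n}\to\mathcal{C}$ preserves colimits, so the adjoint functor theorem supplies $P_n$. (When $\mathsf{P}$ is finite --- the case in all our examples --- one may bypass the $\mathrm{Pr}^{\mathrm{L}}$ formalism and simply observe that a finite intersection of presentable, colimit-closed full subcategories of a presentable category is again presentable and colimit-closed, hence coreflective.) This argument also shows the $\nu$-slice filtration is presentable whenever the strata filtrations are, which will be convenient later. The main obstacle is exactly this presentability/coreflectivity bookkeeping; everything else is a direct unwinding of the definition together with the two structural facts above.
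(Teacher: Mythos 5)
Your proof is correct and follows the same route as the paper: closure under colimits and extensions via exactness of the $L_p$, separatedness from joint conservativity together with the combinatorics of $\nu$, suspension-compatibility from the bound $\nu(n+1,p)\le\nu(n,p)+1$, and coreflectivity from the adjoint functor theorem. Where the paper compresses the last step to ``since we've assumed $\mathcal{C}$ is presentable, this provides the right adjoint,'' you supply the needed accessibility of $\mathcal{C}_{\ge n}$ by exhibiting it as a pullback in $\mathrm{Pr}^{\mathrm L}$; this fills in exactly the detail the paper leaves implicit, but the argument is not a genuinely different one.
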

\begin{proof} Each $\mathcal{C}_{\ge n}$ is closed under colimits, extensions,
and equivalence since $L_p$ preserves colimits. Since we've assumed $\mathcal{C}$
is presentable, this provides the right adjoint. That the filtration is separated
follows from the joint conservativity of the functors $L_p$ together with the fact that
$\nu$ is weakly increasing and surjective. Finally, in order for $\nu$ to be both weakly
increasing and surjective, we must have $\nu(n+1,p) \le \nu(n, p) +1$. This, together
with compatibility with suspension on each stratum, completes the proof.
\end{proof}

\begin{lemma} If $\nu$ jumps at $n$, then $\Sigma \mathcal{C}_{\ge n} = 
\mathcal{C}_{\ge n+1}$.
\end{lemma}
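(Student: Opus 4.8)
The plan is to prove the inclusion $\mathcal{C}_{\ge n+1}\subseteq\Sigma\mathcal{C}_{\ge n}$; the reverse inclusion $\Sigma\mathcal{C}_{\ge n}\subseteq\mathcal{C}_{\ge n+1}$ is precisely the statement that the $\nu$-slice filtration is compatible with suspension, which is the preceding lemma. The first step is the purely arithmetic observation that the hypothesis forces $\nu(n+1,p)=\nu(n,p)+1$ for every $p\in\mathsf{P}$: indeed $\nu(n+1,p)\le\nu(n,p)+1$ because $\nu(-,p)$ is weakly increasing and surjective (the very inequality used in the preceding lemma), while $\nu(n+1,p)>\nu(n,p)$ because every $p$ is an $n$-jump.

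Next I would take an object $X$ with $X\ge n+1$ and produce a witness $Y\ge n$ with $\Sigma Y\simeq X$, namely $Y:=\Sigma^{-1}X$, which exists since $\mathcal{C}$ is stable. To check $Y\ge n$ it suffices to check $L_pY\ge\nu(n,p)$ for each $p$. Each localization $L_p\colon\mathcal{C}\to\mathcal{C}_p$ preserves colimits, being a left adjoint, hence commutes with the invertible suspension functors, so $L_pY\simeq\Sigma^{-1}L_pX$; and $L_pX\in(\mathcal{C}_p)_{\ge\nu(n+1,p)}=(\mathcal{C}_p)_{\ge\nu(n,p)+1}$ by the arithmetic step. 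So the claim reduces to the stratumwise assertion that desuspension drops the stratum filtration by exactly one degree, i.e.\ $\Sigma^{-1}(\mathcal{C}_p)_{\ge m+1}\subseteq(\mathcal{C}_p)_{\ge m}$, equivalently $\Sigma(\mathcal{C}_p)_{\ge m}=(\mathcal{C}_p)_{\ge m+1}$. This is where I would invoke the structure of the strata filtrations: in all the cases of interest each stratum $\mathcal{C}_p$ is $\mathrm{Fun}(\mathrm{Aut}(T),\mathcal{C})$ equipped with its Postnikov filtration, for which this equality holds on the nose. Granting it, $L_pY\simeq\Sigma^{-1}L_pX$ lies in $(\mathcal{C}_p)_{\ge\nu(n,p)}$ for every $p$, so $Y\ge n$ and therefore $X\simeq\Sigma Y\in\Sigma\mathcal{C}_{\ge n}$.

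The genuine content is the reduction across the localizations $L_p$ to a stratumwise statement, using that $L_p$ commutes with $\Sigma^{\pm1}$ together with the identity $\nu(n+1,p)=\nu(n,p)+1$; everything else is formal. I expect the one point needing care is the last input --- that suspension acts invertibly on the degrees of each stratum filtration --- since this is exactly what separates the stratum filtrations (which behave like ordinary Postnikov filtrations) from the resulting $\nu$-slice filtration on $\mathcal{C}$ (which generally does not), and a genuinely coarser filtration on some stratum would make the displayed containment, and hence the lemma, fail.
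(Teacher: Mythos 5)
The paper states this lemma without proof, so there is no official argument to compare against; your reconstruction is almost certainly the intended one. The arithmetic step, the observation that the exact localizations $L_p$ commute with $\Sigma^{\pm 1}$, and the reduction to the stratumwise identity $\Sigma(\mathcal{C}_p)_{\ge m}=(\mathcal{C}_p)_{\ge m+1}$ are all correct and exactly what the author must have had in mind.

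You are also right to single out the one place where an extra hypothesis is being used. Definition \ref{defn:slice-filtn} only asks that the strata filtrations be \emph{compatible with suspension}, i.e.\ $\Sigma(\mathcal{C}_p)_{\ge m}\subseteq(\mathcal{C}_p)_{\ge m+1}$, which is strictly weaker than the equality your argument (and the lemma itself) needs. Taking $\mathsf{P}$ a singleton and the stratum filtration $(\mathcal{C}_p)_{\ge m}=\tau_{\ge 2m}$ for a nondegenerate $t$-structure gives a separated filtration compatible with suspension whose unique dimension function $\nu(n,p)=n$ jumps at every $n$, yet $\Sigma\tau_{\ge 2n}=\tau_{\ge 2n+1}\ne\tau_{\ge 2n+2}$; so the lemma as literally stated fails at this level of generality. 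What makes it true in the paper is the (slightly later) convention that the strata are equipped with $t$-structure filtrations, for which $\Sigma\tau_{\ge m}=\tau_{\ge m+1}$ holds on the nose. In short: your proof is correct, and the caveat you flag is a genuine implicit hypothesis in the paper, not a defect in your argument.
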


\begin{lemma} The functor $P^n$ is a 
left adjoint to the inclusion $\mathcal{C}_{\le n} \subseteq
\mathcal{C}$.
\end{lemma}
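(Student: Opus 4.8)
The plan is to read off everything from the defining cofiber sequence for $P^n$ together with stability of $\mathcal{C}$, and then invoke the standard recognition criterion for reflective subcategories. Recall that $P_{n+1}$ is right adjoint to the inclusion $\iota\colon \mathcal{C}_{\ge n+1}\hookrightarrow \mathcal{C}$, with counit $\varepsilon\colon \iota P_{n+1}\to \mathrm{id}$, and that by definition $P^nX = \mathrm{cofib}(\varepsilon_X)$, so for every $X\in\mathcal{C}$ there is a cofiber sequence
\[
P_{n+1}X \longrightarrow X \stackrel{\eta_X}{\longrightarrow} P^nX.
\]

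First I would check that $P^n$ factors through $\mathcal{C}_{\le n}$. Fix $Z\ge n+1$. Since $\mathcal{C}$ is stable, applying $\mathrm{map}_{\mathcal{C}}(Z,-)$ to the cofiber sequence yields a fiber sequence $\mathrm{map}(Z,P_{n+1}X)\to \mathrm{map}(Z,X)\to \mathrm{map}(Z,P^nX)$. The first map is the map on mapping spaces induced by $\varepsilon_X$; because $Z\in\mathcal{C}_{\ge n+1}$ and $\iota$ is fully faithful, the adjunction $\iota\dashv P_{n+1}$ identifies both $\mathrm{map}(Z,P_{n+1}X)$ and $\mathrm{map}(Z,X)$ with $\mathrm{map}_{\mathcal{C}_{\ge n+1}}(Z,P_{n+1}X)$ compatibly, so it is an equivalence. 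Hence $\mathrm{map}(Z,P^nX)$ is contractible for all $Z\ge n+1$, i.e.\ $P^nX\le n$. Next, $\eta_X$ is a $\mathcal{C}_{\le n}$-localization: for any $Y\le n$, applying $\mathrm{map}_{\mathcal{C}}(-,Y)$ to the cofiber sequence gives a fiber sequence $\mathrm{map}(P^nX,Y)\to \mathrm{map}(X,Y)\to \mathrm{map}(P_{n+1}X,Y)$, whose last term is contractible since $P_{n+1}X\ge n+1$ and $Y$ is slice $n$-truncated; thus $\eta_X^*\colon \mathrm{map}(P^nX,Y)\to \mathrm{map}(X,Y)$ is an equivalence.

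Finally I would conclude. The two facts just established say precisely that for every $X$ there is a morphism $\eta_X\colon X\to P^nX$ with $P^nX\in\mathcal{C}_{\le n}$ inducing an equivalence on mapping spaces into every object of $\mathcal{C}_{\le n}$ (the full subcategory $\mathcal{C}_{\le n}$ is automatically closed under equivalences, being defined by a mapping-space condition). By the standard recognition principle for reflective subcategories \cite[5.2.7.4]{HTT}, this exhibits $\mathcal{C}_{\le n}$ as a reflective subcategory of $\mathcal{C}$ and $P^n$ as the associated reflector; in particular $P^n$ is left adjoint to the inclusion $\mathcal{C}_{\le n}\subseteq\mathcal{C}$. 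There is no serious obstacle here: the only points requiring care are that stability of $\mathcal{C}$ is what turns the cofiber sequence into fiber sequences of mapping spaces in both variables, and that one must cite the correct $\infty$-categorical form of ``pointwise localization implies adjunction'' rather than re-deriving it.
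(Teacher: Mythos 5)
Your route is genuinely different from the paper's. The paper produces the left adjoint $\tilde P^n$ abstractly — it observes that $\mathcal{C}_{\le n}$ is closed under limits and accessible, invokes the adjoint functor theorem \cite[5.5.2.9]{HTT}, and then identifies the fiber of $\mathrm{id}\to\tilde P^n$ with $P_{n+1}$ so that the resulting cofiber must be $P^n$. You instead work forward from the defining cofiber sequence $P_{n+1}X\to X\to P^nX$ and try to verify the universal property of a reflector directly, pointwise, and then cite the recognition principle. This is a perfectly legitimate strategy in outline, and the second half of your argument (that $\eta_X$ is a local equivalence) is correct: if $Y\le n$ then $\mathrm{map}(P_{n+1}X,Y)\simeq *$, and a fiber sequence of spaces with contractible \emph{base} does force the fiber--total-space map to be an equivalence.

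The gap is in the first half. You apply $\mathrm{map}(Z,-)$ to the cofiber sequence to get a fiber sequence
\[
\mathrm{map}(Z,P_{n+1}X)\longrightarrow \mathrm{map}(Z,X)\longrightarrow \mathrm{map}(Z,P^nX),
\]
observe that the first map is an equivalence, and conclude that the third term is contractible. That inference is false for fiber sequences of spaces: $F\to E$ being an equivalence in a pullback square $F = E\times_B *$ only says that $E\to B$ is null, not that $B\simeq *$. (Concretely, with mapping spaces you can deduce that $\mathrm{map}(Z,P^nX)$ has trivial higher homotopy — rotate and look at $\Omega P^nX$ — but you cannot rule out a nontrivial $\pi_0 = [Z,P^nX]$ this way.) Your remark that ``there is no serious obstacle'' and that stability alone turns the cofiber sequence into fiber sequences ``in both variables'' papers over exactly this asymmetry. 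To close the gap you need the hypothesis you never used: $\mathcal{C}_{\ge n+1}$ is closed under \emph{extensions}, which is part of the definition of a filtration. Given $f\colon W\to P^nX$ with $W\ge n+1$, form the pullback $E = W\times_{P^nX}X$; the fiber sequence $P_{n+1}X\to E\to W$ and extension-closure give $E\ge n+1$, so $E\to X$ factors through the counit $\varepsilon\colon P_{n+1}X\to X$, and the universal property of the coreflector then shows the induced $E\to P_{n+1}X$ retracts $P_{n+1}X\to E$. The split cofiber sequence yields a section $W\to E$, hence a lift $W\to X$ of $f$; since $W\ge n+1$ this lift factors through $\varepsilon$, and as $\eta_X\circ\varepsilon\simeq 0$ we get $f\simeq 0$. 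Alternatively, and more in the spirit of the paper, note that $\mathcal{C}_{\ge n+1}$ being presentable, closed under colimits and extensions is exactly the hypothesis of \cite[1.4.4.11]{HA}, which produces the accessible $t$-structure with $\mathcal{C}_{\ge n+1}$ as its connective part; the lemma is then the statement that $\tau^{(n+1)}_{\le -1} = P^n$ is left adjoint to the inclusion of $\mathcal{C}_{\ge n+1}^\perp = \mathcal{C}_{\le n}$.
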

\begin{proof} This is classical from the theory of Bousfield localizations, but
we recall the proof here since it displays where we use the presentability hypotheses.
The subcategory
$\mathcal{C}_{\le n}$ is evidently closed under limits. From our presentability conditions, we
also 
see that $\mathcal{C}_{\le n} \subseteq \mathcal{C}$ is accessible. It follows that the inclusion
admits a left adjoint
$\tilde{P}^n$ \cite[5.5.2.9]{HTT}, and we need to identify it with $P^n$. To that end,
define $\tilde{P}_{n+1}$ as the fiber of the unit $\mathrm{id} \to \tilde{P}^n$. For any $X \in
\mathcal{C}$ we have a fiber sequence
	\[
	\tilde{P}_{n+1}X \to X \to \tilde{P}^nX.
	\]
So for any $W \ge n+1$, we get a fibration:
	\[
	\mathrm{map}(W, \tilde{P}_{n+1}X)
	\to
	\mathrm{map}(W, X) \to \mathrm{map}(W, \tilde{P}^nX).
	\]
The last term vanishes by definition of slice coconnectivity, so the first map is an equivalence.
It follows that $\tilde{P}_{n+1}X \ge n+1$ and that $\tilde{P}_{n+1}$ is a right adjoint
to the inclusion $\mathcal{C}_{\ge n+1} \subseteq \mathcal{C}$. The result follows.
\end{proof}

\begin{proposition} For each $n$, $\mathsf{Slice}_n$ is a reflective subcategory
of $\heartsuit_n$. In particular, $\mathsf{Slice}_n$ is an ordinary, presentable,
additive category. If $\nu$ jumps at $n$, then $\mathsf{Slice}_n = \heartsuit_n$ 
and is thus Grothendieck abelian.
\end{proposition}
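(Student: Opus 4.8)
The engine of the whole statement is the inclusion $\Sigma\,\mathcal{C}_{\ge n} \subseteq \mathcal{C}_{\ge n+1}$ supplied by compatibility of the $\nu$-slice filtration with suspension (established just above), together with the observation from the preceding lemma that this inclusion is an equality when $\nu$ jumps at $n$. The plan is: first identify $\heartsuit_n$ concretely and place $\mathsf{Slice}_n$ inside it; then exhibit an explicit reflector; then read off the formal consequences.

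\textbf{Step 1 (the inclusion $\mathsf{Slice}_n \subseteq \heartsuit_n$).} By construction the $n$th $t$-structure has $\mathcal{C}_{\ge n}$ as its $0$-connective part, so its $0$-truncated part is the right orthogonal $(\Sigma\,\mathcal{C}_{\ge n})^{\perp}$ formed in $\mathcal{C}$, and $\heartsuit_n = \mathcal{C}_{\ge n} \cap (\Sigma\,\mathcal{C}_{\ge n})^{\perp}$. On the other hand $\mathsf{Slice}_n = \mathcal{C}_{\ge n} \cap \mathcal{C}_{\le n}$ with $\mathcal{C}_{\le n} = (\mathcal{C}_{\ge n+1})^{\perp}$ by the definition of slice truncation. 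Since forming right orthogonals reverses inclusions, $\Sigma\,\mathcal{C}_{\ge n} \subseteq \mathcal{C}_{\ge n+1}$ yields $\mathcal{C}_{\le n} \subseteq (\Sigma\,\mathcal{C}_{\ge n})^{\perp}$, hence $\mathsf{Slice}_n \subseteq \heartsuit_n$; and when $\Sigma\,\mathcal{C}_{\ge n} = \mathcal{C}_{\ge n+1}$ the two orthogonals agree, so $\mathsf{Slice}_n = \heartsuit_n$.

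\textbf{Step 2 (the reflector).} I claim $P^n$ restricts to a left adjoint of $\mathsf{Slice}_n \hookrightarrow \heartsuit_n$. If $X \in \heartsuit_n$ then $X \ge n$, so in the defining cofiber sequence $P_{n+1}X \to X \to P^nX$ the first two terms are slice $n$-connective (as $n+1\ge n$), hence so is $P^nX$ since $\mathcal{C}_{\ge n}$ is closed under colimits; since $P^nX \le n$ always, we get $P^nX \in \mathsf{Slice}_n \subseteq \heartsuit_n$ (equivalently $P^nX \simeq P^n_nX$ here, the counit $P_nX \to X$ being an equivalence when $X\ge n$). In particular $P^n$ carries $\mathcal{C}_{\ge n}$ into itself. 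The unit $X \to P^nX$ is a morphism in $\heartsuit_n$, and for any $A \in \mathsf{Slice}_n \subseteq \mathcal{C}_{\le n}$ the map $\mathrm{map}_{\mathcal{C}}(P^nX, A) \to \mathrm{map}_{\mathcal{C}}(X, A)$ is an equivalence by the adjunction $P^n \dashv (\mathcal{C}_{\le n} \hookrightarrow \mathcal{C})$ of the preceding lemma. Mapping spaces between objects of a heart are discrete, so applying $\pi_0$ produces a natural bijection $\mathrm{Hom}_{\mathsf{Slice}_n}(P^nX, A) \cong \mathrm{Hom}_{\heartsuit_n}(X, A)$; thus $\mathsf{Slice}_n$ is reflective in $\heartsuit_n$.

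\textbf{Step 3 (formal consequences).} As a full subcategory of $\heartsuit_n$, which is (equivalent to) an ordinary category, $\mathsf{Slice}_n$ is an ordinary category. It contains $0$ and is closed under the finite products of $\heartsuit_n$ — these coincide with finite products in $\mathcal{C}$, under which $\mathcal{C}_{\ge n}$ (closed under colimits) and $\mathcal{C}_{\le n}$ (closed under limits) are both closed — so $\mathsf{Slice}_n$ is additive. For presentability: $\mathcal{C}_{\ge n}$ is presentable (being closed under colimits in $\mathcal{C}$ and accessible, as the common vanishing locus of the accessible functors $L_p$ in the presentable $\mathcal{C}_{p,\ge\nu(n,p)}$), $P^n$ is an accessible idempotent functor on $\mathcal{C}$ (since $\mathcal{C}_{\le n}\subseteq\mathcal{C}$ is an accessible reflective subcategory, by the proof of the previous lemma) which by Step 2 preserves $\mathcal{C}_{\ge n}$, so it restricts to an accessible localization functor on the presentable category $\mathcal{C}_{\ge n}$ whose subcategory of local objects is exactly $\mathsf{Slice}_n$; hence $\mathsf{Slice}_n$ is presentable. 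Finally, when $\nu$ jumps at $n$, Step 1 gives $\mathsf{Slice}_n = \heartsuit_n$, which is Grothendieck abelian by \cite[1.3.5.23]{HA}, as recorded in the definition of $\heartsuit_n$.

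\textbf{Main obstacle.} The only point needing genuine care is Step 1: one must recognize that on slice $n$-connective objects ``slice $n$-truncated'' coincides with ``$0$-truncated for the $n$th $t$-structure'', and that the discrepancy between $\mathsf{Slice}_n$ and $\heartsuit_n$ is governed exactly by the gap between $\Sigma\,\mathcal{C}_{\ge n}$ and $\mathcal{C}_{\ge n+1}$, i.e.\ by whether $\nu$ jumps at $n$. Everything after that is transport of adjunctions already in hand.
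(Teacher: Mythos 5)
Your proof is correct and follows the same route as the paper: the inclusion $\mathsf{Slice}_n \subseteq \heartsuit_n$ is read off from $\Sigma\,\mathcal{C}_{\ge n} \subseteq \mathcal{C}_{\ge n+1}$, and $P^n$ supplies the reflector. The paper states these two facts in a single sentence; you supply the orthogonality bookkeeping, verify that $P^n$ lands in $\mathsf{Slice}_n$ on slice $n$-connective inputs, and spell out the formal consequences (ordinary, additive, presentable) that the paper leaves implicit, all of which is sound.
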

\begin{proof} The inclusion $\Sigma \mathcal{C}_{\ge n} \subseteq
\mathcal{C}_{\ge n+1}$ yields an inclusion
	\[
	\mathsf{Slice}_n \subseteq \heartsuit_n,
	\]
and $P^n$ provides the desired left adjoint. The last claim 
is immediate from the definition of jump.
\end{proof}

\begin{example}[Perverse $t$-structures] If $X$ is a space equipped with
a finite stratification $X = \coprod_{s \in S} X_s$,
then the $\infty$-category
of $S$-constructible sheaves (valued in the derived category of abelian groups, say)
$\mathsf{D}_{S-cstr}(X)$ is an $S$-stratified $\infty$-category.
Here we consider $S$ as a poset by
declaring $s \le s'$ if the closure
of $X_{s'}$ contains $X_s$.
If $p: S \to \mathbb{Z}$
is a function (the \emph{perversity}), then we can define a perverse $t$-structure on
$\mathsf{D}_{S-cstr}(X)$ as the slice filtration associated to the dimension function
$\nu(n, s) = n + p(s)$.
This filtration is a $t$-structure because $\nu$ 
jumps at every $n \in \mathbb{Z}$.
\end{example}

The homotopy theory of Mackey functors with values in $\mathcal{C}$ has the
feature that all of its strata are presheaves valued in $\mathcal{C}$. Thus,
a filtration on $\mathcal{C}$ determines a filtration on all the strata in a canonical
way. This provides us with the most important class of examples for our work.

\begin{example} Let $\mathcal{O}$ be an inductive orbital category and 
$\mathcal{C}$ a presentable, stable
$\infty$-category equipped with a presentable, separated filtration $\tau$, and
$\nu$ a dimension function on $\mathsf{P}_{\mathcal{O}}$. Then the 
\textbf{$\nu$-slice filtration on $\mathsf{Mack}(\mathcal{O};\mathcal{C})$} is defined by
	\[
	X \ge n \iff \text{ for all }T \in \mathcal{O}, \,\,X^{\Phi T} \in \tau_{\ge \nu(n, T)}\mathcal{C}.
	\]
\end{example}

\begin{convention} 
	\begin{itemize}
	\item For the remainder of this section,
	unless otherwise stated, $\mathcal{C}$ will denote a stable,
	presentable, $\infty$-category equipped with a presentable,
	separated filtration.
	\item For the remainder of the paper,
	the homotopy theory $\mathsf{Sp}$ will be equipped
	with its standard $t$-structure filtration unless otherwise specified. 
	With this convention,
	there is an unambiguous $\nu$-slice filtration on $\mathsf{Sp}^{\mathcal{O}}$ for
	any inductive orbital category $\mathcal{O}$ and dimension function $\nu$.
	\end{itemize}
\end{convention}

\begin{definition} If $\mathcal{O}$ is
an inductive orbital category, then an \textbf{$\mathcal{O}$-parameterized filtration}
on $\mathsf{Mack}(\mathcal{O}; \mathcal{C})$ is the data of a filtration, for every $T \in
\fin_{\mathcal{O}}$, on $\mathsf{Mack}(\mathcal{O}_{/T}; \mathcal{C})$ with the
following properties:
	\begin{itemize}
	\item induction and restriction preserve filtration,
	\item if $T = \coprod_i T_i$, then the filtration on $\mathsf{Mack}(\mathcal{O}_{/T}; \mathcal{C})$
	is identified with the product filtration under the canonical equivalence
		\[
		\mathsf{Mack}(\mathcal{O}_{/T}; \mathcal{C}) \stackrel{\cong}{\longrightarrow}
		\prod_i \mathsf{Mack}(\mathcal{O}_{T_i}; \mathcal{C}).
		\]
	\end{itemize}
\end{definition}
\begin{remark} An $\mathcal{O}$-parameterized filtration determines and
is determined by a family of filtrations, one on $\mathsf{Mack}(\mathcal{O}_{/T}; \mathcal{C})$ for
each $T \in \mathcal{O}$, for which induction and restriction between orbits preserve filtration.
\end{remark}

The following is immediate from the definitions, and Lemma \ref{lem-geo-ind-res}.

\begin{proposition} Let $\mathcal{O}$ be an inductive orbital category, and suppose we have
a dimension function $\nu$ on $\mathsf{P}_{\mathcal{O}}$. For
any $T \in \fin_{\mathcal{O}}$, denote by $\nu_T$ the restriction of $\nu$
to $\mathsf{P}_{\mathcal{O}_{/T}}$. Then:
	\begin{enumerate}[(i)]
	\item The family of $\nu_T$-slice filtrations
	is an $\mathcal{O}$-parameterized filtration on 
	$\mathsf{Mack}(\mathcal{O}; \mathcal{C})$.
	\item Restriction and induction preserve slice coconnectivity and
	take $n$-slices to $n$-slices.
	\item Restriction and induction preserve coconnectivty
	for the $t$-structures associated to the slice filtration,
	and take elements of $\heartsuit_n$ to elements of $\heartsuit_n$.
	\end{enumerate}
\end{proposition}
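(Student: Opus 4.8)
The plan is to deduce all three clauses from Lemma~\ref{lem-geo-ind-res}, which computes how the geometric fixed point functors interact with induction and restriction, together with the self-duality equivalence $\mathrm{ind}_T \simeq \mathrm{coind}_T$ for Mackey functors valued in a semiadditive category. Clause (i) is the only one with genuine content; clauses (ii) and (iii) are then pure adjunction formalism.

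For clause (i), first dispatch the product-filtration condition: if $T = \coprod_i T_i$ with each $T_i \in \mathcal{O}$, then every orbit of $\mathcal{O}_{/T}$ lies over a unique $T_i$, so $\mathcal{O}_{/T} = \coprod_i \mathcal{O}_{/T_i}$ and $\nu_T$ restricts to $\nu_{T_i}$ on $\mathsf{P}_{\mathcal{O}_{/T_i}}$; since slice $n$-connectivity is tested orbit-by-orbit on geometric fixed points, $X = (X_i)$ is $\nu_T$-slice $n$-connective if and only if each $X_i$ is $\nu_{T_i}$-slice $n$-connective, which is exactly the product filtration. For the preservation claim, the key point is a pointwise identification of geometric fixed points. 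Given $X \ge n$ in $\mathsf{Mack}(\mathcal{O};\mathcal{C})$ and $U \in \mathcal{O}_{/T}$ with image $\bar{U} \in \mathcal{O}$, applying the second square of Lemma~\ref{lem-geo-ind-res} with $\widetilde{\mathcal{F}}$ the downward closure of $[\bar U]$ yields $(\mathrm{res}_T X)^{\Phi U} \simeq X^{\Phi \bar U}$, which lies in $\tau_{\ge \nu(n,[\bar U])}\mathcal{C} = \tau_{\ge \nu_T(n,[U])}\mathcal{C}$ because $\nu_T$ is the restriction of $\nu$; hence $\mathrm{res}_T X \ge n$. For an orbit $S \in \mathcal{O}$, applying the first square of the lemma with $\widetilde{\mathcal{F}}$ the downward closure of $[S]$ expresses $(\mathrm{ind}_T Y)^{\Phi S}$ as a finite direct sum $\bigoplus_{S'} Y^{\Phi S'}$ over the lifts $S'$ of $S$ to $\mathcal{O}_{/T}$ (and as $0$ when $S$ has no lift, by the vanishing clause $[S]\notin\widetilde{\mathcal F}\Rightarrow (i_{\widetilde{\mathcal F}})^*\mathrm{ind}_T=0$); each summand is $\nu_T(n,[S']) = \nu(n,[S])$-connective, so the sum is, and $\mathrm{ind}_T Y \ge n$. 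Running the same argument inside each $\mathcal{O}_{/T}$ — again an inductive orbital category — handles the relative induction and restriction functors, so the family $\{\nu_T\text{-slice filtration}\}$ is indeed an $\mathcal{O}$-parameterized filtration.

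Clauses (ii) and (iii) now follow formally. If $Y \le n$ and $X' \ge n+1$ in $\mathsf{Mack}(\mathcal{O}_{/T};\mathcal{C})$, then $\mathrm{map}(X', \mathrm{res}_T Y) \simeq \mathrm{map}(\mathrm{ind}_T X', Y) \simeq *$ since $\mathrm{ind}_T X' \ge n+1$ by (i); so $\mathrm{res}_T Y \le n$. Dually, using $\mathrm{ind}_T \simeq \mathrm{coind}_T$, for $X \ge n+1$ we get $\mathrm{map}(X, \mathrm{ind}_T Y) \simeq \mathrm{map}(\mathrm{res}_T X, Y) \simeq *$, so $\mathrm{ind}_T Y \le n$; intersecting with (i) shows $n$-slices go to $n$-slices. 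For the $t$-structures attached to the slice filtration, $Y$ is $k$-truncated for the $n$th $t$-structure exactly when $\mathrm{map}(X', Y)$ is $k$-truncated for every $X' \ge n$, and the same two adjunction identities (now with $X', X \ge n$) transport this property to $\mathrm{res}_T Y$ and $\mathrm{ind}_T Y$; combined once more with (i), membership in $\heartsuit_n = \mathcal{C}_{\ge n}\cap \tau^{(n)}_{\le 0}$ is preserved.

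The only step that needs genuine care — and the reason this is not quite a tautology — is the geometric-fixed-point bookkeeping in clause (i): one must confirm that Lemma~\ref{lem-geo-ind-res} really produces $(\mathrm{res}_T X)^{\Phi U} \simeq X^{\Phi \bar U}$ and $(\mathrm{ind}_T Y)^{\Phi S} \simeq \bigoplus_{S'} Y^{\Phi S'}$ on the nose, and that the identification $\mathsf{P}_{\mathcal{O}_{/T}} \subseteq \mathsf{P}_{\mathcal{O}}$ used to define $\nu_T$ is precisely the one under which $[U]$ maps to $[\bar U]$, so that the connectivity thresholds line up. Once that is pinned down, everything else is the formal yoga of adjoints and the norm equivalence, and the statement earns its label of ``immediate''.
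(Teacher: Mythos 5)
Your argument is correct and is precisely the expansion of what the paper calls ``immediate from the definitions and Lemma~\ref{lem-geo-ind-res}'': the paper supplies no explicit proof, and you have filled in the same route it points to, namely the geometric-fixed-point bookkeeping of Lemma~\ref{lem-geo-ind-res} for clause (i) followed by the adjunction identities and the $\mathrm{ind}_T \simeq \mathrm{coind}_T$ norm equivalence for clauses (ii) and (iii). The only point worth flagging is that the pointwise formulas $(\mathrm{res}_T X)^{\Phi U} \simeq X^{\Phi \bar U}$ and $(\mathrm{ind}_T Y)^{\Phi S} \simeq \bigoplus_{S'} Y^{\Phi S'}$ need the semiadditivity of $\mathcal{C}$ to turn the Kan-extension formula into a finite direct sum, which your argument tacitly uses and which is part of the standing hypotheses.
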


Now we prove a recognition theorem that allows one to identify a given
parameterized filtration with the $\nu$-slice filtration.

\begin{theorem}\label{thm:slice-recognition}
 Suppose $\{F_{\ge n}\}$ is a
parameterized filtration of $\mathsf{Mack}(\mathcal{O}; \mathcal{C})$.
Then it agrees with the $\nu$-slice filtration on
$\mathsf{Mack}(\mathcal{O}; \mathcal{C})$ if and only if the following two conditions
are satisfied:
	\begin{enumerate}[(i)]
	\item for all $n$, $F_{\ge n}
	\subseteq {^{\nu}}\mathsf{Mack}(\mathcal{O}; \mathcal{C})_{\ge n}$,
	\item for every $T \in \mathcal{O}$, the class of objects
	$\{ X^{\Phi T} \in \mathcal{C}_{\ge \nu(n,  T)} | X \in 
	F_{\ge n}\}$ generates
	$\mathcal{C}_{\ge \nu(n, T)}$ under colimits, extensions,
	and equivalences.
	\end{enumerate}
\end{theorem}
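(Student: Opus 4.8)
The plan is to prove the two implications separately: the forward direction is essentially a formality, while the reverse direction is carried out by induction on $|\mathsf{P}_{\mathcal{O}}|$ using the stratification of Theorem \ref{thm-mack-strat} and Proposition \ref{prop-nil-induced}. For the forward implication, suppose $\{F_{\ge n}\}$ is the $\nu$-slice filtration. Then (i) is an equality, so there is nothing to prove. For (ii), fix $T \in \mathcal{O}$ and $c \in \mathcal{C}_{\ge \nu(n,T)}$. Using the identification of the stratum $\mathfrak{S}(\{[T]\})$ with $\mathrm{Fun}(\mathrm{Aut}(T),\mathcal{C})$ recalled above, the object $c$ (with, say, trivial $\mathrm{Aut}(T)$-action) determines a Mackey functor, call it $\iota_T(c) \in \mathsf{Mack}(\mathcal{O};\mathcal{C})$, whose geometric fixed points vanish away from $[T]$ and satisfy $\iota_T(c)^{\Phi T} \simeq c$. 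Such an object is slice $n$-connective, so $c$ is literally the $T$th geometric fixed points of a slice $n$-connective Mackey functor; thus (ii) holds in the strong form that $X \mapsto X^{\Phi T}$ is already essentially surjective onto $\mathcal{C}_{\ge \nu(n,T)}$.

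For the reverse implication, assume (i) and (ii). Being coreflective in a presentable category, each $F_{\ge n}$ is closed under colimits, and by hypothesis it is closed under extensions and equivalences; by (i) it is contained in ${^{\nu}}\mathsf{Mack}(\mathcal{O}; \mathcal{C})_{\ge n}$, so it remains to show that every slice $n$-connective $X$ lies in $F_{\ge n}$. I induct on $|\mathsf{P}_{\mathcal{O}}|$. When $|\mathsf{P}_{\mathcal{O}}| = 1$ the category $\mathcal{O}$ is a connected finite groupoid $B\Gamma$ and $\mathsf{Mack}(\mathcal{O};\mathcal{C}) \simeq \mathrm{Fun}(B\Gamma,\mathcal{C})$; here slice $n$-connectivity means that the underlying object lies in $\mathcal{C}_{\ge \nu(n,T)}$, and since the free--forgetful adjunction $\mathcal{C} \rightleftarrows \mathrm{Fun}(B\Gamma,\mathcal{C})$ is monadic, such objects are generated under colimits by the free objects $\Gamma_+ \otimes c$ with $c \in \mathcal{C}_{\ge \nu(n,T)}$. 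By (ii), each such $c$ is built from $\{W^{\Phi T} : W \in F_{\ge n}\}$ under colimits, extensions and equivalences; applying the (colimit- and extension-preserving) functor $\Gamma_+ \otimes (-)$ and noting that $\Gamma_+ \otimes W^{\Phi T} \simeq \mathrm{ind}_T \mathrm{res}_T W \in F_{\ge n}$, since $\mathrm{ind}$ and $\mathrm{res}$ preserve the parameterized filtration (Notation \ref{notation-so-many-adjoints}), we get $\Gamma_+ \otimes c \in F_{\ge n}$, and hence every slice $n$-connective object lies in $F_{\ge n}$.

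For the inductive step, choose a maximal class $[T_0] \in \mathsf{P}_{\mathcal{O}}$, let $\mathcal{F} = \{[T_0]\}$ (upward closed) and let $\widetilde{\mathcal{F}}$ be its downward-closed complement. For slice $n$-connective $X$, Theorem \ref{thm-mack-strat} supplies a cofiber sequence $(j_{\mathcal{F}})_!(j_{\mathcal{F}})^* X \to X \to (i_{\widetilde{\mathcal{F}}})_*(i_{\widetilde{\mathcal{F}}})^* X$ whose outer terms are again slice $n$-connective (their geometric fixed points are those of $X$, supported on $\mathcal{F}$ resp.\ on $\widetilde{\mathcal{F}}$), so by closure under extensions it suffices to treat the two of them. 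For the nilpotent term, Proposition \ref{prop-nil-induced} exhibits $(j_{\mathcal{F}})_!(j_{\mathcal{F}})^* X$ as a geometric realization of objects obtained from $\mathrm{res}_{T_0} X \in \mathsf{Mack}(\mathcal{O}_{/T_0};\mathcal{C})$ by iterated $\mathrm{ind}$ and $\mathrm{res}$; since $\mathcal{O}_{/T_0}$ has strictly smaller poset and inherits (via Lemma \ref{lem-geo-ind-res}) a parameterized filtration for which (i) and (ii) again hold --- condition (ii) there being fed by the restrictions of the objects furnished by (ii) for $\mathcal{O}$ --- the inductive hypothesis gives $\mathrm{res}_{T_0} X \in F_{\ge n}$, whence $(j_{\mathcal{F}})_!(j_{\mathcal{F}})^* X \in F_{\ge n}$ by filtration-preservation and closure under colimits. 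For the geometric term, $(i_{\widetilde{\mathcal{F}}})^* X$ is slice $n$-connective in $\mathsf{Mack}(\mathcal{O}_{\widetilde{\mathcal{F}}};\mathcal{C})$, whose poset is strictly smaller; applying the inductive hypothesis there writes $(i_{\widetilde{\mathcal{F}}})^* X$ as a colimit/extension of objects $(i_{\widetilde{\mathcal{F}}})^* W$ with $W \in F_{\ge n}(\mathcal{O})$, and applying the colimit- and extension-preserving functor $(i_{\widetilde{\mathcal{F}}})_*$ reduces us to proving $(i_{\widetilde{\mathcal{F}}})_*(i_{\widetilde{\mathcal{F}}})^* W \in F_{\ge n}$ for such $W$; but that object is the cofiber of $(j_{\mathcal{F}})_!(j_{\mathcal{F}})^* W \to W$, the fiber lies in $F_{\ge n}$ by the nilpotent-term argument applied to $W$ (now using only $\mathrm{res}_{T_0} W \in F_{\ge n}$, which holds because $W \in F_{\ge n}$), and $F_{\ge n}$ is closed under cofibers.

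The main obstacle is making the geometric stratum step rigorous: one must equip the full subcategory $\mathcal{O}_{\widetilde{\mathcal{F}}}$ --- and, recursively, the full subcategories of each $\mathcal{O}_{/T}$ cut out by downward-closed stratum sets --- with a parameterized filtration to which the inductive hypothesis genuinely applies (the natural choice being the filtration generated under colimits, extensions and equivalences by the images of the $F_{\ge n}(\mathcal{O}_{/T})$ under the relevant geometric restriction functors), and then check that conditions (i) and (ii) descend to it; the substantive point in (ii) descending is that there are enough objects of $F_{\ge n}$ restricting into the given stratum to witness the required generation, which is exactly what condition (ii) for $\mathcal{O}$ provides. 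One must also carry the $\mathrm{Aut}(T)$-actions on geometric fixed points along throughout, for which the monadic bookkeeping of the base case is the template; everything else is formal manipulation with the six-functor package of Notation \ref{notation-so-many-adjoints} together with Proposition \ref{prop-nil-induced}.
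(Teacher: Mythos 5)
Your proof takes the induction in the opposite direction from the paper's, and the crux step is deferred rather than proved. Both arguments induct on $|\mathsf{P}_{\mathcal{O}}|$ through a cofiber sequence $j_!j^*X \to X \to i_*i^*X$; but you excise a \emph{maximal} isomorphism class $[T_0]$ (so $\mathcal{F} = \{[T_0]\}$ and $\widetilde{\mathcal{F}}$ is everything else), whereas the paper excises a \emph{minimal} one ($\widetilde{\mathcal{F}} = \{[T]\}$). The tradeoff is genuine. With your choice the nilpotent piece is cheap: $\mathcal{O}_{/T_0}$ is contractible, $\mathsf{Mack}(\mathcal{O}_{/T_0};\mathcal{C}) \simeq \mathcal{C}$, condition (ii) at $T_0$ plus closure under $\mathrm{ind}_{T_0}\mathrm{res}_{T_0}$ and colimits gives $j_!j^*X \in F_{\ge n}$ directly from Proposition \ref{prop-nil-induced}, with no appeal to the inductive hypothesis. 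The paper, in contrast, does need the inductive hypothesis on the nilpotent side, applied to the $\mathcal{O}_{/T'}$ for $T' \in \mathcal{F}$ --- but those categories come pre-equipped with the needed parameterized filtration, simply by restricting the given one, so the hypothesis applies without further ado.

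The gap in your argument is the geometric stratum. You have piled all the remaining strata into $i_*i^*X$, which lives in $\mathsf{Mack}(\mathcal{O}_{\widetilde{\mathcal{F}}};\mathcal{C})$, and this category is \emph{not} one of the $\mathsf{Mack}(\mathcal{O}_{/T};\mathcal{C})$ on which the hypothesis ``$\{F_{\ge n}\}$ is a parameterized filtration'' hands you a filtration. So the inductive hypothesis (which is the statement of the theorem for smaller posets) is not available to you until you construct a parameterized filtration on $\mathsf{Mack}(\mathcal{O}_{\widetilde{\mathcal{F}}};\mathcal{C})$ and on every $\mathsf{Mack}((\mathcal{O}_{\widetilde{\mathcal{F}}})_{/T'};\mathcal{C})$, verify induction/restriction compatibility (which requires Lemma \ref{lem-geo-ind-res}), and check that (i) and (ii) descend. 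You flag this at the end and call it ``formal manipulation,'' but it is not a formality --- it is where most of the content of the theorem actually lives, and in effect it re-proves a good piece of what you were trying to use. The paper sidesteps the issue by design: for a minimal orbit the geometric stratum is the single fiber $\mathsf{Fun}(\mathrm{Aut}(T),\mathcal{C})$, which is analyzed directly through the free objects $e_!(-)$ rather than by re-invoking the theorem on a localization that lacks the hypothesis's structure. If you want to keep your decomposition, you should isolate and prove the lemma you are implicitly using --- that the images under geometric restriction of a parameterized filtration satisfying (i) and (ii) generate, on each downward-closed localization, a parameterized filtration again satisfying (i) and (ii) --- before appealing to the inductive hypothesis. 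As written, the forward direction and the base case are fine, but the geometric-stratum step is a real gap.
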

\begin{proof} Using the functors $(i_{T})_*$, it is straightforward to see that 
(ii) is satisfied for the $\nu$-slice filtration, and (i) is tautological. So we prove
the other direction.

We proceed by induction on the size of $\mathsf{P}_{\mathcal{O}}$.
So let $T \in \mathsf{P}_{\mathcal{O}}$ be a minimal element, 
with associated functors $(i^*, i_*)$, and let $(j_!, j^*)$ be the functors associated
to the upward closed complement of $T$.

Since, by assumption (i), $F_{\ge n} \subseteq {^{\nu}}\mathsf{Mack}(\mathcal{O}; \mathcal{C})_{\ge n}$,
we need to show that any $X \ge n$ belongs to $F_{\ge n}$. To that end, consider the
cofiber sequence
	\[
	j_!j^*X \to X \to i_*i^*X.
	\]
It suffices to show that both $j_!j^*X$ and $i_*i^*X$ belong to $F_{\ge n}$. That
$j_!j^*X$ lies in $F_{\ge n}$
follows from the fact that $\{F_{\ge n}\}$ is a parameterized filtration, together
with the induction hypothesis and Proposition \ref{prop-nil-induced}. 

To complete the proof, it suffices, by the definition
of the slice filtration for Mackey functors,
to show that if $Y \in \mathsf{Mack}(\mathcal{O}_{\{T\}}; \mathcal{C})_{\ge n}$, then
$i_*Y \in F_{\ge n}$.

To that end, recall that, since $\mathcal{O}_{\{T\}}$
consists of a single object and every endomorphism
in an inductive orbital category
is an isomorphism, $\mathsf{Mack}(\mathcal{O}_{\{T\}}; \mathcal{C})
\cong \mathsf{Fun}(\mathrm{Aut}(T), \mathcal{C})$. Under this
equivalence, 
	\[
	\mathsf{Mack}(\mathcal{O}_{\{T\}}; \mathcal{C})_{\ge n}
	\cong \mathsf{Fun}(\mathrm{Aut}(T), \mathcal{C}_{\ge \nu(n,T)})
	\]
Let $e: * \to \mathrm{Aut}(T)$ denote the inclusion of the identity.
Then the
subcategory $\mathsf{Fun}(\mathrm{Aut}(T), \mathcal{C}_{\ge \nu(n,T)})$
is generated under extensions, equivalence, and colimits
by the essential image of the left Kan extension
	\[
	e_!: \mathcal{C}_{\ge \nu(n, T)} = 
	\mathsf{Fun}(*, \mathcal{C}_{\ge \nu(n, T)})
	\to \mathsf{Fun}(\mathrm{Aut}(T), \mathcal{C}_{\ge \nu(n, T)}),
	\]
also known as $\mathrm{Aut}(T)_+ \wedge (-)$.
By our assumption
(ii), this subcategory is also generated by elements
of the form $e_!X^{\Phi T}$ for $X \in F_{\ge n}$.

Let $A$ denote the class of $Y \in \mathsf{Mack}(\mathcal{O}_{\{T\}}; \mathcal{C})$
such that $i_*Y \in F_{\ge n}$. 
Then $A$ is closed under extensions, equivalence, and colimits. So it suffices
by our assumption (ii)
to show that $A$ contains $e_!Z^{\Phi T}$ for every $Z \in F_{\ge n}$.
Write $Z$ as an extension
	\[
	j_!j^*Z \to Z \to i_*i^*Z.
	\]
By the induction hypothesis and the same argument as before, $j_!j^*Z \in F_{\ge n}$,
and we have assumed $Z \in F_{\ge n}$. Therefore $i^*i_*Z \in F_{\ge n}$, and hence
$Z \in A$, which completes the proof.
\end{proof}

\begin{corollary}[Hill-Yarnall]\label{cor:HY-char}
	\begin{itemize}
	\item The original slice filtration on $\mathsf{Sp}^G$
	agrees with the filtration associated to the dimension function
		\[
		\nu_{sl}(n, H) = \left\lfloor \frac{n}{|H|} \right\rfloor.
		\]
	\item The regular slice filtration on $\mathsf{Sp}^G$
	agrees with the filtration associated to the dimension function
		\[
		\nu_{reg}(n, H) = \left\lceil \frac{n}{|H|} \right\rceil.
		\]
	\end{itemize}
\end{corollary}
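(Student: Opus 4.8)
The plan is to deduce both statements from the recognition theorem, Theorem~\ref{thm:slice-recognition}, applied with $\mathcal{O} = \mathcal{O}_G$, $\mathcal{C} = \mathsf{Sp}$ equipped with its standard $t$-structure, and $\nu = \nu_{sl}$ (resp.\ $\nu = \nu_{reg}$). The candidate parameterized filtration $\{F_{\ge n}\}$ is the original (resp.\ regular) slice filtration: on $\mathsf{Sp}^G = \mathsf{Mack}(\mathcal{O}_G;\mathsf{Sp})$ it is generated under colimits and extensions by the slice cells $G/K_+ \wedge S^{k\rho_K - \epsilon}$ with $K \subseteq G$, $\epsilon \in \{0,1\}$, and $k|K| - \epsilon \ge n$ (resp.\ by those cells with $\epsilon = 0$ and $k|K| \ge n$), and similarly on each $\mathsf{Sp}^{H}$. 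First I would check this is an $\mathcal{O}_G$-parameterized filtration: compatibility with the product decomposition is immediate; induction preserves the filtration because it sends slice cells to slice cells of the same dimension; and restriction preserves it by the double coset formula together with the identity $\mathrm{res}^K_L\rho_K \cong [K:L]\rho_L$, which sends a cell of dimension $k|K| - \epsilon$ to a wedge of cells of dimension $k|K| - \epsilon$. (This is all classical, cf.\ \cite{HHR}.)

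Next I would verify condition (i) of Theorem~\ref{thm:slice-recognition}, i.e.\ $F_{\ge n} \subseteq {}^{\nu}\mathsf{Sp}^{G}_{\ge n}$. Since the target is closed under colimits and extensions, it suffices to treat a slice cell $X = G/K_+ \wedge S^{k\rho_K - \epsilon}$ with $k|K| - \epsilon \ge n$. For $H \subseteq G$, the double coset formula for geometric fixed points together with $\Phi^L S^{\rho_K} \cong S^{[K:L]}$ shows that $X^{\Phi H}$ is a (possibly empty) finite wedge of copies of $S^{k|K|/|H| - \epsilon}$, indexed by $\{[g] \in H\backslash G / K : g^{-1}Hg \subseteq K\}$. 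From $k|K| \ge n + \epsilon$ one checks $k|K|/|H| - \epsilon \ge \lfloor n/|H| \rfloor$: for $\epsilon = 0$ because $k|K|/|H|$ is an integer $\ge n/|H|$, and for $\epsilon = 1$ because $k|K|/|H| \ge (n+1)/|H| > \lfloor n/|H|\rfloor$ while $k|K|/|H|$ is an integer. Hence $X \in {}^{\nu_{sl}}\mathsf{Sp}^{G}_{\ge n}$. The regular case is the same with $\epsilon = 0$ throughout and $\lceil n/|H|\rceil$ in place of the floor, using that an integer which is $\ge n/|H|$ is $\ge \lceil n/|H|\rceil$.

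Finally I would verify condition (ii): for each orbit $T = G/H$, the collection $\{X^{\Phi H} : X \in F_{\ge n}\}$ should generate $\mathsf{Sp}_{\ge \nu(n,H)}$ under colimits, extensions and equivalences. By the previous paragraph each such $X^{\Phi H}$ already lies in $\mathsf{Sp}_{\ge \nu(n,H)}$, so it suffices to exhibit one $X \ge n$ whose $X^{\Phi H}$ contains $S^{\nu(n,H)}$ as a retract. For the original slice filtration, take $X = G/H_+ \wedge S^{k\rho_H - 1}$ with $k = \lfloor n/|H|\rfloor + 1$; then $k|H| - 1 \ge n$ (since $\lfloor n/|H|\rfloor|H| \ge n - |H| + 1$), so $X \ge n$, and the computation above gives $X^{\Phi H} \cong \mathrm{Aut}(G/H)_+ \wedge S^{\lfloor n/|H|\rfloor}$. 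For the regular slice filtration, take $X = G/H_+ \wedge S^{\lceil n/|H|\rceil\,\rho_H}$, which is regular slice $n$-connective since $\lceil n/|H|\rceil|H| \ge n$ and has $X^{\Phi H} \cong \mathrm{Aut}(G/H)_+ \wedge S^{\lceil n/|H|\rceil}$. With (i) and (ii) established, Theorem~\ref{thm:slice-recognition} yields the result.

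The step I expect to be the main obstacle is condition (ii) for the original slice filtration: one has to notice that the cells with $\epsilon = 1$ are genuinely necessary, because the cells $G/H_+ \wedge S^{k\rho_H}$ with $k|H| \ge n$ realize only spheres $S^m$ with $m \ge \lceil n/|H|\rceil$ on $\Phi^H$, which overshoots $\lfloor n/|H|\rfloor$ whenever $|H| \nmid n$; once this is seen, the remaining work is the floor/ceiling bookkeeping appearing in (i) and (ii).
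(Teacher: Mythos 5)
Your proposal is correct and takes essentially the same route as the paper: verify that the HHR slice filtration is an $\mathcal{O}_G$-parameterized filtration, check condition (i) of Theorem~\ref{thm:slice-recognition} by the double-coset computation on slice cells, and check condition (ii) by exhibiting $G/H_+ \wedge S^{(\lfloor n/|H|\rfloor + 1)\rho_H - 1}$, whose $H$-geometric fixed points are a nonzero wedge of copies of $S^{\lfloor n/|H|\rfloor}$. The closing observation that the $\epsilon = 1$ cells are indispensable for condition (ii) in the original filtration (since $\epsilon = 0$ cells overshoot to $\lceil n/|H|\rceil$ when $|H| \nmid n$) is a helpful gloss not made explicit in the paper, but the underlying argument is the same.
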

\begin{proof} We give the proof for the original slice filtration, the proof in the regular
case is much the same. Let $F_{\ge n}\mathsf{Sp}^G$ denote the subcategory
of spectra which are slice $(n-1)$-positive in the sense of \cite[4.8]{HHR}.
It is elementary to check that this filtration is compatible with restrictions and
induction \cite[4.13]{HHR}, so this defines a parameterized filtration on
$\mathsf{Sp}^G$. To verify condition (i) it suffices to show that
	\[
	(G_+ \wedge_K S^{m\rho_K - \epsilon})^{\Phi H} \ge 
	\left\lfloor \frac{n}{|H|} \right\rfloor
	\]
whenever $\epsilon = 0,1$ and $m|K| - \epsilon \ge n$, and this
computation is routine using the double-coset formula.

To verify (ii), let $m = \lfloor n/|H|\rfloor$ and
notice that $G_+ \wedge_H S^{(m+1)\rho_H - 1}$ is in $F_{\ge n}\mathsf{Sp}^G$
and has
$H$-geometric fixed points a finite wedge of copies of $S^{m}$. The result
follows since $S^m$ generates $\mathsf{Sp}_{\ge m}$ under colimits, extensions,
and equivalences.
\end{proof}

We close with an analog of \cite[2.10]{HY} which, in some cases,
allows us to use the values of the Mackey functor instead of its
geometric fixed points.

\begin{lemma}\label{lem:slice-vs-post}
Let $\nu$ be a dimension function and fix $n \in \mathbb{Z}$. 
Suppose that $\nu(n, -): \mathsf{P}_{\mathcal{O}} \to \mathbb{Z}$ is order-preserving.
Then
	\[
	X\ge n \iff \textup{ for all }T \in \mathcal{O}, \,\, X(T) \in \mathcal{C}_{\ge \nu(n, T)}.
	\]
\end{lemma}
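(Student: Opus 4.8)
The plan is to deduce both implications from a single \emph{isotropy cofiber sequence} comparing the value $X(T)$ with the geometric fixed points $X^{\Phi T}$. Fix an orbit $T \in \mathcal{O}$, let $\mathcal{F}_T = \mathsf{P}_{\mathcal{O}} \smallsetminus (-\infty, T]$ be the upward-closed complement of $(-\infty,T]$, and write $j_T := j_{\mathcal{F}_T}$, so that $(j_T)_! \dashv (j_T)^*$, and $i_T = i_{(-\infty,T]}$. The recollement of Theorem \ref{thm-mack-strat} attached to the decomposition $\mathsf{P}_{\mathcal{O}} = (-\infty,T] \amalg \mathcal{F}_T$ produces a natural cofiber sequence $(j_T)_!(j_T)^* X \to X \to (i_T)_*(i_T)^* X$ in $\mathsf{Mack}(\mathcal{O};\mathcal{C})$. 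Evaluating at the orbit $T$ — an exact, colimit-preserving operation — and using that $\big((i_T)_*(i_T)^* X\big)(T) = X^{\Phi T}$ (by definition of $X^{\Phi T}$, since $(i_T)_*$ is a fully faithful right Kan extension and so preserves values over $\mathcal{O}_{(-\infty,T]}$), I get a natural cofiber sequence in $\mathcal{C}$:
\[
\big((j_T)_!(j_T)^* X\big)(T) \longrightarrow X(T) \longrightarrow X^{\Phi T}.
\]
Thus everything reduces to controlling the connectivity of the nilpotent term on the left.

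The key step — and the only part that is not formal — is to show that $\big((j_T)_!(j_T)^* X\big)(T)$ lies in the smallest full subcategory of $\mathcal{C}$ closed under colimits and extensions and containing $\{X(R) : [R] > [T]\}$. For this I would apply Proposition \ref{prop-nil-induced} to the upward-closed family $\mathcal{F}_T$ (with $U = \coprod_{V \in \mathcal{O}_{\mathcal{F}_T}} V$): it resolves $(j_T)_!(j_T)^* X$ by a simplicial object each of whose terms is an induction $\mathrm{ind}_U$ of an iterated restriction/induction of $X$. It then suffices to check that evaluating such an induction at $T$ only involves values at orbits strictly above $[T]$. This is the base-change (double-coset) formula for Mackey functors — the same tool used in the proof of Corollary \ref{cor:HY-char}, and compatible with Lemma \ref{lem-geo-ind-res}: $(\mathrm{ind}_U Y)(T)$ is a finite direct sum of values of $Y$ at the orbit-components $R$ of $U \times T$, each admitting a map $R \to T$, hence with $[R] \ge [T]$; and since $[V] \not\le [T]$ for every $V$ with $[V] \in \mathcal{F}_T$, no such $R$ can be isomorphic to $T$ (an isomorphism would factor a map $T \to U$, forcing some $[V] \le [T]$), so $[R] > [T]$ strictly. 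Running this through the resolution and using that each $\mathcal{C}_{\ge m}$ is closed under colimits and extensions gives the claim; in particular, when $[T]$ is maximal the index set is empty and $\big((j_T)_!(j_T)^* X\big)(T) = 0$. This base-change bookkeeping is the main obstacle; everything downstream of it is routine.

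Granting the claim, both directions follow quickly, using that order-preservingness of $\nu(n,-)$ gives $\mathcal{C}_{\ge \nu(n,R)} \subseteq \mathcal{C}_{\ge \nu(n,T)}$ whenever $[R] > [T]$. For $(\Leftarrow)$: if $X(R) \in \mathcal{C}_{\ge \nu(n,R)}$ for every orbit $R$, then by the claim the left-hand term of the cofiber sequence lies in $\mathcal{C}_{\ge \nu(n,T)}$, and $X(T)$ does too by hypothesis, so $X^{\Phi T}$ — the cofiber of a map between objects of $\mathcal{C}_{\ge \nu(n,T)}$ — lies in $\mathcal{C}_{\ge \nu(n,T)}$; as $T$ was arbitrary, $X \ge n$. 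For $(\Rightarrow)$: assume $X \ge n$ and induct downward on the finite poset $\mathsf{P}_{\mathcal{O}}$, proving $X(T) \in \mathcal{C}_{\ge \nu(n,T)}$ under the inductive hypothesis that $X(R) \in \mathcal{C}_{\ge \nu(n,R)}$ for all $[R] > [T]$ — the base case $[T]$ maximal being immediate, since there the left-hand term vanishes and $X(T) \cong X^{\Phi T} \in \mathcal{C}_{\ge \nu(n,T)}$. In the inductive step the left-hand term lies in $\mathcal{C}_{\ge \nu(n,T)}$ by the claim and the inductive hypothesis, $X^{\Phi T} \in \mathcal{C}_{\ge \nu(n,T)}$ since $X \ge n$, and hence $X(T) \in \mathcal{C}_{\ge \nu(n,T)}$ from the cofiber sequence.
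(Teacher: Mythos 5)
Your proof is correct and follows essentially the same approach as the paper: both use the recollement cofiber sequence $j_!j^*X \to X \to i_*i^*X$ together with Proposition \ref{prop-nil-induced} and closure of $\mathcal{C}_{\ge m}$ under colimits and extensions. The only difference is organizational: the paper takes $T$ minimal (so $(-\infty,T]=\{T\}$) and runs a single induction on $|\mathsf{P}_{\mathcal{O}}|$ handling both implications at once, whereas you work with the recollement at $(-\infty,T]$ for arbitrary $T$, isolate a uniform claim about $\bigl((j_T)_!(j_T)^*X\bigr)(T)$, and then get $(\Leftarrow)$ directly and $(\Rightarrow)$ by downward induction; the base-change bookkeeping you spell out is exactly what the paper invokes implicitly via Proposition \ref{prop-nil-induced}.
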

\begin{proof} Let $T \in \mathcal{O}$ be minimal and consider the usual inductive set-up:
	\[
	j_!j^*X \to X \to i_*i^*X.
	\]
Ordinary connectivity behaves well under induction and restriction, so the inductive
hypothesis tells us that either assumption on $X$ leads to a $j^*X$ which is
slice $n$-connective and satisfies $X(S) = j^*X(S) \in \mathcal{C}_{\ge \nu(n, S)}$ for all
$S \ne T$. In particular, each $j^*X(S)$ is in $\mathcal{C}_{\ge \nu(n, T)}$ by
our assumption on $\nu(n, -)$. It follows from (\ref{prop-nil-induced}) that
$j_!j^*X$ is both slice $n$-connective and satisfies $j_!j^*X(T)\in \mathcal{C}_{\ge \nu(n, T)}$.
But now both conditions are closed under extensions and cofibers, and
$X^{\Phi T} = i_*i^*X(T)$, so the proof is complete.
\end{proof}

\section{Categories of slices}\label{sec:slice-cats}

We saw in the last section that every slice filtration gives rise
to a sequence of Grothendieck abelian categories $\heartsuit_n$
and distinguished reflective subcategories $\mathsf{Slice}_n \subseteq \heartsuit_n$.
Every Grothendieck abelian category
$\mathcal{A}$ is a left exact
localization of a category of right modules over a ring (Gabriel-Popescu \cite{GP}). 
The ring in question is the ring of endomorphisms of a chosen generator.
Often it is convenient to use a family of generators instead of a single one, and
there is a mild generalization of the theorem in this case.

\begin{theorem}[Gabriel-Popescu \cite{GP}, Kuhn \cite{Kuhn}]
Let $\mathcal{A}$ be a Grothendieck abelian category and 
$\mathcal{A}_0 \subseteq \mathcal{A}$ an essentially small
subcategory closed under finite direct sums.
Suppose that, for every object $a \in \mathcal{A}$, there is a set
of objects $\{x_\alpha\}$, with each $x_{\alpha} \subseteq \mathcal{A}_0$,
and an epimorphism
$\bigoplus_{\alpha} x_{\alpha} \to a$. Then:
	\begin{enumerate}[(i)]
	\item The restricted Yoneda embedding to the category of
	finite product preserving functors
		\[
		G: \mathcal{A} \longrightarrow 
		\mathsf{Psh}^{\times}(\mathcal{A}_0, \mathsf{Set})
		\]
	is fully faithful.
	\item The functor $G$ admits an exact left adjoint, $F$. 
	\end{enumerate}
\end{theorem}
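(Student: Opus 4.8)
The plan is to adapt the classical Gabriel--Popescu argument, in the many-object form due to Kuhn, to this setting; the only novelty is bookkeeping with a generating \emph{family} rather than a single generator. First I would establish (i). The restricted Yoneda functor $G$ sends $a \in \mathcal{A}$ to the presheaf $x \mapsto \mathrm{Hom}_{\mathcal{A}}(x, a)$ on $\mathcal{A}_0$; since $\mathcal{A}_0$ is closed under finite direct sums and $\mathrm{Hom}_{\mathcal{A}}(-, a)$ carries finite direct sums to products, $G(a)$ indeed lands in $\mathsf{Psh}^{\times}(\mathcal{A}_0, \mathsf{Set})$. Full faithfulness is exactly the statement that the objects of $\mathcal{A}_0$ jointly \emph{generate} $\mathcal{A}$: given $a, b$ and a natural transformation $\eta: G(a) \to G(b)$, one uses the hypothesized epimorphism $\bigoplus_\alpha x_\alpha \twoheadrightarrow a$ together with a similar presentation of the kernel to write $a$ as a cokernel $\bigoplus_\beta y_\beta \to \bigoplus_\alpha x_\alpha \to a \to 0$ with all $y_\beta, x_\alpha \in \mathcal{A}_0$; then $\eta$ is determined on the $x_\alpha$, compatibly with the $y_\beta$, hence descends uniquely to a map $a \to b$. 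Here I would use that a product-preserving presheaf on $\mathcal{A}_0$ is automatically additive (the semi-additive structure forces it), so $\eta$ respects the relevant finite biproducts and the descent argument goes through. Injectivity of $a \mapsto G(a)$ on morphisms follows from the same generation property, and surjectivity onto $\mathrm{Hom}(G(a),G(b))$ from the cokernel presentation; this gives (i).

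For (ii), the left adjoint $F$ is constructed by a standard presentable-category argument: $\mathsf{Psh}^{\times}(\mathcal{A}_0, \mathsf{Set})$ is a locally presentable category (it is the category of models for a Lawvere-type theory with arities $\mathcal{A}_0$), $\mathcal{A}$ is Grothendieck hence locally presentable, and $G$ preserves limits and filtered colimits, so the adjoint functor theorem produces a left adjoint $F$. Concretely $F$ is the left Kan extension of the inclusion $\mathcal{A}_0 \hookrightarrow \mathcal{A}$ along the Yoneda-type embedding $\mathcal{A}_0 \to \mathsf{Psh}^{\times}(\mathcal{A}_0, \mathsf{Set})$, i.e. $F(M) = \mathrm{colim}_{(x \to M)} x$ over the category of elements, a reflexive-coequalizer-of-coproducts formula. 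That $F$ is a genuine left adjoint to $G$ then reduces to checking the unit/counit on representables, where it is tautological.

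The substantive point is \emph{exactness} of $F$. Since $F$ is a left adjoint it is automatically right exact, so the content is left exactness, i.e. $F$ preserves finite limits, equivalently kernels. Here I would follow the classical Gabriel--Popescu device: present a monomorphism $M' \hookrightarrow M$ in $\mathsf{Psh}^{\times}(\mathcal{A}_0, \mathsf{Set})$ by choosing, via the generation hypothesis applied in $\mathcal{A}$ to $F(M)$, enough elements of $\mathcal{A}_0$ mapping in, and then use the fact that in the Grothendieck category $\mathcal{A}$ filtered colimits are exact to push the kernel computation through the colimit formula for $F$. The key lemma is that $G$ reflects monomorphisms and that $GF(M') \to GF(M)$ restricted along the unit recovers $M' \to M$; combined with $GF \cong \mathrm{id}$ on the image (from (i), $G$ is fully faithful so $G F \to \mathrm{id}$... rather $F G \cong \mathrm{id}$), one deduces $F$ takes the given mono to a mono, and then a diagram chase with the exactness of filtered colimits in $\mathcal{A}$ upgrades this to preservation of all finite limits. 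This exactness step is the main obstacle: it is where one genuinely uses that $\mathcal{A}$ is \emph{Grothendieck} (AB5) and not merely cocomplete abelian, and where the many-object version requires a little care in choosing the indexing categories of elements so that the relevant colimits remain filtered. Everything else is formal.
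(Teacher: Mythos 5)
The paper does not prove this theorem at all: it is stated and cited from the literature (Gabriel--Popescu \cite{GP} for the single-generator case, Kuhn \cite{Kuhn} for the family version) and then used as a black box. So there is no paper proof to compare yours against, and I can only assess the sketch on its own terms.

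The overall shape is right and matches the classical Gabriel--Popescu argument. For (i), the cokernel presentation $\bigoplus_\beta y_\beta \to \bigoplus_\alpha x_\alpha \to a \to 0$ together with naturality of $\eta$ (which plays the role of $R$-linearity in the one-generator case) is the correct mechanism for fullness; faithfulness is the easy joint-generation statement. For the construction of $F$ in (ii), the left Kan extension / adjoint-functor-theorem description is fine. The problem is with the exactness step. You have a confused moment in the parenthetical where you write $GF \cong \mathrm{id}$ and then switch to $FG \cong \mathrm{id}$: the latter is what full faithfulness of $G$ gives (the counit is an isomorphism), and neither of these alone yields the statement you need, which is that the unit $M \to GF(M)$ is a monomorphism for every $M$ in the presheaf category. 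That is the genuine content of left exactness, and in the classical proof it is established by a nontrivial lemma (essentially a Baer-criterion-type argument using AB5) rather than by "a diagram chase with the exactness of filtered colimits." As written, you correctly identify this as the main obstacle and then do not surmount it; the sketch stops exactly where the real work starts. A complete proof would need to formulate and prove the multi-object analogue of the lemma that the unit is monic, and verify that the relevant index categories in the Kan-extension formula can be taken filtered so that AB5 applies.
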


The following corollary was actually known before the above theorem,
and can be found in Freyd's thesis \cite{Freyd-thesis} and
a paper of Gabriel \cite{gabriel}, at least in the case of a single generator.

\begin{corollary}[Freyd-Gabriel] In the situation of the above theorem, if every
object of $\mathcal{A}_0$ is compact and projective in $\mathcal{A}$,
then $G$ is an equivalence of categories.
\end{corollary}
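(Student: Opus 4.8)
The plan is to show that the fully faithful functor $G$ of the preceding theorem is essentially surjective by proving that the unit $\eta_M \colon M \to GFM$ of the adjunction $F \dashv G$ is an isomorphism for every $M \in \mathsf{Psh}^{\times}(\mathcal{A}_0,\mathsf{Set})$; since $G$ is fully faithful its counit $FG \to \mathrm{id}_{\mathcal{A}}$ is already invertible, so this is exactly what it takes to conclude $G$ is an equivalence. First I would set up the bookkeeping on representables. Because $\mathcal{A}_0$ is a full subcategory of the abelian category $\mathcal{A}$ closed under finite direct sums, it is additive, and a product-preserving set-valued presheaf on an additive category automatically takes values in abelian groups; thus $\mathsf{Psh}^{\times}(\mathcal{A}_0,\mathsf{Set})$ is the abelian category of additive functors $\mathcal{A}_0^{\mathrm{op}} \to \mathsf{Ab}$, in which the representables $y(x) = \mathrm{Hom}_{\mathcal{A}}(-,x)|_{\mathcal{A}_0}$ form a generating set of projectives. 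The adjunction identity $\mathrm{Hom}_{\mathcal{A}}(Fy(x),a) = (Ga)(x) = \mathrm{Hom}_{\mathcal{A}}(x,a)$, natural in $a$, gives $Fy(x) \cong x$ via Yoneda, hence $GFy(x) \cong Gx = y(x)$; and a triangle identity then exhibits $\eta_{y(x)}$ as the inverse of the isomorphism $G\epsilon_x$, so $\eta$ is an isomorphism on representables, and therefore (once we know $GF$ preserves coproducts) on arbitrary coproducts of representables.

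The one step with genuine content is to show that $G$ preserves arbitrary coproducts and cokernels. Since $G(a)(x) = \mathrm{Hom}_{\mathcal{A}}(x,a)$ and colimits in the functor category are computed objectwise, $G$ preserves cokernels exactly when each $\mathrm{Hom}_{\mathcal{A}}(x,-)$ is right exact, i.e.\ exactly when each $x \in \mathcal{A}_0$ is projective; and $G$ preserves coproducts exactly when each $\mathrm{Hom}_{\mathcal{A}}(x,-)$ commutes with coproducts, which follows from compactness of $x$: a coproduct is the filtered colimit of its finite sub-coproducts, $\mathrm{Hom}_{\mathcal{A}}(x,-)$ commutes with such filtered colimits by compactness, and it commutes with finite direct sums automatically since $\mathcal{A}$ is additive. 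The functor $F$ preserves all colimits, being a left adjoint.

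Finally I would assemble the pieces. Choosing a projective presentation $P_1 \xrightarrow{d} P_0 \to M \to 0$ with $P_0,P_1$ coproducts of representables (available since the representables generate), applying $F$ and then $G$ and using that both preserve coproducts and cokernels exhibits $GFM$ as the cokernel of $GFP_1 \to GFP_0$, with $GFP_i \cong P_i$ compatibly with $\eta$ by the first step. Naturality of $\eta$ then gives a morphism from the cokernel diagram $(P_1 \to P_0 \to M)$ to $(GFP_1 \to GFP_0 \to GFM)$ that is an isomorphism on the first two terms, so $\eta_M$ is an isomorphism by the universal property of the cokernel, and $G$ is an equivalence. I do not expect a serious obstacle here: the entire point is the translation in the middle step — recognizing that compactness and projectivity of the objects of $\mathcal{A}_0$ are precisely what upgrade the a priori merely left-exact functor $G$ to one commuting with the colimits that appear in a projective presentation — after which everything is formal adjunction and the universal property of cokernels.
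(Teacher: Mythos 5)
Your proof is correct. Note that the paper does not actually give a proof of this corollary — it cites Freyd and Gabriel — but it later states and proves the closely related Proposition~\ref{prop:freyd-gabriel} (with conservativity of $G$ in place of the generating hypothesis), and the comparison with that proof is illuminating. The paper's argument constructs an explicit candidate inverse $L$ by left Kan extension along $\mathcal{A}_0 \hookrightarrow \mathcal{A}$, reduces checking $LG \cong \mathrm{id}$ to checking $GLG \cong G$ via conservativity, and then observes that $GL$ is a colimit-preserving endofunctor of $\mathsf{Psh}^{\times}_{\mathsf{Set}}(\mathcal{A}_0)$ that restricts to the identity on representables, hence is the identity. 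Your route instead leans on the Gabriel--Popescu--Kuhn theorem you are quoting: since $G$ is already known to be fully faithful with left adjoint $F$, the counit is invertible and only the unit $\eta$ needs to be checked, which you do by hand on a projective presentation by coproducts of representables. Both proofs hinge on the same observation — compactness and projectivity of the objects of $\mathcal{A}_0$ upgrade $G$ from left exact to colimit-preserving — but the paper's version packages the conclusion as a one-line comparison of colimit-preserving functors, while yours unwinds it via a two-step resolution. Your approach is slightly more elementary and makes transparent that the hypotheses of the Corollary (full faithfulness already given) do less work than those of the Proposition (only conservativity assumed, so essential surjectivity must also be proved directly); the paper's version is more uniform and proves the ostensibly stronger Proposition in one pass. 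One small point worth being explicit about: your cokernel-preservation step implicitly uses that $\mathcal{A}$ is abelian (so that the cokernel of $f\colon A\to B$ fits into $A\to B\to C\to 0$ exact with $\ker(B\to C)=\operatorname{im}(f)$), which is of course part of the hypotheses but should be flagged since projectivity of $x$ alone gives $\operatorname{Hom}(x,-)$ exact on short exact sequences, not a priori cokernel-preserving in a general additive category.
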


Thus, in order to understand $\heartsuit_n$ and its localization
$\mathsf{Slice}_n$, we should search for generators. In the case
of the standard $t$-structure on $\mathsf{Sp}$ we know that
$S^n$ is sufficient, and $\pi_n$ identifies $\mathsf{Sp}_{\ge n} \cap \mathsf{Sp}_{\le n}$
with the category of abelian groups in this way. In fact, we could have used
a finite wedge of copies of $S^n$ just as well.

From the definition
of the slice filtration, we are lead to consider spectra which are finite
wedges of spheres on each stratum. Since we hope to apply
the corollary above, we restrict attention to \emph{compact}
spectra with this property, and thus arrive at the notion of a slice sphere.

In \S\ref{ssec:slice-sphere}-\ref{ssec:slice-htpy} we define slice $n$-spheres
and study the elementary properties of the functors they corepresent
(called slice homotopy). Perhaps the only parts that require a bit of care
are Proposition \ref{prop-sph-is-test} (showing that slice $n$-spheres
exist in sufficient supply) and Proposition \ref{prop-slice-htpy-loc}
(which identifies the slice homotopy group of $P^nX$ in terms
of the slice homotopy group of $X$.) In \S\ref{ssec:slice-model}
we check the hypotheses of the result of Freyd-Gabriel
and conclude that $\heartsuit_n$ is the category of models
for an explicit Lawvere theory, and $\mathsf{Slice}_n$
is a specified localization thereof.

This bridge to algebra made, we reformulate
this Lawvere theory in terms of modules
over a Green functor
in \S\ref{ssec:slice-module}.
The technique is to use a parameterized
version of an argument going
back to Gabriel and Freyd, which
may be of independent interest.

Finally, we digress in \S\ref{ssec:split-modules}
to analyze the structure of modules over Green functors
with an additional condition that allows us to remove
redundant data in describing a module. We
show in \S\ref{ssec:slices-as-mack} that this condition
is satisfied in the case of interest, and this leads to
our final description of $\heartsuit_n$ and $\mathsf{Slice}_n$
as categories of \emph{twisted Mackey functors}.

\subsection{Slice spheres}\label{ssec:slice-sphere}

\begin{definition}\label{def-slice-sphere}
Let $\nu$ be a dimension function for $\mathsf{P}_{\mathcal{O}}$.
We say that an object $W \in \mathsf{Sp}^{\mathcal{O}}$ is a \textbf{slice sphere}
if
	\begin{enumerate}[(i)]
	\item $W$ is compact,
	\item for each $T \in \mathcal{O}$, the spectrum $W^{\Phi T}$
	is equivalent to a finite wedge of spheres.
	\end{enumerate}
We say that a slice sphere $W$ is \textbf{homogeneous} if the spheres
appearing in the decomposition of $W^{\Phi T}$ are all of the same dimension.
We say that an object $W \in \mathsf{Sp}^{\mathcal{O}}$ is a \textbf{slice $n$-sphere}
if it is a slice sphere satisfying the further requirement that the spheres in condition
(ii) have dimension $\nu(n, T)$. We denote the \emph{homotopy category}
of slice $n$-spheres by $\mathsf{Sph}_n$. 

Finally, we define an \textbf{isotropic slice $n$-sphere} to be a slice $n$-sphere
$W \in \mathsf{Sp}^{\mathcal{O}}$ such that, for each $T \in \mathcal{O}$,
$W^{\Phi T}$ is \emph{nonzero}.
\end{definition}

\begin{warning} The property of being a slice $n$-sphere depends on the dimension
function $\nu$. 
\end{warning}

\begin{warning} A slice sphere need not be a slice $n$-sphere for any $n$. 
\end{warning}

\begin{example} Any representation sphere is a slice sphere in $\mathsf{Sp}^G$.
\end{example}

\begin{example} For any group $G$ and
subgroup $H \subseteq G$, the cofiber of the map $\nabla: G/H_+ \to S^0$ is a slice sphere.
When $H$ is trivial, $\mathrm{cof}(\nabla)$
is an example of a slice 1-sphere for the original slice
filtration and a slice 2-sphere for the regular slice filtration. When $|G|>2$,
this cofiber is not equivalent to a wedge of representation spheres.
\end{example}

\begin{remark} The desire to stick to \emph{compact} objects is
motivated both by computations and to have a good algebraic theory. This
is also the reason why it is difficult to develop the theory for an arbitrary
stratified, stable $\infty$-category. The trouble is visible
already in the case of a recollement. The pushforward
functor
$i_*$ has no reason to preserve compact objects, and in fact
does not in our main example of interest. The pushforward $j_!$
does preserve compact objects, but the ones created this way necessarily
vanish on the closed locus. A related disappointment is that
one cannot test compactness on strata. 

In the case of
spectral Mackey functors, however, we have an alternative way to produce
examples: using the functors of the form $(\psi_{\widetilde{\mathcal{F}}})_!$. For
formal reasons, these functors \emph{do} preserve compact objects, and
can be used, in particular, to build a sufficient number of slice spheres.
\end{remark}

\begin{remark}\label{rmk:test-in-heart}
Since slice $n$-spheres are evidently slice $n$-connective,
the natural map
	\[
	[W, W'] \longrightarrow [\tau^{(n)}_{\le 0}W, \tau^{(n)}_{\le 0}W']
	\]
is an isomorphism.
Thus, we may identify $\mathsf{Sph}_n$ with
a full subcategory of $\heartsuit_n$.
\end{remark}

\begin{definition} A \textbf{testing subcategory for $n$-slices} is a full subcategory
$\mathsf{Test}_n \subseteq \mathsf{Sph}_n$ such that, for any $T \in \fin_{\mathcal{O}}$:
	\begin{itemize}
	\item $\mathrm{ind}_T\mathrm{res}_T(\mathsf{Test}_n) 
	\subseteq \mathsf{Test}_n$, and
	\item if $T \in \mathcal{O}$, then 
	there exists some $W \in \mathsf{Test}_n$ with $W^{\Phi T}$ equivalent
	to a \emph{nonzero}, finite wedge of copies of $S^{\nu(n, T)}$. 
	\end{itemize}
\end{definition}

One way of producing a testing subcategory is to generate one from an
isotropic slice $n$-sphere, which is immediate from the definitions.

\begin{lemma}\label{lem-isotropic-gen}
If $W$ is an isotropic $n$-sphere, let $\mathsf{Test}\langle W\rangle$
be the smallest subcategory of $\mathsf{Sph}_n$ containing $W$ and closed
under $\mathrm{ind}_T\mathrm{res}_T$, for all $T \in \fin_{\mathcal{O}}$.
Then $\mathsf{Test}\langle W\rangle$ is a testing subcategory.
\end{lemma}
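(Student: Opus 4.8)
The plan is to verify the two clauses in the definition of a testing subcategory directly for $\mathsf{Test}\langle W\rangle$. The second clause will be immediate from the word \emph{isotropic} in the hypothesis, so the real content is the first clause, and that in turn rests on a single fact that needs checking: that $\mathsf{Sph}_n$ is itself closed under each operator $\mathrm{ind}_T\mathrm{res}_T$ (this is also what makes the phrase ``the smallest subcategory of $\mathsf{Sph}_n$ containing $W$ and closed under $\mathrm{ind}_T\mathrm{res}_T$'' meaningful).

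The first thing to do is to show $\mathrm{ind}_T\mathrm{res}_T$ carries slice $n$-spheres to slice $n$-spheres. Compactness is preserved for formal reasons: $\mathrm{res}_T$ and $\mathrm{ind}_T$ are each left adjoints whose right adjoints ($\mathrm{coind}_T\simeq\mathrm{ind}_T$, respectively $\mathrm{res}_T$) preserve all colimits. For the condition on geometric fixed points, fix an orbit $U\in\mathcal{O}$ and apply Lemma \ref{lem-geo-ind-res} with the downward closed family $(-\infty,U]$: the functor $(i_U)^*$ commutes past $\mathrm{ind}_T$ and $\mathrm{res}_T$, giving
\[
(i_U)^*\,\mathrm{ind}_T\mathrm{res}_T W \;\simeq\; \mathrm{ind}_T\mathrm{res}_T\,(i_U)^*W
\]
in $\mathsf{Mack}(\mathcal{O}_{(-\infty,U]})$ (the last assertion of that lemma shows the components $T_i$ of $T$ with $[T_i]\notin(-\infty,U]$ contribute nothing). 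Evaluating at $U$, and using that inside $\mathcal{O}_{(-\infty,U]}$ every orbit of $U\times T$ is isomorphic to $U$, one gets
\[
(\mathrm{ind}_T\mathrm{res}_T W)^{\Phi U} \;\simeq\; \bigvee_{\mathrm{Hom}_{\fin_{\mathcal{O}}}(U,\,T)} W^{\Phi U},
\]
which in the case $\mathcal{O}=\mathcal{O}_G$ is the familiar identity $(\mathrm{ind}_T\mathrm{res}_T W)^{\Phi U}\simeq (T_+\wedge W)^{\Phi U}$ computed from the strong monoidality of $\Phi^U$. As $\fin_{\mathcal{O}}$ is essentially finite this is a finite, possibly empty, wedge of copies of $W^{\Phi U}$, hence a finite wedge of spheres of dimension $\nu(n,U)$. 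So $\mathrm{ind}_T\mathrm{res}_T W\in\mathsf{Sph}_n$, and by iteration $\mathsf{Sph}_n$ is stable under all the operators $\mathrm{ind}_T\mathrm{res}_T$.

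Granting this, $\mathsf{Test}\langle W\rangle$ — concretely, the full subcategory of $\mathsf{Sph}_n$ spanned by the objects $\mathrm{ind}_{T_1}\mathrm{res}_{T_1}\cdots\mathrm{ind}_{T_k}\mathrm{res}_{T_k}W$ for $k\ge 0$ and $T_i\in\fin_{\mathcal{O}}$ — is a well defined full subcategory of $\mathsf{Sph}_n$, contains $W$, is closed under every $\mathrm{ind}_T\mathrm{res}_T$, and is plainly the smallest such; this is the first clause. For the second clause, given any orbit $T\in\mathcal{O}$ take the object $W$ itself: it lies in $\mathsf{Test}\langle W\rangle$, and since $W$ is an isotropic slice $n$-sphere, $W^{\Phi T}$ is a finite wedge of copies of $S^{\nu(n,T)}$ and is nonzero, exactly as required. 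Hence $\mathsf{Test}\langle W\rangle$ is a testing subcategory. I expect the only step with genuine content to be the closure of $\mathsf{Sph}_n$ under $\mathrm{ind}_T\mathrm{res}_T$, and within it the geometric-fixed-point computation displayed above; everything else is bookkeeping with the definitions.
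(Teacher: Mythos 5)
Your proof is correct and takes exactly the route the paper has in mind: the paper itself offers no proof (it calls the lemma ``immediate from the definitions''), and the one point with genuine content is the one you flag, namely that $\mathsf{Sph}_n$ is closed under each $\mathrm{ind}_T\mathrm{res}_T$, so that $\mathsf{Test}\langle W\rangle$ really is a subcategory of $\mathsf{Sph}_n$. Your compactness argument and your use of Lemma \ref{lem-geo-ind-res} to commute geometric fixed points past $\mathrm{ind}_T\mathrm{res}_T$ are both right; once one knows $(\mathrm{ind}_T\mathrm{res}_T W)^{\Phi U}$ is a finite wedge of copies of $W^{\Phi U}$, closure of $\mathsf{Sph}_n$ is immediate. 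Two minor quibbles: (1) $\fin_{\mathcal{O}}$ itself is not essentially finite (only $\mathcal{O}$ is) — what you actually need, and what is true, is that $\mathrm{Hom}_{\fin_{\mathcal{O}}}(U,T)$ is a finite set for $U$ an orbit and $T \in \fin_{\mathcal{O}}$; and (2) the phrase ``every orbit of $U\times T$'' presumes a product in $\fin_{\mathcal{O}}$, which the paper takes pains to avoid assuming for a general inductive orbital category (see the remark preceding Definition \ref{defn:burnside-mack}) — but the conclusion you draw from it (a finite wedge of copies of $W^{\Phi U}$) is correct and is all that is used, so this is a matter of phrasing rather than substance.
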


At the moment, it is not at all clear that testing subcategories even exist.
Luckily, the definition is not vacuous.

\begin{proposition}\label{prop-sph-is-test}
 $\mathsf{Sph}_n$ is itself a testing subcategory for $n$-slices. More precisely,
 there exists an isotropic $n$-sphere in $\mathsf{Sph}_n$.
\end{proposition}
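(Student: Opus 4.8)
\noindent\emph{Proof strategy.}
The plan is to prove the substantive assertion, that $\mathsf{Sph}_n$ contains an isotropic slice $n$-sphere; the ``testing subcategory'' clause is then formal. Indeed $\mathsf{Sph}_n$ is stable under $\mathrm{ind}_T\mathrm{res}_T$ for every $T\in\fin_{\mathcal{O}}$: compactness survives because, by the Wirthm\"uller equivalence $\mathrm{ind}_T\simeq\mathrm{coind}_T$ (Notation \ref{notation-so-many-adjoints}(d)), both $\mathrm{ind}_T$ and $\mathrm{res}_T$ have cocontinuous right adjoints, while the geometric fixed points stay finite wedges of spheres of the correct dimension by Lemma \ref{lem-geo-ind-res}, since $\nu_T$ is the restriction of $\nu$; and the second axiom for a testing subcategory is exactly the existence of an isotropic slice $n$-sphere (compare Lemma \ref{lem-isotropic-gen}).

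To construct an isotropic slice $n$-sphere $W$, I would induct on $|\mathsf{P}_{\mathcal{O}}|$. If $\mathsf{P}_{\mathcal{O}}=\{T\}$ then $\mathsf{Sp}^{\mathcal{O}}\simeq\mathsf{Fun}(\mathrm{Aut}(T),\mathsf{Sp})$ and $W:=S^{\nu(n,T)}$ with trivial action does the job. For the inductive step, pick a minimal $T_0\in\mathsf{P}_{\mathcal{O}}$, so that $\mathcal{F}:=\mathsf{P}_{\mathcal{O}}\setminus\{T_0\}$ is upward closed and $\{T_0\}$ is downward closed; write $j=j_{\mathcal{F}}$ and $i=i_{T_0}$ for the recollement functors of Theorem \ref{thm-mack-strat}. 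By the inductive hypothesis there is an isotropic slice $n$-sphere $W'\in\mathsf{Sp}^{\mathcal{O}_{\mathcal{F}}}$ for the restricted dimension function. Set $W_0:=j_!W'$: this is compact (the right adjoint $j^*$ is cocontinuous), and---expanding $j_!W'$ as the realization of the bar resolution by iterated inductions of restrictions (Proposition \ref{prop-nil-induced}) and computing geometric fixed points of inductions with Lemma \ref{lem-geo-ind-res}---one finds $W_0^{\Phi T}\simeq (W')^{\Phi T}$, a nonzero finite wedge of $S^{\nu(n,T)}$, for all $T\in\mathcal{F}$, while $W_0^{\Phi T_0}=0$ because $i^*j_!=0$ in the recollement.

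It remains to correct the stratum over $T_0$. I would do this by a finite sequence of cofibrations $W_0=Y_0\to Y_1\to\cdots\to Y_r=:W$ in which each step attaches an induced cell, i.e.\ $Y_{s+1}$ is the cofiber of a map built out of a counit $\mathrm{ind}_{T}\mathrm{res}_{T}(-)\to(-)$ (the analogue of the collapse $G/H_+\wedge X\to X$). Two features make this go through. First, $\Phi^{T}$ carries such a counit to a split surjection, so attaching these cells keeps every stratum $Y_s^{\Phi S}$ a finite wedge of spheres; one first uses $T_0$-cells to pass from $Y_0^{\Phi T_0}=0$ to a nonzero wedge of $S^{\nu(n,T_0)}$. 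Second, a $T_0$-cell perturbs $Y_s^{\Phi T}$ only for those $T$ with $[T_0]\le[T]$ (Lemma \ref{lem-geo-ind-res}), and there it only introduces extra spheres; any resulting spheres of the wrong dimension are then reconciled by attaching further induced cells over those $T$ (raising dimensions, and if necessary first clearing and rebuilding the stratum), which in turn only perturb strata strictly higher in $\mathsf{P}_{\mathcal{O}}$, so the process terminates because $\mathsf{P}_{\mathcal{O}}$ is finite. Compactness is maintained throughout, since each $\mathrm{ind}_T\mathrm{res}_T$ preserves compact objects and cofibers of maps between compacts are compact. The final term $W$ is then an isotropic slice $n$-sphere.

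The step I expect to be the main obstacle is this last one---the combinatorics of choosing cell dimensions and attaching maps so that (i) every intermediate $Y_s^{\Phi S}$ stays a finite wedge of spheres of its single allowed dimension, which is exactly what forces all attaching maps to split after geometric fixed points, and (ii) the cascade of corrections across $\mathsf{P}_{\mathcal{O}}$ terminates in the correct dimensions. This is genuinely more delicate than the equivariant prototype ``begin with $S^k$ and attach induced spheres'': $\mathsf{P}_{\mathcal{O}}$ need not have a least element (hence the induction on $|\mathsf{P}_{\mathcal{O}}|$ rather than a bootstrap from a single constant sphere), and $\nu(n,-)$ need not be monotone on $\mathsf{P}_{\mathcal{O}}$, so one cannot in general fix the lower strata first and be done, and one may need to allow induced cells in negative degrees.
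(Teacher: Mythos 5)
Your formal reductions are fine: $\mathsf{Sph}_n$ is stable under $\mathrm{ind}_T\mathrm{res}_T$ for the reasons you give, and the real content is producing one isotropic slice $n$-sphere. But the central construction has a genuine gap which the paper works around by a different choice of building block.

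The trouble is the stratum at $T_0$. You take $W_0 := j_!W'$, so $W_0^{\Phi T_0} = 0$, and propose to make that stratum nonzero by taking cofibers of counits $\mathrm{ind}_{T}\mathrm{res}_{T}(-)\to(-)$. No operation of that form can do this. If $T\not\simeq T_0$, then Lemma \ref{lem-geo-ind-res} gives $\Phi^{T_0}\mathrm{ind}_T = 0$, so the cell leaves $\Phi^{T_0}$ untouched. If $T\simeq T_0$, then $\Phi^{T_0}(\mathrm{ind}_{T_0}\mathrm{res}_{T_0}W_0)$ is also computed, via both commutative squares of Lemma \ref{lem-geo-ind-res}, from $W_0^{\Phi T_0}$, which is zero; so $\Phi^{T_0}$ of the counit is a map $0\to 0$ and its cofiber stays zero. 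This is especially stark when $T_0$ is terminal in $\mathcal{O}$ (e.g.\ $T_0 = G/G$ in $\mathcal{O}_G$, or after the paper's initial reduction): there $\mathcal{O}_{/T_0}\simeq\mathcal{O}$, so $\mathrm{ind}_{T_0}\mathrm{res}_{T_0}=\mathrm{id}$ and the counit cofiber is literally zero. Counit cofibers can shift and thin cells that are already present on a stratum, but they cannot create cells where there were none, and your starting object $j_!W'$ is, by construction, $\mathcal{F}$-nilpotent and hence dead at $T_0$.

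The paper uses a different starting point precisely to avoid this. It first reduces, by wedging $\mathrm{ind}_{T_i}W_i$ over representatives of the minimal orbits, to the case where $\mathcal{O}$ has a terminal object $T$. It then begins from $X_0 := (\psi_{\{T\}})_!\,S^{\nu(n,T)}$, the left Kan extension along the inclusion $\psi_{\{T\}}:\mathcal{O}_{\{T\}}\hookrightarrow\mathcal{O}$. This is the functor your proposal never invokes: $(\psi_{\{T\}})_!$ preserves compact objects, and $X_0^{\Phi T'}\simeq S^{\nu(n,T)}$ for \emph{every} $T'\in\mathcal{O}$, so the starting object is already nonzero (and homogeneous of a single dimension) on each stratum. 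From that point the argument is essentially what you describe: march through a conservative linear refinement of $\mathsf{P}_{\mathcal{O}}$, correcting each stratum by cofibers of counits (to raise dimension), fibers of units (to lower dimension), or wedging on induced isotropic spheres from the inductive hypothesis if a stratum becomes zero along the way, with the ordering ensuring later corrections never perturb earlier ones. The difference from your plan is not the cascade bookkeeping, which you correctly note terminates, but that the paper never has to resurrect a zero stratum at $T_0$, because $X_0$ is supported everywhere to begin with, whereas your $j_!W'$ cannot be.
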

\begin{proof} We begin with a few reductions. First, by (\ref{lem-isotropic-gen}),
it suffices to construct a single isotropic $n$-sphere. Second, suppose
$\{T_1, ..., T_k\}$ is a set containing a representative for each minimal
element in $\mathsf{P}_{\mathcal{O}}$. If $W_i \in \mathsf{Sp}^{\mathcal{O}_{/T_i}}$ is
an isotropic $n$-sphere, for each $1\le i \le k$, then
$\bigvee_i \mathrm{ind}_{T_i}W_i$ is an isotropic $n$-sphere in
$\mathsf{Sp}^{\mathcal{O}}$. So we may assume, without loss of generality,
that $\mathcal{O}$ has a terminal object. We may also
assume, by induction, that the result holds for posets smaller
than $|\mathsf{P}_{\mathcal{O}}|$. Now we proceed.

Let $T \in \mathcal{O}$ be terminal. Recall
that we have a functor
	\[
	\left(\psi_{\{T\}}\right)_!: \mathsf{Mack}(\mathcal{O}_{\{T\}})
	\cong \mathsf{Sp} \to 
	\mathsf{Mack}(\mathcal{O}) = \mathsf{Sp}^{\mathcal{O}}
	\]
given by left Kan extension.
Define 
$X_0 := \left(\psi_{\{T\}}\right)_!S^{\nu(n, T)}$. 
Then $X_0$ is a slice sphere with the property that 
$(X_0)^{\Phi T'} =
S^{\nu(n, T)}$
for every $T' \in \mathcal{O}$.
Our goal is to modify $X_0$ until it becomes an isotropic slice $n$-sphere.

To that end, choose a conservative, surjective map of posets $\mathsf{P}_{\mathcal{O}}
\to [m]$ for some $m\ge 0$, and an ordering on each fiber. Then list the elements
of $\mathsf{P}_{\mathcal{O}}$ in dictionary order: $\mathsf{P}_{\mathcal{O}}
= \{p_0, p_1, ...., p_N\}$, where $N+1= |\mathsf{P}_{\mathcal{O}}|$.
We'll choose the ordering on the fiber over $0$ so that
$p_0 = T$. 

We propose to inductively build $X_k$ with the property that $X_k$
is a homogeneous slice sphere and, for $i\le k$, $(X_k)^{\Phi p_i}$
is a nonzero wedge of spheres of the form $S^{\nu(n, p_i)}$. So suppose we have
such an $X_k$ for $k< N$, then we describe how to build $X_{k+1}$. Since $X_k$
is a homogeneous
slice sphere, we know that $(X_k)^{\Phi p_{k+1}}$ is a (possibly trivial) finite
wedge
of spheres all of the same dimension, say $(X_k)^{\Phi p_{k+1}}
\cong \bigvee S^m$. To finish the proof, we need only treat each of the following three cases.
	\begin{itemize}
	\item Case 1: $(X_k)^{\Phi p_{k+1}} = 0$. In this case, let
	$A \in \mathsf{Sp}^{\mathcal{O}_{/p_{k+1}}}$ be
	an isotropic $n$ sphere for the restricted slice filtration, which
	exists by the induction, and
	take $X_{k+1}=X_k \vee \mathrm{ind}_{p_{k+1}}A$. 
	\item Case 2: $(X_k)^{\Phi p_{k+1}} \ne 0$ and $m\le \nu(n, p_{k+1})$.
	In this case, let $r = \nu(n, p_{k+1}) - m \ge 0$ and $X_k:= Y_{k, 0}$.
	Given $Y_{k, i}$ for $i<r$, define $Y_{k, i+1}$ as the cofiber of
	the counit:
		\[
		\mathrm{ind}_{p_{k+1}}\mathrm{res}_{p_{k+1}}Y_{k, i}
		\to Y_{k, i}.
		\]
	This construction does not affect geometric fixed points at $p_j$
	for $j\le k$, by the construction of our ordering,
	and it modifies all other geometric fixed points by 
	replacing the previous wedge of spheres with a new wedge of spheres
	of one higher dimension (or doing nothing if the geometric
	fixed points were trivial.) Indeed, on geometric
	fixed points, the unit of induction-restriction admits a section,
	and a summand of a wedge of spheres is still a wedge of spheres.
 	Thus, $X_{k+1}:=Y_{k, r}$ does the trick.
	\item Case 3: $(X_k)^{\Phi p_{k+1}} \ne 0$ and $m> \nu(n, p_{k+1})$.
	This is exactly as before, except we define $Y_{k,i+1}$ as the \emph{fiber}
	of the unit
		\[
		Y_{k, i} \to \mathrm{coind}_{p_{k+1}}\mathrm{res}_{p_{k+1}}Y_{k,i}.
		\]
	Again, we check that this is still a wedge of spheres of appropriate dimension
	using the fact that the geometric fixed points of the unit for the coinduction-restriction
	adjunction admits a retraction.
	\end{itemize}
\end{proof}

\begin{remark}\label{rmk:efficient-isotropic-spheres}
Isotropic slice spheres with fewer cells
may be constructed by modifying the above construction to be
more efficient. Specifically, one may replace the use of
$\mathrm{ind}_T\mathrm{res}_TX \to X$ with
any map $\mathrm{ind}_TY \to X$ which becomes split
upon restriction to $T$. We suspect that all isotropic slice
spheres arise from this more general procedure, but
have not tried to prove it.
\end{remark}

For later use, we record an evident but useful observation.

\begin{lemma}\label{lem-res-test} Fix $T \in \fin_{\mathcal{O}}$ and a testing subcategory
$\mathsf{Test}_n(\mathcal{O}) \subseteq \mathsf{Sph}_n(\mathcal{O})$. Then
$\mathrm{res}_T(\mathsf{Test}_n(\mathcal{O}))$ is a testing subcategory
for $n$-slices in $\mathsf{Sp}^{\mathcal{O}_{/T}}$.
\end{lemma}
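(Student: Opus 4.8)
The plan is to verify that the full subcategory $\mathsf{Test}_n(\mathcal{O}_{/T}) \subseteq \mathsf{Sph}_n(\mathcal{O}_{/T})$ obtained from $\mathrm{res}_T(\mathsf{Test}_n(\mathcal{O}))$ — taken to be the objects $\mathrm{res}_T W$ for $W \in \mathsf{Test}_n(\mathcal{O})$ together with their retracts — satisfies the two defining conditions of a testing subcategory for $n$-slices, where on $\mathcal{O}_{/T}$ we use the restricted dimension function $\nu_T := \nu|_{\mathsf{P}_{\mathcal{O}_{/T}}}$. The first thing to check is that $\mathrm{res}_T$ lands in $\mathsf{Sph}_n(\mathcal{O}_{/T})$ at all. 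Compactness is preserved because the right adjoint of $\mathrm{res}_T$ is $\mathrm{coind}_T \cong \mathrm{ind}_T$ (Notation \ref{notation-so-many-adjoints}(d)), which, being a left adjoint, preserves all colimits and in particular filtered ones. For geometric fixed points, observe that an orbit $T' \in \mathcal{O}_{/T}$ is a pair $(\bar{T'}, p)$ with $\bar{T'} \in \mathcal{O}$ and $p \colon \bar{T'} \to T$, and that the restriction square of Lemma \ref{lem-geo-ind-res} (applied with $\widetilde{\mathcal{F}}$ the downward closure of $[\bar{T'}]$) identifies $(\mathrm{res}_T W)^{\Phi T'}$ with $W^{\Phi \bar{T'}}$. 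Since the latter is a finite (possibly empty) wedge of copies of $S^{\nu(n, \bar{T'})}$ and $\nu_T(n, T') = \nu(n, \bar{T'})$ by definition of $\nu_T$, we conclude $\mathrm{res}_T W \in \mathsf{Sph}_n(\mathcal{O}_{/T})$; retracts of such objects are again finite wedges of $S^{\nu_T(n,T')}$, so $\mathsf{Test}_n(\mathcal{O}_{/T})$ really is a full subcategory of $\mathsf{Sph}_n(\mathcal{O}_{/T})$.

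The second defining condition — that for every orbit $T'$ of $\mathcal{O}_{/T}$ there is some object of $\mathsf{Test}_n(\mathcal{O}_{/T})$ whose value at $T'$ is a \emph{nonzero} finite wedge of copies of $S^{\nu_T(n,T')}$ — is now immediate: choose $W \in \mathsf{Test}_n(\mathcal{O})$ with $W^{\Phi \bar{T'}}$ a nonzero finite wedge of $S^{\nu(n,\bar{T'})}$ (using that $\mathsf{Test}_n(\mathcal{O})$ is a testing subcategory and that $\bar{T'} \in \mathcal{O}$), and then $(\mathrm{res}_T W)^{\Phi T'} \cong W^{\Phi \bar{T'}}$ by the computation above.

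The real content is closure under $\mathrm{ind}_{T'}\mathrm{res}_{T'}$ for $T' \in \fin_{\mathcal{O}_{/T}}$. Fix $W \in \mathsf{Test}_n(\mathcal{O})$ and write $\bar{T'} \in \fin_{\mathcal{O}}$ for the total space of $T'$, with structure map $p \colon \bar{T'} \to T$. Transitivity of restriction, via the identification $(\mathcal{O}_{/T})_{/T'} \cong \mathcal{O}_{/\bar{T'}}$, gives $\mathrm{res}_{T'}^{\mathcal{O}_{/T}} \circ \mathrm{res}_T = \mathrm{res}_{\bar{T'}}$. Base change (Beck--Chevalley applied to the pullback square whose total object is $\mathcal{O}_{/(\bar{T'} \times T)}$, with its two projections to $\mathcal{O}_{/\bar{T'}}$ and $\mathcal{O}_{/T}$) identifies $\mathrm{res}_T \circ \mathrm{ind}_{\bar{T'}}$ with an induction--restriction along those projections; isolating the component of $\bar{T'} \times T$ cut out by the graph $(\mathrm{id}, p)$ — which is a direct summand since $\mathcal{O}$ is orbital — shows that $\mathrm{ind}_{T'}^{\mathcal{O}_{/T}}\mathrm{res}_{T'}^{\mathcal{O}_{/T}}(\mathrm{res}_T W)$ is a direct summand of $\mathrm{res}_T(\mathrm{ind}_{\bar{T'}}\mathrm{res}_{\bar{T'}} W)$. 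As $\mathsf{Test}_n(\mathcal{O})$ is closed under $\mathrm{ind}_{\bar{T'}}\mathrm{res}_{\bar{T'}}$, the object $\mathrm{ind}_{\bar{T'}}\mathrm{res}_{\bar{T'}} W$ lies in $\mathsf{Test}_n(\mathcal{O})$, so $\mathrm{ind}_{T'}\mathrm{res}_{T'}(\mathrm{res}_T W)$ is a retract of an object in the image of $\mathrm{res}_T$, hence lies in $\mathsf{Test}_n(\mathcal{O}_{/T})$; and the retract-closure is stable under $\mathrm{ind}_{T'}\mathrm{res}_{T'}$ by the same argument. The step I expect to be the main obstacle — and the reason one is forced to pass to retracts rather than work with the bare essential image of $\mathrm{res}_T$ — is exactly the appearance of the extra components of $\bar{T'}\times T$ (the "double-coset" multiplicities), which prevent a restricted induced object from being literally induced; the supporting bookkeeping is the compatibility of restriction, induction, and the geometric strata encoded in Lemma \ref{lem-geo-ind-res} and base change.
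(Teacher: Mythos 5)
The paper states this lemma without proof, calling it ``an evident but useful observation,'' so there is no argument in the text to compare against; your proof is substantively correct and also catches a genuine imprecision in the statement. The literal essential image $\mathrm{res}_T(\mathsf{Test}_n(\mathcal{O}))$ is \emph{not} in general closed under $\mathrm{ind}_{T'}\mathrm{res}_{T'}$, so one must interpret the lemma as asserting that the retract closure of the image (or the smallest $\mathrm{ind}\,\mathrm{res}$-closed subcategory containing it) is a testing subcategory. This is not merely hypothetical: take $\mathcal{O}=\mathcal{O}_{C_4}$, $T=C_4/C_2$, and $\mathsf{Test}_n=\mathsf{Test}\langle W\rangle$ for an isotropic slice $n$-sphere $W$. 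Restrictions $\downarrow_{C_2}U$ of finite $C_4$-sets $U$ always carry an even number of free $C_2$-orbits, so $C_{2+}\wedge\downarrow_{C_2}W=\mathrm{ind}_{C_2}\mathrm{res}_{C_2}(\downarrow_{C_2}W)$ is a retract of $\downarrow_{C_2}(C_{4+}\wedge W)$ but is not itself equivalent to any $\downarrow_{C_2}(U_+\wedge W)$. So the need to pass to retracts, which you flagged as the crux, is real and this should be recorded as a small correction to the lemma.

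Two smaller remarks on the argument itself. First, for the closure step your Beck--Chevalley square invokes the product $\bar{T'}\times T$, which need not exist in $\mathsf{Fin}_{\mathcal{O}}$ for a general inductive orbital category (the paper itself points out that $\mathsf{Fin}_{\mathcal{O}}$ may lack products). A route that avoids this is to use the factorization of left Kan extensions $\mathrm{ind}^{\mathcal{O}}_{\bar{T'}}\cong\mathrm{ind}^{\mathcal{O}}_{T}\circ\mathrm{ind}^{\mathcal{O}_{/T}}_{T'}$, so that $\mathrm{res}_T\,\mathrm{ind}_{\bar{T'}}\cong(\mathrm{res}_T\mathrm{ind}_T)\circ\mathrm{ind}^{\mathcal{O}_{/T}}_{T'}$; since the unit $\mathrm{id}\to\mathrm{res}_T\mathrm{ind}_T$ is a split monomorphism of functors, $\mathrm{ind}^{\mathcal{O}_{/T}}_{T'}\mathrm{res}^{\mathcal{O}_{/T}}_{T'}\mathrm{res}_TW=\mathrm{ind}^{\mathcal{O}_{/T}}_{T'}\mathrm{res}_{\bar{T'}}W$ is exhibited directly as a retract of $\mathrm{res}_T(\mathrm{ind}_{\bar{T'}}\mathrm{res}_{\bar{T'}}W)$ without any reference to $\bar{T'}\times T$. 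Second, your remaining checks---compactness of $\mathrm{res}_TW$ via $\mathrm{coind}_T\cong\mathrm{ind}_T$, the identification $(\mathrm{res}_TW)^{\Phi T'}\cong W^{\Phi\bar{T'}}$ from Lemma~\ref{lem-geo-ind-res}, and the transfer of the nonvanishing condition---are all handled correctly, and passing to retracts does not disturb membership in $\mathsf{Sph}_n(\mathcal{O}_{/T})$ or the nonvanishing condition.
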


\subsection{Slice homotopy groups}\label{ssec:slice-htpy}

Having defined the appropriate notion of spheres in our context, we now
develop the resulting analogue of homotopy groups and their basic properties.

\begin{definition}\label{def-slice-htpy} Fix a testing subcategory $\mathsf{Test}_n \subseteq \mathsf{Sph}_n$.
Then the restricted Yoneda embedding defines a functor:
	\[
	\hat{\pi}_n: \mathsf{Sp}^{\mathcal{O}} \longrightarrow
	\mathsf{Psh}_{\mathsf{Set}}^{\times}(\mathsf{Test}_n). 
	\]
to the category of product-preserving
presheaves on $\mathsf{Test}_n$. We
will call the target of $\hat{\pi}_n$ the category
of \textbf{$n$-models} and denote it by $\mathsf{Model}_n$.   
We call $\hat{\pi}_nX$ the \textbf{$n$-th slice homotopy group} of $X$. 
\end{definition}

\begin{remark} Since $\mathsf{Test}_n$ is additive,
there is a canonical equivalence
	\[
	\mathsf{Model}_n \cong \mathsf{Psh}_{\mathsf{Ab}}^{\oplus}(\mathsf{Test}_n)
	\]
with the category of additive presheaves. We will
often move back and forth between the equivalent
interpretations of $\mathsf{Model}_n$ for convenience.
For example, with this definition, it is clear that
$\mathsf{Model}_n$ is abelian.
\end{remark}

The name `slice homotopy group' is slightly abusive since
$\hat{\pi}_nX$ is really a diagram of groups. We don't think this will cause confusion.

\begin{warning} Both the category $\mathsf{Model}_n$
and the functor $\hat{\pi}_n$ depend
on the dimension function $\nu$ and the choice of testing subcategory.
\end{warning}

The following lemma is evident and will often be used without comment.

\begin{lemma}\label{lem:homological}
The functor $\hat{\pi}_n$ is homological, i.e. $\hat{\pi}_n$ sends
cofiber sequences to exact sequences.
\end{lemma}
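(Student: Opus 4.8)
The plan is to reduce exactness in $\mathsf{Model}_n$ to exactness of abelian groups, one representing object at a time, and then invoke the standard fact that mapping out of a fixed object in a stable $\infty$-category is a homological functor.

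First I would use the identification of $\mathsf{Model}_n$ with the category $\mathsf{Psh}_{\mathsf{Ab}}^{\oplus}(\mathsf{Test}_n)$ of additive presheaves valued in abelian groups (the remark following Definition \ref{def-slice-htpy}). Since $\mathsf{Test}_n$ is essentially small, this is an abelian category in which kernels and cokernels, hence exactness, are computed pointwise: a sequence $F' \to F \to F''$ is exact at $F$ if and only if $F'(W) \to F(W) \to F''(W)$ is exact in $\mathsf{Ab}$ for every $W \in \mathsf{Test}_n$.

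Next, fix a cofiber sequence $X \to Y \to Z$ in $\mathsf{Sp}^{\mathcal{O}}$ and an object $W \in \mathsf{Test}_n$. By definition $(\hat{\pi}_n X)(W) = [W, X]$, and similarly for $Y$ and $Z$. Because $\mathsf{Sp}^{\mathcal{O}} = \mathsf{Mack}(\mathcal{O}; \mathsf{Sp})$ is stable, the given cofiber sequence is also a fiber sequence, so applying the mapping-spectrum functor $F(W, -)$ produces a fiber sequence of spectra $F(W, X) \to F(W, Y) \to F(W, Z)$; the associated long exact sequence of homotopy groups contains the exact segment $[W, X] \to [W, Y] \to [W, Z]$. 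This uses only that $W$ is an object of a stable $\infty$-category; compactness of $W$ plays no role here.

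Combining the two points, $\hat{\pi}_n X \to \hat{\pi}_n Y \to \hat{\pi}_n Z$ is exact at $\hat{\pi}_n Y$ precisely because it is exact after evaluation at each $W \in \mathsf{Test}_n$, which is the assertion that $\hat{\pi}_n$ is homological. I do not expect any real obstacle: the only points requiring care are the bookkeeping identifying $\mathsf{Model}_n$ as an abelian category with pointwise exactness and the compatibility of the $\mathsf{Set}$-valued and $\mathsf{Ab}$-valued descriptions of $\hat{\pi}_n$, both of which are formal.
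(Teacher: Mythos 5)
Your proof is correct and supplies precisely the routine argument the paper elides; the paper states the lemma as ``evident'' and gives no proof, since (as you observe) it reduces formally to pointwise exactness in $\mathsf{Psh}_{\mathsf{Ab}}^{\oplus}(\mathsf{Test}_n)$ together with the fact that $[W,-]$ is homological on any stable $\infty$-category. Nothing further is needed.
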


\begin{warning} Despite the notation, it is very much \emph{not} the case that
$\hat{\pi}_n(\Sigma X)$ is the same as $\hat{\pi}_{n-1}X$ in general.
\end{warning}

\begin{proposition}\label{prop-slice-left-adjoint}
 For any testing subcategory $\mathsf{Test}_n \subseteq \mathsf{Sph}_n$,
the functor
	\[
	\hat{\pi}_n: \heartsuit_n \to \mathsf{Model}_n
	\]
admits a left adjoint, denoted $H: \mathsf{Model}_n
\to \heartsuit_n$. It is determined by the property that, if $W \in \mathsf{Test}_n$,
then 
$H[-,W] = \tau^{(n)}_{\le 0}W$. Composing with $P^n$ yields a functor
	\[
	P^nH : \mathsf{Model}_n \longrightarrow \mathsf{Slice}_n
	\]
left adjoint to the restriction $\hat{\pi}_n \vert_{\mathsf{Slice}_n}$.
\end{proposition}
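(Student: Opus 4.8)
The plan is to produce the left adjoint $H$ by a standard adjoint functor / presentable localization argument and then pin down its value on representables, after which the statement about $P^nH$ is formal. First I would observe that $\heartsuit_n$ is a Grothendieck abelian category (by the earlier discussion, it is the heart of an accessible $t$-structure on the presentable stable $\infty$-category $\mathsf{Sp}^{\mathcal{O}}$), and that $\mathsf{Model}_n = \mathsf{Psh}^{\times}_{\mathsf{Set}}(\mathsf{Test}_n) \cong \mathsf{Psh}^{\oplus}_{\mathsf{Ab}}(\mathsf{Test}_n)$ is also presentable. The functor $\hat{\pi}_n|_{\heartsuit_n}$ is the restricted Yoneda embedding along the fully faithful inclusion $\mathsf{Test}_n \hookrightarrow \heartsuit_n$ of Remark \ref{rmk:test-in-heart}; concretely it sends $A$ to $W \mapsto [W, A] = \operatorname{Hom}_{\heartsuit_n}(\tau^{(n)}_{\le 0}W, A)$. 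This functor visibly preserves all limits (it is a right adjoint objectwise since each $[W,-]$ does), and it is accessible because $\heartsuit_n$ is presentable and $\mathsf{Test}_n$ is essentially small. By the adjoint functor theorem for presentable categories (\cite[5.5.2.9]{HTT}) it therefore admits a left adjoint $H: \mathsf{Model}_n \to \heartsuit_n$.

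Next I would identify $H$ on representables. For $W \in \mathsf{Test}_n$ write $[-,W]$ for the presheaf $\hat{\pi}_nW$ restricted to $\mathsf{Test}_n$ — equivalently $\hat{\pi}_n(\tau^{(n)}_{\le 0}W)$, using that $W$ is slice $n$-connective so $[W', W] \cong [\tau^{(n)}_{\le 0}W', \tau^{(n)}_{\le 0}W]$. The defining adjunction gives, for every $A \in \heartsuit_n$,
\[
\operatorname{Hom}_{\heartsuit_n}(H[-,W], A) \cong \operatorname{Hom}_{\mathsf{Model}_n}([-,W], \hat{\pi}_nA) \cong \hat{\pi}_nA(W) = [W, A] \cong \operatorname{Hom}_{\heartsuit_n}(\tau^{(n)}_{\le 0}W, A),
\]
where the middle isomorphism is the (enriched) Yoneda lemma in $\mathsf{Model}_n$ applied to the representable presheaf $[-,W]$. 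By the Yoneda lemma in $\heartsuit_n$ this forces $H[-,W] \cong \tau^{(n)}_{\le 0}W$, naturally in $W$. Since every object of $\mathsf{Model}_n$ is a colimit of representables and $H$ preserves colimits, this determines $H$ uniquely.

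Finally, for the last sentence: $P^n: \heartsuit_n \to \mathsf{Slice}_n$ is the left adjoint to the inclusion $\mathsf{Slice}_n \hookrightarrow \heartsuit_n$ established in Proposition~(the one stating $\mathsf{Slice}_n$ is a reflective subcategory of $\heartsuit_n$). Composing adjunctions, $P^nH$ is left adjoint to the composite $\heartsuit_n \hookleftarrow \mathsf{Slice}_n$ followed by $\hat{\pi}_n$, i.e. to $\hat{\pi}_n|_{\mathsf{Slice}_n}$, which is exactly the claim. I expect the only point requiring genuine care to be the identification $H[-,W] \cong \tau^{(n)}_{\le 0}W$ — specifically making sure the Yoneda step is applied in the abelian (additive-presheaf) model of $\mathsf{Model}_n$ so that representables really are the corepresenting objects and that naturality in $W$ is tracked correctly; everything else is a formal consequence of presentability and composition of adjoints.
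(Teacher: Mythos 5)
Your proof is correct and takes essentially the same approach as the paper's: verify that $\hat{\pi}_n$ preserves limits and is accessible, invoke the adjoint functor theorem for presentable categories, and then read off the value on representables from the Yoneda lemma. The only cosmetic difference is in the accessibility step: the paper uses the compactness of slice spheres directly (so $\hat{\pi}_n$ preserves all filtered colimits), whereas you appeal to the fact that in a presentable category every small set of objects is uniformly $\kappa$-compact for some $\kappa$ — both are fine, the paper's version is just a bit sharper and more self-contained given that slice spheres are defined to be compact.
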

\begin{proof} The inclusion 
$\heartsuit_n \subseteq \mathsf{Sp}^{\mathcal{O}}_{\ge n}$
preserves limits (since it
admits $\tau^{(n)}_{\le 0}$ as a left adjoint), and so does the Yoneda embedding and restriction, whence
$\hat{\pi}_n$ preserves limits. Since each object of $\mathsf{Test}_n$ is compact, 
by definition, the functor $\hat{\pi}_n$ also preserves filtered colimits. The result now
follows from the adjoint functor theorem, since the source and target are presentable
categories. The formula for $H$ follows from checking that $\tau^{(n)}_{\le 0}W$ 
corepresents the expected functor on $\heartsuit_n$.
\end{proof}

\begin{warning} The functor $\hat{\pi}_n$ is generally not right exact
as a functor with domain $\mathsf{Slice}_n$.
\end{warning}

We will find much use out of the following 
elementary vanishing conditions for slice homotopy groups.

\begin{proposition}\label{prop-slice-htpy-vanish}
\begin{enumerate}[(a)]
\item If $X \ge n$, then, for any slice $n$-sphere $W$,
$[W, \Sigma X] = 0$.
\item If $T \in \mathcal{O}$ is minimal
and an $n$-jump, and $X \in \mathsf{Sp}_{\ge \nu(n+1, T)}$,
then $[W,i_*X] = 0$ for any slice $n$-sphere $W$.
\end{enumerate}
\end{proposition}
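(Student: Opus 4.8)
The plan is to prove the formally slightly stronger assertion that \emph{if $X\ge n$ then $[W,\Sigma^{k}X]=0$ for every slice $n$-sphere $W$ and every $k\ge 1$}; part (a) is the case $k=1$, and part (b) will drop out at the end. The technical heart is a non-equivariant input, which I will call Lemma~A: \emph{if $\Gamma$ is a finite group, $V$ is a compact object of $\mathsf{Sp}^{B\Gamma}$ whose underlying spectrum is a finite wedge of $d$-spheres, and $Y\in\mathsf{Sp}^{B\Gamma}$ has underlying spectrum that is $(d+1)$-connective, then $[V,Y]_{B\Gamma}=0$.} To prove Lemma~A one identifies $\mathsf{Sp}^{B\Gamma}\simeq\mathrm{Mod}_{\mathbb{S}[\Gamma]}$ with modules over the spherical group ring; then $V$ is perfect, so $\mathrm{map}_{B\Gamma}(V,Y)\simeq V^{\vee}\otimes_{\mathbb{S}[\Gamma]}Y$ with $V^{\vee}=\mathrm{map}_{\mathbb{S}[\Gamma]}(V,\mathbb{S}[\Gamma])$. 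Since $\mathbb{S}[\Gamma]$ is self-dual over $\mathbb{S}$ ($\Gamma$ being finite), the underlying spectrum of $V^{\vee}$ is the $\mathbb{S}$-linear dual of that of $V$, hence a finite wedge of $(-d)$-spheres; consequently every term $V^{\vee}\otimes_{\mathbb{S}}\mathbb{S}[\Gamma]^{\otimes m}\otimes_{\mathbb{S}}Y$ of the bar complex computing $V^{\vee}\otimes_{\mathbb{S}[\Gamma]}Y$ is a finite wedge of copies of $\Sigma^{-d}Y$, hence $1$-connective. Geometric realization preserves connectivity, so $V^{\vee}\otimes_{\mathbb{S}[\Gamma]}Y$ is $1$-connective and $[V,Y]_{B\Gamma}=\pi_{0}=0$.

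I would then prove the stronger assertion by induction on $|\mathsf{P}_{\mathcal{O}}|$. Fix a slice $n$-sphere $W$ and let $\mathcal{S}_{W}\subseteq\mathsf{Sp}^{\mathcal{O}}$ be the full subcategory of objects $M$ with $[W,\Sigma^{k}M]=0$ for all $k\ge 1$. Since $W$ is compact, $\mathcal{S}_{W}$ is closed under arbitrary colimits, and by the long exact sequences it is closed under extensions; so it suffices to show $\mathsf{Sp}^{\mathcal{O}}_{\ge n}\subseteq\mathcal{S}_{W}$. If $|\mathsf{P}_{\mathcal{O}}|=1$, then $\mathsf{Sp}^{\mathcal{O}}\simeq\mathsf{Sp}^{B\mathrm{Aut}(T_{0})}$, the spheres in $W$ have dimension $d:=\nu(n,T_{0})$, any $X\ge n$ has underlying spectrum $\ge d$, hence $\Sigma^{k}X$ has underlying spectrum $\ge d+1$ for $k\ge 1$, and Lemma~A gives $X\in\mathcal{S}_{W}$. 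For the inductive step pick a minimal $T_{0}\in\mathsf{P}_{\mathcal{O}}$ and, for $X\ge n$, use the recollement cofiber sequence $j_{!}j^{*}X\to X\to i_{*}i^{*}X$ attached to the atomic stratum $\{T_{0}\}$; by closure under extensions it is enough to put both outer terms in $\mathcal{S}_{W}$.

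For the geometric term, $[W,\Sigma^{k}i_{*}i^{*}X]\cong[i^{*}W,\Sigma^{k}i^{*}X]_{B\mathrm{Aut}(T_{0})}$ by the $(i^{*},i_{*})$-adjunction; here $i^{*}W$ is compact (as $i^{*}$ has the colimit-preserving right adjoint $i_{*}$ in the recollement $i^{*}\dashv i_{*}\dashv i^{!}$) with underlying spectrum a finite wedge of $\nu(n,T_{0})$-spheres, while $\Sigma^{k}i^{*}X$ has underlying spectrum $\Sigma^{k}(X^{\Phi T_{0}})$, which is $(\nu(n,T_{0})+1)$-connective for $k\ge 1$, so Lemma~A kills it. For the nilpotent term, Proposition~\ref{prop-nil-induced} (with $T=\coprod_{S\in\mathcal{O}_{\mathcal{F}}}S$) exhibits $j_{!}j^{*}X$ as a geometric realization whose $m$-th term is $\mathrm{ind}_{T}Z_{m}$ with $Z_{m}=\mathrm{res}_{T}(\mathrm{ind}_{T}\mathrm{res}_{T})^{m}(j^{*}X)\in\mathsf{Sp}^{\mathcal{O}_{/T}}$, and $Z_{m}\ge n$ because restriction, induction and $j^{*}$ all preserve slice $n$-connectivity. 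Using $\mathrm{ind}_{T}\simeq\mathrm{coind}_{T}$ and adjunction, $[W,\Sigma^{k}\mathrm{ind}_{T}Z_{m}]\cong[\mathrm{res}_{T}W,\Sigma^{k}Z_{m}]_{\mathcal{O}_{/T}}$; since $\mathrm{res}_{T}W$ is a slice $n$-sphere in $\mathsf{Sp}^{\mathcal{O}_{/T}}$ and $\mathsf{P}_{\mathcal{O}_{/T}}$ is strictly smaller than $\mathsf{P}_{\mathcal{O}}$ ($T$ omits $T_{0}$, cf. Remark~\ref{rem-induct}), the induction hypothesis gives $Z_{m}\in\mathcal{S}_{\mathrm{res}_{T}W}$, i.e. these groups vanish. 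Hence every term of the realization lies in $\mathcal{S}_{W}$, so $j_{!}j^{*}X\in\mathcal{S}_{W}$, completing the induction.

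Finally, (b) follows from (a): since $T$ is an $n$-jump and $\nu(n,-)$ is weakly increasing and surjective one has $\nu(n+1,T)=\nu(n,T)+1$, so $X\in\mathsf{Sp}_{\ge\nu(n,T)+1}=\Sigma\,\mathsf{Sp}_{\ge\nu(n,T)}$; writing $X=\Sigma X'$ with $X'\ge\nu(n,T)$, the object $i_{*}X'$ is slice $n$-connective (its only possibly nonzero geometric fixed points are at $T$, where they equal $X'$), and $[W,i_{*}X]=[W,\Sigma(i_{*}X')]=0$ by (a). I expect the main obstacle to be Lemma~A, together with keeping the nilpotent-piece bookkeeping honest — in particular verifying that $\mathsf{P}_{\mathcal{O}_{/T}}$ is genuinely smaller so the induction applies; the geometric piece and the reduction of (b) to (a) are then formal.
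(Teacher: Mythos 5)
Your argument is correct and follows the same strategy as the paper's own proof: induct on $|\mathsf{P}_{\mathcal{O}}|$, split $X$ via the recollement cofiber sequence at a minimal orbit, kill the geometric piece by the $(i^*,i_*)$-adjunction and a connectivity argument, and kill the nilpotent piece via Proposition~\ref{prop-nil-induced} together with the inductive hypothesis applied to $\mathrm{res}_T W$ over $\mathcal{O}_{/T}$. The main thing you add is making the nonequivariant vanishing step honest: the paper simply asserts $[W^{\Phi T},(\Sigma X)^{\Phi T}]=0$ from connectivity, while your Lemma~A correctly isolates that this needs $i^*W$ to be a \emph{perfect} $\mathbb{S}[\mathrm{Aut}(T)]$-module (a non-compact $V$ such as $S^0$ with trivial $C_2$-action against $Y=\Sigma H\mathbb{Z}/2$ would give a nonzero $[V,Y]_{BC_2}$), and you also spell out the skeletal-filtration closure the paper compresses into ``standard arguments''; both are worthwhile clarifications rather than deviations.
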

\begin{proof} First we prove (a).
By induction on the size of $\mathsf{P}_{\mathcal{O}}$,
and standard arguments,
we're reduced to checking that $\hat{\pi}_ni_*i^*\Sigma X = 0$
where $(i^*, i_*)$ is the adjoint pair associated to a minimal element
$T \in \mathcal{O}$. But $\hat{\pi}_n(i_*i^*\Sigma X)(W) = [W^{\Phi T}, (\Sigma X)^{\Phi T}]$
by adjunction. Since $W$ is an $n$-slice sphere, $W^{\Phi T}$ is either 0 or a wedge
of copies of $S^{\nu(n, T)}$. On the other hand, $\Sigma X^{\Phi T}$ is 
$(\nu(n, T)+1)$-connective since $X \ge n$, so in either case we get zero.
The proof of (b) is similar and easier.
\end{proof}

\begin{proposition}\label{prop-inj-res} If $Y\le n$, let $T^{\mathrm{jump}} \in \fin_{\mathcal{O}}$ be the coproduct over all the $n$-jumps for $\nu$, in $\mathcal{O}$. 
Then, for any $T \in \fin_{\mathcal{O}}$ with $T^{\mathrm{jump}}$ as a summand,
the map
	\[
	\hat{\pi}_nA(W) \to \hat{\pi}_nA(\mathrm{ind}_{T}\mathrm{res}_{T}W)
	\]
induced by the counit $\mathrm{ind}_{T}\mathrm{res}_{T}W \to W$
is injective.
\end{proposition}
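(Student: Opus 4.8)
The statement to prove is Proposition~\ref{prop-inj-res}: if $Y \le n$ and $T \in \fin_{\mathcal{O}}$ has $T^{\mathrm{jump}}$ as a summand, then $\hat\pi_n Y(W) \to \hat\pi_n Y(\mathrm{ind}_T\mathrm{res}_T W)$ is injective for every $W \in \mathsf{Test}_n$. (I read the ``$A$'' in the display as the slice-truncated object, call it $Y$.) The plan is to reduce this injectivity statement to a statement about the \emph{fiber} of the counit $c\colon \mathrm{ind}_T\mathrm{res}_T W \to W$ and then to recognize that fiber as a slice $(n+1)$-connective $\mathcal{O}$-spectrum, so that mapping it into $Y \le n$ gives nothing.

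First I would set up the cofiber sequence associated to the counit. Write $F := \mathrm{fib}(c\colon \mathrm{ind}_T\mathrm{res}_T W \to W)$, so we have a cofiber sequence
\[
F \longrightarrow \mathrm{ind}_T\mathrm{res}_T W \longrightarrow W.
\]
Since $\hat\pi_n = [-, Y]$ on $\heartsuit_n$ (restricted Yoneda) and $Y \le n$, the long exact sequence obtained by applying $[-,Y]$ (Lemma~\ref{lem:homological}) shows that the desired map $[W,Y]\to[\mathrm{ind}_T\mathrm{res}_T W, Y]$ is injective as soon as $[\Sigma F, Y] = 0$, equivalently $[F, Y] \to [\Sigma^{-1} \mathrm{ind}_T\mathrm{res}_T W, Y]$ has the right behavior — more cleanly, it suffices that $[\Sigma F, Y] = 0$, and since $Y \le n$ this follows if $\Sigma F \ge n+1$, i.e. if $F \ge n$. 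Actually the cleanest route: the counit $c$ is split after applying $\mathrm{res}_T$ (the restriction of the unit of $\mathrm{ind}_T \dashv \mathrm{res}_T$ is a section, using the self-duality $\mathrm{ind}_T \simeq \mathrm{coind}_T$ of Notation~\ref{notation-so-many-adjoints}(d)), hence $\mathrm{res}_T F$ is a retract of $\mathrm{res}_T \mathrm{ind}_T\mathrm{res}_T W$; but I want geometric-fixed-point information, so I will instead argue directly on $\Phi$-fixed points.

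The heart of the argument is the connectivity claim $F \ge n+1$. By the description of the slice filtration on Mackey functors (the $\nu$-slice filtration), $F \ge n+1$ iff $F^{\Phi S} \in \mathsf{Sp}_{\ge \nu(n+1, S)}$ for every $S \in \mathcal{O}$. Now apply $(-)^{\Phi S}$ to the fiber sequence. If $[S] \notin \mathsf{P}_{\mathcal{O}_{/T}}$, then by Lemma~\ref{lem-geo-ind-res} we have $(\mathrm{ind}_T\mathrm{res}_T W)^{\Phi S} = 0$, so $F^{\Phi S} \simeq \Sigma^{-1} W^{\Phi S}$; but $W^{\Phi S}$ is a wedge of spheres of dimension $\nu(n,S)$, and since $S$ is \emph{not} below any orbit in $T$ — in particular not below any $n$-jump, because $T^{\mathrm{jump}}$ is a summand of $T$ — $S$ is an $n$-rest, so $\nu(n+1,S) = \nu(n,S)$ and $\Sigma^{-1}W^{\Phi S}$ has bottom cell in dimension $\nu(n,S)-1$… which is \emph{not} $\ge \nu(n+1,S)$. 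So that naive splitting is wrong; this is the subtlety. The correct statement must use that $c$ is an \emph{equivalence} on $\Phi S$ for such $S$: indeed if $[S]\notin \mathsf{P}_{\mathcal{O}_{/T}}$ then $W \in \mathsf{Test}_n$ forces nothing, but the map $c$ itself — hmm. Let me restructure: the right reduction is that $W$ lies in a testing subcategory, and the counit map $\mathrm{ind}_T\mathrm{res}_T W \to W$ becomes, on $\Phi S$, the fold/counit map of the finite set $(T)^{\Phi\text{-}S\text{-points}}$ acting on the wedge $W^{\Phi S}$, which is \emph{surjective on $\pi_{\nu(n,S)}$} precisely when $T$ has an orbit below $S$ — and when it does not, $F^{\Phi S} = \Sigma^{-1}W^{\Phi S}$ genuinely, which would break $F \ge n+1$ unless $W^{\Phi S} = 0$. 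So the real claim must be that we only need injectivity \emph{detected} by orbits below $T$, and the argument is: by induction on $|\mathsf{P}_{\mathcal{O}}|$, using the recollement for a minimal $S$, together with Proposition~\ref{prop-slice-htpy-vanish}(a),(b) to kill the contributions from the part of $F$ supported away from $T$, and Proposition~\ref{prop-nil-induced} plus the induction hypothesis for the part supported on $\mathcal{O}_{\mathcal{F}}$ with $\mathcal{F}$ the family of orbits occurring in $T$.

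So the actual plan, in order: (1) reduce to showing $[\mathrm{cof}(c), Y] = 0$ and its first loop vanishes, i.e. to a statement about $\Sigma^{-1}$ of the cofiber $C := \mathrm{cof}(c)$; (2) observe $C^{\Phi S} = 0$ whenever there is an orbit in $T$ below $S$ — because there $c$ is the counit of an honest adjunction on spectra after a split surjection, hence $c^{\Phi S}$ is split surjective on homotopy and $C^{\Phi S}$ is concentrated in degree $\nu(n,S)+1$, wait — better: $C^{\Phi S} = \mathrm{cof}$ of a split surjection of wedges of $S^{\nu(n,S)}$, hence is a wedge of $S^{\nu(n,S)}$, so $C$ is \emph{itself} (close to) a slice $n$-sphere supported on $\{S : S \text{ below some orbit of }T\}$, and since every such $S$ that is an $n$-jump forces — no. I will instead take the bull by the horns: (2$'$) show directly, by the inductive recollement argument over minimal $S$, using Lemma~\ref{lem-geo-ind-res}, Proposition~\ref{prop-nil-induced}, and Proposition~\ref{prop-slice-htpy-vanish}, that $\hat\pi_n$ of the cofiber sequence $C[-1] \to \mathrm{ind}_T\mathrm{res}_T W \to W$ is short exact in the relevant range; (3) conclude the map on $\hat\pi_n Y(-)$ is injective because $Y \le n$ and the relevant obstruction group $[\Sigma C, Y]$ or $\hat\pi_n(\Sigma^{-1}\mathrm{ind}_T\mathrm{res}_TW)$-contribution vanishes by Proposition~\ref{prop-slice-htpy-vanish}(a) applied after noting $\mathrm{res}_T W \ge n$ in $\mathsf{Sp}^{\mathcal{O}_{/T}}$. \textbf{The main obstacle} is exactly pinning down why the fiber $F$ of the counit is slice $(n+1)$-connective \emph{despite} $\nu$ not jumping at $n$ at the rest-orbits: the resolution is that $W \in \mathsf{Test}_n$ together with $T^{\mathrm{jump}}$ being a summand of $T$ guarantees that on every orbit $S$ where $F$ fails to be $\Sigma$ of something $\ge n$, namely the $n$-rest orbits below no element of $T$, one has $W^{\Phi S}$ mapped isomorphically by $c^{\Phi S}$ — this needs the precise interaction of $\mathrm{ind}_T\mathrm{res}_T$ with geometric fixed points from Lemma~\ref{lem-geo-ind-res}, and is where the hypothesis ``$T^{\mathrm{jump}}$ a summand of $T$'' does its work.
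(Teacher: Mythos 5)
You correctly identify the right strategy (apply the homological functor $\hat\pi_n$ to the cofiber sequence of the counit and reduce to a connectivity claim), and the eventual observation that the hypothesis ``$T^{\mathrm{jump}}$ is a summand of $T$'' must be used precisely on orbits $S$ that do not map to $T$ is also correct. But a conflation early on sends you down a dead end, and the ``bull by the horns'' recollement plan that follows is both unnecessary and never actually closed.

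The error is in the step ``it suffices that $[\Sigma F, Y] = 0$, and since $Y\le n$ this follows if $\Sigma F\ge n+1$, \emph{i.e.} if $F\ge n$.'' That ``i.e.'' is not an equivalence: the slice filtration is only weakly compatible with suspension, $\Sigma\,\mathcal C_{\ge n}\subseteq \mathcal C_{\ge n+1}$, and the inclusion is strict exactly at $n$-rest orbits. So $F\ge n$ is \emph{sufficient} for $\Sigma F\ge n+1$, but it is strictly stronger; and as you then discover, $F\ge n$ genuinely fails (at the rest orbits outside $T$). The conclusion should have been ``so work with the cofiber $C=\Sigma F$ directly,'' not ``so a more elaborate argument is needed.'' Indeed the paper's entire proof is the one-line assertion that $C=\mathrm{cof}(\mathrm{ind}_T\mathrm{res}_T W\to W)$ is slice $(n+1)$-connective, and that is checked stratum by stratum. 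For $S$ admitting a map to an orbit of $T$, the counit becomes split surjective after $\mathrm{res}_T$ (using $\mathrm{ind}_T\simeq\mathrm{coind}_T$), hence so is $c^{\Phi S}$; both source and target are wedges of $S^{\nu(n,S)}$, so $C^{\Phi S}$ is a wedge of $S^{\nu(n,S)+1}$, which is $\nu(n+1,S)$-connective since $\nu(n+1,S)\le\nu(n,S)+1$ always. For $S$ admitting no map to any orbit of $T$, Lemma~\ref{lem-geo-ind-res} gives $(\mathrm{ind}_T\mathrm{res}_T W)^{\Phi S}=0$, so $C^{\Phi S}=W^{\Phi S}$, a wedge of $S^{\nu(n,S)}$; and here the hypothesis enters: since $T^{\mathrm{jump}}$ is a summand of $T$, such an $S$ must be an $n$-rest, so $\nu(n+1,S)=\nu(n,S)$ and this is again $\nu(n+1,S)$-connective. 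That is the whole proof; no induction over the poset is needed. (A secondary slip: near the end you write ``the fiber $F$ ... is slice $(n+1)$-connective,'' but that would say $C=\Sigma F\ge n+2$, which is both false and more than you need — it should be the cofiber that is slice $(n+1)$-connective.)
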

\begin{proof} The assumptions precisely
imply that the cofiber of the counit map is slice $(n+1)$-connective.
\end{proof}

\begin{definition} Let $\mathsf{Slice}_n^{alg} \subseteq 
\mathsf{Model}_n$ denote the full subcategory
of functors which satisfy the conclusion of (\ref{prop-inj-res}). We temporarily
call this
the \textbf{category of algebraic $n$-slices}.
\end{definition}
\begin{warning} The category $\mathsf{Slice}_n^{alg}$ depends on the
choice of testing subcategory. We justify this abuse of notation by
Theorem \ref{thm:slices-as-models}, below, which implies that changing the testing subcategory
yields an equivalent category of algebraic $n$-slices.
\end{warning}

From Proposition \ref{prop-inj-res} we get:

\begin{corollary} The functor $\hat{\pi}_n: \mathsf{Slice}_n
\to \mathsf{Model}_n$ factors through $\mathsf{Slice}_n^{alg}$.
\end{corollary}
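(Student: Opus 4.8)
The plan is to observe that this corollary is immediate from Proposition~\ref{prop-inj-res} once one unwinds the definition of $\mathsf{Slice}_n^{alg}$ and uses that $\mathsf{Slice}_n^{alg} \subseteq \mathsf{Model}_n$ is a \emph{full} subcategory.

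First I would recall that every $A \in \mathsf{Slice}_n$ satisfies in particular $A \le n$; the slice $n$-connectivity of $A$ plays no role here. I would then apply Proposition~\ref{prop-inj-res} with $Y = A$: for every $T \in \fin_{\mathcal{O}}$ having $T^{\mathrm{jump}}$ as a summand and every $W \in \mathsf{Test}_n$, the map
\[
\hat{\pi}_n A(W) \longrightarrow \hat{\pi}_n A(\mathrm{ind}_T \mathrm{res}_T W)
\]
induced by the counit $\mathrm{ind}_T \mathrm{res}_T W \to W$ is injective. This is verbatim the condition cutting out $\mathsf{Slice}_n^{alg}$ inside $\mathsf{Model}_n$, so $\hat{\pi}_n A \in \mathsf{Slice}_n^{alg}$ for every $n$-slice $A$.

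Finally, since $\mathsf{Slice}_n^{alg}$ is full, there is nothing to check on morphisms: the restriction $\hat{\pi}_n|_{\mathsf{Slice}_n}$ corestricts uniquely to a functor $\mathsf{Slice}_n \to \mathsf{Slice}_n^{alg}$, and this is the asserted factorization. There is no real obstacle; the only point worth care is that the hypothesis of Proposition~\ref{prop-inj-res} and the definition of $\mathsf{Slice}_n^{alg}$ quantify over the \emph{same} class of objects $T$ (those admitting $T^{\mathrm{jump}}$ as a summand), so the two conditions coincide on the nose. I would also remark that this factorization is the first half of what is later promoted to the equivalence $\mathsf{Slice}_n \cong \mathsf{Slice}_n^{alg}$ in Theorem~\ref{thm:slices-as-models}, the remaining content being essential surjectivity (via $P^n H$) and full faithfulness.
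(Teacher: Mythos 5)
Your proof is correct and takes the same (essentially trivial) route as the paper, which states the corollary immediately after Proposition~\ref{prop-inj-res} with no further argument. The observation that only $A \le n$ is needed, combined with the fact that $\mathsf{Slice}_n^{alg}$ is defined as the full subcategory of $\mathsf{Model}_n$ cut out by exactly the conclusion of that proposition, is all there is to it.
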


Given an additive presheaf on $\mathsf{Test}_n$, it is easy to change it into
an algebraic $n$-slice.

\begin{lemma}\label{lem-alg-loc} The inclusion $\mathsf{Slice}_n^{alg} \subseteq
\mathsf{Model}_n$ admits a left adjoint
$L^{inj}$ described explicitly as
	\[
	L^{inj}\pi (W) = 
	\frac{\pi(W)}{\mathrm{ker}: 
	\pi(W) \to \pi(\mathrm{ind}_T\mathrm{res}_TW)}
	\]
where $T = T^{\textup{jump}}$ as in (\ref{prop-inj-res}).
\end{lemma}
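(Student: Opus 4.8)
The plan is to realize $\mathsf{Slice}_n^{alg}$ as the full subcategory of $\mathsf{Model}_n$ cut out by a \emph{single} injectivity condition, and then verify by hand that the stated formula defines a reflection onto it. Write $F:=\mathrm{ind}_{T^{\mathrm{jump}}}\mathrm{res}_{T^{\mathrm{jump}}}$, an endofunctor of $\mathsf{Test}_n$ by the testing subcategory axiom, and let $\varepsilon\colon F\Rightarrow\mathrm{id}$ be the counit of the $(\mathrm{ind}_{T^{\mathrm{jump}}},\mathrm{res}_{T^{\mathrm{jump}}})$-adjunction. First I would observe that for $T=T^{\mathrm{jump}}\amalg S$ in $\fin_{\mathcal{O}}$ one has $\mathcal{O}_{/T}\cong\mathcal{O}_{/T^{\mathrm{jump}}}\amalg\mathcal{O}_{/S}$, hence a natural decomposition $\mathrm{ind}_T\mathrm{res}_TW\cong FW\oplus\mathrm{ind}_S\mathrm{res}_SW$ under which the counit $\mathrm{ind}_T\mathrm{res}_TW\to W$ corresponds to $(\varepsilon_W,\varepsilon^S_W)$. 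Since every $\pi\in\mathsf{Model}_n$ is additive, $\varepsilon_W^*\colon\pi(W)\to\pi(FW)$ is then a direct summand of $\pi(W)\to\pi(\mathrm{ind}_T\mathrm{res}_TW)$, so $\mathsf{Slice}_n^{alg}$ is precisely the full subcategory of those $\pi$ with $\varepsilon_W^*$ injective for every $W\in\mathsf{Test}_n$.

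Next I would set up $L^{inj}$ itself. As $\mathsf{Model}_n\simeq\mathsf{Psh}^{\oplus}_{\mathsf{Ab}}(\mathsf{Test}_n)$ is abelian with (co)kernels computed objectwise, the assignment $W\mapsto\ker(\varepsilon_W^*\colon\pi(W)\to\pi(FW))$ is a subpresheaf $K\pi\subseteq\pi$ (naturality uses that $\varepsilon$ is a natural transformation and that $F$ preserves $\mathsf{Test}_n$), and I set $L^{inj}\pi:=\pi/K\pi$, which is exactly the formula in the statement. This is manifestly functorial in $\pi$, and $L^{inj}\pi=\pi$ precisely when $\pi\in\mathsf{Slice}_n^{alg}$, so what remains is: (i) $L^{inj}\pi\in\mathsf{Slice}_n^{alg}$; and (ii) the quotient $q\colon\pi\to L^{inj}\pi$ is initial among maps from $\pi$ to objects of $\mathsf{Slice}_n^{alg}$.

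The crux is (i), and the key point is that $\varepsilon_{FW}^*\colon\pi(FW)\to\pi(FFW)$ is \emph{split} injective for any $\pi$: setting $Y=\mathrm{res}_{T^{\mathrm{jump}}}W$ and letting $\eta$ be the unit of the adjunction, the triangle identity gives $\varepsilon_{FW}\circ\mathrm{ind}_{T^{\mathrm{jump}}}\eta_Y=\mathrm{id}_{FW}$, a composite of maps between objects of $\mathsf{Test}_n$, so $(\mathrm{ind}_{T^{\mathrm{jump}}}\eta_Y)^*$ is a retraction of $\varepsilon_{FW}^*$. Now if $x\in\pi(W)$ represents a class killed in $L^{inj}\pi(FW)$, then by construction $\varepsilon_W^*x\in K\pi(FW)=\ker\varepsilon_{FW}^*$, and split injectivity forces $\varepsilon_W^*x=0$, i.e. $x\in K\pi(W)$; so the class was already zero, proving $\varepsilon_W^*$ is injective on $L^{inj}\pi$.

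Finally, for (ii): given $\rho\in\mathsf{Slice}_n^{alg}$ and $f\colon\pi\to\rho$, each $x\in K\pi(W)$ satisfies $\varepsilon_W^*(f_Wx)=f_{FW}(\varepsilon_W^*x)=0$, and $\varepsilon_W^*$ is injective on $\rho$, whence $f_Wx=0$; thus $f$ annihilates $K\pi$ and factors uniquely through the epimorphism $q$. Together with (i) and the fact that $q$ is the identity when $\pi$ already lies in $\mathsf{Slice}_n^{alg}$, this exhibits $L^{inj}$ as left adjoint to the inclusion with the asserted formula. I expect the only genuine care-point beyond bookkeeping is keeping track that all the maps invoked ($\varepsilon$, the splitting $\mathrm{ind}_{T^{\mathrm{jump}}}\eta_Y$, and their sources and targets) really live in $\mathsf{Test}_n$, which is exactly what the closure axiom for a testing subcategory provides; the additivity of objects of $\mathsf{Model}_n$ is needed only for the reduction in the first paragraph.
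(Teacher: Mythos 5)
The paper states this lemma without proof, so there is no argument to compare against; your proposal supplies one, and it is correct. The overall structure is the standard one for exhibiting a reflector: identify the defining injectivity condition with a single-object condition, check that the candidate quotient $\pi/K\pi$ is a product-preserving presheaf lying in $\mathsf{Slice}_n^{alg}$, and verify the universal property. The key observation that $\varepsilon_{FW}^*$ is split injective via the triangle identity $\varepsilon_{FW}\circ\mathrm{ind}_{T^{\mathrm{jump}}}\eta_Y=\mathrm{id}_{FW}$ is exactly what makes $K\pi(FW)=0$ and hence $L^{inj}\pi$ genuinely land in $\mathsf{Slice}_n^{alg}$; this is the only nontrivial step and you have it right.

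One small wording quibble: in the first paragraph, $\varepsilon_W^*$ is a \emph{component} of the map $\pi(W)\to\pi(\mathrm{ind}_T\mathrm{res}_TW)\cong\pi(FW)\oplus\pi(\mathrm{ind}_S\mathrm{res}_SW)$, not literally a direct summand of that arrow in the arrow category. The logic you need, and which does hold, is the two easy implications: if $\varepsilon_W^*$ is injective then so is the pair $(\varepsilon_W^*,(\varepsilon_W^S)^*)$; and conversely, taking $T=T^{\mathrm{jump}}$ recovers $\varepsilon_W^*$. So the equivalence of the two formulations of $\mathsf{Slice}_n^{alg}$ is correct, just worth phrasing a bit more carefully.
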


Next, we give some preliminary evidence for the strong relationship between
$n$-slices and algebraic $n$-slices.

\begin{proposition}\label{prop-slice-htpy-loc}
For $C \in \mathsf{Sp}^{\mathcal{O}}$, the localization map
$C \to P^nC$ induces an isomorphism
	\[
	L^{inj}\hat{\pi}_nC \stackrel{\cong}{\longrightarrow} \hat{\pi}_nP^nC.
	\]
\end{proposition}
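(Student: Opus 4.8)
The plan is to apply $\hat{\pi}_n$ to the defining cofiber sequence $P_{n+1}C\to C\to P^nC$ and then apply the localization $L^{inj}$, exploiting that $\hat{\pi}_n$ is homological and that $L^{inj}$, being a left adjoint, is right exact.

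First I would assemble three inputs. Since $P_{n+1}C\ge n+1\ge n$, Proposition \ref{prop-slice-htpy-vanish}(a) gives $\hat{\pi}_n(\Sigma P_{n+1}C)=0$; together with Lemma \ref{lem:homological} this makes
$\hat{\pi}_nP_{n+1}C\to\hat{\pi}_nC\to\hat{\pi}_nP^nC\to 0$
a right exact sequence in the abelian category $\mathsf{Model}_n$. Next, $P^nC\le n$, so by Proposition \ref{prop-inj-res} the object $\hat{\pi}_nP^nC$ already lies in $\mathsf{Slice}_n^{alg}$, whence $L^{inj}\hat{\pi}_nP^nC=\hat{\pi}_nP^nC$. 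The crucial input is the claim that $L^{inj}\hat{\pi}_nY=0$ for every slice $(n+1)$-connective $Y$. Granting it, applying the (right exact) functor $L^{inj}$ of Lemma \ref{lem-alg-loc} to the displayed sequence annihilates the left-hand term and fixes the right-hand one, producing an isomorphism $L^{inj}\hat{\pi}_nC\xrightarrow{\ \cong\ }\hat{\pi}_nP^nC$; naturality of the unit of $L^{inj}$ then identifies this with the comparison map induced by $C\to P^nC$, which is the assertion.

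It remains to prove the claim. By Lemma \ref{lem-alg-loc}, $L^{inj}\hat{\pi}_nY(W)$ is the quotient of $[W,Y]$ by the kernel of the map to $[\mathrm{ind}_{T^{\mathrm{jump}}}\mathrm{res}_{T^{\mathrm{jump}}}W,Y]$ induced by the counit, so it suffices to see that this target vanishes. By the $\mathrm{ind}\dashv\mathrm{res}$ adjunction it equals $[\mathrm{res}_{T^{\mathrm{jump}}}W,\mathrm{res}_{T^{\mathrm{jump}}}Y]$ computed in $\mathsf{Sp}^{\mathcal{O}_{/T^{\mathrm{jump}}}}$, where $\mathrm{res}_{T^{\mathrm{jump}}}W$ is a slice $n$-sphere (Lemma \ref{lem-res-test}) and $\mathrm{res}_{T^{\mathrm{jump}}}Y$ is slice $(n+1)$-connective for the restricted dimension function. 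The structural point is that \emph{every stratum of $\mathcal{O}_{/T^{\mathrm{jump}}}$ is an $n$-jump}: an orbit of $\mathcal{O}_{/T^{\mathrm{jump}}}$ admits a map to one of the jump-orbits making up $T^{\mathrm{jump}}$, and in the equivariant case a subgroup of an $n$-jump subgroup is again an $n$-jump subgroup by Lagrange's theorem (the analogous elementary observation holds in the other examples of interest). Hence the restricted dimension function jumps at $n$ over $\mathcal{O}_{/T^{\mathrm{jump}}}$, so $\mathrm{res}_{T^{\mathrm{jump}}}Y$ is a suspension of a slice $n$-connective object, and $[\mathrm{res}_{T^{\mathrm{jump}}}W,\mathrm{res}_{T^{\mathrm{jump}}}Y]=0$ by a second application of Proposition \ref{prop-slice-htpy-vanish}(a), this time inside $\mathsf{Sp}^{\mathcal{O}_{/T^{\mathrm{jump}}}}$.

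The main obstacle is precisely the claim $L^{inj}\hat{\pi}_nY=0$ for $Y\ge n+1$: because $\hat{\pi}_n$ does \emph{not} annihilate all slice $(n+1)$-connective spectra, one cannot argue directly, and the entire point is that passing to $\mathcal{O}_{/T^{\mathrm{jump}}}$ — where all strata are jumps — converts the extra degree of connectivity that $Y$ carries at the $n$-jumps into a genuine suspension, which $\hat{\pi}_n$ does detect.
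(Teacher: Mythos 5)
Your proof is correct and rests on exactly the same three technical inputs as the paper's: the vanishing $\hat{\pi}_n\Sigma P_{n+1}C = 0$ from Proposition \ref{prop-slice-htpy-vanish}(a), the membership $\hat{\pi}_nP^nC \in \mathsf{Slice}_n^{alg}$ from Proposition \ref{prop-inj-res}, and the key vanishing $[\mathrm{res}_{T^{\mathrm{jump}}}W, \mathrm{res}_{T^{\mathrm{jump}}}Y] = 0$ for $Y \ge n+1$, which both you and the paper establish by the observation that restriction along $T^{\mathrm{jump}}$ converts slice $(n+1)$-connectivity into a genuine suspension of something slice $n$-connective. The paper assembles these via a hands-on diagram chase: it forms the triangle with vertices $\hat{\pi}_nC$, $L^{inj}\hat{\pi}_nC$, $\hat{\pi}_nP^nC$, proves the map $f$ to $\hat{\pi}_nP^nC$ is surjective, and shows $\ker f \subseteq \ker g$ by taking a single null composite $W \to C \to P^nC$, lifting it to $P_{n+1}C$, and killing it after restriction to $T^{\mathrm{jump}}$. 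You instead package the same computation functorially: you isolate the cleaner statement ``$L^{inj}\hat{\pi}_nY = 0$ whenever $Y \ge n+1$,'' and then deduce the result by hitting the right-exact sequence with the right-exact functor $L^{inj}$. The two routes are interchangeable, but yours has the merit of cleanly separating the formal bookkeeping (right exactness of $L^{inj}$) from the single substantive vanishing. One thing worth flagging, which you do and the paper does not: the assertion that $\nu_{T^{\mathrm{jump}}}$ jumps at $n$ over all of $\mathcal{O}_{/T^{\mathrm{jump}}}$, equivalently that the set of $n$-jumps is upward closed in $\mathsf{P}_{\mathcal{O}}$, is an additional hypothesis on the dimension function beyond Definition \ref{def-dim-func}. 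It holds automatically for $\nu_{sl}$ and $\nu_{reg}$ by Lagrange, but not for an arbitrary $\nu$, and both your argument and the paper's (at the step ``by the definition of $T^{\textup{jump}}$ and the slice filtration \dots\ $\mathrm{res}_T P_{n+1}C \in \Sigma\mathsf{Sp}^{\mathcal{O}_{/T}}_{\ge n}$'') tacitly use it. You are right to call it out; it is the one point where the stated generality outruns the proof as written.
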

\begin{proof} By Proposition \ref{prop-inj-res}, $\hat{\pi}_nP^nC$ belongs
to $\mathsf{Slice}_n^{alg}$. By Lemma \ref{lem-alg-loc}, we get a
commutative diagram:
	\[
	\xymatrix{
	&\hat{\pi}_nC\ar[dr]^f\ar[dl]_g &\\
	L^{inj}\hat{\pi}_nC \ar[rr]_h&& \hat{\pi}_nP^nC
	}
	\]
Since $g$ is surjective, we can prove that $h$ is an isomorphism by showing
that $f$ is surjective and $\mathrm{ker}(f) \subseteq \mathrm{ker}(g)$
(the other inclusion is implied by commutativity of the diagram.)

The obstruction
to the surjectivity of $f$ lives in $\hat{\pi}_n\Sigma P_{n+1}C$, but this
group vanishes by Proposition \ref{prop-slice-htpy-vanish}(a), so $f$ is surjective.

Now suppose $W\in \mathsf{Test}_n$ and $W \to C \to P^nC$ is null. Then
we have a factorization $W \to P_{n+1}C \to C \to P^nC$. Let $T = T^{\textup{jump}}$.
Then it suffices to show that the composite
	\[
	\mathrm{ind}_T\mathrm{res}_TW \to W \to P_{n+1}C
	\]
is null. Equivalently, that $\mathrm{res}_TW \to \mathrm{res}_T P_{n+1}C$ is null.
But, by
the definition of $T^{\textup{jump}}$ and the slice filtration, together with the
fact that restrictions preserve slice connective covers, we conclude that
$\mathrm{res}_TP_{n+1}C \in \Sigma \mathsf{Sp}^{\mathcal{O}_{/T}}_{\ge n}$.
Since $\mathrm{res}_TW$ is a slice $n$-sphere, we conclude the vanishing
by Proposition \ref{prop-slice-htpy-vanish}(a), which completes the proof.
\end{proof}

\subsection{Slices as models for a Lawvere theory}\label{ssec:slice-model}

We are now ready for our first algebraic description of slices.

\begin{theorem}\label{thm:slices-as-models} The functors
$\hat{\pi}_n$ and $H$ yield an equivalence of adjoint pairs:
 
	\[
	\xymatrix{
	\heartsuit_n \ar@/^1.5pc/[rr]^{\hat{\pi}_n}_{\cong}\ar[dd]^{P^n} &&
	\mathsf{Model}_n \ar[dd]_{L^{inj}}\ar[ll]^{H}\\
	&&\\
	\mathsf{Slice}_n \ar@/_1.5pc/[rr]_{\hat{\pi}_n}^{\cong} 
	\ar@/^1.5pc/[uu]&& 
	\mathsf{Slice}_n^{alg} \ar@/_1.5pc/[uu] \ar[ll]_{P^nH}
	}
	\]
\end{theorem}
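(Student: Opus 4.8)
The plan is to follow the path dictated by the Freyd--Gabriel corollary recalled above: exhibit $\mathsf{Test}_n$ as a set of compact projective generators of the Grothendieck abelian category $\heartsuit_n$, so that $\hat{\pi}_n$ becomes an equivalence $\heartsuit_n \simeq \mathsf{Model}_n$ with inverse $H$, and then transport the reflective subcategory $\mathsf{Slice}_n \subseteq \heartsuit_n$ across this equivalence by means of Proposition~\ref{prop-slice-htpy-loc}.

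First I would record the formal properties of the adjunction $H \dashv \hat{\pi}_n$ between $\mathsf{Model}_n$ and $\heartsuit_n$. Restricted to $\heartsuit_n$, the functor $\hat{\pi}_n$ preserves filtered colimits because each $W \in \mathsf{Test}_n$ is compact in $\heartsuit_n$ (which follows from compactness of slice $n$-spheres in $\mathsf{Sp}^{\mathcal{O}}$ together with Remark~\ref{rmk:test-in-heart}), and it is exact: for a short exact sequence $0 \to M' \to M \to M'' \to 0$ in $\heartsuit_n$, viewed as a cofiber sequence in $\mathsf{Sp}^{\mathcal{O}}$, Lemma~\ref{lem:homological} yields a long exact sequence whose neighbouring terms $\hat{\pi}_n(\Sigma M')$ and $\hat{\pi}_n(\Sigma^{-1}M'')$ vanish --- the first by Proposition~\ref{prop-slice-htpy-vanish}(a) since $M' \ge n$, the second because maps from a slice $n$-sphere into $\Sigma^{-1}M''$, equivalently from $\Sigma W$ into the heart object $M''$, are zero for $t$-structure reasons. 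Hence $\hat{\pi}_n|_{\heartsuit_n}$ preserves all colimits, and the same vanishing shows each $W$ is projective. Moreover $H[-,W] = \tau^{(n)}_{\le 0}W$ by Proposition~\ref{prop-slice-left-adjoint}, so Remark~\ref{rmk:test-in-heart} gives $\hat{\pi}_nH[-,W] \cong [-,W]$ on representables; since $\hat{\pi}_nH$ and $\mathrm{id}$ both preserve colimits and representables generate $\mathsf{Model}_n$, the unit $\mathrm{id} \to \hat{\pi}_nH$ is an equivalence, i.e.\ $H$ is fully faithful.

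The crux is conservativity of $\hat{\pi}_n|_{\heartsuit_n}$, which I would prove by induction on $|\mathsf{P}_{\mathcal{O}}|$ via isotropy separation. Suppose $0 \ne M \in \heartsuit_n$ with $\hat{\pi}_nM = 0$; pick a minimal $T \in \mathcal{O}$ and look at the recollement cofiber sequence $j_!j^*M \to M \to i_*i^*M$, where $j^*$ restricts to $\mathcal{O}_{\mathcal{F}}$ with $\mathcal{F} = \mathsf{P}_{\mathcal{O}} \setminus \{[T]\}$, and $i^*$ restricts to $\mathcal{O}_{\{[T]\}}$, whose Mackey functors are $\mathrm{Fun}(\mathrm{Aut}(T),\mathsf{Sp})$. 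For $W \in \mathsf{Test}_n$ one has $[j_!j^*W, M] = [j^*W, j^*M]$; Proposition~\ref{prop-nil-induced} presents $j_!j^*W$ as a geometric realisation of objects $\mathrm{ind}_{T'}\mathrm{res}_{T'}W \in \mathsf{Test}_n$, and Proposition~\ref{prop-slice-htpy-vanish}(a) together with $\hat{\pi}_nM = 0$ shows each mapping spectrum $\mathrm{map}(\mathrm{ind}_{T'}\mathrm{res}_{T'}W, M)$ --- hence $\mathrm{map}(j_!j^*W, M)$ --- is contractible. Using Lemma~\ref{lem-res-test} and the compatibility of restriction with the testing-subcategory axioms, $j^*\mathsf{Test}_n$ generates a testing subcategory of $\mathsf{Sp}^{\mathcal{O}_{\mathcal{F}}}$ on which the corresponding functor vanishes at $j^*M$; by the inductive hypothesis $j^*M = 0$, so $M \simeq i_*i^*M$. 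For the geometric part, choose $W \in \mathsf{Test}_n$ with $W^{\Phi T} \ne 0$; then $[\mathrm{ind}_T\mathrm{res}_TW, M] = [(\mathrm{ind}_T\mathrm{res}_TW)^{\Phi T}, i^*M]_{\mathrm{Aut}(T)}$, and since $(\mathrm{ind}_T\mathrm{res}_TW)^{\Phi T} \cong \mathrm{Aut}(T)_+ \wedge W^{\Phi T}$ is a nonzero free $\mathrm{Aut}(T)$-spectrum (Lemma~\ref{lem-geo-ind-res}) while $i^*M = M^{\Phi T}$ is a nonzero object of the heart of $\mathrm{Fun}(\mathrm{Aut}(T),\mathsf{Sp})$ concentrated in degree $\nu(n,T)$, this group is nonzero, contradicting $\hat{\pi}_nM = 0$; the base case $|\mathsf{P}_{\mathcal{O}}| = 1$ is just this last step. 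Once conservativity is known, full faithfulness of $H$ forces the counit $H\hat{\pi}_n \to \mathrm{id}$ to be an equivalence as well (apply $\hat{\pi}_n$, invoke a triangle identity, then use conservativity), so $H \dashv \hat{\pi}_n$ is an adjoint equivalence $\heartsuit_n \simeq \mathsf{Model}_n$: the top row of the square.

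Finally, Proposition~\ref{prop-slice-htpy-loc}, applied to objects of $\heartsuit_n$, provides a natural isomorphism $L^{inj}\hat{\pi}_nM \cong \hat{\pi}_nP^nM$ under which the localization map $M \to P^nM$ corresponds to the unit $\hat{\pi}_nM \to L^{inj}\hat{\pi}_nM$; this is the commutativity of the left-hand square of left adjoints, $\hat{\pi}_nP^n \simeq L^{inj}\hat{\pi}_n$. In particular $M$ lies in $\mathsf{Slice}_n$ iff $M \to P^nM$ is an equivalence iff $\hat{\pi}_nM \to L^{inj}\hat{\pi}_nM$ is (by conservativity), i.e.\ iff $\hat{\pi}_nM \in \mathsf{Slice}_n^{alg}$, so $\hat{\pi}_n$ restricts to an equivalence $\mathsf{Slice}_n \simeq \mathsf{Slice}_n^{alg}$. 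Its inverse is the restriction of $H$, which lands in $\mathsf{Slice}_n$ precisely because $\hat{\pi}_n$ of it lies in $\mathsf{Slice}_n^{alg}$, and there $P^nH$ agrees with $H$; combined with Proposition~\ref{prop-inj-res} (which guarantees $\hat{\pi}_n(\mathsf{Slice}_n) \subseteq \mathsf{Slice}_n^{alg}$ in the first place) this assembles into the asserted equivalence of adjoint pairs. I expect the conservativity induction to be the only genuinely delicate step --- in particular the bookkeeping that shows $\mathsf{Test}_n$ descends along $j^*$ to a testing subcategory, and the careful handling of the interaction between $\mathrm{res}_T$, geometric fixed points, and the recollement functors at a minimal orbit --- while everything else is formal given Propositions~\ref{prop-slice-htpy-vanish} and~\ref{prop-slice-htpy-loc}.
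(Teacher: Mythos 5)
Your proposal follows the same strategy as the paper: exhibit $\mathsf{Test}_n$ as a family of compact projective generators of $\heartsuit_n$, invoke Freyd--Gabriel, prove conservativity by isotropy-separation induction on $|\mathsf{P}_{\mathcal{O}}|$, and use Proposition~\ref{prop-slice-htpy-loc} to transport the localization across. The only substantive variations are cosmetic: you prove exactness of $\hat{\pi}_n$ (hence projectivity) where the paper proves projectivity directly, and you phrase conservativity as ``$\hat{\pi}_nM=0\Rightarrow M=0$'' where the paper works directly with a map $f:A\to B$.

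One small point of imprecision worth fixing: in the inductive step you invoke Lemma~\ref{lem-res-test} to claim that $j^*\mathsf{Test}_n$ is a testing subcategory of $\mathsf{Sp}^{\mathcal{O}_{\mathcal{F}}}$, but that lemma concerns $\mathrm{res}_T:\mathsf{Sp}^{\mathcal{O}}\to\mathsf{Sp}^{\mathcal{O}_{/T}}$, not $j^*:\mathsf{Sp}^{\mathcal{O}}\to\mathsf{Sp}^{\mathcal{O}_{\mathcal{F}}}$; these functors are not the same (cf.\ Remark~\ref{rem-induct}). The paper sidesteps this by applying the induction hypothesis to each $\mathcal{O}_{/T'}$ for $T'\in\mathcal{F}$ via $\mathrm{res}_{T'}$ and then reassembling through Proposition~\ref{prop-nil-induced}; if you prefer the $j^*$-route you should supply the short additional check that $j^*$ preserves the slice heart and that $j^*\mathsf{Test}_n$ satisfies the testing-subcategory axioms. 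Also, in the sentence asserting that each $\mathrm{map}(\mathrm{ind}_{T'}\mathrm{res}_{T'}W,M)$ is contractible you should say ``mapping space'' rather than ``mapping spectrum'': the spectrum-level claim would be false in negative degrees, but the space-level claim (which is what Freyd--Gabriel and the totalization actually need, given the paper's $\mathrm{map}$ convention) is exactly what you prove via the $\pi_0$-vanishing and $t$-structure bounds.
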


To prove this, we will need the following classical
bit of category theory.

\begin{proposition}[Freyd, Gabriel]\label{prop:freyd-gabriel}
Let $\mathcal{A}$ be a Grothendieck abelian category
and suppose $\mathcal{A}_0 \subseteq \mathcal{A}$ is
a full subcategory closed under finite direct sums
and satisfying the following properties:
	\begin{enumerate}[(i)]
	\item Every object of $\mathcal{A}_0$ is compact.
	\item Every object of $\mathcal{A}_0$ is projective.
	\item The restricted Yoneda embedding
		\[
		G: \mathcal{A}
		\longrightarrow
		\mathsf{Psh}^{\times}_{\mathsf{Set}}(\mathcal{A}_0)
		\]
	is conservative.
	\end{enumerate}
Then $G$
is an equivalence of categories.
\end{proposition}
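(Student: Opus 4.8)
The plan is to recognize this as a "many-object" version of the classical characterization of module categories (Morita theory / Gabriel–Popescu with the localization being an equivalence), and to reduce it to the Gabriel–Popescu theorem already quoted in the excerpt. The Gabriel–Popescu theorem (in the form stated just before this proposition) says that if every object of $\mathcal{A}$ is covered by a direct sum of objects of $\mathcal{A}_0$, then $G$ is fully faithful with an exact left adjoint $F$; and the Freyd–Gabriel corollary says that if moreover each object of $\mathcal{A}_0$ is compact and projective, then $G$ is an equivalence. So the entire content of the present proposition is: conditions (i), (ii), (iii) here imply the covering hypothesis of Gabriel–Popescu, after which one simply invokes the already-cited results. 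Thus the real work is extracting a ``sufficiently many projectives'' statement from conservativity alone.

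The key steps, in order, would be as follows. First, reduce to showing that $G$ is fully faithful and essentially surjective; since $G$ already has a left adjoint $F$ by the adjoint functor theorem (both categories are presentable and $G$ preserves limits and filtered colimits, the latter because objects of $\mathcal{A}_0$ are compact), it suffices to show the unit $\mathrm{id} \to GF$ and counit $FG \to \mathrm{id}$ are equivalences. Second — the crucial step — show that for every object $a \in \mathcal{A}$ there is an epimorphism $\bigoplus_\alpha x_\alpha \twoheadrightarrow a$ with each $x_\alpha \in \mathcal{A}_0$. Here one argues: let $a' \subseteq a$ be the subobject generated by all images of maps $x \to a$ with $x \in \mathcal{A}_0$ (a legitimate subobject since $\mathcal{A}$ is Grothendieck, hence well-powered with a generator and arbitrary coproducts); if $a' \neq a$ then the quotient $a/a' \neq 0$, and one must derive a contradiction with conservativity (iii). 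The point is that $G$ conservative plus $G$ a right adjoint forces $G$ to reflect zero objects, and $G(a/a') = 0$ would follow if every map $x \to a/a'$ with $x \in \mathcal{A}_0$ vanishes — which it does by construction of $a'$ together with projectivity (ii): any $x \to a/a'$ lifts along $a \to a/a'$ since $x$ is projective, and the lift factors through $a'$, so the composite to $a/a'$ is zero. Third, having established the covering hypothesis, apply Gabriel–Popescu to get that $G$ is fully faithful with exact left adjoint $F$, and then apply the Freyd–Gabriel corollary (using (i) and (ii) again) to conclude $G$ is an equivalence.

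The main obstacle is the second step — passing from ``$G$ is conservative'' to ``$\mathcal{A}_0$ generates $\mathcal{A}$ by epimorphisms'' — and within it the subtlety is making sure $G(a/a') = 0$ really is forced. The argument sketched above needs $G$ to preserve the relevant cokernel or at least to detect that a map is epi; but $G$ is only known a priori to preserve products (it's a restricted Yoneda embedding into product-preserving presheaves of sets), not colimits. The clean way around this is to observe that $G(a/a')$, as a product-preserving presheaf, sends $x \in \mathcal{A}_0$ to $\mathrm{Hom}_{\mathcal{A}}(x, a/a')$, and this Hom-set is a single point (the zero map) by the projectivity-plus-construction argument; a product-preserving presheaf valued pointwise in the terminal set is the terminal (zero) object of $\mathsf{Psh}^\times_{\mathsf{Set}}(\mathcal{A}_0)$; hence $G(a/a') = 0$, so by conservativity $a/a' = 0$. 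No colimit-preservation of $G$ is needed — only the explicit pointwise description of $G$ on objects, which is immediate. With that in hand the rest is bookkeeping citing the two results already displayed in the excerpt. One should also remark that compactness (i) is not needed for full faithfulness but is needed to run Freyd–Gabriel's argument that the exact left adjoint $F$ inverts $G$ (equivalently, that the counit $FG \to \mathrm{id}$ is an iso), so all three hypotheses genuinely enter.
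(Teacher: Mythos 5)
Your proof is correct, but it takes a genuinely different route from the paper's. The paper's argument is self-contained and does not invoke the Gabriel--Popescu theorem quoted just before: it first observes that (i) and (ii) force $G$ to preserve all colimits (filtered colimits by compactness, cokernels by projectivity, and finite direct sums automatically), so that the essential image of $G$ is a colimit-closed subcategory containing the representables and hence everything; it then forms the left Kan extension $L$ of $\mathcal{A}_0 \hookrightarrow \mathcal{A}$, applies $G$ to the counit $LG \to \mathrm{id}$, and concludes via conservativity and the universal property of the presheaf category (which forces $GL \cong \mathrm{id}$, since $GL$ is a colimit-preserving endofunctor fixing representables). In contrast, you reduce the Proposition to the already-displayed Gabriel--Popescu theorem and Freyd--Gabriel corollary by showing that conservativity together with projectivity implies the epimorphic-covering hypothesis, via the subobject $a' \subseteq a$ generated by images of maps from $\mathcal{A}_0$ and the pointwise computation $G(a/a')(x) = \mathrm{Hom}(x, a/a') = 0$. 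Your approach makes explicit that (iii) is, under (i) and (ii), equivalent to $\mathcal{A}_0$ generating $\mathcal{A}$ in the epimorphic sense, which is a conceptually pleasing bridge back to the classical statements; you also correctly flag and resolve the potential pitfall that $G$ is not yet known to preserve cokernels at that stage, by arguing pointwise on representables rather than using colimit-preservation. The paper's route is shorter given comfort with presheaf-category universal properties and avoids treating Gabriel--Popescu as a black box, while yours makes the logical relationship to the quoted theorems transparent. Both are valid; the only caveat for your version is the implicit assumption that $\mathcal{A}_0$ is essentially small (needed both for the target presheaf category to make sense and for the indexing in $\bigoplus_\alpha x_\alpha$ to be a set), which the paper also leaves tacit.
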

\begin{proof} By (i) and (ii), $G$ preserves
all colimits.
Since the target of $G$ is generated by representables,
$G$ is essentially surjective. On the other hand, we have
a functor $L: \mathsf{Psh}^{\times}_{\mathsf{Set}}(\mathcal{A}_0)
\longrightarrow \mathcal{A}$ induced from the inclusion
$\mathcal{A}_0 \subseteq \mathcal{A}$ by left Kan extension.
There is a natural map $LG \to \mathrm{id}$ and we will be done
if we can check it is an isomorphism. But $G$ is conservative
by assumption, so we need only check that the map
	\[
	GLG \to G
	\]
is an isomorphism. Now, $GL$ is a colimit preserving
endofunctor of $\mathsf{Psh}^{\times}_{\mathsf{Set}}(\mathcal{A}_0)$
which is the identity on $\mathcal{A}_0$, so it is canonically equivalent to the
identity. This completes the proof.
\end{proof}

\begin{remark} Suppose $\mathcal{A}$ is any category with equalizers,
coproducts, and with the property that monic-epimorphisms are
isomorphisms. If $\mathcal{A}_0 \subseteq \mathcal{A}$
is a full subcategory, then the following are equivalent:
	\begin{enumerate}[(i)]
	\item Every object $x \in \mathcal{A}$ admits an epimorphism
	$\coprod_{\beta} a_{\beta} \to x$ where $a_{\beta} \in \mathcal{A}_0$.
	\item The restricted Yoneda embedding
	$\mathcal{A} \longrightarrow \mathsf{Psh}_{\mathsf{Set}}(\mathcal{A}_0)$
	is faithful.
	\item The restricted Yoneda embedding
	$\mathcal{A} \longrightarrow \mathsf{Psh}_{\mathsf{Set}}(\mathcal{A}_0)$
	is conservative.
	\end{enumerate}
In the literature, people use the phrase `$\mathcal{A}_0$ is a family of generators
for $\mathcal{A}$' inconsistently to mean any of these, even in situations
where they are \emph{not} all equivalent. (Actually, the first two conditions
are always equivalent. It is the equivalence with the third condition which is transient.)
\end{remark}

\begin{proof}[Proof of Theorem \ref{thm:slices-as-models}] Recall that
we have already shown (Proposition \ref{prop-slice-htpy-loc}) that 
$\hat{\pi}_nP^nC = L^{inj}\hat{\pi}_nC$ for any $C$. Thus,
if the top two arrows in the diagram are inverse equivalences,
then the two localizations are necessarily equivalent. 
Now, by Remark \ref{rmk:test-in-heart}, we may view
$\mathsf{Test}_n$ as a full subcategory of $\heartsuit_n$, and
each slice $n$-sphere is compact. We now argue that they
are also projective. Indeed,
suppose that $f: A \to B$ is a map between elements of
$\heartsuit_n$. Then form the cofiber sequence in $\mathsf{Sp}^{\mathcal{O}}$:
	\[
	A \to B \to C.
	\]
The cokernel of $f$ in $\heartsuit_n$ is given by $\tau^{(n)}_{\le 0}C$.
Suppose $f$ is surjective so
that $\tau^{(n)}_{\le 0}C = 0$. Then we 
have $C \in \tau^{(n)}_{\ge 1}\mathsf{Sp}^{\mathcal{O}}$
so that $C$ is the suspension of something slice $n$-connective.
It follows from Proposition \ref{prop-slice-htpy-vanish} that
$\hat{\pi}_nC = 0$ and by Lemma \ref{lem:homological}
that $\hat{\pi}_nA \to \hat{\pi}_nB$ is surjective. So we've shown
that for all $W \in \mathsf{Test}_n$, that $[W, -]$ preserves
surjections, and hence each $W$ is projective.

By the previous proposition, it now suffices to check that
$\hat{\pi}_n$ is conservative on $\heartsuit_n$. This 
we prove below.
\end{proof}

\begin{proposition} The functor $\hat{\pi}_n: \heartsuit_n \to 
\mathsf{Model}_n$ is conservative.
\end{proposition}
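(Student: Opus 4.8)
The plan is to reduce conservativity to the statement that an object $X\in\heartsuit_n$ with $\hat\pi_nX=0$ must vanish, and then prove that by induction on $|\mathsf{P}_{\mathcal O}|$. The reduction is formal: $\hat\pi_n|_{\heartsuit_n}$ preserves limits, since it has the left adjoint $H$ of Proposition \ref{prop-slice-left-adjoint}, and it preserves epimorphisms, since every $W\in\mathsf{Test}_n$ is projective in $\heartsuit_n$ (this is established inside the proof of Theorem \ref{thm:slices-as-models}); hence $\hat\pi_n|_{\heartsuit_n}$ is exact, so if $\hat\pi_nf$ is an isomorphism then $\hat\pi_n(\ker f)=0$ and $\hat\pi_n(\operatorname{coker}f)=0$, and the vanishing statement forces $f$ to be an isomorphism. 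Moreover, since the functors $(i_T)^{*}$ for $T\in\mathcal O$ are jointly conservative (Theorem \ref{thm-mack-strat}), it suffices to prove $X^{\Phi T}=0$ for every $T\in\mathcal O$.

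For the inductive step I would split on whether $T$ is minimal in $\mathsf{P}_{\mathcal O}$. If $T$ is \emph{not} minimal, then $\mathsf{P}_{\mathcal O_{/T}}$ is strictly smaller than $\mathsf{P}_{\mathcal O}$ (Remark \ref{rem-induct}), the category $\mathrm{res}_T(\mathsf{Test}_n)$ is a testing subcategory for $n$-slices in $\mathsf{Sp}^{\mathcal O_{/T}}$ (Lemma \ref{lem-res-test}), and $\mathrm{res}_TX\in\heartsuit_n(\mathcal O_{/T})$. For any $W\in\mathsf{Test}_n$ we have $\mathrm{ind}_T\mathrm{res}_TW\in\mathsf{Test}_n$, so $[\mathrm{ind}_T\mathrm{res}_TW,X]=0$, which by the $\mathrm{ind}_T\dashv\mathrm{res}_T$ adjunction says $[\mathrm{res}_TW,\mathrm{res}_TX]_{\mathcal O_{/T}}=0$. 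Thus $\hat\pi_n^{\mathcal O_{/T}}(\mathrm{res}_TX)=0$, and the inductive hypothesis gives $\mathrm{res}_TX=0$, hence in particular $X^{\Phi T}=0$.

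If $T$ is minimal, I would use the recollement at $T$: let $(j_!\dashv j^{*}\dashv j_{*})$ be the functors for the open complement $\mathcal O_{\mathcal F}$ of $\{T\}$ and $(i^{*}\dashv i_{*}\dashv i^{!})$ those for $\{T\}$, with the cofiber sequence $j_!j^{*}X\to X\to i_{*}i^{*}X$. Restriction preserves slice connectivity, so $j^{*}X\ge n$; since $j_!$ is built from iterated $\mathrm{ind}\circ\mathrm{res}$ (Proposition \ref{prop-nil-induced}) and $\mathsf{Sp}^{\mathcal O}_{\ge n}$ is closed under colimits and extensions, $j_!j^{*}X\ge n$, whence $[W,\Sigma j_!j^{*}X]=0$ for all $W\in\mathsf{Test}_n$ by Proposition \ref{prop-slice-htpy-vanish}(a). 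Combining this with $\hat\pi_nX=0$ and the homological property of $\hat\pi_n$ (Lemma \ref{lem:homological}) applied to the cofiber sequence gives $\hat\pi_n(i_{*}i^{*}X)=0$; a short chase with the $n$-th $t$-structure homotopy objects (using $X\in\heartsuit_n$ and $j_!j^{*}X\ge n$) shows moreover $i_{*}i^{*}X\in\heartsuit_n$, so $X^{\Phi T}=i^{*}X$ lies in $\heartsuit_n^{\{T\}}\simeq\mathrm{Fun}(\mathrm{Aut}(T),\mathsf{Ab})$, i.e. $X^{\Phi T}\simeq\Sigma^{\nu(n,T)}H\underline M$ for some $\mathrm{Aut}(T)$-module $\underline M$. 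For $W\in\mathsf{Test}_n$, adjunction identifies $\hat\pi_n(i_{*}i^{*}X)(W)$ with $[W^{\Phi T},X^{\Phi T}]^{\mathrm{Aut}(T)}\cong\operatorname{Hom}_{\mathbb Z[\mathrm{Aut}(T)]}(\underline P_W,\underline M)$ where $\underline P_W:=\widetilde H_{\nu(n,T)}(W^{\Phi T})$ is finitely generated free over $\mathbb Z$ with its $\mathrm{Aut}(T)$-action. Now choose $W_0\in\mathsf{Test}_n$ with $W_0^{\Phi T}\ne 0$; since $T$ is minimal, $\mathcal O_{/T}$ is a discrete groupoid, so $\mathrm{res}_T$ forgets the $\mathrm{Aut}(T)$-action and $\mathrm{ind}_T$ reinduces it freely, giving $(\mathrm{ind}_T\mathrm{res}_TW_0)^{\Phi T}\simeq\mathrm{Aut}(T)_{+}\wedge\mathrm{res}_e(W_0^{\Phi T})$, a nonzero free $\mathrm{Aut}(T)$-spectrum, so $\underline P_{\mathrm{ind}_T\mathrm{res}_TW_0}\cong\mathbb Z[\mathrm{Aut}(T)]^{\oplus k}$ with $k>0$. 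Since $\mathrm{ind}_T\mathrm{res}_TW_0\in\mathsf{Test}_n$, we get $0=\operatorname{Hom}_{\mathbb Z[\mathrm{Aut}(T)]}(\mathbb Z[\mathrm{Aut}(T)]^{\oplus k},\underline M)=\underline M^{\oplus k}$, so $\underline M=0$ and $X^{\Phi T}=0$. The base case $|\mathsf{P}_{\mathcal O}|=1$ (so $\mathcal O=B\Gamma$) is the same computation with $W^{\Phi T}=W$.

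The main obstacle is the minimal-object case: one must see that the closure of $\mathsf{Test}_n$ under $\mathrm{ind}_T\mathrm{res}_T$ produces an object whose $T$-geometric fixed points realize a free $\mathbb Z[\mathrm{Aut}(T)]$-module, which is exactly what makes the stratum-level detection of $\underline M$ work, and one must organize the induction so that this case (handled through the recollement) is cleanly separated from the non-minimal case (handled by descending along $\mathcal O_{/T}$). Everything else is bookkeeping with the standard isotropy-separation cofiber sequence and the vanishing results already proved.
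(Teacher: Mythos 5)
Your overall strategy---reduce conservativity to the vanishing statement ``$X\in\heartsuit_n$, $\hat\pi_nX=0$ $\Rightarrow X=0$'' via exactness of $\hat\pi_n$, then test on each stratum, splitting on whether $T$ is minimal---is a clean reformulation of the paper's induction (the paper instead checks directly that the two pieces of the recollement of a map $f:A\to B$ are equivalences). Your non-minimal case is fine, and the key mechanism you identify for the minimal case---that closing $\mathsf{Test}_n$ under $\mathrm{ind}_T\mathrm{res}_T$ produces an object whose $\Phi^T$ is a \emph{free} $\mathrm{Aut}(T)$-wedge of $S^{\nu(n,T)}$'s---is exactly the lever the paper pulls too (the retract statement involving $e_!S^{\nu(n,T)}$ there is read against $i^*(\mathrm{ind}_T\mathrm{res}_TW)$).

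There is, however, a genuine gap in the minimal case: the assertion that a ``short chase'' gives $i_*i^*X\in\heartsuit_n$, hence $X^{\Phi T}\simeq\Sigma^{\nu(n,T)}H\underline M$. This is false in general. Membership of $X$ in $\heartsuit_n$ controls the \emph{connectivity} of $X^{\Phi T}$ but not its \emph{coconnectivity}; the localization $i_*i^*$ need not preserve $\tau^{(n)}_{\le 0}$. Concretely, for $G=C_2$ and $n=0$, $H\underline{\mathbb Z}\in\heartsuit_0$ but $\Phi^{C_2}H\underline{\mathbb Z}$ has nonvanishing homotopy in positive degrees, so it is not an Eilenberg--MacLane spectrum and $i_*i^*H\underline{\mathbb Z}\notin\heartsuit_0$. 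Your subsequent identification of $\hat\pi_n(i_*i^*X)(W)$ with $\operatorname{Hom}_{\mathbb Z[\mathrm{Aut}(T)]}(\underline P_W,\underline M)$ therefore is not justified, and the $\pi_0$-level information you have genuinely cannot see the higher homotopy of $X^{\Phi T}$.

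The repair is to work with full mapping \emph{spectra} rather than $\hat\pi_n=\pi_0\mathrm{map}(W,-)$, which is what the paper does. Since $X\in\heartsuit_n$ and $W$ is a slice $n$-sphere, one has $\pi_k\mathrm{map}(W,X)=[\Sigma^kW,X]=0$ for $k>0$ (coconnectivity of $X$ in the $n$-th $t$-structure) and $\pi_{-k}\mathrm{map}(W,X)=[W,\Sigma^kX]=0$ for $k>0$ by Proposition \ref{prop-slice-htpy-vanish}(a); combined with $\pi_0\mathrm{map}(W,X)=\hat\pi_nX(W)=0$ this gives $\mathrm{map}(W,X)\simeq 0$ as a spectrum for every $W\in\mathsf{Test}_n$. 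Applying this to $\mathrm{ind}_T\mathrm{res}_TW_0$ and using the adjunction together with the free-$\mathrm{Aut}(T)$ identification you wrote down yields $\mathrm{map}(W_0^{\Phi T},X^{\Phi T})\simeq 0$ with $W_0^{\Phi T}$ a nonzero wedge of $S^{\nu(n,T)}$'s and $X^{\Phi T}$ being $\nu(n,T)$-connective; this forces $X^{\Phi T}=0$ without any claim that $X^{\Phi T}$ lies in the heart. A minor additional inaccuracy: for $T$ minimal, $\mathcal O_{/T}$ is \emph{not} a discrete groupoid (for the orbit category, $\mathcal O_{/(G/G)}\simeq\mathcal O_G$); the formula $(\mathrm{ind}_T\mathrm{res}_TW_0)^{\Phi T}\simeq\mathrm{Aut}(T)_+\wedge W_0^{\Phi T}$ is nonetheless correct, as one sees from Lemma \ref{lem-geo-ind-res} applied to the stratum $\{T\}$.
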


\begin{warning} The statement is obviously false on the larger domain
$\mathsf{Sp}^{\mathcal{O}}$.
\end{warning}

\begin{proof} We proceed by induction on the order of $\mathsf{P}_{\mathcal{O}}$.
So let $T \in \mathcal{O}$ be a minimal element with
upward closed complement $\mathcal{F}$, and let $(i^*, i_*)$ and $(j_!, j^*)$
be the usual adjoint pairs associated to this situation. 

Assume that $f: A \to B$ is a map of objects in $\heartsuit_n$
which induces an isomorphism
on $\hat{\pi}_n$ and form the diagram:
	\[
	\xymatrix{
	j_!j^*A \ar[r]\ar[d] & A\ar[r] \ar[d]& i_*i^*A\ar[d]\\
	j_!j^*B\ar[r]& B\ar[r] & i_*i^*B
	}
	\]
It suffices to prove that the left and right vertical maps are equivalences.
	\begin{itemize}
	\item Our induction hypothesis applies to $\mathcal{O}_{/T'}$
	and the restricted testing subcategory (\ref{lem-res-test}) for any
	$T'\in \mathcal{F}$ since, for such $T'$,
	$\mathsf{P}_{\mathcal{O}_{/T'}}$ is strictly smaller than $\mathsf{P}_{\mathcal{O}}$
	(\ref{rem-induct}). This, together with Proposition \ref{prop-nil-induced}, implies
	that $j_!j^*(f)$ is an equivalence.
	\item By the definition of testing subcategory,
	there is some $W \in \mathsf{Test}_n$ with
	$W^{\Phi T}$ a wedge of copies of $S^{\nu(n, T)}$. We have a map
	of fiber sequences:
		\[
		\xymatrix{
		\mathrm{map}(W, A) \ar[r]\ar[d] &
		\mathrm{map}(W, i_*i^*A) \ar[r]\ar[d] &
		\mathrm{map}(W, \Sigma j_!j^*A) \ar[d]\\
		\mathrm{map}(W, B) \ar[r] &
		\mathrm{map}(W, i_*i^*B) \ar[r] &
		\mathrm{map}(W, \Sigma j_!j^*B)
		}
		\]
	The last vertical 
	map is an equivalence because we have already shown
	$j_!j^*$ is an equivalence. We know that
	$[\Sigma^kW, A] = [\Sigma^kW, B] = 0$ for $k>0$
	because $A,B \in \tau^{(n)}_{\le 0}\mathsf{Sp}^{\mathcal{O}}$,
	and $\Sigma^kW \in \tau^{(n)}_{\ge k}\mathsf{Sp}^{\mathcal{O}}$
	as $W$ is slice $n$-connective.
	We also know
	$[W, A] = \hat{\pi}_nA(W) \to
	\hat{\pi}_nB(W) = [W, B]$ is an isomorphism by assumption. 
	Thus, the first vertical map is an equivalence. We deduce
	that the middle vertical map is an equivalence.

	\item Finally, let $e: * \to \mathrm{Aut}(T)$ be the inclusion
	of the identity. Since $S^{\nu(n, T)}$ is a retract of $W^{\Phi T}$,
	we conclude that $e_!S^{\nu(n, T)}$ is a retract of $i^*W$. (Here
	it is important that we are working stably so that
	the norm $e_! \to e_*$
	is an equivalence.) Equivalences are closed under
	retracts, so we conclude from the previous bullet
	and adjunction that
		\[
		\mathrm{map}(S^{\nu(n,T)}, A^{\Phi T})
		\to \mathrm{map}(S^{\nu(n, T)}, B^{\Phi T})
		\]
	is a weak equivalence. But $A^{\Phi T}$ and $B^{\Phi T}$ are
	$\nu(n, T)$-connective by the definition of the slice filtration
	and our assumption that $A,B \ge n$, so we
	deduce that $i^*f$ is an equivalence and the proof is complete.
	\end{itemize}
\end{proof}

\begin{example} Let $\mathsf{A}(G)$ denote the
full subcategory of $\mathsf{Sp}^{G}$ containing
$S^0$ and closed under smashing with finite $G$-sets. 
It is a consequence of the tom Dieck splitting
that $\mathrm{h}\mathsf{A}(G)$ is equivalent to the classical
Burnside category \cite[V.9.6]{LMS}. So if one finds
a representation sphere $S^V$ which is also
an isotropic slice $n$-sphere, then we get an equivalence
of categories:
	\[
	\xymatrix{
	\mathrm{h}\mathsf{A}(G) \ar[rr]^-{S^V \wedge (-)}
	&&
	\mathsf{Test}\langle S^V\rangle},
	\]
and hence an equivalence:
	\[
	\mathsf{Mack}(G; \mathsf{Ab}) \stackrel{\cong}{\longrightarrow}
	\mathsf{Model}_n \cong \heartsuit_n.
	\]
Unwinding the definitions
we learn that in this case the $n$-slice of a spectrum is determined
by a quotient of its $V$th homotopy Mackey functor.

This works more generally for any isotropic slice $n$-sphere in the
Picard group, and more generally still for any inductive orbital
category in place of $\mathcal{O}_G$.
\end{example}

\subsection{Slices as modules over a Green functor}
\label{ssec:slice-module}

In the case when $\mathsf{Test}_n$ is generated by an
isotropic slice $n$-sphere $W$, the functor $\hat{\pi}_nX$
records the following data:
	\begin{itemize}
	\item For each $T \in \mathsf{Fin}_{\mathcal{O}}$,
	the homotopy groups $[\downarrow_T W, \downarrow_TX]
	= [\uparrow_T\downarrow_TW, X]$.
	\item For each map $\uparrow_T\downarrow_T W
	\to \uparrow_{T'}\downarrow_{T'}W$, the induced map
		\[
		[\uparrow_{T'}\downarrow_{T'}W, X]
		\to
		[\uparrow_T\downarrow_TW, X].
		\]
	\end{itemize}

In this section we identify the above
data with the $\underline{\mathrm{End}}(W)$-module
structure on the homotopy Mackey functor $\underline{[W, X]}$. 
The notations and notions in this theorem will be defined
in the body of the section, but we state it now in any case:

\begin{theorem}\label{thm:slices-as-modules}
The functor
$\underline{[W, -]}$ yields an equivalence of adjoint pairs:
 
	\[
	\xymatrix{
	\heartsuit_n \ar[rr]^-{\underline{[W, -]}}_-{\cong}\ar[dd]^{P^n} &&
	\mathsf{RMod}_{\underline{\mathrm{End}}(W)} \ar[dd]_{L^{inj}}\\
	&&\\
	\mathsf{Slice}_n \ar[rr]_-{\underline{[W, -]}}^-{\cong} 
	\ar@/^1.5pc/[uu]&& 
	\mathsf{RMod}^{\mathrm{loc}}_{\underline{\mathrm{End}}(W)} \ar@/_1.5pc/[uu]
	}
	\]
\end{theorem}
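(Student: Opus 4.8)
The plan is to bootstrap from Theorem \ref{thm:slices-as-models}, which already identifies $\heartsuit_n$ with $\mathsf{Model}_n = \mathsf{Psh}^{\times}_{\mathsf{Set}}(\mathsf{Test}_n)$ and $\mathsf{Slice}_n$ with its localization $\mathsf{Slice}_n^{alg}$, in the case where $\mathsf{Test}_n = \mathsf{Test}\langle W\rangle$ is generated by the isotropic slice $n$-sphere $W$. So the real content is a purely algebraic identification
\[
\mathsf{Psh}^{\times}_{\mathsf{Set}}(\mathsf{Test}\langle W\rangle)\;\simeq\;\mathsf{RMod}_{\underline{\mathrm{End}}(W)}
\]
under which the injectivity constraint defining $\mathsf{Slice}_n^{alg}$ matches the injectivity-of-restriction constraint defining $\mathsf{RMod}^{\mathrm{loc}}_{\underline{\mathrm{End}}(W)}$. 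First I would recall why these constraints match: by construction $\mathsf{Test}\langle W\rangle$ consists of the objects $\uparrow_T\downarrow_TW$ and their summands, and $T^{jump}$ is exactly the coproduct of the $n$-jump orbits; so the presheaf condition ``$F(W)\to F(\mathrm{ind}_{T^{jump}\times T}\mathrm{res}(W))$ is injective for all $T$'' literally unwinds to ``$\underline{M}(T)\to\underline{M}(T^{jump}\times T)$ is injective for all $T$'' once we pass through the equivalence on underlying categories. Hence it suffices to produce the equivalence on the level of $\heartsuit_n$, compatibly with the evident precomposition/restriction functors, and the localization statement follows formally.

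For the underlying equivalence I would run the ``many-object, parameterized'' version of the Freyd--Gabriel argument advertised in the introduction. The key observation is that a product-preserving presheaf on $\mathsf{Test}\langle W\rangle$ is the same data as: the Mackey functor $T\mapsto [\uparrow_T\downarrow_TW, X]^{?} = [W,X]$-valued object $\underline{[W,X]}$, together with the action of all maps $\uparrow_T\downarrow_TW\to\uparrow_{T'}\downarrow_{T'}W$; and since every object of $\mathsf{Test}\langle W\rangle$ is a retract of some $\uparrow_T\downarrow_TW$, the category of such presheaves is equivalent to the category of modules over the endomorphism algebra object $\underline{\mathrm{End}}(W) = \underline{[W,W]}$ in Mackey functors, i.e. $\mathsf{RMod}_{\underline{\mathrm{End}}(W)}$. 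Concretely: one checks that $\underline{[W,W]}$ is a Green functor (Definition \ref{defn:green}), that $\underline{[W,-]}$ lands in right $\underline{\mathrm{End}}(W)$-modules via precomposition, that this functor preserves limits and filtered colimits (as in Proposition \ref{prop-slice-left-adjoint}, since $W$ and its restrictions are compact and live in $\heartsuit_n$), hence admits a left adjoint $H$ by the adjoint functor theorem; then one recognizes $\mathsf{RMod}_{\underline{\mathrm{End}}(W)}$ as a Grothendieck abelian category with $\underline{\mathrm{End}}(W)$ (and its ``$T$-fold'' variants $\uparrow_T\downarrow_T$) as a family of compact projective generators, and invokes Proposition \ref{prop:freyd-gabriel} in its parameterized form. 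The two hypotheses to check are that the free modules are compact projective — immediate from the Green functor structure — and that $\underline{[W,-]}$ is conservative on $\heartsuit_n$, which is exactly the conservativity already proven for $\hat\pi_n$ on $\heartsuit_n$ (Proposition after Theorem \ref{thm:slices-as-models}) transported across the identification $\mathsf{Model}_n\simeq\mathsf{RMod}_{\underline{\mathrm{End}}(W)}$.

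The main obstacle I anticipate is not any single deep fact but the careful parameterized bookkeeping: one must set up $\mathsf{RMod}$ over a Green functor so that ``compact projective generator'' makes sense $G$-categorically (the statement ``a $G$-category of modules over a Green functor is recognized by a compact projective generator'' is the parameterized Freyd--Gabriel result alluded to in the proof sketch), and one must check that the identification of $\mathsf{Psh}^{\times}_{\mathsf{Set}}(\mathsf{Test}\langle W\rangle)$ with $\mathsf{RMod}_{\underline{\mathrm{End}}(W)}$ is natural enough that it intertwines $\hat\pi_n$ with $\underline{[W,-]}$, $H$ with the module-theoretic free functor, and $L^{inj}$ with $L^{inj}$. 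Once the naturality is in place, the bottom row (the equivalence $\mathsf{Slice}_n\simeq\mathsf{RMod}^{\mathrm{loc}}_{\underline{\mathrm{End}}(W)}$ and the compatibility of the vertical adjunctions) is a formal consequence of the top row together with Proposition \ref{prop-slice-htpy-loc}, exactly as in the proof of Theorem \ref{thm:slices-as-models}.
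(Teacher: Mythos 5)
Your proposal is correct and follows essentially the same route as the paper: bootstrap from Theorem \ref{thm:slices-as-models}, identify $\mathsf{Test}\langle W\rangle$ with the full subcategory of free modules $\underline{\mathsf{End}}(W)_T$ inside $\mathsf{RMod}_{\underline{\mathrm{End}}(W)}$, apply Proposition \ref{prop:freyd-gabriel} to get $\mathsf{RMod}_{\underline{\mathrm{End}}(W)}\cong\mathsf{Psh}^\times_{\mathsf{Set}}(\mathsf{Test}\langle W\rangle)=\mathsf{Model}_n$, and transport the localization. One small remark: you need not say that every object of $\mathsf{Test}\langle W\rangle$ is a \emph{retract} of some $\uparrow_T\downarrow_T W$ — by construction and the double-coset formula its objects are literally of that form — and the paper's step of re-verifying compact projectivity and conservativity on $\heartsuit_n$ is already contained in Theorem \ref{thm:slices-as-models}, so you need only check those hypotheses once, for the modules $\underline{\mathsf{End}}(W)_T$.
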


This
identification is straightforward, but is conceptually pleasing
and serves as a stepping stone to our simplification at the end of \S2. 
The reader is encouraged to use this section as a quick reminder of the definitions
of Green functors and modules over them, and then proceed to 
\S\ref{ssec:split-modules}.

We begin by reviewing the symmetric monoidal structure
on $\mathsf{Mack}(\mathcal{O}; \mathsf{Ab})$. Classically, one
begins with a symmetric monoidal structure on $\mathrm{h}\aeff(G)$
(or $\mathrm{h}\mathsf{A}(G)$) which arises from the product of finite
$G$-sets. Unfortunately, the categories $\mathsf{Fin}_{\mathcal{O}}$
need not admit products in general. For example, the categories
$\mathsf{Fin}_{\mathcal{O}_{\mathcal{F}}}$ associated to
a family of subgroups of a group $G$ usually do not have terminal objects.

\begin{remark} The author actually does not know of an example
of an inductive orbital category 
where $\mathsf{Fin}_{\mathcal{O}}$ does not admit \emph{nonempty}
finite products. If no such example exists, the discussion
of the symmetric monoidal structure on $\mathsf{Mack}(\mathcal{O}, \mathsf{Ab})$
below could be simplified somewhat.
\end{remark}

Nevertheless, the presheaf that a product would represent can always
be defined, and this puts a \emph{pro}monoidal structure
on $\mathrm{h}\aeff(\mathcal{O})$ which we now describe.

\begin{definition} Given $U,V \in \mathrm{h}\aeff(\mathcal{O})$, define
a presheaf of sets
$(U \times V): \mathrm{h}\aeff(\mathcal{O})^{op} 
\longrightarrow \mathsf{Set}$
by
	\[
	(U \times V)(T):= \{\text{triples }(S \rightarrow T,
	S \rightarrow U,
	S \rightarrow V)\}/\sim
	\]
Here the maps are in $\mathsf{Fin}_{\mathcal{O}}$
and two triples are equivalent if there is an isomorphism
$S \stackrel{\cong}{\to} S'$ commuting with all the specified maps.
Functoriality comes from pullback and composition. This
construction produces
a functor
	\[
	\times:
	\mathrm{h}\aeff(\mathcal{O}) \times
	\mathrm{h}\aeff(\mathcal{O})
	\longrightarrow
	\mathsf{Psh}_{\mathsf{Set}}(\mathrm{h}\aeff(\mathcal{O}))
	\]
(i.e. a profunctor $\mathrm{h}\aeff(\mathcal{O})^{\times 2} \rightarrow 
\mathrm{h}\aeff(\mathcal{O})$.)
\end{definition}

\begin{definition} The \textbf{box product} of abelian group
valued Mackey functors
$\underline{M}$ and $\underline{N}$ on $\mathcal{O}$ is
defined by left Kan extension and restriction via the diagram:
	\[
	\xymatrix{
	\mathrm{h}\aeff(\mathcal{O}) \times \mathrm{h}\aeff(\mathcal{O})\ar[d]_-{\times}
	\ar[rr]^-{(\underline{M}, \underline{N})} &&
	\mathsf{Ab} \times \mathsf{Ab} \ar[rr]^-{\otimes} && \mathsf{Ab}\\
	\mathsf{Psh}_{\mathsf{Set}}(\mathrm{h}\aeff(\mathcal{O}))
	\ar@/_1pc/[urrrr]_-{\times_{!}} &&&&\\
	\mathrm{h}\aeff(\mathcal{O})\ar[u]
	\ar@/_2pc/[uurrrr]_-{\underline{M} \Box \underline{N}} &&&&
	}
	\]
\end{definition}

This gives $\mathsf{Mack}(\mathcal{O}; \mathsf{Ab})$ the structure
of a symmetric monoidal category. For a reference in much
greater generality than we need, see \cite{mack2}.

\begin{definition}\label{defn:burnside-mack} The \textbf{Burnside Mackey functor},
$\underline{A}$, is the unit for the symmetric monoidal
structure on $\mathsf{Mack}(\mathcal{O}; \mathsf{Ab})$
defined above. Explicitly, it is given by
	\[
	T \mapsto \text{Grothendieck group of 
	the maximal subgroupoid of }\left(\mathsf{Fin}_{\mathcal{O}}\right)_{/T}
	\text{ with respect to }\amalg
	\]
Functoriality is given by pullback and composition.
\end{definition}

\begin{definition}\label{defn:green}
A \textbf{Green functor} is an associative
algebra in $\mathsf{Mack}(\mathcal{O}; \mathsf{Ab})$
with respect to the box product. Given a Green functor
$\underline{R}$ we let $\mathsf{RMod}_{\underline{R}}$
denote the \textbf{category of right modules} over the 
associative algebra $\underline{R}$. 
\end{definition}

\begin{example} For any $X, Y \in \mathsf{Sp}^{\mathcal{O}}$ the
assignment 
	\[
	\mathsf{Fin}_{\mathcal{O}} \ni T \mapsto
	[\downarrow_TX, \downarrow_TY]
	\]
extends to an abelian group valued Mackey functor on $\mathcal{O}$. 
In the case $X = Y$, composition endows this Mackey functor
with the structure of a Green functor, the \textbf{endomorphism
Green functor}, denoted $\underline{\mathrm{End}}(X)$. 
For any $Y$, the Mackey functor $\underline{[X, Y]}$
is a right $\underline{\mathrm{End}}(X)$-module in
a canonical way.
\end{example}

Next we'll need the condition that corresponds to
the localization $P^n$.

\begin{definition}\label{defn:slice-local} Let $W$ be an isotropic slice $n$-sphere
and let $T^{\mathrm{jump}} \in \mathsf{Fin}_{\mathcal{O}}$ denote the coproduct of
all (isomorphism classes) of $n$-jumps. We say that
a right $\underline{\mathrm{End}}(W)$-module $\underline{M}$ is 
\textbf{slice local} if,
for all $T \in \mathsf{Fin}_{\mathcal{O}}$, the map
	\[
	\underline{M}(T) \longrightarrow \underline{M}(T^{\mathrm{jump}} \times T)
	\]
induced by the projection $T^{\mathrm{jump}} \times T \to T$ is
injective.
\end{definition}

With these definitions in hand, we can prove the theorem.

\begin{proof}[Proof of Theorem \ref{thm:slices-as-modules}]
Given $T \in \mathsf{Fin}_{\mathcal{O}}$ denote by
$\underline{\mathsf{End}}(W)_T$ the Mackey functor
	\[
	\mathsf{Fin}_{\mathcal{O}} \ni U \mapsto 
	[\uparrow_U\downarrow_{U}W, \uparrow_T\downarrow_{T}W]. 
	\]
Note that evaluation at $T$ gives an isomorphism:
	\[
	\mathrm{Hom}_{\underline{\mathrm{End}}(W)}
	(\underline{\mathsf{End}}(W)_T, \underline{M})
	\stackrel{\cong}{\longrightarrow} \underline{M}(T). 
	\quad (*)
	\]
So each module $\underline{\mathsf{End}}(W)_T$ is compact and projective,
and together they detect isomorphisms. Let $\mathcal{A}_0$ denote
the full subcategory of $\mathsf{RMod}_{\underline{\mathsf{End}}(W)}$
spanned by the objects $\underline{\mathsf{End}}(W)_T$. Then $\mathcal{A}_0$
satisfies the hypotheses of Proposition \ref{prop:freyd-gabriel}. Thus, the
restricted Yoneda embedding
	\[
	\mathsf{RMod}_{\underline{\mathsf{End}}(W)}
	\longrightarrow \mathsf{Psh}_{\mathsf{Set}}^{\times}(\mathcal{A}_0)
	\]
is an equivalence of categories. On the other hand, we have a functor
	\[
	\mathsf{Test}\langle W\rangle \longrightarrow 
	\mathcal{A}_0
	\]
given by $\uparrow_T\downarrow_TW \mapsto \underline{\mathsf{End}}(W)_T$.
This is an equivalence in view of the formula $(*)$ above together with
the induction-restriction adjunction.

So we have a commutative diagram
	\[
	\xymatrix{
	&\heartsuit_n\ar[dl]_{\underline{[W, -]}}\ar[dr]^-{\hat{\pi}_n}&\\
	\mathsf{RMod}_{\underline{\mathsf{End}}(W)} \ar[r]_{\cong}& 
	\mathsf{Psh}^{\times}_{\mathsf{Set}}(\mathcal{A}_0) \ar[r]_{\cong} &
	\mathsf{Model}_n
	}
	\]
By Theorem \ref{thm:slices-as-models} we know that
$\hat{\pi}_n$ is an equivalence, hence so is $\underline{[W,-]}$. 
The identification of $\mathsf{Slice}_n$ as 
$\mathsf{RMod}^{\mathrm{loc}}_{\underline{\mathsf{End}}(W)}$
is immediate from the definitions and the corresponding
statement in Theorem \ref{thm:slices-as-models}.
\end{proof}

\begin{remark} In this extended remark we place the elementary
result in this section into the broader setting of parameterized
category theory. We will assume the reader is familiar
with the notions of parameterized category theory
found in \cite{BDGNS-basics, denis-stable}. The reader
unfamiliar or uninterested in these ideas can safely skip this remark.

First notice that, given a Green functor $\underline{R}$,
we can define an $\mathcal{O}$-category $\underline{\mathsf{RMod}}_{\underline{R}}$
by the assignment
	\[
	\mathcal{O} \ni T \mapsto
	\mathsf{RMod}_{\mathrm{res}_T\underline{R}}.
	\]
This $\mathcal{O}$-category is $\mathcal{O}$-semiadditive
in the sense of \cite[5.3]{denis-stable}. A priori, homomorphisms
in an $\mathcal{O}$-category form an object in
$\underline{\mathsf{Set}}_{\mathcal{O}}$, but $\mathcal{O}$-semiadditivity
canonically promotes these to elements in
$\underline{\mathsf{Mack}}(\mathcal{O}; \mathsf{CMon})$, i.e.
Mackey functors valued in commutative monoids. In fact, each of
the commutative monoids in question are group-like, so we'll
call such an $\mathcal{O}$-category `$\mathcal{O}$-additive.'
Finally, each of the fibers is abelian, and in this case we'll
say that the $\mathcal{O}$-category is $\mathcal{O}$-abelian.

Now, in general, given a cocartesian
section $C: \mathcal{O}^{op} \to \mathcal{A}$
of an $\mathcal{O}$-abelian category $\mathcal{A}$, we get
an $\mathcal{O}$-functor
	\[
	\underline{\mathrm{Hom}}_{\mathcal{A}}(C, -):
	\mathcal{A}
	\longrightarrow \underline{\mathsf{Mack}}(\mathcal{O}; \mathsf{Ab})
	\]
Using this functor we have natural candidates for the notions
of (i) $\mathcal{O}$-compact, (ii) $\mathcal{O}$-projective, and
(iii) being an $\mathcal{O}$-generator. The key point is to use
$\mathcal{O}$-indexed colimits in place of ordinary colimits
for each definition. Now the proof of Proposition \ref{prop:freyd-gabriel}
carries over essentially verbatim to prove:
	\begin{itemize} 
	\item Suppose $\mathcal{A}$
	is an $\mathcal{O}$-presentable, $\mathcal{O}$-abelian
	$\mathcal{O}$-category with an
	$\mathcal{O}$-compact, $\mathcal{O}$-projective,
	$\mathcal{O}$-generator $C$. Then the $\mathcal{O}$-functor
		\[
		\underline{\mathrm{Hom}}(C, -): \mathcal{A}
		\longrightarrow
		\underline{\mathsf{RMod}}_{\underline{\mathrm{End}}(C)}
		\]
	is an equivalence of $\mathcal{O}$-categories. 
	\end{itemize}
The evident generalization to a family of $\mathcal{O}$-generators
also applies, as does the analogue of the Gabriel-Kuhn-Popescu theorem.
All of the proofs are straightforward adaptations of the classical ones,
once you've pinned down the proper definitions (as above).
\end{remark}

\subsection{Digression: Modules over geometrically split Green functors}
\label{ssec:split-modules}

The Green functor $\underline{\mathrm{End}}(W)$ for an isotropic
slice $n$-sphere $W$ has several special features which
simplifies its category of modules. We single out one of these
in this section and study the resulting bit of algebra.

\begin{definition} Given an abelian group
valued Mackey functor $\underline{M}$
and an orbit $T \in \mathcal{O}$. For maps
of orbits $U \to T$ denote the associated transfer
by
	\[
	\mathrm{tr}_U^T: \underline{M}(U) \to \underline{M}(T).
	\]
Then we define an abelian group $\underline{M}^{\Phi T}$ by:
	\[
	\underline{M}^{\Phi T}:= 
	\frac{\underline{M}(T)}{\langle \mathrm{im}(\mathrm{tr}_U^T)\rangle_{U\to T}}.
	\]
\end{definition}

\begin{remark} The action of $\mathrm{Aut}(T)$ on $\underline{M}(T)$
descends to an action on $\underline{M}^{\Phi T}$. 
\end{remark}

\begin{remark} If $\underline{R}$ is a Green functor,
	then $\underline{R}^{\Phi T}$ is a ring because the submodule
	we quotient by is an ideal. Similarly, if $\underline{M}$ is a 
	right $\underline{R}$-module,
	then $\underline{M}^{\Phi T}$ is naturally
	a right $\underline{R}^{\Phi T}$-module. 
\end{remark}

\begin{example} We will prove below (Lemma \ref{lem:endo-geom-fix})
that if $W$ is an isotropic slice $n$-sphere, then
$\underline{\mathrm{End}}(W)^{\Phi T} = \mathrm{End}(W^{\Phi T})$.
\end{example}

If one unwinds the definition of a module over a Green functor,
one finds the formula:
	\[
	m \cdot \mathrm{tr}_U^T(r) = \mathrm{tr}_U^T(\mathrm{res}^T_U(m) \cdot r).
	\]
Thus, the action of transferred elements is somewhat redundant.
Of course, it is not usually possible to systematically
express an element $r \in \underline{R}(T)$ as the sum of transferred
elements and an element not in the image of any transfer. We
will show below (Lemma \ref{lem:endo-geom-split}) that this \emph{does}
happen our case of interest. This leads us to the following definition.

\begin{definition}\label{defn:geom-split}
A Green functor $\underline{R}$ is called \textbf{geometrically splittable}
if, for all $T \in \mathcal{O}$, the ring map
	\[
	\underline{R}(T) \longrightarrow \underline{R}^{\Phi T}
	\]
admits an $\mathrm{Aut}(T)$-equivariant section. A \textbf{geometrically
split} Green functor is a geometrically splittable Green functor
$\underline{R}$ equipped with chosen splittings 
$s_T: \underline{R}^{\Phi T} \longrightarrow \underline{R}(T)$ as above. 
\end{definition}

\begin{example} The Burnside Mackey functor (\ref{defn:burnside-mack})
$\underline{A}$ is geometrically split. Indeed,
$\underline{A}^{\Phi T} = \mathbb{Z}$ with trivial
$\mathrm{Aut}(T)$-action, so there is a \emph{unique}
splitting of the augmentation
$\underline{A}(T) \to \mathbb{Z}$. 
\end{example}

If $\underline{M}$ is a module over a geometrically split Green functor,
then restriction along $s_T$ gives each $\underline{M}(T)$ the structure
of a module over $\underline{R}^{\Phi T}$. Enumerating
the remaining structure on $\underline{M}$ visible to the rings
$\underline{R}^{\Phi T}$, one is lead to define the category
in Definition \ref{defn:r-phi-modules} below. We will
need a little notation before making this definition, however.

\begin{definition} If $M$ is an abelian group
and $S$ is a set, we denote by $M \otimes S$
and $M^S$ the abelian groups $M \otimes \mathbb{Z}\{S\}$
and $\mathrm{Hom}(\mathbb{Z}\{S\}, M)$, respectively. 
There is a canonical map
$M \otimes S \to M^S$ given by sending
$m \otimes s$ to the function with value $m$
at $s$ and zero otherwise; thus we may
view elements of $M \otimes S$ as elements in
$M^S$.

If a finite group $H$ acts on $M$ and $S$, define
	\[
	\mathrm{trace}: M \otimes S \longrightarrow
	M^S
	\]
by $m\otimes s \mapsto \sum_{h \in H} hm \otimes hs$.
This induces a map by the same name:
	\[
	\mathrm{trace}:
	(M \otimes S)_H \longrightarrow
	\left(M^S\right)^H.
	\]
\end{definition}

\begin{definition}\label{defn:r-phi-modules} Let $\underline{R}$ be a geometrically
split Green functor. We define the category of
\textbf{(right) $\underline{R}^{\Phi}$-modules},
$\mathsf{RMod}_{\underline{R}^{\Phi}}$, to consist
of objects $\underline{M}$ which consist of the following
data:
	\begin{enumerate}[(i)]
	\item For each $[T] \in \mathsf{P}_{\mathcal{O}}$
	an $\underline{R}^{\Phi T}$-$\mathrm{Aut}(T)$-module
	$\underline{M}(T)$;
	\item (Restrictions and transfers) For each pair $[T]\ge [T']$, maps of 
	$\underline{R}^{\Phi T'}$-$\mathrm{Aut}(T')$-modules:
		\[
		\left(\underline{M}(T) 
		\otimes \mathrm{Hom}(T, T')\right)_{\mathrm{Aut}(T)}
		\longrightarrow \underline{M}(T')
		\]
		\[
		\underline{M}(T') \longrightarrow
		\left(\underline{M}(T)^{\mathrm{Hom}(T,T')}\right)^{\mathrm{Aut}(T)}
		\]
	of $\underline{R}^{\Phi T'}$-$\mathrm{Aut}(T')$-modules.
	\end{enumerate}
	subject to the following conditions:
	\begin{itemize}
	\item (Double-coset formula) For each pair, $[T] \ge [T']$, the diagram
		\[
		\xymatrix{
		\left(\underline{M}(T) \otimes 
		\mathrm{Hom}(T, T')\right)_{\mathrm{Aut}(T)}
		\ar[rr]^{\mathrm{trace}}\ar[dr] && 
		\left(\underline{M}(T')^{\mathrm{Hom}(T,T')}\right)^{\mathrm{Aut}(T)}
		\\
		&\underline{M}(T')\ar[ur]&
		}
		\]
	commutes;
	\item (Composition of restrictions and transfers)
	 For each triple $[T_0] \ge [T_1]\ge [T_2]$, the diagrams
		\[
		\xymatrix{
		\left[\left(\underline{M}(T_0) 
		\otimes \mathrm{Hom}(T_0, T_1)\right)_{\mathrm{Aut}(T_0)}
		\otimes \mathrm{Hom}(T_1, T_2)\right]_{\mathrm{Aut}(T_1)}
		\ar[rr]\ar[dd]_{\cong} &&
		\left(\underline{M}(T_1) 
		\otimes \mathrm{Hom}(T_1, T_2)\right)_{\mathrm{Aut}(T_1)}
		\ar[dd]\\
		&&\\
		\left(\underline{M}(T_0) 
		\otimes \mathrm{Hom}(T_0, T_2)\right)_{\mathrm{Aut}(T_0)}
		 \ar[rr]
		&&
		\underline{M}(T_2)
		}
		\]
		\[
		\xymatrix{
		\underline{M}(T_2)\ar[rr]\ar[dd]&&
		\left(\underline{M}(T_0)^{\mathrm{Hom}(T_0, T_2)}\right)^{\mathrm{Aut}(T_0)}
		\ar[dd]\\
		&&\\
		\left(\underline{M}(T_1)^{\mathrm{Hom}(T_1, T_2)}\right)^{\mathrm{Aut}(T_1)}\ar[rr]
		&&
		\left[
		\left(\left(\underline{M}(T_0)^{\mathrm{Hom}(T_0,T_1)}\right)^{\mathrm{Aut}(T_0)}
		\right)^{\mathrm{Hom}(T_1, T_2)}\right]^{\mathrm{Aut}(T_1)}
		}
		\]
	commute.
	\end{itemize}
\end{definition}

\begin{remark} Each object $\underline{M}$ is,
in particular, a Mackey functor. The definition above just
keeps track of the interaction with the $\underline{R}^{\Phi T}$-module
structures.
\end{remark}

By design, there is a forgetful functor:
	\[
	\mathsf{RMod}_{\underline{R}} \longrightarrow 
	\mathsf{RMod}_{\underline{R}^{\Phi}}.
	\]

\begin{theorem}\label{thm:split-structure} The forgetful functor
	\[
	\mathsf{RMod}_{\underline{R}} \longrightarrow 
	\mathsf{RMod}_{\underline{R}^{\Phi}}
	\]
is an equivalence.
\end{theorem}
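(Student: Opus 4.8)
The plan is to exhibit an inverse to the forgetful functor by explicitly reconstructing the full $\underline{R}$-module structure from the data of an $\underline{R}^{\Phi}$-module, using the chosen geometric splittings $s_T\colon \underline{R}^{\Phi T}\to \underline{R}(T)$. First I would record the key structural fact that makes this possible: because every morphism in $\mathrm{h}\aeff(\mathcal{O})$ factors (essentially uniquely, up to the combinatorics of spans) as a composite of a restriction, an automorphism, and a transfer, the value $\underline{M}(T)$ of any $\underline{R}$-module on an orbit $T$, together with the restriction maps $\underline{M}(T)\to\underline{M}(U)$ for $[U]\le[T]$, the transfers $\underline{M}(U)\to\underline{M}(T)$, and the $\mathrm{Aut}(T)$-action, determines $\underline{M}$ entirely as a Mackey functor (this is just the usual description of Mackey functors via restrictions, transfers, conjugations, and the double coset formula, adapted to the category $\mathcal{O}$). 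The content of the theorem is therefore that the extra data of the $\underline{R}(T)$-actions on the $\underline{M}(T)$ is recoverable from the $\underline{R}^{\Phi T}$-actions plus this Mackey structure. The Frobenius reciprocity formula $m\cdot\mathrm{tr}_U^T(r)=\mathrm{tr}_U^T(\mathrm{res}_U^T(m)\cdot r)$ is exactly what lets us do this: an arbitrary element of $\underline{R}(T)$ can be written (non-canonically) as $s_T(\bar r) + \sum \mathrm{tr}_U^T(r_U)$ for $\bar r\in\underline{R}^{\Phi T}$, since $s_T$ is a section of the quotient by the transfer ideal, and the action of each summand is dictated: $s_T(\bar r)$ acts through the given $\underline{R}^{\Phi T}$-module structure, and $\mathrm{tr}_U^T(r_U)$ acts via Frobenius in terms of restriction and the action lower down.

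The main steps, in order, would be: (1) Show the forgetful functor is essentially surjective by the reconstruction just sketched — given an $\underline{R}^{\Phi}$-module $\underline{M}$, define an action of $\underline{R}(T)$ on $\underline{M}(T)$ by choosing a decomposition into a split part and transferred parts and declaring the action by the formula above; then check this is well-defined, i.e. independent of the decomposition. Well-definedness is where one must genuinely use that $s_T$ is a \emph{ring} section and $\mathrm{Aut}(T)$-equivariant, together with the module axioms encoded in Definition \ref{defn:r-phi-modules} (the double-coset square and the composition-of-restrictions-and-transfers squares); two different expressions for the same element of $\underline{R}(T)$ differ by elements of the transfer ideal, and Frobenius plus these coherence diagrams force the two candidate actions to agree. (2) Verify the reconstructed action is associative and unital and compatible with restrictions and transfers in $\underline{M}$ — again a diagram chase reducing, via the factorization of spans, to the cases already packaged in the $\underline{R}^{\Phi}$-module axioms. (3) Show the forgetful functor is fully faithful: a map of underlying $\underline{R}^{\Phi}$-modules is automatically a map of Mackey functors and automatically commutes with the split part of the action and with transferred elements (by naturality of $\mathrm{tr}$), hence commutes with all of $\underline{R}(T)$. (4) Conclude that the unit and counit of the resulting adjunction (or just the reconstruction functor paired with forgetting) are isomorphisms.

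I expect the main obstacle to be Step (1), specifically the well-definedness of the reconstructed $\underline{R}(T)$-action. The subtlety is that writing $r\in\underline{R}(T)$ as $s_T(\bar r)+\sum_U\mathrm{tr}_U^T(r_U)$ involves genuine choices — the elements $r_U\in\underline{R}(U)$ are far from unique, and one may even have transfers from several non-conjugate $U$ contributing — so one must show that all these choices yield the same operator on $\underline{M}(T)$. Tracking this forces one into the combinatorics of the Burnside category $\mathrm{h}\aeff(\mathcal{O})$: the double coset formula governs how $\mathrm{res}_V^T\mathrm{tr}_U^T$ decomposes, and one needs precisely the commuting squares in Definition \ref{defn:r-phi-modules} to see that the induced ambiguity cancels. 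This is the step the author flags as "not trivial," and it is presumably where the pre-hereditary recollement of \cite{FP} enters in the body of the paper — one realizes $\mathsf{RMod}_{\underline{R}}$ and $\mathsf{RMod}_{\underline{R}^{\Phi}}$ as recollements glued along the same atomic pieces $\mathsf{RMod}_{\underline{R}^{\Phi T}}\text{-}\mathrm{Aut}(T)$ and invokes the comparison theorem for pre-hereditary recollements; I would either carry out the direct reconstruction argument above or, following the proof sketch in the excerpt, set up the recollement structure on $\mathsf{RMod}_{\underline{R}}$ (with strata indexed by $\mathsf{P}_{\mathcal{O}}$, geometric-fixed-point functors $\underline{M}\mapsto\underline{M}^{\Phi T}$ as the atomic localizations), verify the pre-hereditary condition using the splittings, and quote \cite{FP} to identify the two recollements.
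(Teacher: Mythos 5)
Your proposal is really two proposals, and they fare quite differently against the paper.

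Your second route — set up the recollement of abelian categories from Construction \ref{cstr:recoll-of-modules}, verify the pre-hereditary condition, identify $\mathsf{RMod}_{\underline{R}^{\Phi}}$ as an iterated MacPherson--Vilonen gluing, and invoke the comparison theorem of Franjou--Pirashvili — is exactly what the paper does. The induction is on $|\mathsf{P}_{\mathcal{O}}|$, peeling off the minimal stratum $\widetilde{\mathcal{F}}$ at each step; the only nontrivial input is Proposition \ref{prop:pre-hereditary}, whose proof goes through the explicit short exact sequence $0 \to j_!j^*r_! \to r_! \to i_* \to 0$ and ultimately a cofinality argument inside $\mathrm{h}\aeff(\mathcal{O})$.

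Your first route — directly reconstruct the $\underline{R}(T)$-action from the $\underline{R}^{\Phi T}$-actions plus the underlying Mackey structure via Frobenius — has a genuine gap at the well-definedness step, and I do not think you can close it with the hand you have shown. Concretely: after subtracting the canonical split part $s_T(\bar r)$, the transferred part of $r$ is a well-defined element of the transfer ideal, but the lift $(r_U)_U \in \bigoplus_{[U]>[T]} \underline{R}(U)$ is far from unique. Writing the candidate action as $m \mapsto \sum_U \mathrm{tr}_U^T\bigl(\mathrm{res}_U^T(m)\cdot r_U\bigr)$, well-definedness amounts to the assertion that $\sum_U \mathrm{tr}_U^T(r_U)=0$ in $\underline{R}(T)$ forces $\sum_U \mathrm{tr}_U^T\bigl(\mathrm{res}_U^T(m)\cdot r_U\bigr)=0$ in $\underline{M}(T)$. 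The coherence diagrams in Definition \ref{defn:r-phi-modules} do not give you this: they are relations that already hold in the target (double-coset compatibility, composition of restrictions and transfers), and they carry no information about which tuples $(r_U)$ lie in the kernel of the total transfer $\bigoplus_U \underline{R}(U) \to \underline{R}(T)$. That kernel depends on the particular Green functor $\underline{R}$, and controlling the ambiguity it creates is precisely the homological content of the pre-hereditary condition — in other words, directly proving well-definedness is morally equivalent to re-proving Proposition \ref{prop:pre-hereditary}, not a shortcut around it. A further structural point worth making explicit even if you did pursue this route: Frobenius evaluates $\mathrm{res}_U^T(m)\cdot r_U$ in $\underline{M}(U)$, i.e. it requires the $\underline{R}(U)$-module structure at every strictly larger orbit $[U]>[T]$, so the reconstruction is forced to be a downward induction on $\mathsf{P}_{\mathcal{O}}$ starting from maximal orbits (where $\underline{R}^{\Phi T} = \underline{R}(T)$ automatically), which your outline does not set up; this is a second, independent reason the argument cannot be a one-shot construction.

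Your step (3) also leans on the same circularity: showing the forgetful functor is faithful by arguing that any $\underline{R}^{\Phi}$-module map automatically intertwines the full $\underline{R}(T)$-actions already presumes that those actions are determined by the $\underline{R}^{\Phi T}$-data and Mackey structure, which is the content of (1). In the paper's framework this is handled uniformly: once both sides are shown to be pre-hereditary recollements with matching atomic strata $\mathsf{Fun}(\mathrm{Aut}(T),\mathsf{RMod}_{\underline{R}^{\Phi T}})$, \cite[Thm.~8.4]{FP} gives the equivalence in one step, with no need to argue essential surjectivity and full faithfulness separately.
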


As usual, the proof relies on induction over the poset $\mathsf{P}_{\mathcal{O}}$.
To set up this induction, we'll need to extend
the functors $((j_{\mathcal{F}})_!, j^*_{\mathcal{F}}, (j_{\mathcal{F}})_*)$ and 
$(i^*_{\widetilde{\mathcal{F}}}, 
(i_{\widetilde{\mathcal{F}}})_*, (i_{\widetilde{\mathcal{F}}})^!)$
to the setting of modules over Green functors. 

We begin by recalling a few definitions.

\begin{definition}\cite{BenRoub, HTT} Suppose we
are given a diagram of categories
	\[
	\xymatrix{
	\mathcal{C}'\ar[d]_{\psi^{*}}\ar[r]^{\widetilde{G}} & \mathcal{C}\ar[d]^{\phi^*}\\
	\mathcal{D}'\ar[r]_G & \mathcal{D}
	}
	\]
commuting up to a specified natural isomorphism
$\eta: \phi^* \circ \widetilde{G} \stackrel{\cong}{\to} G \circ \psi^*$. 
We say that the diagram is \textbf{left adjointable}
or satisfies the \textbf{left Beck-Chevalley condition}
if $G$ and $\widetilde{G}$ admit left adjoints $F$ and $\widetilde{F}$
and the exchange transformation
	\[
	F \circ \phi^* \to F \circ \phi^* \circ \widetilde{G} \circ \widetilde{F}
	\stackrel{\eta}{\cong} F \circ G \circ \psi^* \circ \widetilde{F}
	\to \psi^* \circ \widetilde{F}
	\]
is an isomorphism. We say the diagram is \textbf{right adjointable}
if the functors $G$ and $\widetilde{G}$ admit right adjoints
$H$ and $\widetilde{H}$ and the exchange transformation
	\[
	\psi^* \circ \widetilde{H}
	\to H \circ G \circ \psi^* \circ \widetilde{H}
	\stackrel{\eta^{-1}}{\cong}
	H \circ \phi^* \circ \widetilde{G} \circ \widetilde{H}
	\to H \circ \phi^*
	\]
is an isomorphism.
\end{definition}

\begin{remark} A square is right adjointable if and only
if the square becomes left adjointable upon
applying the functor $(-)^{op}: \mathsf{Cat} \to \mathsf{Cat}$.
\end{remark}

The following lemma is elementary and left to the reader.

\begin{lemma}\label{lem:adjointable} Suppose we have a square of functors:
	\[
	\xymatrix{
	\mathcal{C}'\ar[d]_{\psi^{*}}\ar[r]^{\widetilde{G}} & \mathcal{C}\ar[d]^{\phi^*}\\
	\mathcal{D}'\ar[r]_G & \mathcal{D}
	}
	\]
commuting up to a specified natural isomorphism. Suppose further that
$G$ and $\widetilde{G}$ admit left adjoints $F$ and $\widetilde{F}$,
respectively \emph{and} that $\psi^*$ and $\phi^*$ admit
right adjoints $\psi_*$ and $\phi_*$, respectively. Then
the above square is left adjointable if and only if the square
	\[
	\xymatrix{
	\mathcal{C}' \ar[r]^{\psi^*}\ar[d]_{\widetilde{G}} & 
	\mathcal{D}'\ar[d]^{G}\\
	\mathcal{C} \ar[r]_{\phi^*} & \mathcal{D}
	}
	\] 
is right adjointable.
\end{lemma}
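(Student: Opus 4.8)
The plan is to reduce the statement to a two-step Yoneda argument, i.e. to the standard calculus of mates. Write $\mathrm{Ex}^{1}\colon F\phi^{*}\Rightarrow \psi^{*}\widetilde{F}$ for the exchange transformation whose invertibility defines left adjointability of the first square, as in the displayed formula of the definition. Then I would unwind the definition of right adjointability for the second square: there the horizontal functors are $\psi^{*}$ (top) and $\phi^{*}$ (bottom), with right adjoints $\psi_{*}$ and $\phi_{*}$ supplied by hypothesis, and the commuting isomorphism is $\eta^{-1}$, so the relevant exchange transformation is the composite $\mathrm{Ex}^{2}\colon \widetilde{G}\psi_{*}\to \phi_{*}\phi^{*}\widetilde{G}\psi_{*}\xrightarrow{\eta}\phi_{*}G\psi^{*}\psi_{*}\to \phi_{*}G$, built from the unit of $\phi^{*}\dashv\phi_{*}$, the given isomorphism $\eta$, and the counit of $\psi^{*}\dashv\psi_{*}$. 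The goal is then precisely to show $\mathrm{Ex}^{1}$ is an isomorphism if and only if $\mathrm{Ex}^{2}$ is.

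First I would fix an object $X\in\mathcal{C}$ and assemble the adjunction isomorphisms $\mathrm{Hom}_{\mathcal{D}'}(F\phi^{*}X,-)\cong\mathrm{Hom}_{\mathcal{D}}(\phi^{*}X,G-)\cong\mathrm{Hom}_{\mathcal{C}}(X,\phi_{*}G-)$ on one side, and $\mathrm{Hom}_{\mathcal{D}'}(\psi^{*}\widetilde{F}X,-)\cong\mathrm{Hom}_{\mathcal{C}'}(\widetilde{F}X,\psi_{*}-)\cong\mathrm{Hom}_{\mathcal{C}}(X,\widetilde{G}\psi_{*}-)$ on the other, using all four adjunctions $F\dashv G$, $\widetilde{F}\dashv\widetilde{G}$, $\psi^{*}\dashv\psi_{*}$, $\phi^{*}\dashv\phi_{*}$. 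Under these identifications the presheaf morphism ``precompose with $\mathrm{Ex}^{1}_{X}$'', namely $\mathrm{Hom}_{\mathcal{D}'}(\psi^{*}\widetilde{F}X,-)\to\mathrm{Hom}_{\mathcal{D}'}(F\phi^{*}X,-)$, becomes a natural transformation $\mathrm{Hom}_{\mathcal{C}}(X,\widetilde{G}\psi_{*}-)\to\mathrm{Hom}_{\mathcal{C}}(X,\phi_{*}G-)$.

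The main step — and the only place any real work is needed — is to verify that this last transformation is exactly $\mathrm{Hom}_{\mathcal{C}}(X,\mathrm{Ex}^{2}_{(-)})$, i.e. postcomposition with $\mathrm{Ex}^{2}$. This is the usual ``mates are computed by transposing across adjunctions'' check: one expands $\mathrm{Ex}^{1}$ and $\mathrm{Ex}^{2}$ into their defining strings of units and counits and applies the triangle identities; it is purely formal but bookkeeping-heavy, and it is where the commutativity-up-to-$\eta$ of the square is actually used. (Equivalently, one can observe that $\mathrm{Ex}^{1}$ and $\mathrm{Ex}^{2}$ are mates of one another, both obtained from $\eta$, so that Lemma~\ref{lem:adjointable} is a special case of the Kelly–Street calculus of mates.)

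Granting this, I would finish with two applications of Yoneda. The transformation $\mathrm{Ex}^{1}$ is a natural isomorphism iff each component $\mathrm{Ex}^{1}_{X}$ is invertible, iff ``precompose with $\mathrm{Ex}^{1}_{X}$'' is an isomorphism of presheaves on $\mathcal{D}'$ for every $X$; transported through the identifications above, this says $\mathrm{Hom}_{\mathcal{C}}(X,\mathrm{Ex}^{2}_{Y})$ is bijective for all $X$ and all $Y$, which by Yoneda in $\mathcal{C}$ is equivalent to $\mathrm{Ex}^{2}_{Y}$ being invertible for all $Y$, i.e. to $\mathrm{Ex}^{2}$ being a natural isomorphism. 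Since the whole argument is symmetric in the two squares, this establishes both implications simultaneously, and the only nontrivial ingredient is the routine mate computation in the third paragraph.
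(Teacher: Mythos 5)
The paper states this lemma without proof, declaring it ``elementary and left to the reader,'' so there is no proof in the paper to compare against. Your argument is correct and is the standard one: you identify the two exchange transformations $\mathrm{Ex}^1: F\phi^* \to \psi^*\widetilde{F}$ and $\mathrm{Ex}^2: \widetilde{G}\psi_* \to \phi_*G$ as mates of one another under the four adjunctions, and then a double application of Yoneda (once in $\mathcal{D}'$ to pass from invertibility of $\mathrm{Ex}^1_X$ to bijectivity of precomposition, once in $\mathcal{C}$ to pass from bijectivity of postcomposition to invertibility of $\mathrm{Ex}^2_Y$) shows each is invertible iff the other is. You have correctly unwound the paper's definition of right adjointability for the second square (including the role swap of verticals and horizontals and the replacement of $\eta$ by $\eta^{-1}$), so the exchange transformation $\mathrm{Ex}^2$ you write down is the right one. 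The one step you do not carry out in full is the verification that, under the chain of adjunction isomorphisms, precomposition with $\mathrm{Ex}^1_X$ becomes postcomposition with $\mathrm{Ex}^2$ --- this is exactly the Kelly--Street statement that the two Beck--Chevalley transformations are mates, and while it is routine triangle-identity bookkeeping, it is the single place where the content of the lemma lives. Given that the paper itself waves the whole lemma off as elementary, deferring that check to the calculus of mates is a reasonable level of detail, but a fully self-contained write-up would spell it out.
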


\begin{construction}\label{cstr:recoll-of-modules}
Let $\mathcal{F} \subseteq \mathsf{P}_{\mathcal{O}}$
be upward closed with downward closed complement $\widetilde{\mathcal{F}}$
and suppose $\underline{R}$ is a Green functor. Denote by
$\underline{R}_{\mathcal{F}}$ the restriction of the
Green functor to $\mathcal{O}_{\mathcal{F}}$
and by $\Phi^{\mathcal{F}}\underline{R}$ the Green functor
on $\mathcal{O}_{\widetilde{\mathcal{F}}}$ defined by
$\left(i_{\widetilde{\mathcal{F}}}\right)^*\underline{R}$
(as in Notation \ref{notation-so-many-adjoints}).

Restriction defines a functor:
	\[
	j^*: \mathsf{RMod}_{\underline{R}}
	\longrightarrow \mathsf{RMod}_{\underline{R}_{\mathcal{F}}}.
	\]
Extension by zero defines a functor:
	\[
	i_*:
	\mathsf{RMod}_{\Phi^{\mathcal{F}}\underline{R}}
	\longrightarrow
	\mathsf{RMod}_{\underline{R}}.
	\]
Repeated use of the adjoint functor theorem produces a string of adjoints:
	\[
	\xymatrix{
	\mathsf{RMod}_{\underline{R}_{\mathcal{F}}}
	\ar@<3ex>[r]^{j_!}\ar@<-3ex>[r]^{j_*}&
	\mathsf{RMod}_{\underline{R}} \ar[l]_{j^*}
	\ar@<3ex>[r]^{i^*}\ar@<-3ex>[r]^{i^!}&
	\mathsf{RMod}_{\Phi^{\mathcal{F}}\underline{R}}\ar[l]_{i_*}
	}
	\]
\end{construction}

Any $\underline{R}$-module $\underline{M}$ has
an underlying Mackey functor, and a priori it is not
clear how the functors above compare to the similarly named
functors applied to the underlying Mackey functor. Luckily,
the two notions agree.

\begin{proposition} The formation of the functors $(j_!, j^*, j_*)$ and
$(i^*, i_*, i^!)$ on modules commutes with passage
to underlying Mackey functors.  
\end{proposition}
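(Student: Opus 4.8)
The statement asserts that the adjoint triples $(j_!, j^*, j_*)$ and $(i^*, i_*, i^!)$ on module categories, built in Construction \ref{cstr:recoll-of-modules} via the adjoint functor theorem, agree with the corresponding functors on underlying Mackey functors from Notation \ref{notation-so-many-adjoints}. The plan is to check this one functor at a time, starting from the two functors that are defined directly rather than as adjoints: $j^*$ (restriction of a module along $\mathcal{O}_{\mathcal{F}} \hookrightarrow \mathcal{O}$) and $i_*$ (extension by zero). For these, the claim is essentially a tautology: the forgetful functor $\mathsf{RMod}_{\underline{R}} \to \mathsf{Mack}(\mathcal{O})$ is defined by the same underlying-diagram recipe, and restriction (resp. extension by zero) of the underlying Mackey functor is visibly the underlying Mackey functor of the restricted (resp. extended) module, since the $\underline{R}$-action is built from the same data. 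So the two squares
\[
\xymatrix{
\mathsf{RMod}_{\underline{R}} \ar[r]^-{j^*}\ar[d] & \mathsf{RMod}_{\underline{R}_{\mathcal{F}}}\ar[d] \\
\mathsf{Mack}(\mathcal{O}) \ar[r]_-{j^*} & \mathsf{Mack}(\mathcal{O}_{\mathcal{F}})
}
\qquad
\xymatrix{
\mathsf{RMod}_{\Phi^{\mathcal{F}}\underline{R}} \ar[r]^-{i_*}\ar[d] & \mathsf{RMod}_{\underline{R}}\ar[d] \\
\mathsf{Mack}(\mathcal{O}_{\widetilde{\mathcal{F}}}) \ar[r]_-{i_*} & \mathsf{Mack}(\mathcal{O})
}
\]
commute, where the vertical arrows are the forgetful functors.

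Next I would propagate this to the adjoints using uniqueness of adjoints together with the fact that the forgetful functors are conservative and preserve all limits and colimits (the module category is monadic over the Mackey functor category, and colimits of modules are computed underlyingly). Concretely: $j_!$ on modules is left adjoint to $j^*$ on modules; composing with the (colimit-preserving) forgetful functor and using that the forgetful functor on the $\mathcal{O}_{\mathcal{F}}$-side is right adjoint to a free-module functor, one gets a Beck--Chevalley-type exchange transformation comparing (forgetful $\circ$ $j_!^{\mathrm{Mod}}$) with ($j_!^{\mathrm{Mack}} \circ$ forgetful). To see this exchange map is an equivalence it suffices, since both functors preserve colimits and the module categories are generated under colimits by free modules, to check it on a free module $\underline{R}_{\mathcal{F}} \boxtimes \underline{\mathsf{End}}(W)_T$-type generator — and on free modules $j_!$ is computed by the same left Kan extension formula as on the underlying Mackey functor, because left Kan extension commutes with the free-forgetful adjunction. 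The same argument, dualized (or run through Lemma \ref{lem:adjointable}), handles $j_*$, and then $i^*$ (left adjoint to $i_*$) and $i^!$ (right adjoint to $i_*$) by the identical pattern. This is exactly the kind of adjointability bookkeeping that Lemma \ref{lem:adjointable} was set up to streamline.

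The one step requiring genuine care — the main obstacle — is the compatibility of $i^*$ (and symmetrically $i^!$) with the passage to underlying Mackey functors, because here one must also check that the Green functor $\Phi^{\mathcal{F}}\underline{R} = (i_{\widetilde{\mathcal{F}}})^*\underline{R}$ is the one obtained by applying $(i_{\widetilde{\mathcal{F}}})^*$ to the underlying Mackey functor of $\underline{R}$ \emph{as a Green functor}, i.e. that $(i_{\widetilde{\mathcal{F}}})^*$ is lax (indeed strong) symmetric monoidal for the box product. This follows because $(i_{\widetilde{\mathcal{F}}})^*$ is itself a composite of a restriction and a left Kan extension along functors of $\mathsf{Fin}$-completions that preserve the relevant (co)products, hence preserves the box product; granting that, the module structure on $i^*\underline{M}$ produced by the adjoint functor theorem is forced to agree underlyingly with $(i_{\widetilde{\mathcal{F}}})^*$ of the underlying module, by conservativity of the forgetful functor and uniqueness of the adjoint. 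I would record the monoidality of $(i_{\widetilde{\mathcal{F}}})^*$ as a small lemma (or cite the relevant statement from \cite{mack2}) and then the proposition follows formally from the chain of uniqueness-of-adjoint arguments sketched above.
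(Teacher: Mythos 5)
Your proof is correct and in the same spirit as the paper's. The paper's own proof is terser because of one organizing observation you make less of: the forgetful functor $\phi: \mathsf{RMod}_{\underline{R}} \to \mathsf{Mack}(\mathcal{O}; \mathsf{Ab})$ has a \emph{right} adjoint $\underline{\mathrm{Hom}}(\underline{R}, -)$ (the cofree module) as well as the left adjoint $(-) \Box \underline{R}$. With both adjoints of $\phi$ in hand, the paper only checks that the Mackey-functor-level $j^*$ commutes with both the free and cofree constructions, and that $i^*$ and $i_*$ commute with the free construction, and then repeated application of Lemma \ref{lem:adjointable} delivers all six compatibilities uniformly. Where you instead verify the exchange map on free generators this works smoothly for $j_!$ (everything preserves colimits), but your ``the same argument, dualized'' step for $j_*$ would need cofree modules as injective cogenerators, and that is precisely what naming $\underline{\mathrm{Hom}}(\underline{R}, -)$ up front buys you; without it the dual leg of the argument is a bit underdeveloped. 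Your concern that $(i_{\widetilde{\mathcal{F}}})^*$ must respect the box product so that $\Phi^{\mathcal{F}}\underline{R}$ is the correct Green functor is well placed, and is exactly what the paper packages into the assertion that $i^*$ commutes with $(-) \Box \underline{R}$.
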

\begin{proof} The forgetful functor
	\[
	\phi: \mathsf{RMod}_{\underline{R}}
	\longrightarrow \mathsf{Mack}(\mathcal{O}; \mathsf{Ab})
	\]
admits a left adjoint, given by $(-) \Box \underline{R}$, and
a right adjoint, given by $\underline{\mathrm{Hom}}(\underline{R}, -)$.
The formation of $j^*$ commutes with both of these, and the
formation of $i^*$ and $i_*$ commutes with the first of these.
The result now follows by repeated application of Lemma \ref{lem:adjointable}.
\end{proof}

\begin{warning} The category $\mathsf{RMod}_{\underline{R}}$ is \emph{not}
a recollement of $\mathsf{RMod}_{\underline{R}_{\mathcal{F}}}$
and $\mathsf{RMod}_{\underline{R}^{\Phi \widetilde{F}}}$ in
the sense of Definition \ref{defn:recoll}, in general, because
$i^*$ fails to be left exact in general. Instead, it is an example
of a \emph{recollement of abelian categories} as defined,
for example, in \cite{FP}, or in Definition \ref{defn:abel-recoll}
below.
\end{warning}

We would like to make an inductive argument by showing
that, in the geometrically split case, the theorem
is true for $\mathcal{O}_{\mathcal{F}}$ and for 
$\mathcal{O}_{\widetilde{\mathcal{F}}}$ and then conclude the
result for $\mathcal{O}$. Unfortunately, recollements of
abelian categories are less well-behaved than their
$\infty$-categorical cousins, and it is not true in general
that a map of recollements is an equivalence if it is so
on each piece of the recollement (see \cite[2.2]{FP}).
However, this equivalence criterion is true under
further hypotheses on the recollement.

\begin{definition}\cite{FP}\label{defn:abel-recoll} Suppose 
we have a collection of additive functors between
abelian categories
	\[
	\xymatrix{
	\mathcal{A}_1 \ar@<3ex>[r]^{j_!}\ar@<-3ex>[r]^{j_*}& \mathcal{A} \ar[l]_{j^*}
	\ar@<3ex>[r]^{i^*}\ar@<-3ex>[r]^{i^!}& \mathcal{A}_0\ar[l]_{i_*}
	}
	\]
where each functor is left adjoint to the functor below it.
We say this presents a \textbf{recollement of abelian categories}
if the following additional conditions are satisfied:
	\begin{enumerate}[(i)]
	\item The functors $j_!, j_*,$ and $i_*$ are fully faithful,
	\item The functor $i_*$ has essential image precisely
	those objects $a \in \mathcal{A}$ such that
	$j^*a = 0$.
	\end{enumerate}
If moreover each category has enough projectives, we say that
the recollement of abelian categories
is \textbf{pre-hereditary} if, for any projective
$V \in \mathcal{A}_0$, we have $(L_2i^*)(i_*V) = 0$.
\end{definition}

We will also require one additional functor which
only exists when $\underline{R}$ is geometrically split.

\begin{construction} Suppose $\underline{R}$
is geometrically split, and
$\widetilde{\mathcal{F}} \subseteq \mathsf{P}_{\mathcal{O}}$
is a set of minimal elements.
We define a functor
	\[
	r: \mathsf{RMod}_{\underline{R}}
	\longrightarrow \mathsf{RMod}_{\Phi^{\mathcal{F}}\underline{R}}
	\]
as follows. For an $\underline{R}$-module $\underline{M}$,
the underlying Mackey functor of $r\underline{M}$
is the restriction $\psi_{\widetilde{\mathcal{F}}}^*\underline{M}$
of $\underline{M}$ to $\mathcal{O}_{\widetilde{\mathcal{F}}}$. 
Since $\mathsf{P}_{\mathcal{O}_{\widetilde{\mathcal{F}}}}$
is a collection of pairwise incomparable elements, the Green
functor $\Phi^{\mathcal{F}}\underline{R}$ amounts
to the data of the rings $\underline{R}^{\Phi T}$
with $\mathrm{Aut}(T)$ action as $[T]$ ranges over
the elements of $\widetilde{\mathcal{F}}$. A module
is just a module over each of these rings
with compatible $\mathrm{Aut}(T)$-action. In our
case, we use the $\underline{R}^{\Phi T}$-$\mathrm{Aut}(T)$-module
structure on $(r\underline{M})(T) = \underline{M}(T)$
defined by restricting the $\underline{R}(T)$-module
structure along $s_T$. 
\end{construction}

\begin{example} In the case $\underline{R} = \underline{A}$
is the Burnside Mackey functor,
$r = \psi^*_{\widetilde{\mathcal{F}}}$ is just given by
restriction.
\end{example}

\begin{lemma} The functor $r$ defined above is
exact and gives a retract of $i_*$. Moreover, $r$
admits a left adjoint, $r_!$, which
lifts the functor $(\psi_{\widetilde{\mathcal{F}}})_!$
on underlying Mackey functors.
\end{lemma}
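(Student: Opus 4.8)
The plan is to verify each assertion by reducing everything to the level of underlying Mackey functors, where the corresponding statements for the functors $\psi^*_{\widetilde{\mathcal{F}}}$ and $(\psi_{\widetilde{\mathcal{F}}})_!$ are already available. First I would address exactness: by construction the underlying Mackey functor of $r\underline{M}$ is $\psi^*_{\widetilde{\mathcal{F}}}\underline{M}$, which is just restriction of a diagram along a fully faithful inclusion of orbits that happen to be minimal, hence exact (exactness of $r$ can be checked on underlying Mackey functors since the forgetful functor $\phi\colon \mathsf{RMod}_{\Phi^{\mathcal{F}}\underline{R}} \to \mathsf{Mack}(\mathcal{O}_{\widetilde{\mathcal{F}}};\mathsf{Ab})$ is conservative and exact, being a forgetful functor to an abelian category of diagrams). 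The $\underline{R}^{\Phi T}$-module structure on $(r\underline{M})(T)$ via $s_T$ is natural in $\underline{M}$ and imposes no finiteness or torsion issues, so exactness is unaffected by recording this extra structure.

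Next I would check that $r$ is a retract of $i_*$: this amounts to producing a natural isomorphism $r \circ i_* \cong \mathrm{id}_{\mathsf{RMod}_{\Phi^{\mathcal{F}}\underline{R}}}$. On underlying Mackey functors this is the standard identity $\psi^*_{\widetilde{\mathcal{F}}} \circ (\psi_{\widetilde{\mathcal{F}}})_* \cong \mathrm{id}$ (or more precisely the fact that extension-by-zero followed by restriction to the closed stratum recovers the original diagram, since $\widetilde{\mathcal{F}}$ consists of minimal elements and $i_*$ agrees with $(\psi_{\widetilde{\mathcal{F}}})_*$ in that case — cf. Notation~\ref{notation-so-many-adjoints} and the preceding Proposition). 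The only thing to verify beyond the Mackey-functor level is that the module structures match: for $\underline{N}$ a $\Phi^{\mathcal{F}}\underline{R}$-module, the $\underline{R}^{\Phi T}$-action on $(r i_* \underline{N})(T)$ is obtained by restricting the $\underline{R}(T)$-action on $(i_*\underline{N})(T)$ along $s_T$; but by the construction of $i_*$ (Construction~\ref{cstr:recoll-of-modules}), the $\underline{R}(T)$-action on $(i_*\underline{N})(T)$ is exactly the one pulled back from $\underline{R}^{\Phi T}$ via the quotient map $\underline{R}(T) \to \underline{R}^{\Phi T}$, and since $s_T$ is a section of that quotient, restricting back along $s_T$ recovers the original $\underline{R}^{\Phi T}$-module structure on $\underline{N}(T)$. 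This gives the desired natural isomorphism, and the retraction identity follows.

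Finally, for the left adjoint $r_!$: I would invoke the adjoint functor theorem, exactly as in Construction~\ref{cstr:recoll-of-modules}, noting that $r$ preserves all limits (it is exact and preserves products, being built from $\psi^*_{\widetilde{\mathcal{F}}}$ which does, plus the restriction-of-scalars along $s_T$, which also preserves limits) and is accessible, while both categories of modules are presentable; hence $r_!$ exists. To see that $r_!$ lifts $(\psi_{\widetilde{\mathcal{F}}})_!$ on underlying Mackey functors, I would use the Beck–Chevalley/adjointability formalism just set up (Lemma~\ref{lem:adjointable}): the square relating $r$ to $\psi^*_{\widetilde{\mathcal{F}}}$ via the two forgetful functors $\phi$ commutes, and since the forgetful functors have the appropriate adjoints (left adjoint $(-)\Box\underline{R}$, etc.), the square is left adjointable, which is precisely the statement that $r_!$ followed by forgetting agrees with forgetting followed by $(\psi_{\widetilde{\mathcal{F}}})_!$. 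The main obstacle I anticipate is bookkeeping the module structures carefully in the retraction step — making sure the chosen splittings $s_T$ interact correctly with the quotient maps defining $i_*$ — but conceptually this is forced, and no genuine computation is required; once the definitions are unwound the identifications are immediate.
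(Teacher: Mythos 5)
Your argument for exactness and for the retraction identity is essentially the paper's argument: exactness is pointwise, and the retraction boils down to the observation that the $\underline{R}(T)$-action on $(i_*\underline{N})(T)$ already factors through the quotient $\underline{R}(T)\to\underline{R}^{\Phi T}$ (the paper phrases this via the Frobenius relation $m\cdot\mathrm{tr}(x)=\mathrm{tr}(\mathrm{res}(m)\cdot x)$, whose right-hand side vanishes because $i_*\underline{N}$ is zero off $\widetilde{\mathcal{F}}$), so that precomposing with the section $s_T$ recovers the original $\underline{R}^{\Phi T}$-module structure. The detour through identifying $i_*$ with $(\psi_{\widetilde{\mathcal{F}}})_*$ is unnecessary and slightly off: what you actually use is only that $(i_*\underline{N})(T)=\underline{N}(T)$ for $T\in\widetilde{\mathcal{F}}$, which holds by the very definition of extension by zero. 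One genuine contribution of your write-up is that you supply an argument for the existence of $r_!$ and for the lifting of $(\psi_{\widetilde{\mathcal{F}}})_!$, which the paper's proof omits entirely; invoking presentability for existence and then Lemma~\ref{lem:adjointable} applied to the commuting square of $r$, $\psi^*_{\widetilde{\mathcal{F}}}$, and the two forgetful functors (each with left adjoint $(-)\Box\underline{R}$ respectively $(-)\Box\Phi^{\mathcal{F}}\underline{R}$) is the right move, though you should be explicit about which direction of adjointability you are verifying before deducing the other via the lemma.
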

\begin{proof} Exactness can be checked pointwise,
where it is clear. The $\underline{R}(T)$-module
structure on $i_*\underline{M}(T)$ automatically
factors through the quotient to an $\underline{R}^{\Phi T}$-module
structure, since the source of the relevant transfer maps
on $\underline{M}$ is zero by the definition if $i_*$.
The claim that $r$ is a retraction now follows
from the fact that the composite
	\[
	\underline{R}^{\Phi T} \stackrel{s_T}{\longrightarrow}
	\underline{R}(T)
	\longrightarrow
	\underline{R}^{\Phi T}
	\]
is the identity, by assumption.
\end{proof}

\begin{proposition}\label{prop:pre-hereditary}
Let $\underline{R}$ be a
geometrically split Green functor,
$\widetilde{\mathcal{F}} \subseteq \mathsf{P}_{\mathcal{O}}$
a collection of minimal elements with upward closed complement
$\mathcal{F}$. Then the string
of adjoints from Construction \ref{cstr:recoll-of-modules}
presents a pre-hereditary recollement of abelian categories.
\end{proposition}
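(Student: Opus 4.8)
The statement packages three separate assertions: (1) the functors $(j_!, j^*, j_*, i^*, i_*, i^!)$ from Construction \ref{cstr:recoll-of-modules} form a recollement of abelian categories in the sense of Definition \ref{defn:abel-recoll}; (2) each of $\mathsf{RMod}_{\underline{R}_{\mathcal{F}}}$, $\mathsf{RMod}_{\underline{R}}$, $\mathsf{RMod}_{\Phi^{\mathcal{F}}\underline{R}}$ has enough projectives; and (3) this recollement is pre-hereditary, i.e. $(L_2 i^*)(i_* V) = 0$ for every projective $V$ in $\mathsf{RMod}_{\Phi^{\mathcal{F}}\underline{R}}$. For (1), the string of adjoints already exists by Construction \ref{cstr:recoll-of-modules}; I would verify the two extra axioms. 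Full faithfulness of $j_!, j_*, i_*$ is formal: $j^* j_! \cong \mathrm{id}$ and $j^* j_* \cong \mathrm{id}$ because $\mathcal{O}_{\mathcal{F}} \hookrightarrow \mathcal{O}$ is fully faithful (so Kan extensions along it are fully faithful), and $i_*$ is extension-by-zero along the inclusion of the complement, hence fully faithful by the same reasoning; these compatibilities are visible on underlying Mackey functors by the preceding Proposition and transported to modules via Lemma \ref{lem:adjointable}. That $i_*$ has essential image exactly the $\underline{M}$ with $j^* \underline{M} = 0$ is immediate from the construction of $i_*$ as extension by zero together with the observation that the $\underline{R}$-module structure on such an $\underline{M}$ automatically factors through $\Phi^{\mathcal{F}}\underline{R}$ (the relevant transfers have zero source).

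For (2), enough projectives: the category $\mathsf{RMod}_{\underline{R}}$ is a Grothendieck abelian category with a set of compact projective generators, namely the representable modules $\underline{\mathrm{End}}(\underline{R})_T := \underline{R}$-module corepresenting evaluation at $T$ (the analogue of the $\underline{\mathsf{End}}(W)_T$ appearing in the proof of Theorem \ref{thm:slices-as-modules}, but here with $\underline{R}$ in place of $\underline{\mathrm{End}}(W)$); explicitly $\underline{R}_T(U) = \underline{R}(U \times T)$-ish, and $\mathrm{Hom}_{\underline{R}}(\underline{R}_T, \underline{M}) \cong \underline{M}(T)$, which shows these are projective. The same applies to $\underline{R}_{\mathcal{F}}$ over $\mathcal{O}_{\mathcal{F}}$ and to $\Phi^{\mathcal{F}}\underline{R}$ over $\mathcal{O}_{\widetilde{\mathcal{F}}}$ (where, since $\widetilde{\mathcal{F}}$ consists of minimal pairwise-incomparable elements, $\Phi^{\mathcal{F}}\underline{R}$-modules are just tuples of $\underline{R}^{\Phi T}$-modules with $\mathrm{Aut}(T)$-action, and projectives there are built from projective $\underline{R}^{\Phi T}[\mathrm{Aut}(T)]$-modules).

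For (3) — the pre-hereditary condition — this is where the real combinatorial work lies, and where the hypothesis that $\underline{R}$ is \emph{geometrically split} and that $\widetilde{\mathcal{F}}$ consists of \emph{minimal} elements is essential. One must compute the derived functor $L_2 i^*$ on objects of the form $i_* V$ with $V$ projective. The functor $i^*$ is (a twisted form of) "reduction to geometric fixed points": on a module $\underline{M}$ it quotients $\underline{M}(T)$ by the images of all transfers from strictly smaller orbits. To resolve $i_* V$ by projectives in $\mathsf{RMod}_{\underline{R}}$ one takes the bar-type resolution $\cdots \to P_1 \to P_0 \to i_* V$ built from the representables $\underline{R}_S$; applying $i^*$ and computing $H_2$ amounts to understanding a complex indexed by chains of morphisms in the Burnside category $\mathrm{h}\aeff(\mathcal{O}_{/T})$ out of minimal orbits into $T$. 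The geometric splitting $s_T$ gives, for each minimal orbit $T \in \widetilde{\mathcal{F}}$, a ring section splitting off the "non-transferred" part, so the transfer ideal is a direct summand as an $\mathrm{Aut}(T)$-equivariant bimodule; this is exactly what is needed to kill the potential $H_2$ class. Concretely, I expect to reduce the vanishing to an acyclicity statement for a small explicit complex of $\mathbb{Z}[\mathrm{Aut}(T)]$-modules attached to the poset interval below $[T]$ — the "combinatorics of the Burnside category" alluded to in the introduction — and the key input making that complex acyclic is that, because $[T]$ is minimal, there are no nontrivial composable chains of transfers \emph{into} $T$ from orbits with the same isotropy, so the only contributions come from the genuinely larger orbits which the splitting handles. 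The main obstacle will be bookkeeping: setting up the indexing category of spans carefully enough that the $d_2$ differential can be identified, and checking that the chosen splittings $s_T$ make the relevant map a split surjection on the nose. I would organize this as a lemma computing $i^* P_\bullet$ for the standard resolution, followed by a direct homology calculation using the splitting.
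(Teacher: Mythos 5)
Your treatment of the recollement axioms and of enough projectives is fine, and the paper treats those points as routine. The gap is in the pre-hereditary condition, which is the entire content of the proposition, and there your write-up is a plan rather than a proof. You propose to resolve $i_* V$ and compute $H_2(i^*P_\bullet)$ directly, and you assert --- without an argument --- that the geometric splitting ``is exactly what is needed to kill the potential $H_2$ class.'' The acyclicity statement you say you ``expect to reduce to'' is never formulated, let alone proved, and your explanation of why minimality helps does not quite parse: since $[T]$ is minimal in $\mathsf{P}_{\mathcal{O}}$ there are no orbits strictly below it, so the clause about ``chains of transfers into $T$ from orbits with the same isotropy'' is empty, and the combinatorics of morphisms into $T$ from the larger orbits in $\mathcal{F}$ --- where the real work is --- is precisely what you have not analyzed.

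The paper's argument is structurally different and avoids any direct homology computation. Using the splitting $s_T$, it constructs an exact functor $r: \mathsf{RMod}_{\underline{R}} \to \mathsf{RMod}_{\Phi^{\mathcal{F}}\underline{R}}$ (pointwise restriction to $\widetilde{\mathcal{F}}$, module structure transported along $s_T$) which retracts $i_*$ and admits a left adjoint $r_!$. The crux is then to exhibit a short exact sequence of functors $0 \to j_! j^* r_! \to r_! \to i_* \to 0$; applying $i^*$ and evaluating on a projective $V$, the pre-hereditary condition follows formally because $r_!$ preserves projectives (being left adjoint to an exact functor) and $(L_1 i^*) j_!$ vanishes in any recollement. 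All of the combinatorial content is concentrated in proving injectivity of $j_! j^* r_! \to r_!$, which is checked on underlying Mackey functors by constructing a natural retraction via a finality/decomposition argument for categories of spans into $T$. Your sketch never introduces $r$ or $r_!$ and never isolates the key exact sequence, so the reduction that makes the problem tractable is missing; the step you have filed under ``bookkeeping'' is exactly where the proposition's content lives, and nothing in your proposal discharges it.
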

\begin{proof} The fact that we have a recollement
of abelian categories is straightforward, so we prove that
the recollement is pre-hereditary. We claim that
the following sequence of functors
$\mathsf{RMod}_{\Phi^{\mathcal{F}}\underline{R}}
\longrightarrow \mathsf{RMod}_{\underline{R}}$
is exact:
	\[
	0 \to j_!j^*r_! \to r_! \to i_* \to 0. \quad (*)
	\]
(Here $r_! \to i_*$ is adjoint to $\mathrm{id} \cong ri_*$.)
Suppose for a moment we have established
this exactness. Then we can mimic
the argument in \cite[8.5]{FP} to
establish the pre-hereditary condition.
To elaborate, we first apply $i^*$
to obtain an exact sequence
	\[
	(L_2i^*)r_! \to (L_2i^*)i_* 
	\to (L_1i^*)j_!j^*r_!
	\]
and then evaluate on a projective $V \in
\mathsf{RMod}_{\Phi^{\mathcal{F}}\underline{R}}$.
The first term vanishes because $r_!$ preserves
projectives, being the left adjoint of
an exact functor. The last term vanishes
because the composite $(L_1i^*)j_!$
always vanishes in a recollement.
(Indeed, this follows formally
from the fact that $j_!$ preserves projectives
together with the identity $i^*j_! = 0$.)
Thus the middle term vanishes, which was to be
shown.

So we are left
with checking
the exactness of $(*)$. Note that for any
recollement of abelian categories, we
have an exact sequence
	\[
	j_!j^* \to \mathrm{id} \to i_*i^* \to 0.
	\]
Since $r_!$ is right exact,
the sequence
	\[
	j_!j^*r_! \to r_! \to i_*i^*r_! \to 0
	\]
is exact. But $i^*r_! \cong \mathrm{id}$ since
it is adjoint to $ri_* \cong \mathrm{id}$, so 
we learn that $(*)$ is exact except possibly at the first
nontrivial term. In other words, it suffices to show that
	\[
	j_!j^*r_! \to r_!
	\]
is injective.

This is detected on underlying Mackey functors,
so we need only check the injectivity of
	\[
	 j_!j^*(\psi_{\widetilde{\mathcal{F}}})_!
	 \to (\psi_{\widetilde{\mathcal{F}}})_!
	\]
We do this by evaluating on each $T \in \mathcal{O}$.
If $T \notin \widetilde{\mathcal{F}}$, then
$i^*$ will evaluate to zero, and 
it is always the case that $j_!j^* \to \mathrm{id}$
is an equivalence for such $T$.

So we are left with showing that, for any
$\underline{M} \in \mathsf{Mack}(\mathcal{O}_{\widetilde{\mathcal{F}}}; \mathsf{Ab})$,
and any $T \in \widetilde{\mathcal{F}}$, the map
	\[
	(j_!j^*(\psi_{\mathcal{F}})_!\underline{M})(T)
	\to ((\psi_{\widetilde{\mathcal{F}}})_!\underline{M})(T)
	\]
is injective. In fact, we will show that it admits a
natural retract. To construct this retract, we will need
to unpack the left Kan extensions taking place on each side.
We set-up some temporary notation to handle this.
	\begin{itemize}
	\item We will denote morphisms in effective
	Burnside categories by $\rightsquigarrow$
	to remind the reader that they are represented
	by spans.
	\item Let $K$ denote the category whose
	objects are strings
	$V \rightsquigarrow U \rightsquigarrow T$
	where 
	$V \in \mathsf{Fin}_{\mathcal{O}_{\widetilde{\mathcal{F}}}}$,
	$U \in \mathsf{Fin}_{\mathcal{O}_{\mathcal{F}}}$,
	and the morphisms take place in $\mathrm{h}\aeff(\mathcal{O})$.
	The arrows are commutative diagrams:
		\[
		\xymatrix{
		V \ar@{~>}[rr]^f\ar@{~>}[d]&&V'\ar@{~>}[d]\\
		U \ar@{~>}[dr]\ar@{~>}[rr]^g&& U'\ar@{~>}[dl]\\
		&T&
		}
		\]
	where $f$ is a morphism in 
	$\mathrm{h}\aeff(\mathcal{O}_{\widetilde{\mathcal{F}}})$
	and $g$ is a morphism in $\mathrm{h}\aeff(\mathcal{O}_{\mathcal{F}})$.
	\item Let $K'$ denote the category whose objects
	are arrows $V \rightsquigarrow T$ in $\mathrm{h}\aeff(\mathcal{O})$
	where $V \in \mathsf{Fin}_{\mathcal{O}_{\widetilde{\mathcal{F}}}}$.
	Morphisms are commutative diagrams:
		\[
		\xymatrix{
		V \ar@{~>}[rr]^f \ar@{~>}[dr]&& V'\ar@{~>}[dl]\\
		&T&
		}
		\]
	where $f$ belongs to $\mathrm{h}\aeff(\mathcal{O}_{\widetilde{\mathcal{F}}})$. 
	\end{itemize}
Composition provides a functor $K \to K'$. From the definitions
of $j_!, j^*,$ and $(\psi_{\widetilde{\mathcal{F}}})_!$ we get:
	\[
	\xymatrix{
	\colim_{K} \underline{M}(V) \ar[rr]\ar[d]^{\cong}&&
	\colim_{K'}\underline{M}(V)\ar[d]^{\cong}\\
	(j_!j^*(\psi_{\widetilde{\mathcal{F}}})_!\underline{M})(T)\ar[rr]
	&&
	((\psi_{\widetilde{\mathcal{F}}})_!\underline{M})(T)
	}
	\]
We now decompose $K'$ into two pieces.
	\begin{itemize}
	\item Let $K'_0$ denote the
	full subcategory of
	$K'$ spanned by objects of the form
	$V \leftarrow S \rightarrow T$
	where $S \in \mathsf{Fin}_{\mathcal{O}_{\widetilde{\mathcal{F}}}}$.
	Note that, since $\widetilde{\mathcal{F}}$ consists of
	minimal elements, this forces $[S]=[V]=[T]$.
	\item Let $K'_1$ denote the
	full subcategory spanned by objects of the form
	$V \leftarrow S \rightarrow T$ where
	$S \in \mathsf{Fin}_{\mathcal{O}_{\mathcal{F}}}$. 
	\end{itemize}
We have a functor $K'_1 \to K$ given by
	\[
	(V \leftarrow S \rightarrow T) \mapsto
	(V \leftarrow S = S =S \rightarrow T)
	\]
and hence a natural factorization:
	\[
	\xymatrix{
	&&\colim_{K'_1}\underline{M}(V)\ar[dll]\ar[d]\\
	\colim_{K} \underline{M}(V) \ar[rr]&&
	\colim_{K'}\underline{M}(V)
	}
	\]
We are thus reduced to
proving the following two claims:
	\begin{enumerate}[(i)]
	\item The category $K'$ decomposes
	as a disjoint union
	$K' = K_0' \coprod K_1'$ so that
	the right vertical map above has a natural splitting.
	\item The functor $K_1' \to K$ is final,
	so that the diagonal arrow above is
	an isomorphism.
	\end{enumerate}

The statement (i) follows from the fact that
the morphisms in $K'$ between
 $(V \rightsquigarrow T)$ and $(T \rightsquigarrow T)$
 (where the latter lies in $K'_0$)
 involve a morphism \emph{in $\mathsf{Fin}_{\mathcal{O}_{\widetilde{\mathcal{F}}}}$}
 $V \rightsquigarrow T$ or $T \rightsquigarrow V$. In order to make the resulting
 diagram commute, we conclude that $V$ must be of the form
 $V \leftarrow S \rightarrow T$ where 
 $S \in \mathsf{Fin}_{\mathcal{O}_{\widetilde{\mathcal{F}}}}$,
 and hence $(V \rightsquigarrow T) \notin K'_1$. 
 
The statement (ii) follows from the fact that $K_1' \to K$ is right
adjoint to the map $K \to K_1'$. 
\end{proof}

We now have a good understanding of the left
hand side of Theorem \ref{thm:split-structure},
so we turn to the right hand side. The category
$\mathsf{RMod}_{\underline{R}^{\Phi}}$
is built by iterated application of a procedure
due to Macpherson-Vilonen \cite{MV}.

\begin{definition}[\cite{MV}]
Let $\mathcal{U}$ and $\mathcal{Z}$
be abelian categories, and let
$\xi: F \to G$ be a natural transformation
between two additive functors
$F, G: \mathcal{U} \to \mathcal{Z}$. Define
a category $\mathcal{A}(\xi)$ as follows:
	\begin{itemize}
	\item Objects
	consist of pairs $U \in \mathcal{U}$
	and $Z \in \mathcal{Z}$
	equipped with a factorization:
		\[
		\xymatrix{
		FU \ar[rr]^{\xi} \ar[dr]&& GU\\
		&Z\ar[ur]&
		}
		\]
	\item Morphisms are maps of
	pairs $(U, Z) \to (U', Z')$
	commuting with the chosen factorizations
	of $\xi$. 
	\end{itemize}
We say that $\mathcal{A}(\xi)$ is the
\textbf{MacPherson-Vilonen recollement}
associated to $\xi$.
\end{definition}

\begin{construction}\label{cstr:iterated-mv}
Let 
$\widetilde{\mathcal{F}} \subseteq \mathsf{P}_{\mathcal{O}}$
be the set of minimal elements,
with
upward closed complement $\mathcal{F}$,
and let $\underline{R}$ be geometrically split, let
$i^*$ and $j^*$ be the usual functors associated to
this situation.
Define $\mathcal{U} := \mathsf{RMod}_{j^*\underline{R}^{\Phi}}$.
Define $\mathcal{Z}:= 
\mathsf{RMod}_{i^*\underline{R}^{\Phi}}$.
Define $F, G: \mathcal{U} \to \mathcal{Z}$
for $T' \in \widetilde{\mathcal{F}}$ by
	\[
	(F\underline{M})(T') =
	\bigoplus_{[T]>[T']}\left(\underline{M}(T) \otimes \mathrm{Hom}(T, T')\right)_{\mathrm{Aut}(T)}
	\]
	\[
	(G\underline{M})(T')
	=
	\prod_{[T]>[T']}\left(\underline{M}(T)^{\mathrm{Hom}(T, T')}\right)^{\mathrm{Aut}(T)}
	\]
and use the trace to define a natural transformation
$\xi: F \to G$.
\end{construction}

By design, we get the following proposition:

\begin{proposition}\label{prop:iterated-mv} With notation
as above, the functor
$\mathsf{RMod}_{\underline{R}^{\Phi}}
\to \mathcal{A}(\xi)$ is an equivalence
of categories.
\end{proposition}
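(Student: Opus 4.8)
The plan is to unwind both sides of the claimed equivalence and observe that the data are literally the same, once one keeps careful track of the definitions. Recall that an object of $\mathsf{RMod}_{\underline{R}^{\Phi}}$ consists, by Definition \ref{defn:r-phi-modules}, of: for each $[T] \in \mathsf{P}_{\mathcal{O}}$ an $\underline{R}^{\Phi T}$-$\mathrm{Aut}(T)$-module $\underline{M}(T)$, together with restriction and transfer maps between $[T]$ and every $[T']$ with $[T] \ge [T']$, subject to the double-coset and composition constraints. On the other hand, an object of $\mathcal{A}(\xi)$ for the data in Construction \ref{cstr:iterated-mv} consists of an object $\underline{M}' \in \mathcal{U} = \mathsf{RMod}_{j^*\underline{R}^{\Phi}}$ (i.e. the same data but only for the \emph{non-minimal} strata), an object $\underline{N} \in \mathcal{Z} = \mathsf{RMod}_{i^*\underline{R}^{\Phi}}$ (i.e. for each minimal $T' \in \widetilde{\mathcal{F}}$, an $\underline{R}^{\Phi T'}$-$\mathrm{Aut}(T')$-module), and a factorization of $\xi_{\underline{M}'}: F\underline{M}' \to G\underline{M}'$ through $\underline{N}$.

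The key step is to match these up stratum by stratum. First I would note that $\mathsf{P}_{\mathcal{O}} = \widetilde{\mathcal{F}} \amalg \mathsf{P}_{\mathcal{O}_{\mathcal{F}}}$, so the collection of modules $\{\underline{M}(T)\}$ splits into those indexed by non-minimal strata — which is exactly the underlying data of $\underline{M}' \in \mathcal{U}$ together with the restrictions/transfers among non-minimal strata — and those indexed by minimal strata, which give $\underline{N} \in \mathcal{Z}$. The only remaining structure in a $\underline{R}^{\Phi}$-module is the interaction between a minimal $T' \in \widetilde{\mathcal{F}}$ and the strictly larger strata $[T] > [T']$: namely a map $\left(\underline{M}(T) \otimes \mathrm{Hom}(T, T')\right)_{\mathrm{Aut}(T)} \to \underline{M}(T')$ and a map $\underline{M}(T') \to \left(\underline{M}(T)^{\mathrm{Hom}(T,T')}\right)^{\mathrm{Aut}(T)}$ for each such $[T]$, compatible via the trace. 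Assembling these over all $[T] > [T']$ — using that $\widetilde{\mathcal{F}}$ consists of minimal elements, so there are no maps \emph{into} $T'$ from other minimal strata, and that a minimal stratum receives no further restriction/transfer data except from strictly larger strata — gives precisely a single map $(F\underline{M}')(T') \to \underline{N}(T')$ and a single map $\underline{N}(T') \to (G\underline{M}')(T')$ whose composite is $\xi$; this is the factorization datum of $\mathcal{A}(\xi)$. The remaining coherence constraints (double-coset, composition of restrictions and transfers involving a minimal stratum) are exactly the conditions asserting that these assembled maps are morphisms of $\underline{R}^{\Phi T'}$-$\mathrm{Aut}(T')$-modules and that the factorization is through $\xi$; the triple-composition constraints not involving any minimal stratum are subsumed in the statement that $\underline{M}' \in \mathcal{U}$. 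One then checks, essentially tautologically, that a morphism in $\mathsf{RMod}_{\underline{R}^{\Phi}}$ is the same as a morphism of pairs commuting with the factorizations, so the assignment is fully faithful and essentially surjective.

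I expect the main obstacle to be purely bookkeeping: verifying that the constraints in Definition \ref{defn:r-phi-modules} that mix a minimal stratum $T'$ with a chain $[T_0] > [T_1] > [T']$ (or $[T_0] > [T'] $ alone) correspond precisely to the requirement that the factorization maps $(F\underline{M}')(T') \to \underline{N}(T') \to (G\underline{M}')(T')$ be maps of $\underline{R}^{\Phi T'}$-$\mathrm{Aut}(T')$-modules and be compatible with the ambient restriction/transfer structure of $\underline{M}' \in \mathcal{U}$ — in other words, that nothing is lost and nothing is added in the repackaging. The cleanest way to organize this is to observe that the functor $F$ (resp. $G$) is, by construction, exactly the left (resp. right) adjoint comparison assembling restrictions (resp. transfers) out of the non-minimal strata into the minimal one, so the MacPherson–Vilonen pair $(F, G, \xi)$ is engineered to reproduce the missing layer of structure. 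Once this dictionary is set up, the proposition follows with no real computation; I would state it as "immediate from unwinding Definitions \ref{defn:r-phi-modules} and Construction \ref{cstr:iterated-mv}" and spell out only the stratum-by-stratum correspondence above.
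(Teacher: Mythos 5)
Your proposal takes the same route as the paper, which offers no proof beyond the phrase ``by design''; unwinding Definition~\ref{defn:r-phi-modules} against Construction~\ref{cstr:iterated-mv} is precisely what is intended, and most of your stratum-by-stratum dictionary is correct.

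However, you dismiss the one step that is not an outright tautology. You claim that the composition constraints in Definition~\ref{defn:r-phi-modules} for a triple $[T_0]>[T_1]>[T']$ with $T'$ minimal and $T_0,T_1$ non-minimal are ``exactly the conditions asserting that these assembled maps are morphisms of $\underline{R}^{\Phi T'}$-$\mathrm{Aut}(T')$-modules and that the factorization is through $\xi$.'' I don't see that. Being a morphism in $\mathcal{Z}$ is a condition local to the stratum $T'$, and the $\xi$-factorization condition, once unpacked across the direct-sum decomposition of $F\underline{M}'(T')$ and the product decomposition of $G\underline{M}'(T')$, is purely blockwise: the component indexed by $T$ into the component indexed by $T$ must be the trace (the double-coset condition), and the component indexed by $T$ into the component indexed by $S\ne T$ must be zero. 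Neither of these relates the $T_0$-component of $F\underline{M}'(T')\to\underline{N}(T')$ to its $T_1$-component via the restriction from $T_0$ to $T_1$ internal to $\underline{M}'\in\mathcal{U}$, which is exactly what the triple-composition constraint in Definition~\ref{defn:r-phi-modules} demands. Since $F$ and $G$ are defined as direct sums (resp.\ products) over \emph{all} $[T]>[T']$ rather than only over covers of $T'$, an object of $\mathcal{A}(\xi)$ can a priori have $T_0$- and $T_1$-components incompatible with the restriction in $\underline{M}'$, so your argument has not yet established essential surjectivity. You should either show these constraints are forced by the remaining data, or identify where they are actually encoded in Construction~\ref{cstr:iterated-mv}; if they genuinely are not, you have located something the paper's ``by design'' glosses over, but in any case this step cannot be waved off as pure bookkeeping.
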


\begin{proof}[Proof of Theorem \ref{thm:split-structure}]
We induct on the size of $\mathsf{P}_{\mathcal{O}}$.
If $\mathsf{P}_{\mathcal{O}}$ is discrete, the
theorem is clear.
For the inductive step, let $\widetilde{\mathcal{F}} \subset
\mathsf{P}_{\mathcal{O}}$
be the set of minimal elements in
$\mathsf{P}_{\mathcal{O}}$. Then the forgetful
functor
	\[
	\mathsf{RMod}_{\underline{R}}
	\longrightarrow
	\mathsf{RMod}_{\underline{R}^{\Phi}}
	\]
respects the recollement
data on source and target. It is an equivalence
on each stratum by the induction
hypothesis.
By Proposition
\ref{prop:pre-hereditary}
and \cite[Prop. 8.6]{FP} combined with
Proposition \ref{prop:iterated-mv},
we know that both the source
and target of the forgetful functor
are pre-hereditary.
Now Theorem 8.4 in loc. cit.
implies that the forgetful functor
is an equivalence of categories,
which was to be shown.
\end{proof}

\subsection{Slices as twisted Mackey functors}
\label{ssec:slices-as-mack}

In order to apply the results from
the previous section, we first
need to compute
$\underline{\mathrm{End}}(W)^{\Phi T}$
for $W$ an isotropic slice $n$-sphere,
and show that $\underline{\mathrm{End}}(W)$
is geometrically splittable, at least for some
choice of $W$. 
These statements follow
by combining the next two lemmas.

\begin{lemma}\label{lem:endo-geom-fix} Let $W$ be
an isotropic slice $n$-sphere,
and $T \in \mathcal{O}$ be arbitrary. Then
taking geometric fixed points of
endomorphisms gives an
$\mathrm{Aut}(T)$-equivariant 
isomorphism
	\[
	\underline{\mathrm{End}}(W)^{\Phi T}
	\longrightarrow
	\mathrm{End}(W^{\Phi T}).
	\]
\end{lemma}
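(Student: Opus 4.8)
The statement is that for an isotropic slice $n$-sphere $W$ and any $T\in\mathcal{O}$, the geometric-fixed-points map $\underline{\mathrm{End}}(W)^{\Phi T}\to\mathrm{End}(W^{\Phi T})$ is an $\mathrm{Aut}(T)$-equivariant isomorphism. Recall that by Notation \ref{notation-so-many-adjoints}, $\underline{\mathrm{End}}(W)(T) = [\downarrow_T W, \downarrow_T W]$, and $\underline{\mathrm{End}}(W)^{\Phi T}$ is the quotient of this by the images of all transfers $\mathrm{tr}_U^T$ for $U\to T$. On the other hand $W^{\Phi T} = (i_T)^*W(T)$ where $i_T$ is the functor associated to the downward-closed interval $(-\infty, T]$. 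The plan is to pass to the orbital category $\mathcal{O}_{/T}$, which by Remark \ref{rem-induct} has a strictly smaller (or equal) poset and in which $T$ becomes a terminal object, so we may reduce to the case where $T$ is terminal in $\mathcal{O}$. This reduction is legitimate because both sides only see the restriction of $W$ to $\mathcal{O}_{/T}$, and $\mathrm{Aut}(T)$-equivariance is preserved; restriction commutes with geometric fixed points by Lemma \ref{lem-geo-ind-res}.

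With $T$ terminal, the key is the stratification of Theorem \ref{thm-mack-strat}: writing $\widetilde{\mathcal{F}} = \{[S] : [S]\le[T]\} = \mathsf{P}_{\mathcal{O}}$ is everything when $T$ is both terminal and... no — rather, take $\mathcal{F}$ the upward-closed complement of the single minimal point, or better, use the isotropy separation sequence. First I would establish the general fact that $\underline{\mathrm{End}}(W)^{\Phi T}$, as defined algebraically via quotient by transfers, agrees with $\pi_0$ of the mapping spectrum $\mathrm{map}(W^{\Phi T}, W^{\Phi T})$ computed in $\mathsf{Sp}$. The mechanism is the cofiber sequence (isotropy separation at the minimal elements below $T$): $j_!j^*W \to W \to i_*i^*W$, apply $\mathrm{map}(W, -)$, take genuine $T$-fixed points (i.e. evaluate the Mackey functor at $T$), and use that $[\downarrow_T W, \downarrow_T j_!j^*W]$ is exactly the image of the transfers — this is the Mackey-functor incarnation of the fact that $j_!j^*$ is built from inductions, via Proposition \ref{prop-nil-induced}. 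More precisely, $j_!j^*W$ is a colimit of $\mathrm{ind}_U\mathrm{res}_U$-terms over $\Delta^{op}$, so every map $\downarrow_T W \to \downarrow_T j_!j^*W$ factors through such inductions and hence lands in the transfer ideal; conversely every transfer factors through $j_!j^*W$. Therefore $\underline{\mathrm{End}}(W)^{\Phi T} = \mathrm{coker}\big([\downarrow_TW,\downarrow_Tj_!j^*W]\to[\downarrow_TW,\downarrow_TW]\big)$, and the long exact sequence identifies this with the image of $[\downarrow_TW,\downarrow_TW]\to[\downarrow_TW,\downarrow_Ti_*i^*W] = \pi_0\mathrm{map}(W^{\Phi T},W^{\Phi T})$ — the last equality because $(i_T)^*$ then just reads off the value at $T$ and $i_*$ is fully faithful.

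To finish, I would show this image map $[\downarrow_TW,\downarrow_TW]\to[\downarrow_TW,i_*i^*W]$ is actually surjective onto $\mathrm{End}(W^{\Phi T})$, i.e. that $\mathrm{map}(W, i_*i^*W)\to\mathrm{map}(W^{\Phi T}, W^{\Phi T})$ is $\pi_0$-surjective. This follows from adjunction: $\mathrm{map}(W, i_*i^*W) \simeq \mathrm{map}_{\mathsf{Sp}}(i^*W, i^*W)$ is literally $\mathrm{map}(W^{\Phi T}, W^{\Phi T})$ on the nose, so the comparison map is the quotient map $[\downarrow_TW,\downarrow_TW]\twoheadrightarrow[\downarrow_TW,i_*i^*W]$, which is surjective because the connecting map lands in $[\downarrow_TW, \Sigma\downarrow_Tj_!j^*W]$ and this vanishes — it vanishes because $W$ is a slice $n$-sphere, so $W^{\Phi T}$ is a wedge of spheres in a single dimension $\nu(n,T)$, while $\Sigma j_!j^*W$ on geometric fixed points at points above $T$ is a suspension of wedges of spheres of dimensions $\nu(n,S)$ with $[S]>[T]$, and the slice-connectivity comparison (the same mechanism as Proposition \ref{prop-slice-htpy-vanish}(a)) forces the relevant $[\cdot,\cdot]$ group to be zero. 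All maps in sight are induced by functors of $\mathrm{Aut}(T)$-objects, so $\mathrm{Aut}(T)$-equivariance is automatic throughout. The main obstacle is the bookkeeping in identifying the transfer ideal with $[\downarrow_TW,\downarrow_Tj_!j^*W]$ precisely and uniformly across all $U\to T$; once Proposition \ref{prop-nil-induced} is invoked correctly this is formal, but stating it cleanly for a general inductive orbital category (rather than just a group) requires care with the indexing category of the bar resolution.
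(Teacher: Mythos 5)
Your approach is essentially the same as the paper's: reduce to $T$ terminal by passing to $\mathcal{O}_{/T}$ (where the image of $T$ acquires trivial automorphism group, so equivariance of the final map is automatic by functoriality), apply $[W,-]$ to the isotropy separation sequence $j_!j^*W \to W \to i_*i^*W$, prove surjectivity via vanishing of the connecting term, and identify the kernel with the transfer ideal using Proposition~\ref{prop-nil-induced}. The paper organizes this as explicit lifting arguments whereas you phrase it as a cokernel/image computation from the long exact sequence; these are the same.

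There is one genuine gap. You assert that because $j_!j^*W$ is a $\Delta^{op}$-colimit of $(\mathrm{ind}_S\mathrm{res}_S)$-terms, ``every map $\downarrow_T W \to \downarrow_T j_!j^*W$ factors through such inductions.'' This does \emph{not} follow from the colimit presentation alone: $\Delta^{op}$ is not filtered, so compactness of $W$ does not let a map into the colimit factor through a stage. The paper's actual argument is a \emph{second} invocation of Proposition~\ref{prop-slice-htpy-vanish}(a): the cofiber of the zero-skeleton map $\mathrm{ind}_S\mathrm{res}_S j^*W \to j_!j^*W$ has associated graded a wedge of \emph{positive} suspensions of slice $n$-connective objects, and one deduces $[W,\mathrm{cof}]=0$ by induction up the skeletal filtration together with compactness of $W$, which is what produces the desired lift of $W \to j_!j^*W$ to $W \to \mathrm{ind}_S\mathrm{res}_S W$. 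From there the triangle identity exhibits the map as a sum of genuine transfers, as you say. You do flag at the end that ``stating it cleanly ... requires care with the indexing category of the bar resolution,'' but the obstruction is not merely a bookkeeping issue with the indexing: without the vanishing argument the factorization step is simply false as stated. Supplying that second use of Proposition~\ref{prop-slice-htpy-vanish}(a) closes the gap and makes the proof identical in substance to the paper's.
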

\begin{proof} The map is $\mathrm{Aut}(T)$-equivariant
by functoriality of geometric fixed points,
so we may as well replace
$\mathcal{O}$ by $\mathcal{O}_{/T}$
so that $T$ is terminal in
$\mathcal{O}$.
This leads to a recollement situation
for $\widetilde{\mathcal{F}}=\{T\}$ and
a fiber sequence
	\[
	j_!j^*W \longrightarrow W
	\longrightarrow i_*W^{\Phi T}
	\]
First, let $f: W^{\Phi T} \longrightarrow W^{\Phi T}$
be an endomorphism, and consider the diagram
of solid arrows:
	\[
	\xymatrix{
	W\ar[r] \ar@{-->}[d]& i_*W^{\Phi T}\ar[d]^{i_*f}
	&\\
	W\ar[r] & i_*W^{\Phi T} \ar[r] & \Sigma j_!j^*W
	}
	\]
The composite $W \to \Sigma j_!j^*W$
is null by Proposition \ref{prop-slice-htpy-vanish},
so the dotted arrow exists. Thus,
	\[
	[W, W]
	\to [W^{\Phi T}, W^{\Phi T}]
	\]
is surjective. 

Now suppose $f: W \to W$ is
an endomorphism such that
$f^{\Phi T} = 0$. That is, we have
a diagram of solid arrows:
	\[
	\xymatrix{
	&W\ar[d]^f\ar@{-->}[dl] \ar[r]& i_*W^{\Phi T}\ar[d]^0\\
	j_!j^*W\ar[r]&W \ar[r] & i_*W^{\Phi T}
	}
	\]
The dotted arrow exists since the composite
$W \to W \to i_*W^{\Phi T}$ is null
by commutativity of the diagram. Thus $f$ factors
as a composite:
	\[
	W \to j_!j^*W \to W.
	\]
Let $S = \coprod_{U \in \mathcal{O}-\{T\}} U \in \mathsf{Fin}_{\mathcal{O}}$.
By Proposition \ref{prop-nil-induced}, there is a natural equivalence
	\[
	\hocolim_{\Delta^{op}} (\mathrm{ind}_S \circ \mathrm{res}_S)^{\circ (n+1)}j^*W
	\cong j_!j^*W.
	\]
In particular, there is a map $\mathrm{ind}_S\mathrm{res}_Sj^*W
\to j_!j^*W$ and the cofiber has an associated graded which
is a wedge of nontrivial suspensions of slice $n$-connective objects.
By Proposition \ref{prop-nil-induced} again, we deduce that $f$
factors as
	\[
	W \stackrel{g}{\longrightarrow} \mathrm{ind}_S\mathrm{res}_SW 
	\stackrel{\varepsilon}{\longrightarrow} W,
	\]
where the second map is the counit of the adjunction. Now,
by the definition of the unit of an adjunction,
we may further factor $f$ as a composite:
	\[
	W \stackrel{\eta}{\longrightarrow}
	\mathrm{ind}_S\mathrm{res}_SW
	\stackrel{\mathrm{ind}_S\tilde{g}}{\longrightarrow}
	\mathrm{ind}_S\mathrm{res}_SW
	\stackrel{\varepsilon}{\longrightarrow}
	W	
	\]
where $\tilde{g}: \mathrm{res}_SW \to \mathrm{res}_SW$
is the adjoint of $g$. Breaking $S$ into its component
orbits, we learn that $f$ is a sum of maps
each transferred up from $U \in \mathcal{O}-\{T\}$.

Putting it all together, we've shown that
if $f^{\Phi T} = 0$ then $f$ lies in the transfer ideal
in $\underline{\mathrm{End}}(W)(T)$. The other
inclusion always holds, so the result follows.
\end{proof}

\begin{lemma}\label{lem:endo-geom-split}
There exists
an isotropic slice $n$-sphere $W$ such that,
for any $T \in \mathcal{O}$, the map
	\[
	[\downarrow_T\!W, \downarrow_T\!W]
	\to
	[W^{\Phi T}, W^{\Phi T}]
	\]
admits an $\mathrm{Aut}(T)$-equivariant ring section.
\end{lemma}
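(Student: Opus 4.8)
The plan is to construct $W$ explicitly, following the inductive recipe from the proof of Proposition \ref{prop-sph-is-test} (see also Remark \ref{rmk:efficient-isotropic-spheres}), and to track the endomorphism rings of geometric fixed points as we go, arranging at each stage for a compatible ring section. As in that proof, I would reduce to the case that $\mathcal{O}$ has a terminal object $T_0$ (using $\mathrm{ind}$ and the fact that both sides of the desired section are computed orbitwise), and induct on $|\mathsf{P}_{\mathcal{O}}|$: assume the lemma for every $\mathcal{O}_{/p}$ with $[p] < [T_0]$. Start with $X_0 := (\psi_{\{T_0\}})_! S^{\nu(n, T_0)}$, which has $X_0^{\Phi T'} = S^{\nu(n, T_0)}$ for every $T'$, so $\mathrm{End}(X_0^{\Phi T'}) = \mathbb{Z}$ with trivial $\mathrm{Aut}(T')$-action, and the identity provides the (unique) ring section. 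Then enumerate $\mathsf{P}_{\mathcal{O}} = \{p_0 = T_0, p_1, \dots, p_N\}$ compatibly with a conservative map to a linear order, and build $X_{k+1}$ from $X_k$ by the three cases in that proof, but in Cases 1 and 3 using the more efficient modification of Remark \ref{rmk:efficient-isotropic-spheres}: attach along maps $\mathrm{ind}_{p_{k+1}} Y \to X_k$ that split after restriction to $p_{k+1}$, chosen so that on geometric fixed points the induced self-map splits off exactly a single copy of a sphere.

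The key structural observation is that at each stage $X_k$ carries a canonical $\mathrm{Aut}(p_j)$-equivariant ring section $s_j^{(k)}\colon \mathrm{End}(X_k^{\Phi p_j}) \to [\downarrow_{p_j} X_k, \downarrow_{p_j} X_k]$ for every $j \le k$, and moreover $X_k^{\Phi p_j}$ for $j \le k$ is, by construction, a wedge of copies of $S^{\nu(n, p_j)}$ with \emph{trivial} $\mathrm{Aut}(p_j)$-action on the homotopy (the attaching maps are built from counits $\mathrm{ind}_{p}\mathrm{res}_{p}(-)$ and induced spheres, which contribute no twisting on the already-handled strata). Concretely, a map $S^{\nu(n,p_j)} \to X_k^{\Phi p_j}$ including a chosen wedge summand can be lifted — using Proposition \ref{prop-slice-htpy-vanish}(a) to kill the obstruction living in $[\downarrow_{p_j}\!\Sigma j_!j^* X_k, \downarrow_{p_j} X_k]$, exactly as in the surjectivity half of Lemma \ref{lem:endo-geom-fix} — to an $H$-equivariant idempotent splitting of $\downarrow_{p_j} X_k$; assembling these over a chosen $\mathrm{Aut}(p_j)$-stable basis gives the section $s_j^{(k)}$. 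When we pass from $X_k$ to $X_{k+1}$, a strata $p_j$ with $j \le k$ is unaffected on geometric fixed points, so $s_j^{(k)}$ extends to $s_j^{(k+1)}$ via the map $[\downarrow_{p_j} X_k, \downarrow_{p_j} X_k] \to [\downarrow_{p_j} X_{k+1}, \downarrow_{p_j} X_{k+1}]$ (naturality of the lifting argument), while for $j = k+1$ the new stratum $X_{k+1}^{\Phi p_{k+1}}$ is a wedge of copies of $S^{\nu(n, p_{k+1})}$ and, because the relevant modification map restricted to $p_{k+1}$ splits $\mathrm{Aut}(p_{k+1})$-equivariantly, its endomorphism ring receives a section by the same idempotent-lifting procedure; on the lower strata $\mathcal{O}_{/p_{k+1}}$ with $p_{k+1}$ not terminal we invoke the inductive hypothesis applied to $\mathcal{O}_{/p_{k+1}}$ to get the needed equivariant section there, and then $\mathrm{ind}_{p_{k+1}}$ transports it. Taking $W := X_N$ then has the required property at every $T \in \mathcal{O}$.

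The main obstacle — and the only place real care is needed — is keeping the sections \emph{ring} homomorphisms and \emph{simultaneously} $\mathrm{Aut}(T)$-equivariant as the induction proceeds, rather than merely additive: lifting an idempotent is easy, but lifting a \emph{complete orthogonal family} of idempotents plus the matrix units between them (i.e. the whole ring $\mathrm{End}(W^{\Phi T}) \cong M_r(\mathbb{Z})$) requires that the lifts be mutually compatible, and equivariance forces the choice of family to be $\mathrm{Aut}(T)$-stable. The way to handle this cleanly is to observe that once one $\mathrm{Aut}(T)$-equivariant splitting $\downarrow_T\!W \simeq \bigoplus_{\alpha} W_{\alpha}$ into one-dimensional (on $\Phi T$) summands is produced — with $\mathrm{Aut}(T)$ permuting the $W_\alpha$ according to its action on the basis of $\pi_{\nu(n,T)} W^{\Phi T}$, which by the construction above is a permutation action possibly with sign twists — the full endomorphism ring section is then forced: it sends a matrix unit $e_{\alpha\beta}$ to the composite $W_\beta \hookrightarrow \downarrow_T\!W \twoheadrightarrow W_\alpha$ chosen so that its $\Phi T$-realization is the standard matrix unit, and equivariance of this assignment is immediate from equivariance of the splitting. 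So the real content is producing that one equivariant splitting at each inductive stage, which the vanishing lemma \ref{prop-slice-htpy-vanish}(a) and the split-on-restriction property of the attaching maps deliver; the rest is the bookkeeping of Lemma \ref{lem:endo-geom-fix} carried along the tower.
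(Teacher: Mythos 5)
Your overall strategy — build the isotropic slice sphere by the inductive recipe of Proposition \ref{prop-sph-is-test} and carry a ring section along the tower, using the fact that modifying at $p_{k+1}$ leaves $\Phi^{p_j}$ unchanged for $j\le k$ — is the same skeleton the paper uses. But the mechanism you propose for producing the section at each stage has a real gap, and you have in fact identified exactly where it is and then papered over it.

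The problem is the claim that an $\mathrm{Aut}(T)$-equivariant direct sum decomposition $\downarrow_T\!W\simeq\bigoplus_\alpha W_\alpha$ ``forces'' a ring section of $\mathrm{End}(\downarrow_T\!W)\to\mathrm{End}(W^{\Phi T})\cong M_r(\mathbb{Z})$. A direct sum decomposition only hands you the diagonal idempotents; it gives no off-diagonal matrix units. Concretely, the composite $W_\beta\hookrightarrow\downarrow_T\!W\twoheadrightarrow W_\alpha$ that you propose as the lift of $e_{\alpha\beta}$ is \emph{zero} whenever $\alpha\ne\beta$, since inclusion and projection come from the same decomposition — so it cannot realize the standard matrix unit on $\Phi T$. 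To repair this you would need to \emph{choose} maps $W_\beta\to W_\alpha$ inducing identities on $\Phi T$, and then verify both the ring identities $s(e_{\alpha\beta})s(e_{\beta\gamma})=s(e_{\alpha\gamma})$, $s(e_{\alpha\beta})s(e_{\gamma\delta})=0$ for $\beta\ne\gamma$, and equivariance of the whole family simultaneously. None of this is ``forced,'' and it is exactly the compatibility problem you flagged as the main obstacle; the flag is correct, the proposed resolution is not.

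The paper's proof sidesteps idempotents and matrix units altogether. It isolates three closure properties of the class of ``good'' objects (those satisfying the conclusion): closure under induction from a minimal orbit, under cofiber of the counit $\uparrow_U\!\downarrow_U\!X\to X$, and under fiber of the unit. For the cofiber case with $[T]\le[U]$ (the only nontrivial one), it observes that (homotopical) functoriality of the cofiber construction yields a canonical ring map $[\downarrow_T\!X,\downarrow_T\!X]\to[\downarrow_T\!Y,\downarrow_T\!Y]$ compatible with the $\mathrm{Aut}(T)$-action, and that $Y^{\Phi T}=X^{\Phi T}$ (since $\Phi^T\uparrow_U=0$ when $[T]<[U]$), so one simply precomposes the given section for $X$ with this transport map. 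This produces a ring section with no choices of basis, no splitting of $\downarrow_T\!W$ into rank-one pieces, and no lifting of any idempotent. If you want to salvage your write-up, the fix is to replace your entire discussion of equivariant splittings and matrix units with this functorial transport: at each stage $X_k\to X_{k+1}$, the attaching construction comes with a canonical ring map on endomorphisms for each $T$ not lying over the orbit being modified, and that map is precisely what carries $s^{(k)}_T$ to $s^{(k+1)}_T$.
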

\begin{proof} 
Let's temporarily call an $\mathcal{O}$-spectrum
\emph{good} if it satisfies the conclusion of the lemma. 
Revisiting the proof of 
we see that it is enough to prove the following closure properties
for the class of good $\mathcal{O}$-spectra.
	\begin{enumerate}[(i)]
	\item If $U \in \mathcal{O}$ is minimal,
	and $X \in \mathsf{Sp}^{\mathcal{O}_{/U}}$
	is good, then so is $\mathrm{ind}_UX \in \mathsf{Sp}^{\mathcal{O}}$.
	\item If $X$ is good and $U \in \mathcal{O}$
	is arbitrary, then the cofiber of
	$\uparrow_U\!\downarrow_U\!X \to X$
	is also good.
	\item If $X$ is good and $U \in \mathcal{O}$
	is arbitrary, then the fiber
	of $X \to \uparrow_U\!\downarrow_U\!X$
	is also good.
	\end{enumerate}
Claim (i) follows from the more general observation
that if $X$ is good then so is $U_+ \wedge X$ for
any set $U$ because taking geometric fixed points 
commutes with colimits. 

To prove (ii), denote the cofiber by $Y$. We need
to find, for each $T$, an $\mathrm{Aut}(T)$-equivariant
ring section of $[\downarrow_T\!Y, \downarrow_T\!Y]
\to [Y^{\Phi T}, Y^{\Phi T}]$. The argument
depends on the relationship between $T$ and $U$.
	\begin{itemize}
	\item If $[U] \le [T]$, then the map
		\[
		\downarrow_T\uparrow_U\downarrow_U\!X 
		\to \downarrow_TX
		\]
	has a functorial splitting, so $\downarrow_T\!Y$ is
	an $\mathrm{Aut}(T)$-equivariant summand
	of $\downarrow_T\uparrow_U\downarrow_U\!X$
	and the conclusion follows.
	\item If $[U]$ and $[T]$ are incomparable,
	then $\downarrow_T\uparrow_U = 0$ and 
	the conclusion is vacuously satisfied.
	\item If $[T] \le [U]$ then we have
		\[
		\downarrow_T\uparrow_U\downarrow_U
		=
		\uparrow^T_U\downarrow^T_U\downarrow_T
		\]
	Naturality of the counit of an adjunction,
	together with (homotopical) functoriality of the cofiber
	provides us with maps
		\[
		\mathrm{map}\left(\downarrow_T\!X, \downarrow_T\!X\right)
		\longrightarrow
		\mathrm{map}
		\left(\uparrow^T_U\downarrow^T_U\downarrow_T\!X
		\to \downarrow_T\!X, 
		\uparrow^T_U\downarrow^T_U\downarrow_T\!X
		\to \downarrow_T\!X\right)
		\longrightarrow
		\mathrm{map}(\downarrow_T\!Y, \downarrow_T\!Y)
		\]
	\end{itemize}
Taking the composite on $\pi_0$ yields:
	\[
	[\downarrow_TX, \downarrow_TX]
	\to [\downarrow_T Y, \downarrow_TY]
	\]
and naturality ensures that the $\mathrm{Aut}(T)$
action and ring structure (from composition) are preserved.
Finally, in this case $Y^{\Phi T} = X^{\Phi T}$
so precomposing with the assumed splitting
$[X^{\Phi T}, X^{\Phi T}] \to [\downarrow_T\!X, \downarrow_T\!X]$
gives the result.

Claim (iii) is proved in the same way as claim (ii).
\end{proof}

\begin{remark} This proof can be modified
to treat 
the isotropic slice spheres constructed via the
method
described in Remark \ref{rmk:efficient-isotropic-spheres}.
As a corollary of our hunch in that remark, we
guess that \emph{every} isotropic slice $n$-sphere
satisfies the conclusion of the preceding lemma. 
We have not tried to prove this.
\end{remark}

\begin{remark} If $W^{\Phi T}$ is a single sphere,
the splitting trivially exists at $T$. If $T$ is maximal,
then the splitting also always exists because
$\downarrow_T\!W = W^{\Phi T}$. Combining these observations
we learn that if $W$ is a slice $n$-sphere
with the property that $W^{\Phi T}$ is a single sphere
for non-maximal $T$, then $W$ satisfies the conclusion of
the above lemma. This is enough to cover
the examples in the next section, for example.
\end{remark}

These results, combined
with those of \S\ref{ssec:split-modules} and
Theorem \ref{thm:slices-as-modules},
now reduce the study of $\mathsf{Slice}_n$
to the study of Mackey functors equipped
with compatible $\mathrm{Aut}(T)$-equivariant
actions of the rings $\mathrm{End}(W^{\Phi T})$.
Since $W^{\Phi T}$ is a wedge of spheres
of a single dimension, this endomorphism
ring is abstractly equivalent to the matrix
ring $M_k(\mathbb{Z})$. Moreover,
the action of $\mathrm{Aut}(T)$ comes
from an action on $\mathbb{Z}^{\oplus k}$. 

We now apply some Morita theory to
our problem.

\begin{lemma} Let $J$ be a finitely generated,
free abelian group with an action of a finite group $G$.
Let $R = \mathrm{End}(J)$ with its induced left
action of $G$ by conjugation. Denote by
$J^*$ the right
$R$-module $\mathrm{Hom}_{\mathbb{Z}}(J, \mathbb{Z})$
with its left, $G$-action by conjugation (which intertwines
the $R$-module structure).
Then the functors:
	\[
	\mathsf{RMod}_{R\text{-}G} \longrightarrow
	\mathsf{Mod}_G
	\]
	\[
	N \mapsto N \otimes_{R} J
	\]
	\[
	\mathsf{Mod}_G \longrightarrow
	\mathsf{RMod}_{R\text{-}G}
	\]
	\[
	M \mapsto M \otimes J^*
	\]
are inverse equivalences of categories.
\end{lemma}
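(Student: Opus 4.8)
This is a version of the classical Morita equivalence between $\mathrm{End}_{\mathbb Z}(J)$ and $\mathbb Z$ for $J$ a finitely generated free abelian group, enhanced with the diagonal action of the finite group $G$; the plan is to run the usual Morita argument and check at each step that all maps in sight are $G$-equivariant. First I would record that, since $J$ is finitely generated and free, the evaluation and coevaluation maps
\[
J^* \otimes_R J \xrightarrow{\ \mathrm{ev}\ } \mathbb Z, \qquad \mathbb Z \xrightarrow{\ \mathrm{coev}\ } J \otimes_{\mathbb Z} J^*
\]
(the latter landing in $R = \mathrm{End}_{\mathbb Z}(J) \cong J \otimes_{\mathbb Z} J^*$) are isomorphisms, and that they are $G$-equivariant because the $G$-actions on $J$, $J^*$, $R$ are all by conjugation/transport of structure, hence compatible with the canonical structure maps of the dualizable object $J$. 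This realizes $(J, J^*)$ as a $G$-equivariant Morita context, i.e. an invertible $(R\text{-}G, \mathbb Z G)$-bimodule together with its inverse.

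Next I would spell out the two composites. Given a right $R$-$G$-module $N$, the object $(N \otimes_R J) \otimes_{\mathbb Z} J^*$ is identified with $N \otimes_R (J \otimes_{\mathbb Z} J^*) = N \otimes_R R = N$ via the evaluation/multiplication map, and one checks this identification is natural in $N$ and respects the diagonal $G$-action (here one uses that the $R$-action on $N$ is $G$-equivariant, which is exactly the definition of an object of $\mathsf{RMod}_{R\text{-}G}$). Symmetrically, for $M \in \mathsf{Mod}_G$, the composite $(M \otimes_{\mathbb Z} J^*) \otimes_R J \cong M \otimes_{\mathbb Z} (J^* \otimes_R J) \cong M \otimes_{\mathbb Z} \mathbb Z = M$, again naturally and $G$-equivariantly. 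The only mildly delicate point is bookkeeping the associativity isomorphism $(N \otimes_R J) \otimes_{\mathbb Z} J^* \cong N \otimes_R (J \otimes_{\mathbb Z} J^*)$ and verifying it intertwines the diagonal actions; this is where one must be careful that the left $G$-action on $J$ (used to form the diagonal action on $N \otimes_R J$) is precisely the conjugation action through which $G$ acts on $R$, so that tensoring over $R$ is $G$-equivariant.

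Finally I would note two small checks to close the argument: that $N \mapsto N \otimes_R J$ indeed lands in $\mathsf{Mod}_G$ (the output carries the diagonal $G$-action, which is well-defined on the tensor product over $R$ by the previous paragraph) and that $M \mapsto M \otimes_{\mathbb Z} J^*$ lands in $\mathsf{RMod}_{R\text{-}G}$ (the right $R$-action on $J^*$ commutes with the $\mathbb Z$-action on $M$, and the conjugation $G$-action on $J^*$ intertwines it, which is exactly the compatibility required). The main obstacle is purely notational: keeping the several $G$-actions (diagonal, conjugation on $R$, conjugation on $J^*$) straight and confirming that every structure isomorphism of the Morita context is $G$-equivariant. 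There is no homotopy-theoretic or higher-categorical content here — it is the ordinary Morita theorem with a group action carried along — so once the equivariance of $\mathrm{ev}$ and $\mathrm{coev}$ is established, the rest is formal.
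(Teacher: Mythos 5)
Your proposal is correct and is essentially the same argument the paper gives: the paper's proof just says the statement follows from classical Morita theory once one checks the unit and counit of the adjunction on underlying modules respect the $G$-actions. You spell out the details (equivariance of $\mathrm{ev}$ and $\mathrm{coev}$, the associativity bookkeeping, where the functors land) but the route is identical.
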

\begin{proof} This is immediate from
classical Morita theory once one observes that
the unit and counit of the adjunction
on underlying modules respect the prescribed
$G$-actions. But, of course, they were defined
so that this is the case.
\end{proof}

\begin{definition}\label{defn:tw-mack} The category
of \textbf{$n$-twisted Mackey functors}
associated to an isotropic slice $n$-sphere
$W$ with chosen splittings $s_T$
is the category whose objects consist of
the following data:
	\begin{itemize}
	\item For each $T \in \mathcal{O}$,
	an abelian group
	$M_{(T)}$ with an action of $\mathrm{Aut}(T)$;
	\item The structure of
	an object in $\mathsf{RMod}_{\underline{\mathrm{End}}(W)^{\Phi}}$
	on the collection $\{M_{(T)} \otimes J_T^*\}$. 
	\end{itemize}
Here $J_T = \pi_{\nu(n, T)}W^{\Phi T}$. We denote by
$\mathsf{TwMack}^{\mathrm{loc}}_n$ the full subcategory
spanned by those $n$-twisted Mackey functors with
the property that, under the equivalence of Theorem \ref{thm:split-structure},
the associated $\underline{\mathrm{End}}(W)$-module
is slice local in the sense of Definition \ref{defn:slice-local}.
\end{definition}

\begin{remark} We can express the condition
that an $n$-twisted Mackey functor be slice
local directly as follows. For each $T \in \mathcal{O}$
write $T \times T^{jump} = \coprod U_{\alpha}$
in $\mathsf{Fin}_{\mathcal{O}}$
as a coproduct of orbits. Then an $n$-twisted
Mackey functor $M_{(-)}$ is slice local if and only
if, for every $T \in \mathcal{O}$,
the sum of restriction maps
	\[
	M_{(T)} \otimes J_T^*
	\longrightarrow
	\bigoplus_{\alpha} M_{(U_{\alpha})} \otimes J_{U_{\alpha}}^*
	\]
induced by the projection $T \times T^{jump} \to T$,
is injective.
\end{remark}

\begin{remark} Notice that there
are canonical isomorphisms
	\[
	 \mathrm{End}(W^{\Phi T})
\cong \mathrm{End}(J_T)
	\]
	\[
	[W^{\Phi T}, S^{\nu(n, T)}] \cong J_T^*
	\]
of $\mathrm{Aut}(T)$-modules,
given by assigning
to a map between wedges of spheres its 
behavior on $\pi_{\nu(n, T)}$. 
\end{remark}

\begin{remark}\label{rmk:lil-simpler} We will often modify this
definition somewhat by noting that giving an
$\mathrm{End}(W^{\Phi T'})$-module map
	\[
	M_{(T)} \otimes J^*_T
	\to M_{(T')} \otimes J_{T'}
	\]
is equivalent to giving a map of abelian groups
	\[
	M_{(T)} \otimes J^*_T \otimes_{\mathrm{End}(W^{\Phi T'})}J_{T'}
	\to 
	M_{(T')}.
	\]
A similar observation applies to the restriction maps.
Working out the relations between the maps presented this way
is more easily done in practice than in theory, as we
will see in the next section.
\end{remark}

\begin{construction}\label{cstr:idempotent-piece}
Let $W$ be an isotropic slice $n$-sphere
with a prescribed splitting
$s_T: \mathrm{End}(W^{\Phi T}) \to
\mathrm{End}({\downarrow_T\!W})$.
Choose an $S^{\nu(n, T)}$ summand of
the spectrum
$W^{\Phi T}$ with corresponding idempotent
$e \in \mathrm{End}(W^{\Phi T})$ and
retraction
	\[
	\mathrm{pr}: W^{\Phi T} \to S^{\nu(n, T)}.
	\]
Let $W_{(T)}$
be the summand of $\downarrow_T\!\!W$ obtained from the image
of $e$ in $\mathrm{End}(\downarrow_T\!\!W)$. 
Now define
a map, for any $X \in \mathsf{Sp}^{\mathcal{O}}$,
	\[
	[W_{(T)}, \downarrow_T\!X] \otimes [W^{\Phi T}, S^{\nu(n,T)}]
	\longrightarrow 
	[\downarrow_T\!W, \downarrow_T\!X].
	\]
Given an element $f \otimes g$ write $g$ as $\mathrm{pr}A$
where $A: W^{\Phi T} \to W^{\Phi T}$ is an endomorphism.
Then 
	\[
	f \otimes g \mapsto
	(\downarrow_T\!W \stackrel{s_T(A)}{\longrightarrow}
	\downarrow_T\!W \to W_{(T)} \stackrel{f}{\longrightarrow}
	\downarrow_T\!X).
	\]
\end{construction}

\begin{lemma}\label{lem:idempotent-piece} The map constructed in
(\ref{cstr:idempotent-piece}) is an 
$\mathrm{End}(W^{\Phi T})$-isomorphism.
\end{lemma}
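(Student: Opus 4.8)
The strategy is to unwind the definitions and exhibit an explicit inverse map, then check that both composites are the identity, keeping track of the $\mathrm{End}(W^{\Phi T})$-module structures throughout. Recall that $W_{(T)}$ is cut out by the idempotent $e \in \mathrm{End}(W^{\Phi T})$ carried across the chosen ring splitting $s_T$; write $p: {\downarrow_T\!W} \to W_{(T)}$ and $\iota: W_{(T)} \to {\downarrow_T\!W}$ for the associated retraction and inclusion, so that $\iota \circ p = s_T(e)$ and $p \circ \iota = \mathrm{id}$. First I would observe that, because $W^{\Phi T}$ is a finite wedge of spheres of dimension $\nu(n,T)$, the group $[W^{\Phi T}, S^{\nu(n,T)}] \cong J_T^*$ is a free right $\mathrm{End}(W^{\Phi T})$-module on the single generator $\mathrm{pr}$; every element is uniquely of the form $\mathrm{pr} \circ A$ for $A$ running over a complement of the kernel, and $e$ acts as the identity on the sub generated by $\mathrm{pr}$. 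This is the key structural input and makes the source $[W_{(T)}, {\downarrow_T\!X}] \otimes_{\mathrm{End}(W^{\Phi T})} J_T^*$ canonically isomorphic to $[W_{(T)}, {\downarrow_T\!X}]$ via $f \otimes (\mathrm{pr}\circ A) \mapsto f \circ (p \circ s_T(A)\circ\iota)$ — but I will not phrase things this way; instead I will work directly with the map of the construction and build the inverse.

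The inverse map I propose is
\[
[\downarrow_T\!W, \downarrow_T\!X] \longrightarrow [W_{(T)}, \downarrow_T\!X]\otimes_{\mathrm{End}(W^{\Phi T})}J_T^{*}, \qquad h \longmapsto (h\circ\iota)\otimes \mathrm{pr}.
\]
To see this is well-defined and inverse to the construction, I would compute both composites. Starting from $f\otimes g$ with $g = \mathrm{pr}\circ A$: the construction sends it to $h := f\circ p\circ s_T(A)$, and then the inverse sends $h$ to $(f\circ p\circ s_T(A)\circ\iota)\otimes\mathrm{pr}$. Now $p\circ s_T(A)\circ\iota \in \mathrm{End}(W_{(T)})$, and using that $s_T$ is a ring homomorphism together with $\iota\circ p = s_T(e)$, one checks $p\circ s_T(A)\circ\iota = p\circ\iota\circ p\circ s_T(A)\circ\iota$; transporting the scalar $\mathrm{pr}\circ A$-versus-$A$ bookkeeping across the tensor (this is exactly where one uses that $g=\mathrm{pr}\circ A$ determines $A$ only up to the kernel of $\mathrm{pr}$, which is annihilated after tensoring with $J_T^*$, i.e. $e$ kills it) recovers $f\otimes g$. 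Conversely, starting from $h$, the inverse gives $(h\circ\iota)\otimes\mathrm{pr}$, and the construction applied to this — with the endomorphism $A$ taken to be $e$ itself, since $\mathrm{pr} = \mathrm{pr}\circ e$ — returns $h\circ\iota\circ p\circ s_T(e) = h\circ s_T(e)\circ s_T(e) = h\circ s_T(e)$. Here I would invoke that $h\circ s_T(e) = h$ for every $h \in [{\downarrow_T\!W},{\downarrow_T\!X}]$ is \emph{not} true, so this composite is instead $h$ composed with the idempotent $s_T(e)$; the point is that the target $[W_{(T)},{\downarrow_T\!X}]\otimes_{\mathrm{End}(W^{\Phi T})}J_T^*$ only sees the $e$-part of everything, so one should instead verify the composite on the target is the identity by noting $(h\circ s_T(e)\circ\iota)\otimes\mathrm{pr} = (h\circ\iota)\otimes(\mathrm{pr}\circ e) = (h\circ\iota)\otimes\mathrm{pr}$.

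Finally I would check $\mathrm{End}(W^{\Phi T})$-linearity of the construction: an element $B \in \mathrm{End}(W^{\Phi T})$ acts on the source through $J_T^*$ by $g\mapsto g\circ B$, i.e. $\mathrm{pr}\circ A \mapsto \mathrm{pr}\circ A\circ B$, and acts on the target $[{\downarrow_T\!W},{\downarrow_T\!X}]$ by precomposition with $s_T(B)$; since $s_T$ is a ring map, $s_T(A\circ B) = s_T(A)\circ s_T(B)$, and the two actions match on the nose. The main obstacle — really the only subtlety — is the bookkeeping in the previous paragraph: the representation $g = \mathrm{pr}\circ A$ is not unique, so one must confirm that the construction descends through the tensor product over $\mathrm{End}(W^{\Phi T})$, which amounts to the identity $\mathrm{pr}\circ A = \mathrm{pr}\circ(e\circ A)$ together with $s_T$ being a homomorphism, so that replacing $A$ by $e\circ A$ changes nothing after tensoring. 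Everything else is a routine diagram chase with idempotents, and I would leave those verifications to the reader as is standard.
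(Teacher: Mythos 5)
Your plan has a structural misconception at its base that causes the subsequent verification to fail. The group $J_T^{*} = [W^{\Phi T}, S^{\nu(n,T)}]$ is \emph{not} a free right $\mathrm{End}(W^{\Phi T})$-module on the generator $\mathrm{pr}$: writing $R = \mathrm{End}(W^{\Phi T}) \cong M_k(\mathbb{Z})$ and $e$ for the idempotent, one has $J_T^{*} \cong eR$, which is cyclic on $\mathrm{pr}$ but has $\mathbb{Z}$-rank $k$, not $k^2$; it is free over $R$ only when $k=1$. Relatedly, the tensor you write, $[W_{(T)},\downarrow_T X] \otimes_{\mathrm{End}(W^{\Phi T})} J_T^{*}$, does not parse: $[W_{(T)},\downarrow_T X]$ carries a right module structure only over $eRe \cong \mathbb{Z}$ (the endomorphisms of the chosen summand), not over $R$, and the source of the map in Construction \ref{cstr:idempotent-piece} is the plain tensor product over $\mathbb{Z}$. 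That plain tensor is the correct object — the $\mathbb{Z}$-ranks match precisely because $J_T^{*}$ has rank $k$ — whereas if $J_T^{*}$ were rank one over $R$ as you assert, the source would already be $[W_{(T)},\downarrow_T X]$ and the lemma would be claiming that $[W_{(T)},\downarrow_T X] \cong [\downarrow_T W, \downarrow_T X]$, which is false.

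The proposed inverse $h \mapsto (h\circ\iota)\otimes\mathrm{pr}$ then genuinely fails to be a two-sided inverse, and the issue is not the "bookkeeping" you defer but a missing idea. Take the trivial group, $W = S^n \vee S^n$, $e = e_{11}$, $W_{(T)}$ the first summand, and $X = S^n$. Then $M = [W,X] \cong \mathbb{Z}^2$ (row vectors), $N = [W_{(T)},X] \cong \mathbb{Z}$, $J_T^{*} \cong \mathbb{Z}^2$, and the construction's map $N\otimes J_T^{*} \to M$ is $f\otimes g \mapsto f\cdot g$, the canonical isomorphism $\mathbb{Z}\otimes\mathbb{Z}^2 \cong \mathbb{Z}^2$. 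Your inverse sends $h = (h_1,h_2)$ to $h_1 \otimes (1,0)$, which the construction carries to $(h_1, 0)$, not to $h$; similarly, $f\otimes(a_{11},a_{12})$ round-trips to $f\otimes(a_{11},0)$. You observe the first failure ($h \mapsto h\circ s_T(e)$) but wave it away; it cannot be waved away, because $s_T(e)$ is not the identity. The true inverse requires a dual-basis sum over all $k$ coordinates of $J_T$, i.e.\ the trace pairing $J_T \otimes J_T^{*} \cong R$, which is exactly what the Morita-theoretic identification $M\cdot e \cong M\otimes_R J_T$ followed by $M\otimes_R J_T \otimes J_T^{*} \cong M\otimes_R R \cong M$ packages. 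That is the paper's proof; the hands-on verification you attempt is possible, but only after replacing the single term $(h\circ\iota)\otimes\mathrm{pr}$ with the full dual-basis expression, at which point you have re-derived the Morita argument by hand.
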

\begin{proof} That this map is an isomorphism
is a general fact that belongs to Morita theory:
if $e \in \mathrm{End}(J)$ is a full idempotent
corresponding to a $\mathbb{Z}$ summand
of $J$,
then there
is a canonical isomorphism
of abelian groups
$M \cdot e \cong M \otimes_{\mathrm{End}(J)}J$.
In our case, $[\downarrow_T\!W, \downarrow_T\!X] \cdot e
\cong [W_{(T)}, \downarrow_T\!X]$ and the map
in Construction \ref{cstr:idempotent-piece}
is precisely the composite
	\[
	M\cdot e \otimes J^* \cong 
	M \otimes_{\mathrm{End}(J)}J \otimes J^*
	\cong M
	\]
in our setting.
\end{proof}

\begin{warning} The source
of the map in \ref{cstr:idempotent-piece}
does not usually have an obvious $\mathrm{Aut}(T)$-action,
and hence must inherit one from the target.
It is possible to compute what this action must be in terms
of the action on $J$, the chosen idempotent,
and the action on 
$[W_{(T)}, \downarrow_TX]$ designated by
placing the trivial action on $W_{(T)}$. However,
this is another procedure more easily carried
out in practice than in theory.
\end{warning}

\begin{definition} Let $W$ be an isotropic slice
$n$-sphere with chosen splittings $s_T$,
and summands $W_{(T)}$ of each $\downarrow_T\!W$
arising from an $S^{\nu(n,T)}$ summand of
$W^{\Phi T}$. Then define
	\[
	\hat{\pi}_n:
	\mathsf{Sp}^{\mathcal{O}}
	\longrightarrow
	\mathsf{TwMack}_n
	\]
by the assignment
	\[
	X \mapsto \{[W_{(T)}, \downarrow_T\!X]\}
	\]
equipped with the natural
$\underline{\mathrm{End}}(W)^{\Phi}$-module
structure on the collection
	\[
	\{[W_{(T)}, \downarrow_T\!X] \otimes [W^{\Phi T}, S^{\nu(n, T)}]\}
	\cong \{[\downarrow_T\!W, \downarrow_T\!X]\}.
	\]
\end{definition}

\begin{remark} Notice that $\hat{\pi}_n$ as defined
above contains essentially the same data
as the previously defined $\hat{\pi}_n$
for the case of the testing subcategory generated by the
$W_{(T)}$. We hope this justifies our recycling of the notation.
\end{remark}

Finally, combining Theorem \ref{thm:split-structure}
and Theorem \ref{thm:slices-as-modules}
we conclude:

\begin{theorem}\label{thm:slices-as-tw-mack}
The functor
$\hat{\pi}_n$ yields an equivalence of adjoint pairs:
 
	\[
	\xymatrix{
	\heartsuit_n \ar[rr]^-{\hat{\pi}_n}_-{\cong}\ar[dd]^{P^n} &&
	\mathsf{TwMack}_{n} \ar[dd]_{L^{inj}}\\
	&&\\
	\mathsf{Slice}_n 
	\ar[rr]_-{\hat{\pi}_n}^-{\cong} 
	\ar@/^1.5pc/[uu]&& 
	\mathsf{TwMack}^{\mathrm{loc}}_{n} 
	\ar@/_1.5pc/[uu]
	}
	\]
\end{theorem}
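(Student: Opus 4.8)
The plan is to assemble the equivalence $\heartsuit_n \cong \mathsf{TwMack}_n$ (and its restriction to $\mathsf{Slice}_n$) by chaining together three equivalences already established in the excerpt and then checking that the composite is the functor $\hat\pi_n$ defined just above. First I would fix an isotropic slice $n$-sphere $W$ for which the splittings $s_T$ exist, using Lemma~\ref{lem:endo-geom-split} (and Lemma~\ref{lem:endo-geom-fix} to identify $\underline{\mathrm{End}}(W)^{\Phi T} = \mathrm{End}(W^{\Phi T})$); this makes $\underline{\mathrm{End}}(W)$ a geometrically split Green functor, so that Theorem~\ref{thm:split-structure} applies. The chain is then
	\[
	\heartsuit_n \xrightarrow[\ \cong\ ]{\underline{[W,-]}}
	\mathsf{RMod}_{\underline{\mathrm{End}}(W)}
	\xrightarrow[\ \cong\ ]{\text{forget}}
	\mathsf{RMod}_{\underline{\mathrm{End}}(W)^{\Phi}}
	\xrightarrow[\ \cong\ ]{\text{Morita}}
	\mathsf{TwMack}_n,
	\]
where the first arrow is Theorem~\ref{thm:slices-as-modules}(i), the second is Theorem~\ref{thm:split-structure}, and the third is the fiberwise Morita equivalence of the Lemma preceding Definition~\ref{defn:tw-mack} applied at each orbit $T$ with $J = J_T = \pi_{\nu(n,T)}W^{\Phi T}$ and $G = \mathrm{Aut}(T)$ — indeed $\mathsf{TwMack}_n$ is \emph{defined} so that this last map is an equivalence. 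Since each arrow is an equivalence of presentable additive categories, so is the composite.

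Next I would identify the composite with $\hat\pi_n$. By construction $\hat\pi_n X = \{[W_{(T)}, \downarrow_T X]\}$ with its $\underline{\mathrm{End}}(W)^\Phi$-module structure on $\{[W_{(T)}, \downarrow_T X]\otimes [W^{\Phi T}, S^{\nu(n,T)}]\}$. Lemma~\ref{lem:idempotent-piece} says exactly that the Morita functor sends the $\underline{\mathrm{End}}(W)$-module $\underline{[W,X]}$ — equivalently, via Theorem~\ref{thm:split-structure}, the $\underline{\mathrm{End}}(W)^\Phi$-module $\{[\downarrow_T W,\downarrow_T X]\}$ — to the collection $\{[W_{(T)},\downarrow_T X]\}$, with the module structure tensored back up to $\{[\downarrow_T W,\downarrow_T X]\}$; this is precisely the definition of $\hat\pi_n$. (One subtlety, flagged in the Warning after Lemma~\ref{lem:idempotent-piece}, is that the $\mathrm{Aut}(T)$-action on $[W_{(T)},\downarrow_T X]$ may differ from the naive one by a twist coming from how the action on $J_T$ interacts with the chosen idempotent; this is harmless because the Morita equivalence is natural and transports the action unambiguously, but it should be remarked that the data on both sides matches.) So the outer square of the theorem — the top horizontal equivalence $\hat\pi_n : \heartsuit_n \cong \mathsf{TwMack}_n$ — is established.

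For part (ii), the left adjoint $P^n : \heartsuit_n \to \mathsf{Slice}_n$ must be matched with $L^{\mathrm{inj}} : \mathsf{TwMack}_n \to \mathsf{TwMack}_n^{\mathrm{loc}}$. This is immediate from the corresponding statement in Theorem~\ref{thm:slices-as-modules}: under $\underline{[W,-]}$, $\mathsf{Slice}_n$ corresponds to slice-local $\underline{\mathrm{End}}(W)$-modules (Definition~\ref{defn:slice-local}), and both Theorem~\ref{thm:split-structure} and the Morita equivalence are equivalences of categories, hence carry the localization $L^{\mathrm{inj}}$ and its image verbatim to $\mathsf{TwMack}_n$; by definition $\mathsf{TwMack}_n^{\mathrm{loc}}$ is the full subcategory that corresponds to the slice-local modules, and the Remark after Definition~\ref{defn:tw-mack} re-expresses the injectivity condition directly in terms of the $M_{(T)}\otimes J_T^*$. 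Since adjoints are preserved by equivalences of categories, the square commutes and $P^n H$ corresponds to $L^{\mathrm{inj}}$ composed with the inverse equivalence, as claimed.

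The only genuinely delicate point — beyond the bookkeeping of the three chained equivalences — is verifying that the $\mathrm{Aut}(T)$-equivariance is correctly threaded through the Morita step, i.e. that the twist discussed in the Warning after Lemma~\ref{lem:idempotent-piece} is accounted for in the definition of $\mathsf{TwMack}_n$; but this is forced by naturality of the Morita equivalence and requires no new argument, only care in matching conventions. Everything else is a formal consequence of results already in hand.
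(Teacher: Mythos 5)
Your proposal is correct and matches the paper's approach exactly: the paper's proof is the one-line observation that the theorem follows by combining Theorem~\ref{thm:split-structure} and Theorem~\ref{thm:slices-as-modules}, and your write-up simply spells out the chain $\heartsuit_n \cong \mathsf{RMod}_{\underline{\mathrm{End}}(W)} \cong \mathsf{RMod}_{\underline{\mathrm{End}}(W)^{\Phi}} \cong \mathsf{TwMack}_n$ together with the role of Lemmas~\ref{lem:endo-geom-fix}, \ref{lem:endo-geom-split}, and \ref{lem:idempotent-piece} in making the Morita step and the identification with $\hat\pi_n$ precise. Your handling of the $\mathrm{Aut}(T)$-twist subtlety and of the transport of the localization $L^{\mathrm{inj}}$ across the equivalences is also consistent with how the paper sets things up.
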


\section{Examples and special cases}\label{sec:examples}

We now apply the general theory of \S2 to several examples.
We begin in \S\ref{ssec:G} by collecting
together the known results for $G$-spectra in general.
Then, in \S\ref{ssec:Cp} we compare our classification
of slices for $C_p$ with the one due
to \cite{HY}. Finally, in \S\ref{ssec:C4} we apply
our machinery to a new example: the case of $C_4$.
The slices for $C_4$ are not all $RO(C_4)$-graded
suspensions of Eilenberg-MacLane spectra.
As a result, the previous methods for studying slices
fail in this case and something like theory
we've developed is necessary.

\subsection{$G$-spectra}\label{ssec:G}

In this section we will restrict attention to the original
slice filtration on $G$-spectra, which we recall is
the one associated to the dimension function
	\[
	\nu(n, H) = \left\lfloor \frac{n}{|H|}\right\rfloor.
	\]

\begin{remark}
Statements about the regular slice filtration
may be recovered from the equality
	\[
	\left\lceil \frac{n+1}{|H|}\right\rceil - 1
	= \left\lfloor \frac{n}{|H|}\right\rfloor.
	\]
That is, regular $n$-slices are the same as 
original $(n-1)$-slices. We warn the reader
that this does \emph{not} mean that the regular $n$-slice
of a spectrum $X$ is the same as its original $(n-1)$-slice.
\end{remark}

To test the effectiveness of our general theory, we
show how to recover the previously known results about the
slice filtration which hold for an arbitrary group
$G$. Of course, in many cases the original proof is simpler
or morally the same as the one given here,
this is only meant to be a proof of concept.

\begin{theorem}\label{thm:lit-review}
	\begin{enumerate}[(i)]
	\item (\cite{HHR, primer})
	The functor $\Sigma^{k\rho}$ yields an 
	equivalence
		\[
		\Sigma^{k\rho}:
		\mathsf{Slice}_n
		\stackrel{\cong}{\longrightarrow}
		\mathsf{Slice}_{n+ k|G|}.
		\]
	\item (\cite{HHR, primer}) The category of $(k|G| - 1)$-slices
	is equivalent via the
	functor
		\[
		\underline{M} \mapsto \Sigma^{k\rho -1}H\underline{M}.
		\]
	to the category of Mackey functors.
	\item (\cite{HHR, primer}) The category
	of $k|G|$-slices is equivalent via the functor
		\[
		\underline{M} \mapsto \Sigma^{k\rho}H\underline{M}
		\]
	to the category
	of Mackey functors all of whose restriction maps
	are injective.
	\item \cite{Ullman-thesis} The category of $(k|G|-2)$-slices
	is equivalent via the functor
		\[
		\underline{M} \mapsto \Sigma^{k\rho -2}H\underline{M}
		\]
	to the category of Mackey functors all of whose transfer
	maps are surjective.
	\item \cite{HY} Fix $n, k \in \mathbb{Z}$. Let $V$
	be a virtual representation with the property
	that, for all $H \subseteq G$, 
		\[
		\mathrm{dim}(V^H) + \left\lfloor \frac{n}{|H|}\right\rfloor
		\ge \left\lfloor \frac{n+k}{|H|}\right\rfloor. \quad (*)
		\]
	Then smashing with $S^V$ gives a functor
		\[
		\Sigma^V: \mathsf{Sp}^G_{\ge n}
		\longrightarrow
		\mathsf{Sp}^G_{\ge n+k}.
		\]
	This functor is an equivalence if and only if equality holds
	in $(*)$.
	\end{enumerate}
\end{theorem}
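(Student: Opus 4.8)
The plan is to peel the five parts apart: (i) and (v) come straight from the Hill--Yarnall characterization \ref{cor:HY-char}; (ii) and (iii) come from feeding the invertible representation spheres $S^{k\rho-\epsilon}$ into Theorem \ref{thm:slices-as-modules}; and (iv) --- the genuinely new difficulty --- is handled by the twisted Mackey functor machinery of Theorem \ref{thm:slices-as-tw-mack}. The elementary fact I will use throughout is that for the standard dimension function $\nu_{sl}(m,H)=\lfloor m/|H|\rfloor$, a subgroup $H$ is an $m$-jump exactly when $|H|$ divides $m+1$; in particular $G/e$ is always a summand of $T^{jump}$, while $G/G$ is a summand precisely when $|G|\mid m+1$. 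For (i): $\Sigma^{k\rho}$ is an exact autoequivalence of $\mathsf{Sp}^G$, and $(\Sigma^{k\rho}X)^{\Phi H}=\Sigma^{k|G|/|H|}X^{\Phi H}$, so by \ref{cor:HY-char} it carries $\mathsf{Sp}^G_{\ge n}$ \emph{onto} $\mathsf{Sp}^G_{\ge n+k|G|}$; passing to right orthogonals gives the same for the truncated subcategories, so $\Sigma^{k\rho}$ restricts to an equivalence $\mathsf{Slice}_n\xrightarrow{\ \sim\ }\mathsf{Slice}_{n+k|G|}$, and this lets me take $k$ to be whatever is convenient in (ii)--(iv). For (v): by \ref{cor:HY-char}, $X\ge n$ iff each $X^{\Phi H}$ is $\lfloor n/|H|\rfloor$-connective, and $(\Sigma^VX)^{\Phi H}=\Sigma^{\dim V^H}X^{\Phi H}$ is then $(\dim V^H+\lfloor n/|H|\rfloor)$-connective, so $(*)$ gives the functor $\Sigma^V\colon\mathsf{Sp}^G_{\ge n}\to\mathsf{Sp}^G_{\ge n+k}$. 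Evaluating this functor on an isotropic slice $n$-sphere (which exists by Proposition \ref{prop-sph-is-test}, and whose $H$-geometric fixed points are a \emph{nonzero} wedge of $S^{\lfloor n/|H|\rfloor}$) shows $(*)$ is also necessary, and $\Sigma^V$ is an equivalence of these subcategories iff $\Sigma^{-V}$ preserves connectivity in the other direction, i.e.\ iff $\dim V^H\le\lfloor(n+k)/|H|\rfloor-\lfloor n/|H|\rfloor$ as well; together with $(*)$ this is equality in $(*)$.

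For (ii) and (iii): for $\epsilon\in\{0,1\}$ one computes $(S^{k\rho-\epsilon})^{\Phi H}=S^{k|G|/|H|-\epsilon}=S^{\nu_{sl}(k|G|-\epsilon,H)}$, so $S^{k\rho-\epsilon}$ is an invertible isotropic slice $(k|G|-\epsilon)$-sphere, hence $\underline{\mathrm{End}}(S^{k\rho-\epsilon})\cong\underline{\mathrm{End}}(S^0)\cong\underline{A}$ and Theorem \ref{thm:slices-as-modules} specializes to: the functor $\underline{\pi}_{k\rho-\epsilon}$ identifies $\mathsf{Slice}_{k|G|-\epsilon}$ with the category of Mackey functors $\underline M$ for which $\underline M(T)\to\underline M(T^{jump}\times T)$ is injective for every $G$-set $T$. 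Now apply the combinatorial fact. For $\epsilon=1$ every subgroup is a $(k|G|-1)$-jump, so $T^{jump}$ contains $G/G$ as a summand, $T^{jump}\times T$ contains $T$, and the condition is split-injective for free: this gives (ii), \emph{all} Mackey functors. For $\epsilon=0$ only the trivial subgroup is a $k|G|$-jump, so $T^{jump}=G/e$ and the condition is injectivity of $\underline M(T)\to\underline M((G/e)\times T)$; using the orbit decomposition $(G/e)\times(G/H)\cong\coprod_{|G|/|H|}G/e$ one sees this at $T=G/H$ is injectivity of $\mathrm{res}^H_e$, and running over all $H$ these are equivalent (by composing restrictions) to injectivity of every restriction map, which gives (iii). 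In both cases the inverse equivalence is $\underline M\mapsto\Sigma^{k\rho-\epsilon}H\underline M$: this spectrum is always slice $(k|G|-\epsilon)$-connective, has $\underline{\pi}_{k\rho-\epsilon}$ equal to $\underline M$, and is slice $(k|G|-\epsilon)$-truncated --- unconditionally for $\epsilon=1$, and under the injectivity hypothesis for $\epsilon=0$ --- by a check against Proposition \ref{prop-slice-htpy-vanish}.

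The hard part is (iv), and the first point is that no representation sphere is available: $S^{k\rho-2}$ is \emph{not} a slice $(k|G|-2)$-sphere, since $(S^{k\rho-2})^{\Phi H}=S^{k|G|/|H|-2}$ has dimension one less than $\nu_{sl}(k|G|-2,H)=k|G|/|H|-1$ for every $H$ with $|H|\ge 2$. So instead I will produce a genuine isotropic slice $(k|G|-2)$-sphere $W$ --- built in the style of Proposition \ref{prop-sph-is-test} and Remark \ref{rmk:efficient-isotropic-spheres}, and arranged so that $W^{\Phi H}$ is a \emph{single} sphere for every $H\ne e$ (only the underlying stratum carries a nontrivial wedge), e.g.\ by starting from $S^{k\rho-1}$ and attaching a single free cell in a cofiber sequence that lowers $W^{\Phi e}$ by one dimension and leaves all other geometric fixed points alone. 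For such a $W$ the Green functor $\underline{\mathrm{End}}(W)$ is geometrically splittable (a single sphere at each non-maximal orbit forces the ring section, as in the Remark after Lemma \ref{lem:endo-geom-split}), so Theorem \ref{thm:slices-as-tw-mack} applies and gives $\mathsf{Slice}_{k|G|-2}\cong\mathsf{TwMack}^{\mathrm{loc}}_{k|G|-2}$.

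The remaining, and main, obstacle is to unwind this twisted-Mackey-functor description into the classical one. The slice-local injectivity constraint again lives over $T^{jump}=G/e$, but it now interacts with the Morita twist by the $\mathrm{Aut}(G/e)=G$-module $J_e=\pi_{k|G|-2}W^{\Phi e}$, which for a well-chosen $W$ is a rank $|G|-1$ lattice of ``reduced regular representation'' type; carrying the injectivity condition across this twist converts ``injectivity of the restriction into the underlying stratum'' into ``surjectivity of all transfer maps'' of the associated ordinary homotopy Mackey functor, producing exactly Ullman's category. Finally one matches the equivalence with $\underline M\mapsto\Sigma^{k\rho-2}H\underline M$ by a direct computation: $\Sigma^{k\rho-2}H\underline M$ is slice $(k|G|-2)$-connective precisely when $\underline M^{\Phi H}=0$ for all $|H|\ge 2$ (which is implied by surjective transfers), its $\hat{\pi}_{k|G|-2}$ is the twisted Mackey functor attached to $\underline M$, and slice-truncatedness then follows once $\hat{\pi}_{k|G|-2}$ is known. (An alternative for this last identification runs through equivariant Anderson/Spanier--Whitehead duality, which exchanges injective restrictions with surjective transfers and so relates (iv) to (iii); but since the standard slice filtration is not self-dual one must track a representation-sphere correction, and this seems no shorter.) The single hardest step is the twist-unwinding: making the combinatorics of the Burnside category convert the injectivity constraint into surjectivity of transfers --- which is exactly where Proposition \ref{prop:pre-hereditary} and the explicit structure of $\mathsf{RMod}_{\underline{R}^{\Phi}}$ carry the load.
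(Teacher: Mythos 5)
Your treatments of (i), (ii), (iii) and (v) are correct and essentially the paper's own: (i) and (v) follow from the formula $(S^V\wedge X)^{\Phi H}\simeq S^{V^H}\wedge X^{\Phi H}$ together with Corollary \ref{cor:HY-char}, and (ii), (iii) follow by feeding the invertible isotropic slice spheres $S^{k\rho-\epsilon}$ into Theorem \ref{thm:slices-as-modules} and analyzing $T^{jump}$ exactly as the paper does (every subgroup is a $(k|G|-1)$-jump, only the trivial one is a $k|G|$-jump). The issue is (iv).

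For (iv) you route through Theorem \ref{thm:slices-as-tw-mack}, and the entire content of the statement is then concentrated in the sentence ``carrying the injectivity condition across this twist converts injectivity of the restriction into the underlying stratum into surjectivity of all transfer maps.'' That sentence \emph{is} the theorem, and it is asserted rather than proved. Nothing in Proposition \ref{prop:pre-hereditary} or the construction of $\mathsf{RMod}_{\underline{R}^{\Phi}}$ performs this conversion automatically: the slice-local condition is an injectivity condition on the $\underline{\mathrm{End}}(W)$-module $\underline{[W,X]}$ over $T^{jump}=G/e$, and to turn it into a statement about the transfers of $\underline{\pi}_{k\rho-2}X$ you must actually use the defining cofiber sequence of your $W$ to relate $[\downarrow_H W,\downarrow_H X]$ to $\underline{\pi}_{k\rho-2}X$ and $\underline{\pi}_{k\rho-1}X$ by a long exact sequence and chase it. You also do not justify that every object of $\mathsf{Slice}_{k|G|-2}$ has the form $\Sigma^{k\rho-2}H\underline{M}$ in the first place; this needs the connectivity estimate the paper supplies (such an $A$ is $(-2)$-connective by Lemma \ref{lem:slice-vs-post}, and $\underline{\pi}_nA=0$ for $n\ge-1$ because $G/H_+\wedge S^n\ge-1$ there). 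Until both points are filled in, (iv) is incomplete.

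For comparison, the paper proves (iv) without the twisted-Mackey machinery at all: necessity of surjective transfers comes from applying Proposition \ref{prop-slice-htpy-vanish} to the slice $(-2)$-spheres $W_H=\mathrm{cof}\bigl(S^{-2}\xrightarrow{\mathrm{tr}}G/H_+\wedge S^{-2}\bigr)$ and reading off the resulting long exact sequence, and sufficiency is an isotropy-separation computation of $\pi_{-2}\bigl((\Sigma^{-2}H\underline{M})^{\Phi H}\bigr)$. Your route is salvageable --- your candidate $W$ is a legitimate isotropic slice $(k|G|-2)$-sphere, and your observation that slice-connectivity of $\Sigma^{k\rho-2}H\underline{M}$ is equivalent to $\underline{M}^{\Phi H}=0$ for all nontrivial $H$ is correct and does recover ``all transfers surjective'' by an induction up the subgroup lattice --- but the decisive unwinding step must actually be carried out, and the paper's direct argument is the shorter way to do it.
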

\begin{proof} Part $(v)$ implies part $(i)$ and both follow
immediately from the formula $(S^V \wedge X)^{\Phi H}
\cong (S^{V^H} \wedge X^{\Phi H})$. Using this,
the statements in $(ii)$-$(iv)$ follow from the special case
when $k=0$. We treat each in turn.
	\begin{enumerate}
	\item[(ii)] The spectrum $S^{-1}$ is an isotropic slice $(-1)$-sphere
	and $(-1)$ is a jump for the dimension function.
	The result now follows from, e.g, Theorem \ref{thm:slices-as-modules}.
	\item[(iii)] The spectrum $S^0$ is an isotropic slice $0$-sphere
	and the only $0$-jump is the trivial subgroup. So,
	by Theorem \ref{thm:slices-as-modules}, $0$-slices
	are the full subcategory of the category of Mackey
	functors spanned by those $\underline{M}$ such that
	the restriction
		\[
		\underline{M}(G/H) \to \underline{M}(G \times G/H)
		= \underline{M}(G)^{\oplus |G/H|}
		\]
	is injective for all $H \subseteq G$. But this restriction
	map is given by the usual restriction
	$\underline{M}(G/H) \to \underline{M}(G)$ followed
	by the diagonal, so it is injective exactly when the
	usual restriction is injective. Since any restriction map
	followed by restriction to $\underline{M}(G)$ must
	be injective, we conclude that all restriction maps are
	injective. The result follows.
	\item[(iv)] If $A$ is a $(-2)$-slice, then $\Sigma^2A \ge 0$
	and so $A$ is $(-2)$-connective in the usual sense 
	by Lemma \ref{lem:slice-vs-post}. On the other hand,
	$G/H_+ \wedge S^n \ge -1$ for all $n \ge -1$
	by inspection of the floor function, so that
	$\underline{\pi}_nA = 0$ for $n \ge -1$. We conclude
	that $A \cong \Sigma^{-2}H \underline{M}$ for some
	Mackey functor $\underline{M}$. We claim that
	$\Sigma^{-2}H \underline{M}$ is a $(-2)$-slice if and only
	if $\underline{M}$ has surjective transfer maps.
	
	Define $W_H$ by the cofiber sequence
		\[
		S^{-2} \stackrel{\mathrm{tr}}{\longrightarrow}
		G/H_+ \wedge S^{-2}
		\longrightarrow W_H.
		\]
	Then $W_H$ is a slice $(-2)$-sphere. Indeed, 
	the underlying cofiber sequence splits
	so that $W_H^{\Phi e}$ is a wedge of copies
	of $S^{-2}$, while, for $K \subsetneq G$,
	the middle term vanishes and $W_H^{\Phi K}$
	is a (possibly vanishing) wedge of copies of $S^{-1}$. 
	This is as prescribed by the floor function $\lfloor -2/ |H|\rfloor$. 
	
	But now Proposition \ref{prop-slice-htpy-vanish} implies
	that $[\Sigma^{-1}W_H, X] = 0$ for any $X \ge -2$
	and any subgroup $H \subseteq G$. This forces
	the transfers in $\underline{\pi}_{-2}X$ to be surjective
	by inspection of the long exact sequence associated to the
	defining cofiber sequence for $W_H$. We conclude that
	the condition on Mackey functors is necessary.
	
	Now suppose that $\underline{M}$ is a Mackey functor.
	If $W \ge -1$ then $\Sigma^2W \ge 1$ and, in particular,
	is $1$-connective by Lemma \ref{lem:slice-vs-post}.
	So $[\Sigma^2W, H\underline{M}]
	=[W, \Sigma^{-2}H\underline{M}] = 0$. Thus, we always have
	$\Sigma^{-2}H\underline{M} \le -2$. If moreover
	$\underline{M}$ has surjective transfer maps,
	we need to show that $\Sigma^{-2}H\underline{M}$
	is slice $(-2)$-connective. To that end,
	consider the diagram in $\mathsf{Sp}^H$
	for $|H| \ne 1$, where we use the usual recollement
	functors on $\mathsf{Sp}^H$ associated to 
	$\widetilde{\mathcal{F}} = \{H/H\}$:
		\[
		\xymatrix{
		\Sigma^{-2}j_!j^*\downarrow_H\!H\underline{M}\ar[r]
		&\Sigma^{-2}\downarrow_H\!H\underline{M}
		\ar[r]&\Sigma^{-2}i_*i^*\downarrow_H\!H\underline{M}\ar[r]&
		\Sigma^{-1}j_!j^*\downarrow_H\!H\underline{M}\\
		\Sigma^{-2}\uparrow_1^H\downarrow_1^GH\underline{M}
		\ar[u]\ar[ur]&&&
		}
		\]
	(Notice that if $H$ is the trivial subgroup, the top left
	object is zero and the vertical arrow does not exist
	because we are using the fact that $\{1\} \subset H$ is a 
	proper subgroup to define that map.)
	Applying $\pi_{-2}^H$ we get:
		\[
		\xymatrix{
		\pi_0^H(j_!j^*\downarrow_H\!H\underline{M}) \ar[r]^-f&
		\underline{M}(G/H) \ar[r]&
		\pi_{-2}\left(\Sigma^{-2}H\underline{M}\right)^{\Phi H}
		\ar[r] & 0\\
		\underline{M}(G)\ar[u]\ar[ur] &&&
		}
		\]
	Since the diagonal arrow is surjective by assumption, so is
	$f$, and hence $\pi_{-2}\left(\Sigma^{-2}H\underline{M}\right)^{\Phi H} = 0$
	for all nontrivial subgroups $H \subseteq G$. It follows that
	$(\Sigma^{-2}H\underline{M})^{\Phi H} \ge -1 = \lfloor (-2)/|H|\rfloor$
	when $|H| \ne 1$ so that $\Sigma^{-2}H\underline{M} \ge -2$,
	which was to be shown.
	\end{enumerate}
\end{proof}

\begin{remark} Though it is the case that $\mathsf{Slice}_{-2}$ is
a localization of the category of Mackey functors,
it is not true in general that $\heartsuit_{-2}$ is equivalent to the category
of Mackey functors, as we will see in the next section
when $G = C_p$.
\end{remark}

The above theorem is more than enough to recover the
known description of slices for the group $G=C_2$.

\begin{corollary}[\cite{HHR, primer}]
Let $G = C_2$.
	\begin{enumerate}[(i)]
	\item The functor
		\[
		\underline{\pi}_{n\rho-1}:
		\mathsf{Slice}_{2n-1} \longrightarrow 
		\mathsf{Mack}(C_2, \mathsf{Ab})
		\]
	is an equivalence of categories.
	\item The functor
		\[
		\underline{\pi}_{n\rho}:
		\mathsf{Slice}_{2n} \longrightarrow
		\mathsf{Mack}(C_2, \mathsf{Ab})
		\]
	is fully faithful. The essential image consists of those
	Mackey functors $\underline{M}$ such that the
	restriction map
		\[
		\mathrm{res}: \underline{M}(*) \longrightarrow \underline{M}(C_2)
		\]
	is injective.
	\item The slices of a $G$-spectrum $X$ are determined by
	the formulae:
		\[
		\underline{\pi}_{n\rho -1}P^{2n-1}_{2n-1}X
		=
		\underline{\pi}_{n\rho - 1}X.
		\]
		\[
		\underline{\pi}_{n\rho}P^{2n}_{2n}X
		=
		\frac{\underline{\pi}_{n\rho}X}{\mathrm{ker}(\mathrm{res})}.
		\]
	\end{enumerate}
\end{corollary}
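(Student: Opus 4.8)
The plan is to derive this corollary directly from Theorem \ref{thm:lit-review} by identifying the relevant isotropic slice spheres for $G=C_2$ with $RO(C_2)$-graded suspensions of the sphere and unwinding. The starting observation is that for $G = C_2$ the regular representation $\rho = 1 + \sigma$ satisfies $|C_2| = 2$, so $k\rho$ has $\dim(k\rho)^{C_2} = k$ and $\dim(k\rho)^{e} = 2k$. Hence $S^{k\rho}$ is an isotropic slice $2k$-sphere and $S^{k\rho - 1}$ is an isotropic slice $(2k-1)$-sphere, both of which are (invertible) representation spheres; this is exactly the first Example in the introduction. Since these spheres are invertible, $\underline{\mathrm{End}}(S^{k\rho}) \cong \underline{\mathrm{End}}(S^0) \cong \underline{A}$ and likewise for $S^{k\rho-1}$, so Theorem \ref{thm:slices-as-modules} (or directly Theorem \ref{thm:lit-review}(ii),(iii)) identifies $\mathsf{Slice}_{2k-1}$ and $\mathsf{Slice}_{2k}$ with categories of Mackey functors with suitable injectivity.

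First I would handle part (i): apply Theorem \ref{thm:lit-review}(ii) with the group $G = C_2$ and $|G| = 2$, which gives that $\underline{M} \mapsto \Sigma^{n\rho - 1}H\underline{M}$ is an equivalence from $\mathsf{Mack}(C_2; \mathsf{Ab})$ to $\mathsf{Slice}_{2n-1}$. The inverse functor sends an $(2n-1)$-slice $A$ to $\underline{[S^{n\rho - 1}, A]} = \underline{\pi}_{n\rho - 1}A$, since $S^{n\rho-1}$ is the relevant isotropic slice sphere and, being invertible, corepresents the homotopy Mackey functor $\underline{\pi}_{n\rho - 1}$. This gives the claimed equivalence.

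Next, for part (ii), apply Theorem \ref{thm:lit-review}(iii) with $|G| = 2$: the functor $\underline{M} \mapsto \Sigma^{n\rho}H\underline{M}$ identifies $\mathsf{Slice}_{2n}$ with the full subcategory of $\mathsf{Mack}(C_2; \mathsf{Ab})$ of Mackey functors with all restriction maps injective. For $C_2$ there is only one nontrivial restriction map, $\mathrm{res}\colon \underline{M}(C_2/C_2) = \underline{M}(*) \to \underline{M}(C_2/e) = \underline{M}(C_2)$, so the condition reduces exactly to injectivity of this single map. The inverse equivalence is again $A \mapsto \underline{[S^{n\rho}, A]} = \underline{\pi}_{n\rho}A$ by invertibility of $S^{n\rho}$, giving the stated fully faithful functor with the stated essential image. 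Part (iii) then follows from Theorem \ref{thm:slices-as-modules}(b) (equivalently the formula $\underline{[W, P^n_n X]} = L^{inj}\underline{[W,X]}$): for $n = 2n-1$ the dimension function jumps at $2n-1$ only at the trivial subgroup, $T^{jump} = C_2/e$, and since restriction to $\underline{M}(C_2)$ is always present, $L^{inj}$ acts trivially, giving $\underline{\pi}_{n\rho-1}P^{2n-1}_{2n-1}X = \underline{\pi}_{n\rho - 1}X$; for $n = 2n$ the localization quotients $\underline{\pi}_{n\rho}X$ by the kernel of $\mathrm{res}$, as claimed.

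I do not expect a serious obstacle here: the real content is entirely contained in Theorem \ref{thm:lit-review} and Theorem \ref{thm:slices-as-modules}, and this corollary is purely a matter of specializing $|G| = 2$, observing that the generic isotropic slice spheres $S^{k\rho}$, $S^{k\rho - 1}$ happen to be invertible representation spheres, and checking that the one nontrivial restriction map is the one appearing in the injectivity condition. The only point requiring a line of care is confirming that $T^{jump}$ for the dimension function $\nu(n,H) = \lfloor n/|H| \rfloor$ consists of just the trivial subgroup at even $n$ and at odd $n$ (so $L^{inj}$ always imposes the same single injectivity constraint), which is an elementary check with the floor function.
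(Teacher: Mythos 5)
Your overall route is exactly the paper's (the paper treats this corollary as an immediate specialization of Theorem \ref{thm:lit-review} and does not spell out the details), but the floor-function check you flag as ``elementary'' at the end contains an error that also invalidates your stated reasoning for the first formula in part (iii). You claim that for odd $n = 2n-1$ only the trivial subgroup is an $n$-jump, so that $T^{jump} = C_2/e$, and conclude that $L^{inj}$ is trivial ``since restriction to $\underline{M}(C_2)$ is always present.'' Neither step is right. For $n$ odd one has $\lfloor (n+1)/2\rfloor = (n+1)/2 > (n-1)/2 = \lfloor n/2\rfloor$, so $C_2$ is also a jump: $\nu_{sl}$ jumps at every odd integer at \emph{every} subgroup of $C_2$, and $T^{jump} = C_2/e \amalg C_2/C_2$. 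With $T^{jump} = C_2/e$ your conclusion would actually be false — $L^{inj}$ would quotient by $\ker(\mathrm{res})$, which is generally nonzero — and the phrase ``restriction is always present'' does not supply injectivity of that map. The correct reason $L^{inj}$ is the identity in odd degree is that $T^{jump}$ contains the terminal orbit $C_2/C_2$, so the projection $T^{jump}\times T \to T$ has a section and the induced restriction $\underline{M}(T)\to\underline{M}(T^{jump}\times T)$ is split injective for every $\underline{M}$; equivalently, since $\nu_{sl}$ jumps at $2n-1$ one has $\mathsf{Slice}_{2n-1}=\heartsuit_{2n-1}$ by the proposition on jumps, so there is nothing to localize. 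Your computation of $T^{jump}$ in even degree, and the rest of parts (i)--(iii), is correct.
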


\subsection{$C_p$-spectra}\label{ssec:Cp}

By the
results in \S\ref{ssec:G}, 
we can already
deduce a description of
the all the categories of slices
for $C_p$-spectra. We find,
as in \cite{HY}, that a description
is possible purely in terms
of $RO(C_p)$-graded
homotopy Mackey functors. In
this section we will employ
the following notation:
	\begin{itemize}
	\item We fix an odd prime $p$. 
	\item We fix a generator $\gamma$
	of $C_p$.
	\item We denote by $\lambda$
	the 2-dimensional real representation
	of $C_p$ where $\gamma$ acts
	by rotation through the angle $2\pi/p$. 
	\item Given a Mackey functor $\underline{M}$
	for $C_p$ valued in abelian groups,
	we denote by $\mathrm{tr}(\underline{M})$
	the sub-Mackey functor generated under
	the transfer by $\underline{M}(C_p)$. 
	Equivalently, $\mathrm{tr}(\underline{M})$
	is defined by the exact sequence:
		\[
		0 \to \mathrm{tr}(\underline{M})
		\to
		\underline{M} \to \Phi^{C_p}\underline{M} \to 0.
		\]
	\end{itemize}

\begin{theorem}[Hill-Yarnall \cite{HY}]
	\begin{enumerate}[(i)]
	\item The functor
		\[
		\underline{\pi}_{n\rho -1}:
		\mathsf{Slice}_{pn -1}
		\longrightarrow
		\mathsf{Mack}(C_p, \mathsf{Ab})
		\] 
	is an equivalence of categories for all $n \in \mathbb{Z}$.
	\item The functor
		\[
		\underline{\pi}_{n\rho +k\lambda}:
		\mathsf{Slice}_{pn + 2k}
		\longrightarrow
		\mathsf{Mack}(C_p, \mathsf{Ab})
		\]
	is fully faithful for all $n \in \mathbb{Z}$ and $0 \le k \le \dfrac{p-3}{2}$.
	The essential image is spanned by those Mackey functors all of whose
	restriction maps are injective.
	\item The functor
		\[
		\underline{\pi}_{n\rho + k\lambda - 1}:
		\mathsf{Slice}_{pn + 2k - 1}
		\longrightarrow
		\mathsf{Mack}(C_p, \mathsf{Ab})
		\]
	is fully faithful for all $n \in\mathbb{Z}$ and
	$1 \le k \le \dfrac{p-1}{2}$. The essential image is spanned
	by those Mackey functors all of whose transfer maps are surjective.
	\item Given a $C_p$-spectrum $X$, its slices are determined by
	the formulae:
		\[
		\underline{\pi}_{n\rho -1}P^{pn-1}_{pn-1}X = 
		\underline{\pi}_{n\rho -1}X.
		\]
		\[
		\underline{\pi}_{n\rho +k\lambda}P^{pn+2k}_{pn +2k}X
		=
		\frac{\underline{\pi}_{n\rho + k\lambda}X}{\mathrm{ker}(\mathrm{res})},
		\quad 0\le k \le \frac{p-3}{2}.
		\]
		\[
		\underline{\pi}_{n\rho + k\lambda - 1}P^{pn + 2k -1}_{pn + 2k-1}X
		=
		\mathrm{tr}\left(\underline{\pi}_{n\rho + k\lambda - 1}X\right),
		\quad
		1\le k\le \frac{p-1}{2}.
		\]
	\end{enumerate}
\end{theorem}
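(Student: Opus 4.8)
The plan is to run through the three families of degrees appearing in the statement separately, according to the residue of the degree modulo $p$, in each case producing a convenient isotropic slice $m$-sphere and then invoking the machinery of \S\ref{ssec:slice-module} (Theorem \ref{thm:slices-as-modules}) and, where $W$ is not invertible, of \S\ref{ssec:slices-as-mack} (Theorem \ref{thm:slices-as-tw-mack}). Write $m = pn + r$ with $0 \le r \le p-1$. For $\mathcal{O} = \mathcal{O}_{C_p}$ the poset $\mathsf{P}_{\mathcal{O}}$ has exactly two elements ($C_p/C_p$ minimal, $C_p/e$ maximal), the dimension function is $\nu(m,e) = m$ and $\nu(m,C_p) = \lfloor m/p\rfloor = n$, and the orbit $C_p/e$ is always an $m$-jump while $C_p/C_p$ is one exactly when $p \mid m+1$, i.e. when $r = p-1$. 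A virtual representation $V = a\mathbf{1} + \sum_j b_j\lambda_j$ is an isotropic slice $m$-sphere precisely when $a = n$ and $2\sum_j b_j = m - n = (p-1)n + r$, so such a $V$ exists if and only if $r$ is even; concretely one may take $W = S^{n\rho + k\lambda}$ when $r = 2k$ with $0 \le k \le (p-3)/2$, and $W = S^{n\rho - 1}$ when $r = p-1$. For $r$ odd no (virtual) representation sphere works, and a different $W$ is needed.

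For the representation-sphere cases $W$ is invertible, so $\underline{\mathrm{End}}(W) \cong \underline{\mathrm{End}}(S^0) = \underline{A}$ and Theorem \ref{thm:slices-as-modules} identifies $\heartsuit_m$ with $\mathsf{Mack}(C_p;\mathsf{Ab})$ via $\underline{\pi}_V = \underline{[S^V,-]}$; it remains to unwind the localization $L^{inj}$. When $W = S^{n\rho-1}$ (case $(i)$) both orbits are $m$-jumps, so $T^{jump} = C_p/e \sqcup C_p/C_p$; since $T^{jump}\times T$ always contains $T$ as a retract, the map $\underline{M}(T) \to \underline{M}(T^{jump}\times T)$ is split injective for every $T$, hence $L^{inj} = \mathrm{id}$, $\mathsf{Slice}_{pn-1} = \heartsuit_{pn-1} = \mathsf{Mack}(C_p;\mathsf{Ab})$ (consistently, $\nu$ jumps at $pn-1$), and $\underline{\pi}_{n\rho-1}P^{pn-1}_{pn-1}X = L^{inj}\underline{\pi}_{n\rho-1}X = \underline{\pi}_{n\rho-1}X$. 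When $W = S^{n\rho+k\lambda}$ (case $(ii)$) only $C_p/e$ is an $m$-jump, so $T^{jump} = C_p/e$; computing $C_p/e \times C_p/C_p \cong C_p/e$ and that $C_p/e \times C_p/e$ is a disjoint union of $p$ copies of $C_p/e$ shows that the injectivity constraint on $\underline{M}$ reduces to injectivity of the single restriction $\mathrm{res}\colon \underline{M}(C_p/C_p)\to\underline{M}(C_p/e)$, and the explicit description of $L^{inj}$ in Lemma \ref{lem-alg-loc} then yields $\underline{\pi}_{n\rho+k\lambda}P^m_m X = \underline{\pi}_{n\rho+k\lambda}X/\ker(\mathrm{res})$. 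Since $\nu$ does not jump at these $m$, $\mathsf{Slice}_m$ is a proper reflective subcategory of $\heartsuit_m$, so $\underline{\pi}_{n\rho+k\lambda}$ is fully faithful with the stated essential image.

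For $r$ odd, $m = pn + 2k - 1$ with $1 \le k \le (p-1)/2$, I would pursue either of two routes. First, a reduction: smashing with the virtual representation sphere $S^{(\frac{p-1}{2}-k)\lambda}$ preserves $\lfloor m/p\rfloor$ at every subgroup, so Theorem \ref{thm:lit-review}$(v)$ gives an equivalence $\mathsf{Slice}_{pn+2k-1} \xrightarrow{\ \cong\ } \mathsf{Slice}_{(n+1)p-2}$, and Theorem \ref{thm:lit-review}$(iv)$ identifies the target with the category of Mackey functors whose transfer maps are surjective. Second, an intrinsic route: build an isotropic slice $m$-sphere $W$ directly by the method of Proposition \ref{prop-sph-is-test} — start from the inflation of $S^n$, which already has $W^{\Phi C_p} = S^n$, and attach free $C_p$-cells to raise the underlying sphere to dimension $m$ without altering $\Phi^{C_p}$ — and observe that $\underline{\mathrm{End}}(W)$ is automatically geometrically split for $C_p$ (the splitting at $C_p/e$ exists since $\downarrow_{C_p/e}W = W^{\Phi e}$, and at $C_p/C_p$ since $W^{\Phi C_p}$ is a single sphere, as in the remarks following Lemma \ref{lem:endo-geom-split}), so that Theorem \ref{thm:slices-as-tw-mack} applies; the injectivity condition defining $\mathsf{TwMack}^{\mathrm{loc}}_m$, rewritten using the Frobenius relation $m\cdot\mathrm{tr}(r) = \mathrm{tr}(\mathrm{res}(m)\cdot r)$, becomes surjectivity of the transfer of the associated (ordinary) Mackey functor. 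In either route the slice formula $\underline{\pi}_{n\rho+k\lambda-1}P^m_m X = \mathrm{tr}\big(\underline{\pi}_{n\rho+k\lambda-1}X\big)$ drops out of the explicit form of $L^{inj}$, because the complement of the image of $L^{inj}$ is exactly the part of the module killed by all restrictions, which corresponds to the transfer sub-Mackey-functor.

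The hard part is the odd-residue case, specifically reconciling whichever description one obtains with the one phrased in terms of $\underline{\pi}_{n\rho+k\lambda-1}$. In the first route this requires showing that the composite equivalence carries $\underline{\pi}_{(n+1)\rho-2}$ to $\underline{\pi}_{n\rho+k\lambda-1}$, i.e. an $RO(C_p)$-graded homotopy comparison $\underline{\pi}_{\rho-1-\frac{p-1}{2}\lambda}H\underline{N} \cong \underline{N}$ for Mackey functors $\underline{N}$ with surjective transfers (using that $C_p$-slices are $RO(C_p)$-graded suspensions of Eilenberg–MacLane spectra, which for $C_p$ follows from the even cases together with the shift above). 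In the second route it requires pushing the twisted-Mackey-functor description through the Morita reduction $\mathrm{End}(W^{\Phi e}) \cong M_N(\mathbb{Z})$ and the combinatorics of the Burnside category, in the spirit of Proposition \ref{prop:pre-hereditary}, and then checking that the resulting twisted restriction/transfer data on $\{[W_{(C_p/H)}, \downarrow_H X]\}$ reassembles into the genuine Mackey functor $\underline{\pi}_{n\rho+k\lambda-1}X$ with its usual restriction and transfer. Either way, the rest of the argument is bookkeeping with the two-object poset and the floor function, as in Corollary \ref{cor:HY-char} and Theorem \ref{thm:lit-review}.
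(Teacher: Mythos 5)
Your treatment of cases (i) and (ii) is correct and is essentially the paper's argument: working directly with the invertible isotropic slice $m$-spheres $S^{n\rho-1}$ and $S^{n\rho+k\lambda}$ (so that $\underline{\mathrm{End}}(W)\cong\underline{A}$ and $\heartsuit_m\cong\mathsf{Mack}(C_p;\mathsf{Ab})$ by Theorem~\ref{thm:slices-as-modules}), your identification of the $m$-jumps and of the localization $L^{inj}$ gives parts (i), (ii) and the first two slice formulas in (iv), which is the same content as shifting by $k\lambda$ to $\mathsf{Slice}_{pn}$ and invoking Theorem~\ref{thm:lit-review}(ii)--(iii).

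Your Route~1 for the odd-residue case is the paper's intended argument, but you have chosen the shift suboptimally, and that is exactly what manufactures the ``hard part.'' Smashing with $S^{(\tfrac{p-1}{2}-k)\lambda}$ --- the shift produced by iterating $\Sigma^{\lambda}$ --- does give an equivalence $\mathsf{Slice}_{pn+2k-1}\to\mathsf{Slice}_{p(n+1)-2}$, but the composite $\underline{\pi}_{(n+1)\rho-2}\circ\Sigma^{(\tfrac{p-1}{2}-k)\lambda}$ is $\underline{\pi}_{n\rho+k\lambda-1+V_0}$ with $V_0=\rho-1-\tfrac{p-1}{2}\lambda$, which is a nonzero element of $RO(C_p)$ once $p>3$; the reconciliation $\Sigma^{V_0}H\underline{N}\simeq H\underline{N}$ you would then need is not obvious and you leave it open. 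But Theorem~\ref{thm:lit-review}(v) is stated for an \emph{arbitrary} virtual representation satisfying the floor-function inequalities, and the correct choice is $V=\rho-1-k\lambda$: it has the same fixed-point dimensions as $(\tfrac{p-1}{2}-k)\lambda$ (namely $p-1-2k$ at $e$ and $0$ at $C_p$), so the equalities in (v) still hold and $\Sigma^V\colon\mathsf{Slice}_{pn+2k-1}\to\mathsf{Slice}_{p(n+1)-2}$ is still an equivalence, but now $(n+1)\rho-2-V=n\rho+k\lambda-1$ on the nose. Hence $\underline{\pi}_{(n+1)\rho-2}\circ\Sigma^V=\underline{\pi}_{n\rho+k\lambda-1}$ exactly, Theorem~\ref{thm:lit-review}(iv) finishes part (iii) and the last slice formula, and there is no residual $RO(C_p)$-graded comparison to make. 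Your Route~2 (constructing a non-representation-sphere isotropic slice sphere and running the twisted Mackey functor machinery) is a genuinely different and heavier argument; the paper carries out a version of it for $m=1$ using $S^{\lambda/2}$ as an illustration of the general theory, but it is not needed to prove the stated theorem.
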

\begin{proof} This follows from the Hill-Yarnall result on periodicity
(seen above as Theorem \ref{thm:lit-review}(v)) applied
to the representation $\lambda$, together with parts
(ii)-(iv) of that same theorem.
\end{proof}

In this section we give a different take on this result.
We find that, even though $\mathsf{Slice}_1$ is a
localization of the category of Mackey functors,
it is not the case that $\heartsuit_1$ is equivalent
to the category of Mackey functors. We then describe
explicitly
how to move back and forth between the two
different descriptions of $\mathsf{Slice}_1$.

We begin by recalling an example of an isotropic slice $1$-sphere.

\begin{definition} Let $S^{\lambda/2}$ denote the cofiber
of the fold map
	\[
	C_{p+} \to S^0.
	\]
This is an isotropic slice $1$-sphere by inspection.
\end{definition}

The reason for the name is the following lemma.

\begin{lemma} There is a cofiber sequence
	\[
	C_{p+} \wedge S^1 \to S^{\lambda/2}
	\to S^{\lambda}.
	\]
\end{lemma}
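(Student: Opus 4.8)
The plan is to produce the claimed cofiber sequence by starting from the standard cofiber sequence defining $S^{\lambda/2}$, namely
\[
C_{p+} \longrightarrow S^0 \longrightarrow S^{\lambda/2},
\]
and comparing it with the analogous cofiber sequence for the representation sphere $S^\lambda$. Recall that $\lambda$ is the $2$-dimensional rotation representation, so $S^\lambda$ has an evident $C_p$-CW structure with one fixed $0$-cell, one free $1$-cell, and one free $2$-cell; equivalently, the unit sphere $S(\lambda)$ is a free $C_p$-space equivalent to $C_{p+} \wedge S^1$ after one chooses a $C_p$-equivariant identification of $S(\lambda)$ with $C_p \times S^1$ (which exists since $S(\lambda)$ is a circle on which $C_p$ acts freely by rotation). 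The cofiber sequence $S(\lambda)_+ \to S^0 \to S^\lambda$ coming from the inclusion of the origin then reads
\[
C_{p+} \wedge S^1 \longrightarrow S^0 \longrightarrow S^\lambda .
\]

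First I would set up both of these as cofiber sequences in $\mathsf{Sp}^{C_p}$ with a common middle term $S^0$, and then identify the map $C_{p+} \wedge S^1 \to S^0$ above with (a suspension of, or a representative of) the composite $C_{p+}\wedge S^1 \to C_{p+} \to S^0$, where the first map is $C_{p+}$ smashed with a chosen map $S^1 \to S^0$ — here one must be a little careful, since $S^1 \to S^0$ is zero, so in fact the correct statement is that the attaching map $S(\lambda)_+ \to S^0$ factors through the collapse $S(\lambda)_+ \to S^0$ in a way that exhibits $S(\lambda)_+ \simeq C_{p+}\wedge S^1$ sitting over the fold map. The cleanest route: use the octahedral axiom (the stable $3\times 3$ lemma) applied to the composite $C_{p+}\wedge S^1 \to C_{p+} \to S^0$. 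The cofiber of $C_{p+} \to S^0$ is $S^{\lambda/2}$ by definition; the cofiber of $C_{p+}\wedge S^1 \to S^0$ is $S^\lambda$ by the CW description above (this uses that $C_p$ acts freely on $S(\lambda)$, so the attaching data is exactly $C_{p+}\wedge S^1$); and the cofiber of $C_{p+}\wedge S^1 \to C_{p+}$, i.e. $C_{p+}\wedge(S^1 \to S^0)$, is $C_{p+}\wedge S^1$ since $S^1 \to S^0$ has cofiber $S^1$. The octahedral axiom then yields precisely the cofiber sequence
\[
C_{p+}\wedge S^1 \longrightarrow S^{\lambda/2} \longrightarrow S^\lambda
\]
as the third edge of the octahedron, which is the assertion.

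The main obstacle will be pinning down that the map $C_{p+}\wedge S^1 \to C_{p+}$ appearing as one leg of the octahedron is genuinely the one induced by the nullhomotopic map $S^1 \to S^0$, rather than some twisted version — in other words, verifying that the free $1$-sphere $S(\lambda)$ with its inclusion into $S^0$ really is modeled by $C_{p+}\wedge S^1 \to C_{p+} \to S^0$ up to equivalence under $S^0$. This is a concrete but slightly fiddly point about the $C_p$-CW structure on $S^\lambda$: one writes down the cell structure of $S(\lambda)$ explicitly (a $C_p$-orbit of arcs), checks the attaching map of the top cell realizes the fold, and tracks that the composite to $S^0$ is the collapse map. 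Once that identification is in hand, everything else is formal from the triangulated/stable structure. Alternatively, and perhaps more simply, one can avoid the octahedron entirely: smash the defining cofiber sequence $C_{p+} \to S^0 \to S^{\lambda/2}$ with the cofiber sequence $S^0 \to S^\lambda \to \Sigma S(\lambda)_+ \simeq \Sigma(C_{p+}\wedge S^1)$ — no, this introduces extra terms; the octahedral approach is the right one. I would therefore carry out: (1) write the CW structure of $S^\lambda$ and extract the cofiber sequence $C_{p+}\wedge S^1 \to S^0 \to S^\lambda$; (2) observe this map factors as $C_{p+}\wedge S^1 \to C_{p+} \xrightarrow{\nabla} S^0$; (3) apply the octahedral axiom to this factorization and read off the third cofiber sequence; (4) identify the resulting middle and right terms with $S^{\lambda/2}$ and $S^\lambda$ respectively.
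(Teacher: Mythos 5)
The octahedral idea is the right tool, but your factorization is wrong and relies on a false identification. The key error is the claim that $S(\lambda)_+ \simeq C_{p+}\wedge S^1$: the unit sphere $S(\lambda)$ is a \emph{single} circle on which $C_p$ acts freely by rotation, so $S(\lambda)_+$ is a connected circle with a disjoint basepoint, whereas $C_{p+}\wedge S^1$ is a wedge of $p$ circles. These are not equivalent even nonequivariantly (compare $\tilde H_*$: the first has $\mathbb{Z}$ in degrees $0$ and $1$, the second has $\mathbb{Z}^p$ in degree $1$ only). There is no $C_p$-equivariant identification $S(\lambda)\cong C_p\times S^1$ — the left side is connected, the right side has $p$ components. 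Consequently your step (1), the sequence $C_{p+}\wedge S^1 \to S^0 \to S^\lambda$, is not a cofiber sequence, and the factorization in step (2) through $C_{p+}$ is not available (in fact the only $C_p$-equivariant based map $S(\lambda)_+ \to C_{p+}$ is zero). A secondary error is the assertion that $C_{p+}\wedge(S^1 \to S^0)$ has cofiber $C_{p+}\wedge S^1$: the map $S^1\to S^0$ is nullhomotopic, so its cofiber is $S^0\vee S^2$, not $S^1$; and a map between the two free spectra $C_{p+}\wedge S^1$ and $C_{p+}$ can never have cofiber $S^{\lambda/2}$ or $S^\lambda$, which are not free.

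The paper runs the octahedron on a different composite. The circle $S(\lambda)$ has a $C_p$-CW structure with $\mathrm{sk}_0 = C_p$ ($p$ vertices) and one free $1$-cell $C_p\times D^1$, giving the cofiber sequence $C_{p+}\to S(\lambda)_+ \to C_{p+}\wedge S^1$. Feeding the composite
\[
C_{p+}\;\longrightarrow\; S(\lambda)_+ \;\longrightarrow\; D(\lambda)_+\simeq S^0
\]
into the octahedral axiom, one has $\mathrm{cof}(C_{p+}\to S(\lambda)_+)=C_{p+}\wedge S^1$, $\mathrm{cof}(C_{p+}\to S^0)=S^{\lambda/2}$ by definition, and $\mathrm{cof}(S(\lambda)_+\to S^0)=S^\lambda$; the resulting third edge of the octahedron is exactly the desired cofiber sequence
\[
C_{p+}\wedge S^1 \;\longrightarrow\; S^{\lambda/2} \;\longrightarrow\; S^\lambda.
\]
So you should keep the octahedral strategy but swap your factorization for $C_{p+}\to S(\lambda)_+ \to S^0$, and drop the mistaken identification of $S(\lambda)_+$ with $C_{p+}\wedge S^1$.
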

\begin{proof} Let $S(\lambda)$ denote the
unit sphere in the representation $\lambda$.
Let $\mathrm{sk}_0S(\lambda) = \{z : z^p=1\}
\cong C_{p}$. And notice that
we have a cofiber sequence
	\[
	C_{p+} \to C_{p+} \to S(\lambda)_+
	\]
corresponding to attaching the 1-cell $C_p \times I$.
Now use
the cofiber sequence
	\[
	S(\lambda)_+ \hookrightarrow D(\lambda)_+
	\to S^{\lambda}
	\]
to induce a cell structure on $S^{\lambda}$.
The attaching maps for this cell structure
show that $\mathrm{sk}_1S^{\lambda} = S^{\lambda/2}$
(after taking suspension spectra) and
produce the desired cofiber sequence above.
\end{proof}

From \S2 we know that we must study the Weyl group
action on the geometric fixed points. Luckily, there aren't
many subgroups of $C_p$.

\begin{lemma}\label{lem:aug-ideal} Let $J$ denote the augmentation
ideal in $\mathbb{Z}[C_p]$. Then there is a canonical
$C_p$-equivariant isomorphism
	\[
	\pi_1(\downarrow_1\!S^{\lambda/2}) \stackrel{\cong}{\longrightarrow} J.
	\]
\end{lemma}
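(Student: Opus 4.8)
The plan is to read off the desired isomorphism directly from the cofiber sequence
\[
C_{p+} \wedge S^1 \longrightarrow S^{\lambda/2} \longrightarrow S^{\lambda}
\]
established in the previous lemma, after restricting to the trivial subgroup. First I would apply $\downarrow_1$ to this cofiber sequence. Since $\downarrow_1 S^{\lambda}$ is the underlying sphere $S^2$ (the representation $\lambda$ is $2$-dimensional), and $\downarrow_1(C_{p+} \wedge S^1) = (C_p)_+ \wedge S^1$ is a wedge of $p$ copies of $S^1$ with the regular permutation action of $C_p$, the long exact sequence on $\pi_*$ reads in the relevant range
\[
\pi_2(S^2) \stackrel{\partial}{\longrightarrow} \pi_1\big((C_p)_+ \wedge S^1\big) \longrightarrow \pi_1(\downarrow_1 S^{\lambda/2}) \longrightarrow \pi_1(S^2) = 0.
\]
Here $\pi_1((C_p)_+ \wedge S^1) \cong \mathbb{Z}[C_p]$ as a $C_p$-module, and $\pi_2(S^2) \cong \mathbb{Z}$ with trivial action. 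So the result will follow once I identify the boundary map $\partial\colon \mathbb{Z} \to \mathbb{Z}[C_p]$ with the inclusion of the subgroup generated by the norm element $\sum_{g} g$, whose cokernel is canonically $\mathbb{Z}[C_p]/(\mathrm{N}) \cong J$ via the map sending $g \mapsto g - 1$ — equivalently, to make the statement as phrased, I should instead track the dual/transfer picture so that the quotient is literally the augmentation ideal $J = \ker(\mathbb{Z}[C_p] \to \mathbb{Z})$; the two are isomorphic $C_p$-equivariantly and I would simply pick the identification that makes $\pi_1$ land in $J$ rather than its dual.

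The cleaner route, which I would actually carry out, is to build the cofiber sequence from the cell structure already described in the proof of the preceding lemma: $S^{\lambda/2} = \mathrm{sk}_1 S^{\lambda}$ is obtained from $S(\lambda)_+$, and $S(\lambda)$ has a $C_p$-CW structure with $\mathrm{sk}_0 S(\lambda) = C_p$ and a single free $1$-cell $C_p \times I$, with attaching map the fold $C_{p+} \sqcup C_{p+} \to C_{p+}$ of the two endpoints (shifted by $\gamma$ on one end). Restricting this cellular chain complex to the trivial group and taking $\pi_1$ of the relevant Thom/suspension spectrum, the boundary in the cellular chain complex of $S(\lambda)$ is exactly the map $\mathbb{Z}[C_p] \to \mathbb{Z}[C_p]$, $x \mapsto (\gamma - 1)x$. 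Hence $\widetilde{H}_1(S(\lambda)) = \ker(\gamma-1) = \mathbb{Z}\cdot\mathrm{N}$ and the reduced homology of $S^{\lambda/2}$ in degree $1$ is $\mathbb{Z}[C_p]/(\gamma-1)\mathbb{Z}[C_p]$ — no wait, I must be careful about which mapping cone I am taking; I would recompute using the cofiber sequence $S(\lambda)_+ \to D(\lambda)_+ \to S^{\lambda}$ together with $S(\lambda)_+ \to S^0$ from collapsing, so that $S^{\lambda/2}$ sits in $C_{p+}\wedge S^1 \to S^{\lambda/2} \to S^{\lambda}$ with the first map induced by the $1$-cell. The upshot is that $\pi_1(\downarrow_1 S^{\lambda/2})$ is the cokernel of $\mathbb{Z}\xrightarrow{\mathrm{N}}\mathbb{Z}[C_p]$, and I would exhibit the explicit $C_p$-isomorphism from this cokernel to $J$ by $[x] \mapsto \varepsilon'(x)$ where $\varepsilon'$ is a suitable splitting — or, dually, identify it with $J$ directly by noting $J \cong \mathrm{coker}(\mathbb{Z} \xrightarrow{\mathrm{N}} \mathbb{Z}[C_p])$ via $g \mapsto g-1$.

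\textbf{Main obstacle.} I expect the only real subtlety is bookkeeping: pinning down the correct direction of the connecting homomorphism and the correct $C_p$-equivariant identification of $\mathrm{coker}(\mathrm{N})$ (or $\ker(\varepsilon)$) with $J$ so that it is truly canonical — i.e. functorial in the sphere and compatible with the later use of this lemma in computing $[\,W_{(C_p/e)}, \downarrow_e X\,]$-type groups and the twisted Mackey functor structure. There is no hard homotopy theory here; everything is forced by the two-cell structure of $S^{\lambda}$ and the fact that restriction to $e$ kills nothing in the relevant range. So the plan is: (1) restrict the cofiber sequence to $e$; (2) write out the $\pi_1$ long exact sequence and identify all three terms and the boundary map via the cellular chain complex of $S(\lambda)$; (3) conclude $\pi_1(\downarrow_1 S^{\lambda/2}) \cong \mathbb{Z}[C_p]/(\mathrm{N})$ as $C_p$-modules; (4) give the explicit equivariant isomorphism $\mathbb{Z}[C_p]/(\mathrm{N}) \xrightarrow{\cong} J$, $g \mapsto g - 1$, and check it is well-defined and bijective. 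This last step is the one deserving a sentence of care, but it is elementary.
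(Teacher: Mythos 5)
Your argument is sound in outline and does reach the right answer, but it runs in a genuinely different direction from the paper's, and along the way the explicit identification you propose is incorrect.

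You work with the cofiber sequence $C_{p+}\wedge S^1 \to S^{\lambda/2} \to S^{\lambda}$ (the ``$1$-skeleton'' sequence), restrict to the trivial subgroup, and realize $\pi_1(\downarrow_1\!S^{\lambda/2})$ as the cokernel of a connecting map $\mathbb{Z}\to\mathbb{Z}[C_p]$; you then still have to identify that cokernel with $J$. The paper instead rotates the \emph{defining} cofiber sequence $C_{p+}\to S^0 \to S^{\lambda/2}$ to
\[
S^0 \longrightarrow S^{\lambda/2} \longrightarrow C_{p+}\wedge S^1 \longrightarrow S^1,
\]
restricts to the trivial subgroup, and applies $\pi_1$. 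One then reads off $\pi_1(\downarrow_1\!S^{\lambda/2})$ as the \emph{kernel} of the fold map $\mathbb{Z}[C_p]\to\mathbb{Z}$, i.e.\ literally the augmentation ideal $J$, with no auxiliary identification. The only thing to check is that $\pi_1(S^0)=\mathbb{Z}/2 \to \pi_1(\downarrow_1\!S^{\lambda/2})$ vanishes, which it does because the target is torsion-free (equivalently, because the fold is split on underlying spectra). So both routes are valid, but the paper's is the one that makes the isomorphism ``canonical'' on the nose and with less bookkeeping; yours buys nothing in exchange for the extra step, and in fact it trades the clean exhibition of $J$ as a kernel for a cokernel that is only isomorphic to $J$.

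The concrete error: the map $g \mapsto g - 1$ you propose for the identification $\mathbb{Z}[C_p]/(N)\xrightarrow{\cong} J$ is neither well-defined on the quotient (under it $N=\sum_g g$ would be sent to $N - p \neq 0$) nor $C_p$-equivariant (the rule $\sum a_g g \mapsto \sum a_g(g-1)$ does not commute with left multiplication by $\gamma$). The correct isomorphism is $\bar{x}\mapsto(\gamma-1)x$: this is well-defined since $(\gamma-1)N=0$, is $C_p$-equivariant because $C_p$ is abelian, has image $(\gamma-1)\mathbb{Z}[C_p]=J$, and has kernel exactly $(N)$ since the fixed submodule of $\mathbb{Z}[C_p]$ is $\mathbb{Z}\cdot N$. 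With that repair your argument goes through; but again, the paper's choice of cofiber sequence avoids this step entirely.
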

\begin{proof} Apply $\pi_1$ to the cofiber sequence:
	\[
	S^0 \to \downarrow_1\!S^{\lambda/2}
	\longrightarrow
	\downarrow_1\!C_{p+} \wedge S^1
	\longrightarrow
	S^1.
	\]
\end{proof}

\begin{proposition} The category $\mathsf{TwMack}_1$ is 
equivalent to
the category whose objects consist of the following data:
	\begin{itemize}
	\item An abelian group $M_{(C_p/C_p)}$,
	\item A $C_p$-module
	$M_{(C_p)}$,
	\item Maps of abelian groups:
		\[
		R: M_{(C_p/C_p)} \to M_{(C_p)}
		\]
		\[
		T: M_{(C_p)} \to M_{(C_p/C_p)}
		\]
	\end{itemize}
subject to the conditions:
	\begin{itemize}
	\item $T( (1+ \cdots + \gamma^{p-1})x) = 0$,
	\item $(1+ \cdots + \gamma^{p-1})R(x) = 0$,
	\item $TR(x) = (1-\gamma)x$.
	\end{itemize}

\end{proposition}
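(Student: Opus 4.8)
The plan is to instantiate the general machinery of \S\ref{ssec:slices-as-mack} with the explicit isotropic slice $1$-sphere $W = S^{\lambda/2}$ and unwind what Theorem \ref{thm:slices-as-tw-mack} says in this case. The poset $\mathsf{P}_{\mathcal{O}_{C_p}}$ has exactly two elements, $[C_p/C_p] > [C_p/e]$, so an object of $\mathsf{TwMack}_1$ consists of: an abelian group $M_{(C_p/C_p)}$ with trivial $\mathrm{Aut}(C_p/C_p)$-action (the Weyl group is trivial), an abelian group $M_{(C_p)}$ with an action of $\mathrm{Aut}(C_p/e) \cong C_p$, together with the restriction/transfer data of Definition \ref{defn:r-phi-modules} between the two strata, subject to the double-coset and composition relations. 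The first task is to compute the relevant invariants of $W = S^{\lambda/2}$: by inspection $W^{\Phi C_p} = S^1$ is a single sphere, so $J_{C_p/C_p} = \mathbb{Z}$ with trivial action and $\mathrm{End}(W^{\Phi C_p}) = \mathbb{Z}$; and $\downarrow_e W = W^{\Phi e}$ has $\pi_1 = J$, the augmentation ideal of $\mathbb{Z}[C_p]$, by Lemma \ref{lem:aug-ideal}, so $J_{C_p/e} = J$ as a $C_p$-module and $\mathrm{End}(W^{\Phi e}) = \mathrm{End}_{\mathbb{Z}}(J)$. Since $W^{\Phi T}$ is a single sphere for the unique non-maximal $T = C_p/C_p$ (indeed $C_p/C_p$ is itself maximal here, and $C_p/e$ is minimal), the splitting hypothesis of Lemma \ref{lem:endo-geom-split} is automatic by the final Remark of \S\ref{ssec:slices-as-mack}.

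Next I would apply Morita theory as in Lemma of \S\ref{ssec:slices-as-mack} together with Remark \ref{rmk:lil-simpler} to replace the $\mathrm{End}(W^{\Phi e})$-$C_p$-module data on $\{M_{(T)} \otimes J_T^*\}$ by plain $C_p$-module data on the $M_{(T)}$'s. Concretely, $M_{(C_p/e)} \otimes J^*$ is the module over $\mathrm{End}_{\mathbb{Z}}(J) \cong \mathrm{End}(W^{\Phi e})$ corresponding to the $C_p$-module $M_{(C_p)} := M_{(C_p/e)}$, and the only remaining data in Definition \ref{defn:r-phi-modules} (since there is only one pair $[C_p/C_p] > [C_p/e]$ and $\mathrm{Hom}(C_p/C_p, C_p/e)$ is a single-orbit set under $\mathrm{Aut}(C_p/e) = C_p$) are a transfer map and a restriction map. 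Using Remark \ref{rmk:lil-simpler} to present these as maps of abelian groups rather than as module maps, the transfer becomes a map
\[
T \colon \left(M_{(C_p)} \otimes_{\mathrm{End}(W^{\Phi e})} \cdots\right)_{C_p} \longrightarrow M_{(C_p/C_p)},
\]
which after the Morita identification and the coinvariants simplifies to $T\colon M_{(C_p)}/(\text{norm}) \to M_{(C_p/C_p)}$, i.e. an ordinary map $T \colon M_{(C_p)} \to M_{(C_p/C_p)}$ satisfying $T((1 + \gamma + \cdots + \gamma^{p-1})x) = 0$; dually the restriction becomes $R \colon M_{(C_p/C_p)} \to M_{(C_p)}$ landing in the norm-killed part, giving the relation $(1 + \gamma + \cdots + \gamma^{p-1})R(x) = 0$. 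The double-coset formula of Definition \ref{defn:r-phi-modules}, traced through the identification $J_{C_p/e} = J$ and the formula for the trace, yields $TR(x) = (1 - \gamma)x$: this is exactly the statement that the composite $M_{(C_p/C_p)} \xrightarrow{R} M_{(C_p)} \xrightarrow{T} M_{(C_p/C_p)}$ is the endomorphism induced by the element $\sum_{g \in C_p} g^{-1}(1)g$ acting on $J \subseteq \mathbb{Z}[C_p]$, which one computes to be multiplication by $1 - \gamma$ on the generator. The composition-of-restrictions and composition-of-transfers relations are vacuous because the poset chain has length only two.

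The main obstacle, and the step I would spend the most care on, is the bookkeeping in the previous paragraph: tracking how the Morita equivalence $\mathsf{RMod}_{\mathrm{End}(J)\text{-}C_p} \simeq \mathsf{Mod}_{C_p}$ of the Lemma in \S\ref{ssec:slices-as-mack} interacts with the $\mathrm{Hom}(C_p/C_p, C_p/e)$-twisting in the definition of restriction and transfer, so that the abstract $\mathrm{trace}$ map of Definition \ref{defn:r-phi-modules} becomes the concrete relation $TR = (1-\gamma)$. The key computational input is the explicit $C_p$-equivariant isomorphism $\pi_1(\downarrow_1 S^{\lambda/2}) \cong J$ of Lemma \ref{lem:aug-ideal}: under it, the counit/unit maps $\uparrow_{C_p/e}\downarrow_{C_p/e} S^{\lambda/2} \rightleftarrows S^{\lambda/2}$ realize, on $\pi_1$, the norm and inclusion maps between $\mathbb{Z}[C_p] \otimes J$-type modules, and one must check that the induced maps on $[W_{(T)}, \downarrow_T X]$ are precisely the claimed $R, T$ with the claimed relations. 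Once this dictionary is pinned down, the equivalence of categories is formal from Theorem \ref{thm:slices-as-tw-mack}: both sides are described by the same data (a trivial-module $M_{(C_p/C_p)}$, a $C_p$-module $M_{(C_p)}$, and maps $R,T$ with the three displayed relations), and the functor $\hat{\pi}_1$ is the comparison. I would close by remarking that this also identifies $\heartsuit_1$ (before imposing the localization $L^{inj}$) and note for the reader that the extra relations $T \circ (\text{norm}) = 0$ and $(\text{norm}) \circ R = 0$ are what prevent $\heartsuit_1$ from being the category of ordinary Mackey functors, since for the latter one would instead have $TR = $ the full transfer-restriction composite without the $1-\gamma$ twist.
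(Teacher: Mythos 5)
Your approach is essentially the paper's: both unwind Definitions \ref{defn:tw-mack} and \ref{defn:r-phi-modules} for $W = S^{\lambda/2}$ with $J_{C_p/C_p} = \mathbb{Z}$ and $J_{C_p/e} = J$, reduce the transfer and restriction data to maps $R,T$ via the identifications $(M_{(C_p)}\otimes J^*)_{C_p}\cong M_{(C_p)}/\mathrm{Nm}$ and $(M_{(C_p)}\otimes J^*)^{C_p}\cong\ker(\mathrm{Nm})$, and recognize the trace as multiplication by $1-\gamma$; where you defer this last identification to ``bookkeeping,'' the paper carries it out by tensoring $M_{(C_p)}$ with the split short exact sequence $0\to\mathbb{Z}\to\mathbb{Z}[C_p]\to J^*\to 0$ (and the defining sequence for $J$) and taking coinvariants and invariants. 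Two slips in your writeup are harmless but worth correcting: $W^{\Phi C_p}$ is $S^0$, not $S^1$, since $\nu(1,C_p/C_p)=\lfloor 1/p\rfloor=0$ (though $J_{C_p/C_p}=\mathbb{Z}$ comes out right either way); and in $\mathsf{P}_{\mathcal{O}_{C_p}}$ the orbit $C_p/e$ is the maximal element and $C_p/C_p$ is minimal, the reverse of what you wrote, although Lemma \ref{lem:endo-geom-split} applies automatically under either reading because $W^{\Phi C_p}$ is a single sphere and $\downarrow_e W = W^{\Phi e}$.
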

\begin{proof} By definition, an object of $\mathsf{TwMack}_1$
consists of
	\begin{itemize}
	\item An abelian group $M_{(C_p/C_p)}$,
	\item A $C_p$-module $M_{(C_p)}$,
	\item A commutative diagram:
		\[
		\xymatrix{
		\left(M_{(C_p)} \otimes J^*\right)_{C_p}\ar[rr]^{\mathrm{trace}}
		\ar[dr]_{T'}&&
		\left(M_{(C_p)}\otimes J^*\right)^{C_p}\\
		&M_{(C_p/C_p)}\ar[ur]_{R'}&
		}
		\]
	\end{itemize}
We compute the top two pieces of this diagram in more explicit terms.
Consider the exact sequence dual to the one defining $J$:
	\[
	0 \to \mathbb{Z} \to \mathbb{Z}[C_p] \to J^* \to 0
	\]
The first map is given by $1 \mapsto (1+ \cdots + \gamma^{p-1})$.
Since this is split exact as a sequence of abelian groups, we get
an exact sequence:
	\[
	0 \to M_{(C_p)} \to \mathbb{Z}[C_p] \otimes M_{(C_p)}
	\to M_{(C_p)} \otimes J^* \to 0.
	\]
Now apply $C_p$ coinvariants to get
	\[
	M_{(C_p)}/(1-\gamma) \stackrel{(1+ \cdots + \gamma^{p-1})}{\longrightarrow}
	M_{(C_p)} \to (M_{(C_p)} \otimes J^*)_{C_p} \to 0.
	\]
The map $\mathbb{Z}[C_p] \to J$ given by $1 \mapsto (1-\gamma)$
induces an isomorphism $J^* \cong J$, and a similar argument
with the defining exact sequence for $J$ yields
	\[
	\left(M_{(C_p)} \otimes J^*\right)^{C_p} \cong 
	\mathrm{ker}\left((1+\cdots + \gamma^{p-1}):
	M_{(C_p)} \to M_{(C_p)}^{C_p}\right).
	\]
Tracing through the identifications transforms the trace map
into $(1-\gamma)$, and the result is proved.
\end{proof}

For definiteness, we choose the $\mathbb{Z}$-summand
of $J$ corresponding to the element $(1-\gamma)$
in the basis 
$(1-\gamma), (\gamma- \gamma^2), ..., (\gamma^{p-2} - \gamma^{p-1})$.
This produces a specific inclusion and retraction:
	\[
	S^1 \to \downarrow_1\!S^{\lambda/2} \to S^1
	\]
and hence natural transformations:
	\[
	R: [S^{\lambda/2}, -] \to [S^1, \downarrow_1(-)],
	\]
	\[
	T: [S^1, \downarrow_1(-)] \to [S^{\lambda/2}, -].
	\]

Combining the previous result with Theorem \ref{thm:slices-as-tw-mack}
gives:

\begin{theorem} The assignment $\hat{\pi}_1$ given by
	\[
	X \mapsto 
	\begin{gathered}
	\xymatrix{
	[S^{\lambda/2}, X]\ar@/_/[d]_R\\
	[S^1, \downarrow_1\!X]\ar@/_/[u]_T
	}
	\end{gathered}
	\]
gives an equivalence of categories:
	\[
	\hat{\pi}_1:
	\heartsuit_1 \stackrel{\cong}{\longrightarrow}
	\mathsf{TwMack}_1.
	\]
Under this equivalence, the subcategory
$\mathsf{Slice}_1$ corresponds to precisely
those objects of $\mathsf{TwMack}_1$
for which the map $R$ is injective.
\end{theorem}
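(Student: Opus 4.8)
The plan is to derive this theorem by specializing the general machinery of Section 2 to the case $G = C_p$, $n = 1$, and the isotropic slice $1$-sphere $W = S^{\lambda/2}$. By Theorem \ref{thm:slices-as-tw-mack}, we already know that $\hat{\pi}_1$ gives an equivalence $\heartsuit_1 \cong \mathsf{TwMack}_1$ and that $\mathsf{Slice}_1$ corresponds to $\mathsf{TwMack}_1^{\mathrm{loc}}$. So the content of the present theorem is twofold: (i) identify the abstract category $\mathsf{TwMack}_1$ with the concrete category of diagrams $([S^{\lambda/2}, X] \rightleftarrows [S^1, {\downarrow_1} X])$ described just above, and (ii) unwind the slice-locality condition of Definition \ref{defn:slice-local} into the statement that $R$ is injective.

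For step (i), I would first invoke the preceding Proposition, which already identifies $\mathsf{TwMack}_1$ with the category of tuples $(M_{(C_p/C_p)}, M_{(C_p)}, R, T)$ subject to the three displayed relations. To see that the functor $\hat{\pi}_1$ of Definition \ref{defn:tw-mack}, applied to an object $X \in \heartsuit_1$, really outputs this data in the claimed form, I would use the explicit choice of summand: the element $(1-\gamma)$ in the basis of $J$ determined a specific summand $W_{(C_p/C_p)} = W^{\Phi C_p} = S^1$ of ${\downarrow_1} W$ (since $C_p$ is maximal, ${\downarrow_1} W = W^{\Phi C_p}$ is not literally true — rather ${\downarrow_1}W \simeq S^1 \vee \bigvee C_{p+}\wedge S^1$, and we take the $S^1$ corresponding to $(1-\gamma)$), while $W_{(C_p)} = W$ itself. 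Thus $M_{(C_p/C_p)} = [W_{(C_p)}, X] = [S^{\lambda/2}, X]$ and $M_{(C_p)} = [W_{(C_p/C_p)}, {\downarrow_1}X] = [S^1, {\downarrow_1}X]$, and the $\mathrm{Aut}(C_p/C_p) = C_p$-action on the latter is the usual one on homotopy groups of the underlying spectrum. The maps $R$ and $T$ are exactly the "twisted restriction" and "twisted transfer" of \S0, built from the inclusion/retraction $S^1 \to {\downarrow_1}S^{\lambda/2} \to S^1$ chosen right before the statement. The three relations then follow from Lemma \ref{lem:aug-ideal} identifying $\pi_1({\downarrow_1}S^{\lambda/2}) \cong J$ together with the computation in the proof of the preceding Proposition that the trace map becomes multiplication by $(1-\gamma)$.

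For step (ii), I need to compute $T^{jump}$ for the dimension function $\nu(n, H) = \lfloor n/|H| \rfloor$ at $n = 1$. The subgroups of $C_p$ are $e$ and $C_p$; we have $\lfloor 2/1 \rfloor = 2 > 1 = \lfloor 1/1 \rfloor$, so $e$ is a $1$-jump, while $\lfloor 2/p \rfloor = 0 = \lfloor 1/p \rfloor$, so $C_p$ is a $1$-rest. Hence $T^{jump} = C_p/e = C_p$ (the free orbit). The slice-locality condition of Definition \ref{defn:slice-local} then asks that for every finite $C_p$-set $U$, the restriction $\underline{M}(U) \to \underline{M}(C_p \times U)$ be injective. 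Since $C_p \times C_p/C_p \cong C_p/e$ and $C_p \times C_p/e \cong (C_p/e)^{\sqcup p}$, this is automatic at the orbit $C_p/e$ (the projection splits) and at the orbit $C_p/C_p$ amounts to injectivity of the restriction $\underline{M}(C_p/C_p) \to \underline{M}(C_p/e)$, which under the twisted-Mackey identification is precisely the map $R$ (using Remark \ref{rmk:lil-simpler} to pass between the $\mathrm{End}(W^{\Phi e})$-module map and the underlying map of abelian groups — here $W^{\Phi e} = \bigvee S^1$ and the endomorphism ring is $M_{p-1}(\mathbb{Z})$, Morita-trivial, so the module map is determined by the map $M_{(C_p/C_p)} \to M_{(C_p)}$, which is $R$). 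I do not expect any serious obstacle here; the main fussiness will be bookkeeping in step (i) — keeping track of how the chosen idempotent and the $(1-\gamma)$-summand interact with the $C_p$-action on $J$, as flagged in the Warning after Construction \ref{cstr:idempotent-piece} — but since we are taking $W_{(C_p)} = W$ with its genuine structure and only splitting off the top geometric fixed point, the action is the tautological one and the potential twist does not arise.
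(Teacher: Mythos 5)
Your overall strategy matches the paper exactly: the paper gives no proof beyond the remark ``Combining the previous result with Theorem~\ref{thm:slices-as-tw-mack} gives:'', and your proposal correctly spells out what that combination involves, including the computation that $T^{jump} = C_p/e$ (for odd $p$) and the unraveling of the slice-locality condition into injectivity of $R$. Step (ii) is accurate.

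However, the bookkeeping in your step (i) contains several inverted or incorrect identifications, even though your final evaluations $[S^{\lambda/2},X]$ and $[S^1,{\downarrow_1}X]$ come out right because the errors cancel. Concretely: the paper's convention (Construction~\ref{cstr:idempotent-piece}) is that $W_{(T)}$ is a summand of $\downarrow_T W$, and $M_{(T)} = [W_{(T)},\downarrow_T X]$. So $W_{(C_p/C_p)}$ should be a summand of $\downarrow_{C_p/C_p}W = W$, and since $W^{\Phi C_p} \simeq S^0$ (not $S^1$ — geometric fixed points of $\mathrm{cof}(C_{p+}\to S^0)$ at $C_p$ kills the free cell), that summand is the whole of $W = S^{\lambda/2}$. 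Meanwhile $W_{(C_p)} = W_{(C_p/e)}$ should be the $S^1$-summand of $\downarrow_1 W \simeq \bigvee_{p-1}S^1$ picked out by $(1-\gamma)$, not $W$ itself. You have these two swapped, and consequently wrote $M_{(C_p/C_p)} = [W_{(C_p)},X]$ when the convention gives $M_{(C_p/C_p)}=[W_{(C_p/C_p)},X]$. Relatedly, $\mathrm{Aut}(C_p/C_p)$ is trivial; the group acting on $M_{(C_p)}=[S^1,{\downarrow_1}X]$ is $\mathrm{Aut}(C_p/e)\cong C_p$. The parenthetical ``since $C_p$ is maximal, $\downarrow_1 W = W^{\Phi C_p}$ is not literally true'' also conflates the underlying spectrum with the top geometric fixed point — those are at opposite ends of the subgroup lattice. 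None of this changes the conclusion, but you should re-index carefully before presenting this.
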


We now describe how to move back and forth
between this description and that of Hill-Yarnall.

\begin{proposition} Let $X$ be a $C_p$-spectrum. Then there
is a natural isomorphism
	\[
	\mathrm{im}\left(\mathrm{tr}:
	\pi_1(\downarrow_1\!X) \to
	\pi_{\lambda - 1}(X)\right)
	\cong
	\mathrm{cok}\left(R:
	[S^{\lambda/2}, X] \to
	\pi_1(\downarrow_1\!X)\right).
	\]
In particular, the 1-twisted Mackey functor
associated to $X$ determines 
$\mathrm{tr}(\underline{\pi}_{\lambda-1}X)$
and vice-versa.
\end{proposition}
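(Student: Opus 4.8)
The plan is to compute both groups in the statement using the cofiber sequence
	\[
	C_{p+} \wedge S^1 \longrightarrow S^{\lambda/2} \longrightarrow S^{\lambda}
	\]
established in the lemma above, and then match the resulting presentations. First I would apply $[-, X]$ (with $X$ regarded as a $C_p$-spectrum) to this cofiber sequence to obtain a long exact sequence
	\[
	[S^{\lambda}, X] \longrightarrow [S^{\lambda/2}, X] \longrightarrow [C_{p+}\wedge S^1, X] \longrightarrow [\Sigma^{-1}S^{\lambda}, X] = \pi_{\lambda-1}(X).
	\]
By the Wirthm\"uller isomorphism (or just the adjunction for the induced $C_p$-set $C_p$), the middle term $[C_{p+}\wedge S^1, X]^{C_p}$ is naturally $\pi_1(\downarrow_1 X)$, and under this identification the map $[S^{\lambda/2}, X] \to \pi_1(\downarrow_1 X)$ is exactly the map $R$ appearing in the previous theorem (this is a bookkeeping check: the inclusion $S^1 \to \downarrow_1 S^{\lambda/2}$ chosen above is, up to the identification $J^* \cong J$ via $(1-\gamma)$, the restriction of the map $C_{p+}\wedge S^1 \to S^{\lambda/2}$ in the cofiber sequence). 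Likewise the connecting map $\pi_1(\downarrow_1 X) = [C_{p+}\wedge S^1, X] \to \pi_{\lambda-1}(X)$ is the transfer $\mathrm{tr}$, since the boundary map for an induced cell is precisely the transfer. Exactness of the sequence then gives
	\[
	\mathrm{cok}\bigl(R: [S^{\lambda/2}, X] \to \pi_1(\downarrow_1 X)\bigr)
	\;\cong\;
	\mathrm{im}\bigl(\mathrm{tr}: \pi_1(\downarrow_1 X) \to \pi_{\lambda-1}(X)\bigr),
	\]
which is the asserted natural isomorphism.

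For the ``in particular'' clause I would argue as follows. By the previous theorem, the $1$-twisted Mackey functor $\hat\pi_1 X$ records precisely the data of the groups $[S^{\lambda/2},X]$, $\pi_1(\downarrow_1 X)$ together with the maps $R$ and $T$; in particular it determines the cokernel of $R$, hence by the displayed isomorphism it determines $\mathrm{im}(\mathrm{tr}: \pi_1(\downarrow_1 X) \to \pi_{\lambda-1}(X))$. But since $\downarrow_1 X$ has a $C_p$-action and the transfer is surjective onto $\mathrm{tr}(\underline{\pi}_{\lambda-1}X)$ by definition, this image is exactly $\mathrm{tr}(\underline{\pi}_{\lambda-1}X)(C_p/C_p)$; the value at $C_p/C_p$ of the sub-Mackey functor $\mathrm{tr}(\underline{\pi}_{\lambda-1}X)$. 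The value at $C_p/C_p$ together with the residual $C_p$-action data recorded in $\hat\pi_1 X$ pins down the whole sub-Mackey functor. Conversely, running the long exact sequence with the description of $\pi_{\lambda-1}(X)$ in terms of $\underline{\pi}_{\lambda-1}X$ shows that knowledge of $\mathrm{tr}(\underline{\pi}_{\lambda-1}X)$ recovers $\mathrm{cok}(R)$, and combined with the exact sequence for the short exact sequence of Mackey functors defining $\mathrm{tr}$, one recovers enough of the $\hat\pi_1$-data.

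The main obstacle I expect is purely one of careful identification rather than any deep input: matching the abstractly-defined map $R$ (which came from a chosen idempotent/splitting and the Morita identification $J^* \cong J$ via the element $(1-\gamma)$, as in Construction \ref{cstr:idempotent-piece} and the discussion preceding this proposition) with the geometrically-defined map $[S^{\lambda/2},X] \to [C_{p+}\wedge S^1, X]$ coming from the cofiber sequence, and similarly checking that the boundary map really is the honest transfer with the correct sign/normalization. Both are traced through the identifications already set up in Lemma \ref{lem:aug-ideal} and the proposition computing $\mathsf{TwMack}_1$, so the verification is routine but needs the conventions to be lined up consistently. Once that is done, exactness of the long exact sequence gives the isomorphism immediately, and naturality is automatic since every map in sight is natural in $X$.
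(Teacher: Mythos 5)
Your proof is correct and takes essentially the same approach as the paper: the paper's own one-line proof invokes the exact sequence of the cofiber sequence $S^{\lambda-1} \to C_{p+}\wedge S^1 \to S^{\lambda/2} \to S^{\lambda}$, which is exactly what you apply $[-,X]$ to. You fill in the identifications (that the map to $\pi_1(\downarrow_1 X)$ agrees with $R$ via the $(1-\gamma)$-summand, and that the connecting map is the transfer) that the paper leaves implicit, and your handling of the ``in particular'' clause is at the same level of detail as the paper's.
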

\begin{proof} This follows from the
exact sequence associated to
the cofiber sequence:
	\[
	S^{\lambda -1}
	\to
	C_{p+} \wedge S^1
	\to
	S^{\lambda/2}
	\to
	S^{\lambda}.
	\]
\end{proof}

\subsection{$C_4$-spectra}\label{ssec:C4}


In this section we will see some phenomena not covered
by previous techniques. We will employ the following notation:
	\begin{itemize}
	\item $\gamma$ is a fixed generator of $C_4$.
	\item $\lambda$ is the two-dimensional
	real representation of $C_4$ where $\gamma$
	acts by rotation through the angle $2\pi/4$.
	\item $\sigma$ is the sign representation
	of $C_4$ (where $\gamma$ acts by $-1$).
	\item $\tau$ is the sign representation
	of the subgroup $C_2 \subset C_4$.
	\end{itemize}
 To orient the reader,
we begin with a counterexample to the
statement that every slice is an $RO(G)$-graded
suspension of an Eilenberg-MacLane spectrum.

\begin{counterexample}\label{counter:non-em-slice}
Let $\underline{M}$ denote the Mackey functor
for $C_2$ which is a copy of $\mathbb{Z}$ concentrated at $[C_2/C_2]$. 
Let $\tau$ denote the sign representation of $C_2$.
Then define
	\[
	A:= \,\uparrow^{C_4}\Sigma H\mathbb{Z} \,\,\vee\, \uparrow_{C_2}^{C_4} 
	\Sigma^{\tau}H\underline{M}.
	\]
Since slices are preserved under induction, $A$ is a 1-slice for $C_4$.
Now suppose $V$ is a virtual representation of $C_4$ with the property that
$\Sigma^{-V}A$ is an Eilenberg-MacLane spectrum. The collection
of Eilenberg-MacLane spectra is closed under retracts and restriction, from
which we conclude that the virtual dimension of $V$ is $1$ and that $\downarrow_{C_2}V$
makes
	\[
	\Sigma^{\tau-\downarrow_{C_2} V}H\underline{M} 
	\]
an Eilenberg-MacLane spectrum. Since $\underline{M}$ is concentrated
on $[C_2/C_2]$, it only sees the fixed points of the representations we suspend by.
That is, we may conclude that $\Sigma^{-a}H\underline{M}$ 
is an Eilenberg-MacLane
spectrum, where $a$ is the (virtual) dimension of the fixed points of $V$. 
This spectrum
is nonzero, and an Eilenberg-MacLane spectrum, which means $\pi_0 \ne 0$. This
forces $a=0$. There aren't many representations of $C_2$ with underlying
dimension 1 and fixed point dimension 0, so we 
conclude that $\downarrow_{C_2}V$ is 
equivalent to the sign representation $\tau$. But there is 
no virtual real representation of $C_4$ which restricts
to the regular representation of $C_2$, so no such $V$ exists.
\end{counterexample}

We now proceed with the program from \S2 to study slices for $C_4$.

\begin{construction} Let $S(\lambda)$ denote the
unit sphere in the representation $\lambda$. Then it has
a cell structure with $\mathrm{sk}_0S(\lambda) = \{z: z^4 = 1\}$,
and $\mathrm{sk}_1S(\lambda) = S(\lambda)$ obtained
by attaching a $(C_4 \times D^1)$-cell. We get an induced
cell structure on $\Sigma^{\infty}(D(\lambda)/S(\lambda)) \cong S^{\lambda}$
and define $S^{\lambda/2}$ as the $1$-skeleton. Notice that
this construction produces cofiber sequences:
	\[
	C_{4+} \to S^0 \to S^{\lambda/2},
	\]
	\[
	C_{4+} \wedge S^1 \to S^{\lambda/2} \to S^{\lambda}.
	\]
\end{construction}

We have already noted that the
cofiber $\mathrm{cof}(G_+ \to S^0)$
is always an isotropic slice 1-sphere. Indeed,
all of the geometric fixed points for proper subgroups
are just $S^0$ (with trivial Weyl group
action), while the underlying spectrum
is a wedge of $(|G|-1)$ copies of $S^1$. 

The following is proved exactly as in Lemma \ref{lem:aug-ideal}.

\begin{lemma} There is a canonical $C_4$-equivariant
isomorphism
	\[
	\pi_1(\downarrow_1\!S^{\lambda/2})
	\cong
	J:= \mathrm{ker}\left(\mathbb{Z}[C_4] 
	\stackrel{\varepsilon}{\longrightarrow}
	\mathbb{Z}\right)
	\]
\end{lemma}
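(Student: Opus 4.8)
The plan is to mimic the proof of Lemma \ref{lem:aug-ideal}, which was itself proved by applying $\pi_1$ to a cofiber sequence; the only difference is that the group is now $C_4$ rather than $C_p$, but the relevant cofiber sequence is literally the one produced in the Construction immediately preceding this statement. First I would take the cofiber sequence
\[
C_{4+} \longrightarrow S^0 \longrightarrow S^{\lambda/2}
\]
from the Construction, restrict it to the trivial subgroup (which is exact since $\downarrow_1$ is exact), and rotate it to obtain the cofiber sequence
\[
S^0 \longrightarrow \downarrow_1\!S^{\lambda/2} \longrightarrow \downarrow_1\!C_{4+}\wedge S^1 \longrightarrow S^1.
\]
Here $\downarrow_1\!C_{4+}\wedge S^1 \simeq \bigvee_{C_4} S^1$, with $C_4$ permuting the wedge summands, so $\pi_1$ of it is $\mathbb{Z}[C_4]$ as a $C_4$-module.

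Next I would apply $\pi_1$ and examine the resulting long exact sequence of $C_4$-modules. Since $S^0$ is $0$-connective, $\pi_1(S^0)=0$ and $\pi_0(S^0)=\mathbb{Z}$ with trivial action; the connecting map $\pi_1(\bigvee_{C_4} S^1)=\mathbb{Z}[C_4]\to\pi_0(S^0)=\mathbb{Z}$ is induced by the original fold/collapse map $C_{4+}\to S^0$, hence is precisely the augmentation $\varepsilon\colon\mathbb{Z}[C_4]\to\mathbb{Z}$. Exactness of
\[
0=\pi_1(S^0)\longrightarrow \pi_1(\downarrow_1\!S^{\lambda/2})\longrightarrow \mathbb{Z}[C_4]\stackrel{\varepsilon}{\longrightarrow}\mathbb{Z}
\]
then identifies $\pi_1(\downarrow_1\!S^{\lambda/2})$ with $\ker(\varepsilon)=J$, and the identification is $C_4$-equivariant because every map in sight is a map of $C_4$-spectra (equivalently, the whole long exact sequence is one of $\mathbb{Z}[C_4]$-modules). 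This is the entire argument; there is no real obstacle, since the combinatorics of $C_4$ play no role beyond the identification $\pi_1(\downarrow_1\!C_{4+}\wedge S^1)\cong\mathbb{Z}[C_4]$. The one point worth a sentence of care is checking that the connecting homomorphism really is the augmentation rather than some other surjection $\mathbb{Z}[C_4]\to\mathbb{Z}$; this follows by naturality from the fact that the defining map $C_{4+}\to S^0$ is the stable fold map, whose effect on $\pi_0^e$ after one suspension shift is exactly $\varepsilon$. Hence the statement follows, exactly as in the $C_p$ case, and I would simply write ``Proved exactly as in Lemma \ref{lem:aug-ideal}, applying $\pi_1$ to the first cofiber sequence in the preceding Construction.''
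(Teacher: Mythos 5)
Your proof is correct and matches the paper's approach exactly: the paper simply notes the result "is proved exactly as in Lemma \ref{lem:aug-ideal}," which applies $\pi_1$ to the rotated cofiber sequence $S^0 \to \downarrow_1 S^{\lambda/2} \to \downarrow_1 C_{4+}\wedge S^1 \to S^1$ and identifies the last map on $\pi_1$ with the augmentation. Your added sentence verifying the connecting map really is $\varepsilon$ is a reasonable extra point of care but not a deviation.
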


Now we can give a concrete description of
$\mathsf{TwMack}_1$.

\begin{proposition} The category $\mathsf{TwMack}_1$
is equivalent to the category whose objects consist of
the data of the diagram of abelian groups:
	\[
	\xymatrix{
	M_{(C_4/C_4)}\ar@/_/[d]_{R^{C_4}_{C_2}}\\
	M_{(C_4/C_2)}\ar@/_/[d]_{R_1^{C_2}}
	\ar@/_/[u]_{T_{C_2}^{C_4}}\\
	M_{(C_4)}\ar@/_/[u]_{T_1^{C_2}}
	}
	\]
Where $M_{(C_4)}$ is a $C_4$-module,
$M_{(C_4/C_2)}$ is a $C_4/C_2$-module,
and the maps are additive and
subject to the following relations:
	\begin{itemize}
	\item (Group action and restrictions)
		\[
		(1+\gamma^2)\cdot R_1^{C_2} = 0
		\]
		\[
		(1-\gamma^2)\cdot R_{C_2}^{C_4} = 0
		\]
		\[
		(1+\gamma
		+\gamma^2+\gamma^3) \cdot R_1^{C_2}R_{C_2}^{C_4} = 0
		\]
	\item (Group action and transfers)
		\[
		T_1^{C_2} \circ (1+\gamma^2) = 0
		\]
		\[
		T_{C_2}^{C_4} \circ (1-\gamma^2) = 0
		\]
		\[
		T_{C_2}^{C_4}T_1^{C_2}\circ(1 + \gamma +
		\gamma^2 + \gamma^3) = 0
		\]
	\item (Double coset formulae)
		\[
		R^{C_2}_1T_1^{C_2} = (1-\gamma)
		\]
		\[
		R^{C_4}_{C_2}T^{C_4}_{C_2} = (1+\gamma)
		\]
		\[
		R_1^{C_2}R_{C_2}^{C_4}T_{C_2}^{C_4}T_1^{C_2}
		=
		(1 -\gamma)
		\]
	\end{itemize}
Under this equivalence, the category of $1$-slices is equivalent
to the subcategory of those diagrams for which the map
	\[
	M_{(C_4/C_4)}
	\stackrel{(R_{C_2}^{C_4}, R^{C_4}_1)}{\longrightarrow}
	M_{(C_4/C_2)} \oplus M_{(C_4)}
	\]
is injective.
\end{proposition}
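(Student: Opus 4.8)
The statement is the explicit form of Theorem \ref{thm:slices-as-tw-mack} for $G=C_4$, $n=1$, and the isotropic slice $1$-sphere $W=S^{\lambda/2}=\mathrm{cof}(C_{4+}\to S^0)$. So the plan is to compute the data feeding into Definition \ref{defn:tw-mack} for this choice of $W$ — the geometric fixed points $W^{\Phi T}$, the $\mathrm{Aut}(T)$-modules $J_T=\pi_{\nu(1,T)}W^{\Phi T}$, and the rings $\underline{\mathrm{End}}(W)^{\Phi T}$ — and then to unwind Definition \ref{defn:r-phi-modules} in this case. First I would record that $W=S^{\lambda/2}$ is a legitimate choice: it is an isotropic slice $1$-sphere by the discussion preceding the statement, and it is geometrically splittable by the Remark following Lemma \ref{lem:endo-geom-split}, since its geometric fixed points at the two non-maximal orbits $C_4/C_4$ and $C_4/C_2$ are single spheres. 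Indeed, applying $\Phi^H$ to the cofiber sequence $C_{4+}\to S^0\to W$ and using $\Phi^H\Sigma^\infty_+(C_4)\simeq\Sigma^\infty_+\big((C_4)^H\big)$ gives $W^{\Phi C_4}\simeq W^{\Phi C_2}\simeq S^0$ (both $(C_4)^{C_4}$ and $(C_4)^{C_2}$ are empty) with trivial residual Weyl action, and $W^{\Phi e}\simeq\bigvee_3 S^1$. By the preceding lemma $J_{C_4/e}=\pi_1 W^{\Phi e}\cong J=\ker(\mathbb{Z}[C_4]\to\mathbb{Z})$ as a $C_4$-module, while $J_{C_4/C_2}\cong\mathbb{Z}$ and $J_{C_4/C_4}\cong\mathbb{Z}$ with trivial actions; correspondingly, by Lemma \ref{lem:endo-geom-fix}, $\underline{\mathrm{End}}(W)^{\Phi C_4}=\underline{\mathrm{End}}(W)^{\Phi C_2}=\mathbb{Z}$ and $\underline{\mathrm{End}}(W)^{\Phi e}=\mathrm{End}\big(\bigvee_3 S^1\big)\cong M_3(\mathbb{Z})$.

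Next I would apply the Morita lemma of \S\ref{ssec:slices-as-mack} at each orbit. Over $C_4/e$ it identifies $\mathsf{RMod}_{M_3(\mathbb{Z})\text{-}C_4}$ with $\mathsf{Mod}_{C_4}$, so the datum $M_{(C_4/e)}\otimes J^*$ with its $\underline{\mathrm{End}}(W)^{\Phi e}$-$C_4$-module structure is the same as a plain $C_4$-module $M_{(C_4)}$; over $C_4/C_2$ and $C_4/C_4$, where the $J_T$ are already $\mathbb{Z}$, one is left with a $C_4/C_2$-module $M_{(C_4/C_2)}$ and an abelian group $M_{(C_4/C_4)}$. Fixing — exactly as in \S\ref{ssec:Cp} — the $\mathbb{Z}$-summand of $J$ spanned by $1-\gamma$ in the basis $(1-\gamma),(\gamma-\gamma^2),(\gamma^2-\gamma^3)$ pins down the corresponding summand $W_{(C_4/e)}\simeq S^1$ of $\downarrow_e W$ together with the inclusion and retraction used to build the maps $R_1^{C_2},T_1^{C_2}$ via Construction \ref{cstr:idempotent-piece}; the maps $R_{C_2}^{C_4},T_{C_2}^{C_4}$ come from the identity inclusion and retraction of $W_{(C_4/C_2)}=\downarrow_{C_2}W$. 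The three families of relations in the statement are then precisely the conditions of Definition \ref{defn:r-phi-modules} transported through these identifications: that the restriction and transfer maps are maps of $\underline{\mathrm{End}}(W)^{\Phi T'}$-$\mathrm{Aut}(T')$-modules gives the ``group action'' relations; the trace squares give the double coset formulae $R_1^{C_2}T_1^{C_2}=1-\gamma$, $R_{C_2}^{C_4}T_{C_2}^{C_4}=1+\gamma$, $R_1^{C_2}R_{C_2}^{C_4}T_{C_2}^{C_4}T_1^{C_2}=1-\gamma$; and the composition squares identify the total restriction (resp. transfer) with $R_1^{C_2}R_{C_2}^{C_4}$ (resp. $T_{C_2}^{C_4}T_1^{C_2}$).

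The one genuinely delicate point — which I expect to be the main obstacle — is tracking how the $C_4$-action on $J$ and the chosen non-equivariant idempotent twist these relations. It is this twisting that turns the ``naïve'' Mackey relations (in which restrictions have invariant image and restriction–transfer composites are given by norm elements $1+\gamma^2$, $1+\gamma$, $1+\gamma+\gamma^2+\gamma^3$) into the stated ones, where the surviving norm conditions reappear on the \emph{other} side (the ``group action'' relations $(1+\gamma^2)R_1^{C_2}=0$, $(1-\gamma^2)R_{C_2}^{C_4}=0$, etc.) and the double coset values become $1-\gamma$ and $1+\gamma$. This is the same phenomenon recorded in the Warning after Construction \ref{cstr:idempotent-piece} and in the identification $J^*\cong J$, $1\mapsto 1-\gamma$, from \S\ref{ssec:Cp}; everything else is bookkeeping, and once the idempotent is fixed the relations can be read off from the $C_4$-module structure of $J$, $J^*$ and their restrictions to $C_2$.

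For the final clause I would invoke Theorem \ref{thm:slices-as-tw-mack} once more: $\mathsf{Slice}_1$ corresponds to the slice-local objects of $\mathsf{TwMack}_1$, i.e. those satisfying the injectivity condition of Definition \ref{defn:slice-local}, reformulated in the remark following Definition \ref{defn:tw-mack}. A subgroup $H\subseteq C_4$ is a $1$-jump iff $\lfloor 2/|H|\rfloor>\lfloor 1/|H|\rfloor$, which holds for $|H|=1,2$ but not $|H|=4$, so $T^{\mathrm{jump}}=C_4/e\sqcup C_4/C_2$. At $T=C_4/e$ and $T=C_4/C_2$ the $C_4$-set $T\times T^{\mathrm{jump}}$ contains $T$ as a retract via the diagonal $T\hookrightarrow T\times T$ (each such $T$ is itself one of the two jump orbits), so the relevant restriction map has the identity as a direct summand and injectivity is automatic. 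At $T=C_4/C_4$ one has $C_4/C_4\times T^{\mathrm{jump}}=C_4/e\sqcup C_4/C_2$, and the map $\underline{M}(C_4/C_4)\to\underline{M}(C_4/e)\oplus\underline{M}(C_4/C_2)$ becomes, after stripping off the (exact, conservative) Morita equivalences, exactly the map $(R^{C_4}_1,R_{C_2}^{C_4})$ on $M_{(C_4/C_4)}$ with $R^{C_4}_1:=R_1^{C_2}R_{C_2}^{C_4}$. Hence slice-locality is equivalent to injectivity of this map, which is the asserted description of the category of $1$-slices.
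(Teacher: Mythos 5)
Your proposal is correct and takes essentially the same route as the paper: identify $W=S^{\lambda/2}$ as the isotropic slice $1$-sphere, compute the geometric fixed points, the twisting modules $J_T$, and the rings $\underline{\mathrm{End}}(W)^{\Phi T}$, apply the Morita equivalence at each orbit, and then unravel the definition of $\underline{R}^{\Phi}$-modules; the jump calculation and the reduction of the slice-local condition to injectivity of $(R_{C_2}^{C_4},R^{C_4}_1)$ also match. The paper's own proof is an equally terse sketch that simply records the (co)invariance computations $(M\otimes J^*)_{C_2}$, $(M\otimes J^*)^{C_2}$, $(M\otimes J^*)_{C_4}$, $(M\otimes J^*)^{C_4}$ by analogy with the $C_p$ case and then asserts a diagram chase, which is exactly the bookkeeping you defer.
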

\begin{proof} Arguing as in the case of $C_p$, one 
establishes isomorphisms:
	\[
	(M \otimes J^*)_{C_2} \cong \uparrow^{C_4/C_2}(M/(1+\gamma^2)),
	\quad\quad
	(M\otimes J^*)^{C_2} \cong 
	\uparrow^{C_4/C_2}(\mathrm{ker}( (1+\gamma^2):
	M \to M).
	\]
	\[
	(M \otimes J^*)_{C_4} \cong
	M/(1+\gamma+ \gamma^2+\gamma^3),
	\quad\quad
	(M \otimes J^*)^{C_4}
	\cong
	\mathrm{ker}((1+\gamma + \gamma^2+\gamma^3):
	M \to M)
	\]
and does a diagram chase.
\end{proof}

Now we establish the link with homotopy theory.

\begin{lemma} There is a $C_2$-equivalence
	\[
	\downarrow_{C_2}S^{\lambda/2} \cong S^{\tau}
	\vee \uparrow_1^{C_2}S^1.
	\]
\end{lemma}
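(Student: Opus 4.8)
The statement is about the restriction to $C_2 \subseteq C_4$ of the isotropic slice $1$-sphere $S^{\lambda/2}$, which was built as the $1$-skeleton of $S^\lambda$. My plan is to compute this restriction directly from the cell structure. First I would apply the restriction functor $\downarrow_{C_2}$ to the two cofiber sequences defining $S^{\lambda/2}$, namely
\[
	C_{4+} \longrightarrow S^0 \longrightarrow S^{\lambda/2},
	\qquad
	C_{4+} \wedge S^1 \longrightarrow S^{\lambda/2} \longrightarrow S^{\lambda}.
\]
Restriction is symmetric monoidal and sends $C_{4+}$ to $\uparrow_1^{C_2} * = C_2 \sqcup C_2$ pointed, i.e. $\downarrow_{C_2} C_{4+} \cong C_{2+} \vee C_{2+}$ (as $C_2$-spaces, $C_4$ restricted to $C_2$ is two free $C_2$-orbits). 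Likewise $\downarrow_{C_2} S^\lambda \cong S^{\lambda'}$ where $\lambda' = \downarrow_{C_2}\lambda$ is the restriction of the rotation representation; since $\gamma^2$ acts on $\lambda$ by rotation through $\pi$, i.e. by $-1$, we get $\lambda' \cong 2\tau$ (two copies of the sign representation of $C_2$), so $\downarrow_{C_2} S^\lambda \cong S^{2\tau}$.

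Next I would analyze the restricted cell structure more carefully rather than just juggling cofiber sequences. The $0$-skeleton of $S(\lambda)$ is $\mu_4 = \{z : z^4 = 1\}$, which as a $C_2$-set (action by $\gamma^2 = -1$) is two free orbits $\mu_4 \cong C_2 \sqcup C_2$; but one should instead think of the induced cell structure on $S^{\lambda/2}$ itself. The key point is that $S^{\lambda/2}$ sits between $S^0$ and $S^\lambda$: from $C_{4+} \to S^0 \to S^{\lambda/2}$ we see $\downarrow_{C_2}S^{\lambda/2}$ is the cofiber of $(C_{2+}\vee C_{2+}) \to S^0$. This fold map, restricted to $C_2$, collapses both copies of $C_{2+}$ via the (restriction of the) fold; I claim it factors as $(C_{2+} \vee C_{2+}) \xrightarrow{\mathrm{fold} \vee \mathrm{id}} C_{2+} \to S^0$ after choosing the right splitting — one copy of $C_{2+}$ maps by the augmentation $\nabla: C_{2+}\to S^0$ and the other maps nullhomotopically (or, more precisely, one reorganizes the wedge so this holds). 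Then the cofiber splits: $\mathrm{cof}(\nabla: C_{2+}\to S^0) = S^\tau$ contributes the $S^\tau$ summand, and $\mathrm{cof}(* : C_{2+} \to S^0) = S^0 \vee \Sigma C_{2+} = S^0 \vee \uparrow_1^{C_2}S^1$ — but the $S^0$ here gets absorbed, leaving $\uparrow_1^{C_2}S^1$. I expect the cleanest way to see the factorization is to use the identification $\pi_1(\downarrow_1 S^{\lambda/2}) \cong J$ (the augmentation ideal of $\mathbb{Z}[C_4]$) from the preceding Lemma, together with the observation that as a $C_2$-module $J|_{C_2}$ splits as $\mathbb{Z}^- \oplus (\text{free } C_2\text{-module})$ where $\mathbb{Z}^-$ is the sign representation — indeed $\mathbb{Z}[C_4]|_{C_2} = \mathbb{Z}[C_2]^{\oplus 2}$ and $J$ is the kernel of $\varepsilon$, which one checks is $\mathbb{Z}[C_2] \oplus \mathbb{Z}^-$ as a $C_2$-module — and this algebraic splitting lifts to the splitting of the slice sphere because the underlying spectra involved are wedges of spheres with $\pi_1$ detecting maps.

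The main obstacle will be making the splitting \emph{$C_2$-equivariant} and genuinely at the level of spectra, not just underlying spectra or homotopy groups. The clean route: both $\downarrow_{C_2}S^{\lambda/2}$ and $S^\tau \vee \uparrow_1^{C_2}S^1$ are slice $1$-spheres for $C_2$ (one checks $(\uparrow_1^{C_2}S^1)^{\Phi C_2} = 0$ and $(\uparrow_1^{C_2}S^1)^{\Phi 1} = S^1 \vee S^1$, while $(S^\tau)^{\Phi C_2} = S^0$ and $(S^\tau)^{\Phi 1} = S^1$, so the wedge has geometric fixed points $S^0$ at $C_2$ and $S^1 \vee S^1 \vee S^1$ at $e$ — matching $\downarrow_{C_2}S^{\lambda/2}$, whose $C_2$-geometric fixed points are $S^0$ since $(S^{\lambda/2})^{\Phi C_2}$ can be read off from the cell structure, and whose underlying spectrum is a wedge of three $S^1$'s). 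So both lie in $\mathsf{Sph}_1$ for $C_2$, hence in $\heartsuit_1$, hence I may apply Theorem \ref{thm:slices-as-tw-mack} (or directly Theorem \ref{thm:slices-as-modules}): a map between them is an equivalence iff it induces an isomorphism on the associated twisted Mackey functor data, i.e. on the relevant $\pi_1$ of restrictions and geometric fixed points with their Weyl actions. Thus it suffices to (i) write down an explicit $C_2$-map $S^\tau \vee \uparrow_1^{C_2}S^1 \to \downarrow_{C_2}S^{\lambda/2}$ — using the inclusion $S^0 \to S^{\lambda/2}$ composed with a chosen splitting for the $\tau$-part, and the counit/cell-inclusion for the free part — and (ii) check it is an isomorphism on $\underline{\pi}_1$, which reduces to the algebraic splitting $J|_{C_2} \cong \mathbb{Z}[C_2] \oplus \mathbb{Z}^-$ noted above together with the (trivial) check on $C_2$-geometric fixed points where both sides give $\mathbb{Z}$. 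This reduces the whole lemma to a finite, routine verification.
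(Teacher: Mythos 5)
Your first route is the paper's proof: restrict the defining cofiber sequence $C_{4+}\to S^0\to S^{\lambda/2}$ to $C_2$, note that $\downarrow_{C_2}C_{4+}\cong C_{2+}\vee C_{2+}$, change basis on the source (the shear $(a,b)\mapsto(a,b-a)$, a bona fide $C_2$-equivariant self-map of the free $C_2$-spectrum $C_{2+}\vee C_{2+}$) so the fold map becomes $(\nabla,0)$, and then the cofiber splits as $\mathrm{cof}(\nabla)\vee\Sigma C_{2+}=S^\tau\vee\uparrow_1^{C_2}S^1$. Your ``main obstacle'' worry about whether the splitting is genuinely $C_2$-equivariant is unfounded: the shearing automorphism lives entirely in genuine $C_2$-spectra (it is an integer matrix acting on a wedge of two copies of the free object), so nothing is being done only at the level of underlying spectra or homotopy groups. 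The second, heavier route you sketch---regard both sides as objects of $\mathsf{Sph}_1\subset\heartsuit_1$, produce a candidate map, and check it is an equivalence via the twisted-Mackey-functor (or geometric-fixed-point) data---is also correct and self-consistent, but it is circular-feeling in context (the lemma is being proved precisely to help identify $\mathsf{TwMack}_1$ concretely) and it imports far more machinery than the situation needs. One small notational slip in your write-up of route one: the cofiber of $(\nabla,0):C_{2+}\vee C_{2+}\to S^0$ is $\mathrm{cof}(\nabla)\vee\Sigma C_{2+}$, not $\mathrm{cof}(\nabla)\vee\mathrm{cof}(0\colon C_{2+}\to S^0)$; your phrase ``the $S^0$ gets absorbed'' is papering over this, though you arrive at the right answer.
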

\begin{proof} As a $C_2$-set, $C_{4+} = (C_2 \amalg C_2)_+$.
It follows that the cofiber
$\mathrm{cof}(C_{2+} \to S^0) \cong S^{\tau}$
is a summand, and the remaining piece is the cofiber
$\mathrm{cof}(C_{2+} \to 0)$ which is $\uparrow_1^{C_2}S^1$.
\end{proof}

While this splitting does not behave nicely
with respect to the $(C_4/C_2)$-action, it does
tell us what groups one needs to compute.
As in the previous section, the element
$(1-\gamma) \in J$ together with the standard basis
$(\gamma^i - \gamma^{i+1})$ of $J$ gives
a splitting:
	\[
	S^1 \to \downarrow_1\!S^{\lambda/2} \to S^1.
	\]
We get the following theorem as a corollary of our
main results:

\begin{theorem} The assignment
	\[
	X \mapsto \begin{gathered}
	\xymatrix{
	[S^{\lambda/2}, X]\ar@/_/[d]_{R^{C_4}_{C_2}}\\
	[S^{\tau}\vee \uparrow^{C_2}\!S^1, \downarrow_{C_2}\!X]
	\ar@/_/[d]_{R_1^{C_2}}
	\ar@/_/[u]_{T_{C_2}^{C_4}}\\
	[S^1, \downarrow_{C_4}\!X]\ar@/_/[u]_{T_1^{C_2}}
	}
	\end{gathered}
	\]
gives an equivalence
of categories
	\[
	\heartsuit_1 \stackrel{\cong}{\longrightarrow} 
	\mathsf{TwMack}_1.
	\]
Under this equivalence, the $1$-slice of a spectrum is
computed by forcing $R_1^{C_2} \oplus R_{C_2}^{C_4}$
to be injective.
\end{theorem}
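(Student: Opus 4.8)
The plan is to recognize this theorem as an immediate application of the machinery built up through Theorem \ref{thm:slices-as-tw-mack}, specialized to $G = C_4$, $n = 1$, and the specific choice of isotropic slice $1$-sphere $W = S^{\lambda/2} = \mathrm{cof}(C_{4+} \to S^0)$. Theorem \ref{thm:slices-as-tw-mack} already gives an equivalence $\heartsuit_n \cong \mathsf{TwMack}_n$ intertwining $\hat{\pi}_n$ with the localization $L^{inj}$, so almost all of the work is in (i) checking that $W$ really is an isotropic slice $1$-sphere for the standard slice filtration, (ii) checking that $W$ satisfies the geometric-splittability hypothesis of Lemma \ref{lem:endo-geom-split} so that the twisted-Mackey-functor description applies, and (iii) unwinding the abstract definition of $\mathsf{TwMack}_1$ for this $W$ into the concrete diagram with the three restriction maps, three transfer maps, and nine relations displayed in the statement.

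First I would verify that $W = \mathrm{cof}(C_{4+} \to S^0)$ is an isotropic slice $1$-sphere: for each proper subgroup $H \subsetneq C_4$ the map $(C_{4})^{\Phi H} \to S^0$ has source $0$ (since $C_4/H$ has no $H$-fixed points when $H \neq C_4$... more precisely $\Phi^H(C_{4+}) = (C_4^H)_+ = \varnothing_+ = S^0$ only when... ) — one checks directly that $W^{\Phi C_4} = S^0$, $W^{\Phi C_2} = S^0$, and $W^{\Phi e} = \bigvee^{3} S^1$, which matches $\lfloor 1/|H|\rfloor = 0$ for $H = C_4, C_2$ and $\lfloor 1/1 \rfloor = 1$ for $H = e$. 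Since each $W^{\Phi H}$ is nonzero, $W$ is isotropic. For the splittability hypothesis, I would invoke the final Remark before \S\ref{ssec:slices-as-mack}: $W^{\Phi H}$ is a single sphere for the non-maximal subgroups $C_2$ and... wait, $W^{\Phi e}$ is not a single sphere, but $e$ is minimal, not maximal; however $W^{\Phi C_2} = S^0$ and $W^{\Phi C_4} = S^0$ are single spheres, and the only subgroup where $W^{\Phi H}$ is a nontrivial wedge is $e$, which is minimal — so the Remark's criterion ("$W^{\Phi T}$ a single sphere for non-maximal $T$") needs slight care. Actually the cleaner route is: $W^{\Phi e} = \downarrow_e W$ is handled because $e$ is minimal so $\downarrow_e W = W^{\Phi e}$ and one takes the trivial action on the chosen summand; for $C_2$ and $C_4$ the geometric fixed points are single spheres so the splitting is automatic. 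Thus Lemma \ref{lem:endo-geom-split} applies.

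Next I would carry out the unwinding. By Definition \ref{defn:tw-mack}, an object of $\mathsf{TwMack}_1$ is the data of abelian groups $M_{(C_4/H)}$ with $\mathrm{Aut}(C_4/H)$-action for $H = C_4, C_2, e$, together with an $\mathsf{RMod}_{\underline{\mathrm{End}}(W)^{\Phi}}$-structure on $\{M_{(C_4/H)} \otimes J_H^*\}$, where $J_H = \pi_{\nu(1,H)} W^{\Phi H}$. Here $J_{C_4} = J_{C_2} = \mathbb{Z}$ (trivial action, as $W^{\Phi C_4} = W^{\Phi C_2} = S^0$ and $\mathrm{Aut}$ acts trivially) and $J_e = J := \ker(\mathbb{Z}[C_4] \xrightarrow{\varepsilon} \mathbb{Z})$ as a $C_4 = \mathrm{Aut}(C_4/e)$-module, by the Lemma computing $\pi_1(\downarrow_1 S^{\lambda/2})$. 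The $\mathsf{RMod}_{\underline{R}^{\Phi}}$ structure (Definition \ref{defn:r-phi-modules}) consists, for each pair $[C_4/H] \ge [C_4/K]$, of a transfer-type map out of a coinvariants and a restriction-type map into an invariants, subject to double-coset and composition relations. Using Remark \ref{rmk:lil-simpler}, the $\mathrm{End}(W^{\Phi K})$-module map $M_{(H)} \otimes J_H^* \to M_{(K)} \otimes J_K$ is equivalent to an ordinary abelian group map, and I would compute $J_e^* \otimes_{\mathrm{End}(W^{\Phi e})} (\mathbb{Z}) $ etc. concretely. For the $e$-to-$C_2$ and $e$-to-$C_4$ transitions the relevant computation is exactly the one carried out in the $C_p$ section, giving $(M \otimes J^*)_{C_2} \cong \uparrow^{C_4/C_2}(M/(1+\gamma^2))$ and $(M\otimes J^*)^{C_2} \cong \uparrow^{C_4/C_2}\ker((1+\gamma^2))$, and similarly for $C_4$-(co)invariants; for the $C_2$-to-$C_4$ transition $J_{C_2} = J_{C_4} = \mathbb{Z}$ so the maps are just honest $C_4/C_2$-module maps $R^{C_4}_{C_2}, T^{C_4}_{C_2}$. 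Reading off the double-coset formula in Definition \ref{defn:r-phi-modules} — the $\mathrm{trace}$ map for the pair $[C_4/e] \ge [C_4/C_2]$ becomes $(1-\gamma)$ on these identifications, for $[C_4/e]\ge[C_4/C_4]$ likewise, and for $[C_4/C_2]\ge[C_4/C_4]$ becomes the norm $(1+\gamma)$ for $C_4/C_2 \cong C_2$ — together with the composition-of-restrictions and composition-of-transfers diagrams, produces precisely the nine displayed relations. The main obstacle will be the bookkeeping in this last diagram chase: keeping the three layers of the Burnside-category composition straight and confirming that the composite $e \to C_2 \to C_4$ relations ($(1+\gamma+\gamma^2+\gamma^3)R_1^{C_2}R_{C_2}^{C_4} = 0$ and its transfer dual, and $R_1^{C_2}R_{C_2}^{C_4}T_{C_2}^{C_4}T_1^{C_2} = (1-\gamma)$) fall out correctly — this requires honestly tracing the identifications $(M\otimes J^*)^{C_4}\cong\ker(1+\gamma+\gamma^2+\gamma^3)$ and $(M\otimes J^*)_{C_4}\cong M/(1+\gamma+\gamma^2+\gamma^3)$ through the two-step Kan extensions, but it is routine given the $C_p$ template.

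Finally, the identification of $\mathsf{Slice}_1$ with the subcategory on which $R_1^{C_2}\oplus R_{C_2}^{C_4}$ is injective follows from Theorem \ref{thm:slices-as-tw-mack}(ii) together with the Remark describing slice-locality: the $1$-jumps for $\nu_{sl}$ are the subgroups $H$ with $\lfloor 2/|H|\rfloor > \lfloor 1/|H|\rfloor$, i.e. $|H| = 1$ and $|H| = 2$ (since $\lfloor 2/2\rfloor = 1 > 0 = \lfloor 1/2\rfloor$) but not $|H| = 4$; so $T^{jump} = C_4/e \amalg C_4/C_2$, and the slice-local condition $M_{(T)}\otimes J_T^* \hookrightarrow \bigoplus_\alpha M_{(U_\alpha)}\otimes J_{U_\alpha}^*$ at $T = C_4/C_4$ unwinds, after decomposing $C_4/C_4 \times T^{jump} = C_4/e \amalg C_4/C_2$, to the injectivity of $(R^{C_4}_{C_2}, R^{C_4}_1)\colon M_{(C_4/C_4)} \to M_{(C_4/C_2)}\oplus M_{(C_4)}$ where $R^{C_4}_1 := R_1^{C_2}R_{C_2}^{C_4}$; at $T = C_4/C_2$ and $T = C_4/e$ the condition is automatically satisfied for objects in the heart (the relevant cofiber of the counit is already slice $2$-connective there, as in Proposition \ref{prop-inj-res}). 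This gives exactly the stated description, and the formula $X \mapsto \hat{\pi}_1 X$ with the three-term diagram is just Construction \ref{cstr:idempotent-piece} applied with $W_{(C_4/C_4)} = S^{\lambda/2}$, $W_{(C_4/C_2)} = S^\tau \vee \uparrow^{C_2}S^1$ (the restriction $\downarrow_{C_2}S^{\lambda/2}$, by the splitting lemma), and $W_{(C_4/e)} = S^1$ (the chosen summand corresponding to $(1-\gamma) \in J$).
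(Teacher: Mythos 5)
Your proposal is correct and follows exactly the approach the paper intends: specialize the general machinery of Theorem \ref{thm:slices-as-tw-mack} to $G = C_4$, $n = 1$, and $W = S^{\lambda/2} = \mathrm{cof}(C_{4+} \to S^0)$, then unwind the definitions into the explicit three-layer diagram using the $C_p$ computations as a template. Your only slip is terminological: you call $C_4/e$ ``minimal,'' but in the orbit poset $\mathsf{P}_{\mathcal{O}_{C_4}}$ (where $[T] \ge [T']$ iff $\mathrm{Hom}(T,T') \ne \varnothing$) the free orbit is the \emph{maximal} element, which is precisely why $\downarrow_e W = W^{\Phi e}$ and why the remark preceding \S\ref{ssec:slices-as-mack} applies directly without the caveat you worried about.
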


\printbibliography

\end{document}